\newtheorem{theorem}{Theorem}[section]
\newtheorem{lemma}[theorem]{Lemma}
\newtheorem{proposition}[theorem]{Proposition}
\newtheorem{corollary}[theorem]{Corollary}
\theoremstyle{definition}
\newtheorem{definition}{Definition}[section]
\newtheorem{example}{Example}[section]
\newtheorem{assumption}{Assumption}
\newtheorem{remark}[theorem]{Remark}
\let\pdfoutput=\undefined\fi
\chardef\@x10\chardef\@xv60
\def\tcitime{
\def\@time{%
  \@minute\time\@hour\@minute\divide\@hour\@xv
  \ifnum\@hour<\@x 0\fi\the\@hour:%
  \multiply\@hour\@xv\advance\@minute-\@hour
  \ifnum\@minute<\@x 0\fi\the\@minute
  }}%
\def\x@hyperref#1#2#3{%
   \catcode`\~ = 12
   \catcode`\$ = 12
   \catcode`\_ = 12
   \catcode`\# = 12
   \catcode`\& = 12
   \y@hyperref{#1}{#2}{#3}%
}
\def\y@hyperref#1#2#3#4{%
   #2\ref{#4}#3
   \catcode`\~ = 13
   \catcode`\$ = 3
   \catcode`\_ = 8
   \catcode`\# = 6
   \catcode`\& = 4
}
\def\QCTOpt[#1]#2{%
  \def\QCTOptB{#1}
  \def\QCTOptA{#2}
}
\def\QCTNOpt#1{%
  \def\QCTOptA{#1}
  \let\QCTOptB\empty
}
\def\Qct{%
  \@ifnextchar[{%
    \QCTOpt}{\QCTNOpt}
}
\def\QCBOpt[#1]#2{%
  \def\QCBOptB{#1}%
  \def\QCBOptA{#2}%
}
\def\QCBNOpt#1{%
  \def\QCBOptA{#1}%
  \let\QCBOptB\empty
}
\def\Qcb{%
  \@ifnextchar[{%
    \QCBOpt}{\QCBNOpt}%
}
\def\PrepCapArgs{%
  \ifx\QCBOptA\empty
    \ifx\QCTOptA\empty
      {}%
    \else
      \ifx\QCTOptB\empty
        {\QCTOptA}%
      \else
        [\QCTOptB]{\QCTOptA}%
      \fi
    \fi
  \else
    \ifx\QCBOptA\empty
      {}%
    \else
      \ifx\QCBOptB\empty
        {\QCBOptA}%
      \else
        [\QCBOptB]{\QCBOptA}%
      \fi
    \fi
  \fi
}
\def\GRAPHICSPS#1{%
 \ifcase\GRAPHICSTYPE
   \special{ps: #1}%
 \or
   \special{language "PS", include "#1"}%
 \fi
}%
\def\graffile#1#2#3#4{%
    \bgroup
	   \@inlabelfalse
       \leavevmode
       \@ifundefined{bbl@deactivate}{\def~{\string~}}{\activesoff}%
        \raise -#4 \BOXTHEFRAME{%
           \hbox to #2{\raise #3\hbox to #2{\null #1\hfil}}}%
    \egroup
}%
\def\draftbox#1#2#3#4{%
 \leavevmode\raise -#4 \hbox{%
  \frame{\rlap{\protect\tiny #1}\hbox to #2%
   {\vrule height#3 width\z@ depth\z@\hfil}%
  }%
 }%
}%
\let\nographics=\@msidraft
\newif\ifwasdraft
\def\GRAPHIC#1#2#3#4#5{%
   \ifnum\@msidraft=\@ne\draftbox{#2}{#3}{#4}{#5}%
   \else\graffile{#1}{#3}{#4}{#5}%
   \fi
}
\def\addtoLaTeXparams#1{%
    \edef\LaTeXparams{\LaTeXparams #1}}%
\newif\ifBoxFrame \BoxFramefalse
\newif\ifOverFrame \OverFramefalse
\newif\ifUnderFrame \UnderFramefalse
\def\BOXTHEFRAME#1{%
   \hbox{%
      \ifBoxFrame
         \frame{#1}%
      \else
         {#1}%
      \fi
   }%
}
\def\doFRAMEparams#1{\BoxFramefalse\OverFramefalse\UnderFramefalse\readFRAMEparams#1\end}%
\def\readFRAMEparams#1{%
 \ifx#1\end%
  \let\next=\relax
  \else
  \ifx#1i\dispkind=\z@\fi
  \ifx#1d\dispkind=\@ne\fi
  \ifx#1f\dispkind=\tw@\fi
  \ifx#1t\addtoLaTeXparams{t}\fi
  \ifx#1b\addtoLaTeXparams{b}\fi
  \ifx#1p\addtoLaTeXparams{p}\fi
  \ifx#1h\addtoLaTeXparams{h}\fi
  \ifx#1X\BoxFrametrue\fi
  \ifx#1O\OverFrametrue\fi
  \ifx#1U\UnderFrametrue\fi
  \ifx#1w
    \ifnum\@msidraft=1\wasdrafttrue\else\wasdraftfalse\fi
    \@msidraft=\@ne
  \fi
  \let\next=\readFRAMEparams
  \fi
 \next
 }%
\def\IFRAME#1#2#3#4#5#6{%
      \bgroup
      \let\QCTOptA\empty
      \let\QCTOptB\empty
      \let\QCBOptA\empty
      \let\QCBOptB\empty
      #6%
      \parindent=0pt
      \leftskip=0pt
      \rightskip=0pt
      \setbox0=\hbox{\QCBOptA}%
      \@tempdima=#1\relax
      \ifOverFrame
          \typeout{This is not implemented yet}%
          \show\HELP
      \else
         \ifdim\wd0>\@tempdima
            \advance\@tempdima by \@tempdima
            \ifdim\wd0 >\@tempdima
               \setbox1 =\vbox{%
                  \unskip\hbox to \@tempdima{\hfill\GRAPHIC{#5}{#4}{#1}{#2}{#3}\hfill}%
                  \unskip\hbox to \@tempdima{\parbox[b]{\@tempdima}{\QCBOptA}}%
               }%
               \wd1=\@tempdima
            \else
               \textwidth=\wd0
               \setbox1 =\vbox{%
                 \noindent\hbox to \wd0{\hfill\GRAPHIC{#5}{#4}{#1}{#2}{#3}\hfill}\\%
                 \noindent\hbox{\QCBOptA}%
               }%
               \wd1=\wd0
            \fi
         \else
            \ifdim\wd0>0pt
              \hsize=\@tempdima
              \setbox1=\vbox{%
                \unskip\GRAPHIC{#5}{#4}{#1}{#2}{0pt}%
                \break
                \unskip\hbox to \@tempdima{\hfill \QCBOptA\hfill}%
              }%
              \wd1=\@tempdima
           \else
              \hsize=\@tempdima
              \setbox1=\vbox{%
                \unskip\GRAPHIC{#5}{#4}{#1}{#2}{0pt}%
              }%
              \wd1=\@tempdima
           \fi
         \fi
         \@tempdimb=\ht1
         \advance\@tempdimb by -#2
         \advance\@tempdimb by #3
         \leavevmode
         \raise -\@tempdimb \hbox{\box1}%
      \fi
      \egroup%
}%
\def\DFRAME#1#2#3#4#5{%
  \vspace\topsep
  \hfil\break
  \bgroup
     \leftskip\@flushglue
	 \rightskip\@flushglue
	 \parindent\z@
	 \parfillskip\z@skip
     \let\QCTOptA\empty
     \let\QCTOptB\empty
     \let\QCBOptA\empty
     \let\QCBOptB\empty
	 \vbox\bgroup
        \ifOverFrame 
           #5\QCTOptA\par
        \fi
        \GRAPHIC{#4}{#3}{#1}{#2}{\z@}%
        \ifUnderFrame 
           \break#5\QCBOptA
        \fi
	 \egroup
  \egroup
  \vspace\topsep
  \break
}%
\def\FFRAME#1#2#3#4#5#6#7{%
  \@ifundefined{floatstyle}
    {
     \begin{figure}[#1]%
    }
    {
	 \ifx#1h
      \begin{figure}[H]%
	 \else
      \begin{figure}[#1]%
	 \fi
	}
  \let\QCTOptA\empty
  \let\QCTOptB\empty
  \let\QCBOptA\empty
  \let\QCBOptB\empty
  \ifOverFrame
    #4
    \ifx\QCTOptA\empty
    \else
      \ifx\QCTOptB\empty
        \caption{\QCTOptA}%
      \else
        \caption[\QCTOptB]{\QCTOptA}%
      \fi
    \fi
    \ifUnderFrame\else
      \label{#5}%
    \fi
  \else
    \UnderFrametrue%
  \fi
  \begin{center}\GRAPHIC{#7}{#6}{#2}{#3}{\z@}\end{center}%
  \ifUnderFrame
    #4
    \ifx\QCBOptA\empty
      \caption{}%
    \else
      \ifx\QCBOptB\empty
        \caption{\QCBOptA}%
      \else
        \caption[\QCBOptB]{\QCBOptA}%
      \fi
    \fi
    \label{#5}%
  \fi
  \end{figure}%
 }%
\def\makeactives{
  \catcode`\"=\active
  \catcode`\;=\active
  \catcode`\:=\active
  \catcode`\'=\active
  \catcode`\~=\active
}
   \gdef\activesoff{%
      \def"{\string"}%
      \def;{\string;}%
      \def:{\string:}%
      \def'{\string'}%
      \def~{\string~}%
    }
\def\FRAME#1#2#3#4#5#6#7#8{%
 \bgroup
 \ifnum\@msidraft=\@ne
   \wasdrafttrue
 \else
   \wasdraftfalse%
 \fi
 \def\LaTeXparams{}%
 \dispkind=\z@
 \def\LaTeXparams{}%
 \doFRAMEparams{#1}%
 \ifnum\dispkind=\z@\IFRAME{#2}{#3}{#4}{#7}{#8}{#5}\else
  \ifnum\dispkind=\@ne\DFRAME{#2}{#3}{#7}{#8}{#5}\else
   \ifnum\dispkind=\tw@
    \edef\@tempa{\noexpand\FFRAME{\LaTeXparams}}%
    \@tempa{#2}{#3}{#5}{#6}{#7}{#8}%
    \fi
   \fi
  \fi
  \ifwasdraft\@msidraft=1\else\@msidraft=0\fi{}%
  \egroup
 }%
\def\TEXUX#1{"texux"}
\def\limfunc#1{\mathop{\rm #1}}%
\long\def\QQQ#1#2{%
     \long\expandafter\def\csname#1\endcsname{#2}}%
\long\def\QQA#1#2{}%
\def\QTR#1#2{{\csname#1\endcsname {#2}}}%
\def\EXPAND#1[#2]#3{}%
\def\NOEXPAND#1[#2]#3{}%
\def\LaTeXparent#1{}%
\def\ChildStyles#1{}%
\def\ChildDefaults#1{}%
\def\QTagDef#1#2#3{}%
  \providecommand{\UNICODE}[2][]{\protect\rule{.1in}{.1in}}
  \providecommand{\U}[1]{\protect\rule{.1in}{.1in}}
\def\QQfnmark#1{\footnotemark}
 \def\abstract{%
  \if@twocolumn
   \section*{Abstract (Not appropriate in this style!)}%
   \else \small 
   \begin{center}{\bf Abstract\vspace{-.5em}\vspace{\z@}}\end{center}%
   \quotation 
   \fi
  }%
   \def\registered{\relax\ifmmode{}\r@gistered
                    \else$\m@th\r@gistered$\fi}%
 \def\r@gistered{^{\ooalign
  {\hfil\raise.07ex\hbox{$\scriptstyle\rm\text{R}$}\hfil\crcr
  \mathhexbox20D}}}}{}%
\newdimen\theight
\def\newfmtname{LaTeX2e}
  \DeclareOldFontCommand{\rm}{\normalfont\rmfamily}{\mathrm}
  \DeclareOldFontCommand{\sf}{\normalfont\sffamily}{\mathsf}
  \DeclareOldFontCommand{\tt}{\normalfont\ttfamily}{\mathtt}
  \DeclareOldFontCommand{\bf}{\normalfont\bfseries}{\mathbf}
  \DeclareOldFontCommand{\it}{\normalfont\itshape}{\mathit}
  \DeclareOldFontCommand{\sl}{\normalfont\slshape}{\@nomath\sl}
  \DeclareOldFontCommand{\sc}{\normalfont\scshape}{\@nomath\sc}
\def\alpha{{\Greekmath 010B}}%
\def\beta{{\Greekmath 010C}}%
\def\gamma{{\Greekmath 010D}}%
\def\delta{{\Greekmath 010E}}%
\def\epsilon{{\Greekmath 010F}}%
\def\zeta{{\Greekmath 0110}}%
\def\eta{{\Greekmath 0111}}%
\def\theta{{\Greekmath 0112}}%
\def\iota{{\Greekmath 0113}}%
\def\kappa{{\Greekmath 0114}}%
\def\lambda{{\Greekmath 0115}}%
\def\mu{{\Greekmath 0116}}%
\def\nu{{\Greekmath 0117}}%
\def\xi{{\Greekmath 0118}}%
\def\pi{{\Greekmath 0119}}%
\def\rho{{\Greekmath 011A}}%
\def\sigma{{\Greekmath 011B}}%
\def\tau{{\Greekmath 011C}}%
\def\upsilon{{\Greekmath 011D}}%
\def\phi{{\Greekmath 011E}}%
\def\chi{{\Greekmath 011F}}%
\def\psi{{\Greekmath 0120}}%
\def\omega{{\Greekmath 0121}}%
\def\varepsilon{{\Greekmath 0122}}%
\def\vartheta{{\Greekmath 0123}}%
\def\varpi{{\Greekmath 0124}}%
\def\varrho{{\Greekmath 0125}}%
\def\varsigma{{\Greekmath 0126}}%
\def\varphi{{\Greekmath 0127}}%
\def\nabla{{\Greekmath 0272}}
\def\FindBoldGroup{%
   {\setbox0=\hbox{$\mathbf{x\global\edef\theboldgroup{\the\mathgroup}}$}}%
}
\def\Greekmath#1#2#3#4{%
    \if@compatibility
        \ifnum\mathgroup=\symbold
           \mathchoice{\mbox{\boldmath$\displaystyle\mathchar"#1#2#3#4$}}%
                      {\mbox{\boldmath$\textstyle\mathchar"#1#2#3#4$}}%
                      {\mbox{\boldmath$\scriptstyle\mathchar"#1#2#3#4$}}%
                      {\mbox{\boldmath$\scriptscriptstyle\mathchar"#1#2#3#4$}}%
        \else
           \mathchar"#1#2#3#4%
        \fi 
    \else 
        \FindBoldGroup
        \ifnum\mathgroup=\theboldgroup 
           \mathchoice{\mbox{\boldmath$\displaystyle\mathchar"#1#2#3#4$}}%
                      {\mbox{\boldmath$\textstyle\mathchar"#1#2#3#4$}}%
                      {\mbox{\boldmath$\scriptstyle\mathchar"#1#2#3#4$}}%
                      {\mbox{\boldmath$\scriptscriptstyle\mathchar"#1#2#3#4$}}%
        \else
           \mathchar"#1#2#3#4%
        \fi     	    
	  \fi}
\newif\ifGreekBold  \GreekBoldfalse
\let\SAVEPBF=\pbf
\def\pbf{\GreekBoldtrue\SAVEPBF}%
  \newcounter{equationnumber}  
  \def\mathletters{%
     \addtocounter{equation}{1}
     \edef\@currentlabel{\theequation}%
     \setcounter{equationnumber}{\c@equation}
     \setcounter{equation}{0}%
     \edef\theequation{\@currentlabel\noexpand\alph{equation}}%
  }
    \def\BibTeX{{\rm B\kern-.05em{\sc i\kern-.025em b}\kern-.08em
                 T\kern-.1667em\lower.7ex\hbox{E}\kern-.125emX}}}{}%
\def\AmS{{\protect\usefont{OMS}{cmsy}{m}{n}%
                A\kern-.1667em\lower.5ex\hbox{M}\kern-.125emS}}}{}%
\def\@@eqncr{\let\@tempa\relax
    \ifcase\@eqcnt \def\@tempa{& & &}\or \def\@tempa{& &}%
      \else \def\@tempa{&}\fi
     \@tempa
     \if@eqnsw
        \iftag@
           \@taggnum
        \else
           \@eqnnum\stepcounter{equation}%
        \fi
     \fi
     \global\tag@false
     \global\@eqnswtrue
     \global\@eqcnt\z@\cr}
\def\TCItag{\@ifnextchar*{\@TCItagstar}{\@TCItag}}
\def\@TCItag#1{%
    \global\tag@true
    \global\def\@taggnum{(#1)}}
\def\@TCItagstar*#1{%
    \global\tag@true
    \global\def\@taggnum{#1}}
\def\tprod{\mathop{\textstyle \prod }}%
\def\ExitTCILatex{\makeatother }
\if@compatibility\message{amsmath already loaded}\fi\aftergroup\ExitTCILatex}
\if@compatibility\message{amstex already loaded}\fi\aftergroup\ExitTCILatex}
\if@compatibility\message{amsgen already loaded}\fi\aftergroup\ExitTCILatex}
\let\DOTSI\relax
\def\RIfM@{\relax\ifmmode}%
\def\FN@{\futurelet\next}%
\def\iint{\DOTSI\intno@\tw@\FN@\ints@}%
\def\iiint{\DOTSI\intno@\thr@@\FN@\ints@}%
\def\iiiint{\DOTSI\intno@4 \FN@\ints@}%
\def\idotsint{\DOTSI\intno@\z@\FN@\ints@}%
\def\ints@{\findlimits@\ints@@}%
\newif\iflimtoken@
\newif\iflimits@
\def\findlimits@{\limtoken@true\ifx\next\limits\limits@true
 \else\ifx\next\nolimits\limits@false\else
 \limtoken@false\ifx\ilimits@\nolimits\limits@false\else
 \ifinner\limits@false\else\limits@true\fi\fi\fi\fi}%
\def\multint@{\int\ifnum\intno@=\z@\intdots@                          
 \else\intkern@\fi                                                    
 \ifnum\intno@>\tw@\int\intkern@\fi                                   
 \ifnum\intno@>\thr@@\int\intkern@\fi                                 
 \int}
\def\multintlimits@{\intop\ifnum\intno@=\z@\intdots@\else\intkern@\fi
 \ifnum\intno@>\tw@\intop\intkern@\fi
 \ifnum\intno@>\thr@@\intop\intkern@\fi\intop}%
\def\intic@{%
    \mathchoice{\hskip.5em}{\hskip.4em}{\hskip.4em}{\hskip.4em}}%
\def\negintic@{\mathchoice
 {\hskip-.5em}{\hskip-.4em}{\hskip-.4em}{\hskip-.4em}}%
\def\ints@@{\iflimtoken@                                              
 \def\ints@@@{\iflimits@\negintic@
   \mathop{\intic@\multintlimits@}\limits                             
  \else\multint@\nolimits\fi                                          
  \eat@}
 \else                                                                
 \def\ints@@@{\iflimits@\negintic@
  \mathop{\intic@\multintlimits@}\limits\else
  \multint@\nolimits\fi}\fi\ints@@@}%
\def\intkern@{\mathchoice{\!\!\!}{\!\!}{\!\!}{\!\!}}%
\def\plaincdots@{\mathinner{\cdotp\cdotp\cdotp}}%
\def\intdots@{\mathchoice{\plaincdots@}%
 {{\cdotp}\mkern1.5mu{\cdotp}\mkern1.5mu{\cdotp}}%
 {{\cdotp}\mkern1mu{\cdotp}\mkern1mu{\cdotp}}%
 {{\cdotp}\mkern1mu{\cdotp}\mkern1mu{\cdotp}}}%
\def\RIfM@{\relax\protect\ifmmode}
\def\text{\RIfM@\expandafter\text@\else\expandafter\mbox\fi}
\let\nfss@text\text
\def\text@#1{\mathchoice
   {\textdef@\displaystyle\f@size{#1}}%
   {\textdef@\textstyle\tf@size{\firstchoice@false #1}}%
   {\textdef@\textstyle\sf@size{\firstchoice@false #1}}%
   {\textdef@\textstyle \ssf@size{\firstchoice@false #1}}%
   \glb@settings}
\def\textdef@#1#2#3{\hbox{{%
                    \everymath{#1}%
                    \let\f@size#2\selectfont
                    #3}}}
\newif\iffirstchoice@
\def\Let@{\relax\iffalse{\fi\let\\=\cr\iffalse}\fi}%
\def\vspace@{\def\vspace##1{\crcr\noalign{\vskip##1\relax}}}%
\def\multilimits@{\bgroup\vspace@\Let@
 \baselineskip\fontdimen10 \scriptfont\tw@
 \advance\baselineskip\fontdimen12 \scriptfont\tw@
 \lineskip\thr@@\fontdimen8 \scriptfont\thr@@
 \lineskiplimit\lineskip
 \vbox\bgroup\ialign\bgroup\hfil$\m@th\scriptstyle{##}$\hfil\crcr}%
\def\Sb{_\multilimits@}%
\def\endSb{\crcr\egroup\egroup\egroup}%
\def\Sp{^\multilimits@}%
\newdimen\ex@
\def\rightarrowfill@#1{$#1\m@th\mathord-\mkern-6mu\cleaders
 \hbox{$#1\mkern-2mu\mathord-\mkern-2mu$}\hfill
 \mkern-6mu\mathord\rightarrow$}%
\def\leftarrowfill@#1{$#1\m@th\mathord\leftarrow\mkern-6mu\cleaders
 \hbox{$#1\mkern-2mu\mathord-\mkern-2mu$}\hfill\mkern-6mu\mathord-$}%
\def\leftrightarrowfill@#1{$#1\m@th\mathord\leftarrow
\mkern-6mu\cleaders
 \hbox{$#1\mkern-2mu\mathord-\mkern-2mu$}\hfill
 \mkern-6mu\mathord\rightarrow$}%
\def\overrightarrow{\mathpalette\overrightarrow@}%
\def\overrightarrow@#1#2{\vbox{\ialign{##\crcr\rightarrowfill@#1\crcr
 \noalign{\kern-\ex@\nointerlineskip}$\m@th\hfil#1#2\hfil$\crcr}}}%
\def\overleftarrow{\mathpalette\overleftarrow@}%
\def\overleftarrow@#1#2{\vbox{\ialign{##\crcr\leftarrowfill@#1\crcr
 \noalign{\kern-\ex@\nointerlineskip}$\m@th\hfil#1#2\hfil$\crcr}}}%
\def\overleftrightarrow{\mathpalette\overleftrightarrow@}%
\def\overleftrightarrow@#1#2{\vbox{\ialign{##\crcr
   \leftrightarrowfill@#1\crcr
 \noalign{\kern-\ex@\nointerlineskip}$\m@th\hfil#1#2\hfil$\crcr}}}%
\def\underrightarrow{\mathpalette\underrightarrow@}%
\def\underrightarrow@#1#2{\vtop{\ialign{##\crcr$\m@th\hfil#1#2\hfil
  $\crcr\noalign{\nointerlineskip}\rightarrowfill@#1\crcr}}}%
\def\underleftarrow{\mathpalette\underleftarrow@}%
\def\underleftarrow@#1#2{\vtop{\ialign{##\crcr$\m@th\hfil#1#2\hfil
  $\crcr\noalign{\nointerlineskip}\leftarrowfill@#1\crcr}}}%
\def\underleftrightarrow{\mathpalette\underleftrightarrow@}%
\def\underleftrightarrow@#1#2{\vtop{\ialign{##\crcr$\m@th
  \hfil#1#2\hfil$\crcr
 \noalign{\nointerlineskip}\leftrightarrowfill@#1\crcr}}}%
\def\qopnamewl@#1{\mathop{\operator@font#1}\nlimits@}
\let\nlimits@\displaylimits
\def\setboxz@h{\setbox\z@\hbox}
\def\varlim@#1#2{\mathop{\vtop{\ialign{##\crcr
 \hfil$#1\m@th\operator@font lim$\hfil\crcr
 \noalign{\nointerlineskip}#2#1\crcr
 \noalign{\nointerlineskip\kern-\ex@}\crcr}}}}
 \def\rightarrowfill@#1{\m@th\setboxz@h{$#1-$}\ht\z@\z@
  $#1\copy\z@\mkern-6mu\cleaders
  \hbox{$#1\mkern-2mu\box\z@\mkern-2mu$}\hfill
  \mkern-6mu\mathord\rightarrow$}
\def\leftarrowfill@#1{\m@th\setboxz@h{$#1-$}\ht\z@\z@
  $#1\mathord\leftarrow\mkern-6mu\cleaders
  \hbox{$#1\mkern-2mu\copy\z@\mkern-2mu$}\hfill
  \mkern-6mu\box\z@$}
\def\projlim{\qopnamewl@{proj\,lim}}
\def\injlim{\qopnamewl@{inj\,lim}}
\def\varinjlim{\mathpalette\varlim@\rightarrowfill@}
\def\varprojlim{\mathpalette\varlim@\leftarrowfill@}
\def\varliminf{\mathpalette\varliminf@{}}
\def\varliminf@#1{\mathop{\underline{\vrule\@depth.2\ex@\@width\z@
   \hbox{$#1\m@th\operator@font lim$}}}}
\def\varlimsup{\mathpalette\varlimsup@{}}
\def\varlimsup@#1{\mathop{\overline
  {\hbox{$#1\m@th\operator@font lim$}}}}
\def\align{\@verbatim \frenchspacing\@vobeyspaces \@alignverbatim
You are using the "align" environment in a style in which it is not defined.}
\let\csname endalign*\endcsname =\endtrivlist
\def\alignat{\@verbatim \frenchspacing\@vobeyspaces \@alignatverbatim
You are using the "alignat" environment in a style in which it is not defined.}
\let\csname endalignat*\endcsname =\endtrivlist
\def\xalignat{\@verbatim \frenchspacing\@vobeyspaces \@xalignatverbatim
You are using the "xalignat" environment in a style in which it is not defined.}
\let\csname endxalignat*\endcsname =\endtrivlist
\def\gather{\@verbatim \frenchspacing\@vobeyspaces \@gatherverbatim
You are using the "gather" environment in a style in which it is not defined.}
\let\csname endgather*\endcsname =\endtrivlist
\def\multiline{\@verbatim \frenchspacing\@vobeyspaces \@multilineverbatim
You are using the "multiline" environment in a style in which it is not defined.}
\let\csname endmultiline*\endcsname =\endtrivlist
\def\arrax{\@verbatim \frenchspacing\@vobeyspaces \@arraxverbatim
You are using a type of "array" construct that is only allowed in AmS-LaTeX.}
\def\tabulax{\@verbatim \frenchspacing\@vobeyspaces \@tabulaxverbatim
You are using a type of "tabular" construct that is only allowed in AmS-LaTeX.}
\let\csname endarrax*\endcsname =\endtrivlist
\let\csname endtabulax*\endcsname =\endtrivlist
 \def\endequation{%
     \ifmmode\ifinner 
      \iftag@
        \addtocounter{equation}{-1} 
        $\hfil
           \displaywidth\linewidth\@taggnum\egroup \endtrivlist
        \global\tag@false
        \global\@ignoretrue   
      \else
        $\hfil
           \displaywidth\linewidth\@eqnnum\egroup \endtrivlist
        \global\tag@false
        \global\@ignoretrue 
      \fi
     \else   
      \iftag@
        \addtocounter{equation}{-1} 
        \eqno \hbox{\@taggnum}
        \global\tag@false%
        $$\global\@ignoretrue
      \else
        \eqno \hbox{\@eqnnum}
        $$\global\@ignoretrue
      \fi
     \fi\fi
 } 
 \newif\iftag@ \tag@false
 \def\TCItag{\@ifnextchar*{\@TCItagstar}{\@TCItag}}
 \def\@TCItag#1{%
     \global\tag@true
     \global\def\@taggnum{(#1)}}
 \def\@TCItagstar*#1{%
     \global\tag@true
     \global\def\@taggnum{#1}}
     \def\tag{\@ifnextchar*{\@tagstar}{\@tag}}
     \def\@tag#1{%
         \global\tag@true
         \global\def\@taggnum{(#1)}}
     \def\@tagstar*#1{%
         \global\tag@true
         \global\def\@taggnum{#1}}
\begin{document}

\title{How Reliable are Bootstrap-based Heteroskedasticity Robust Tests?%
\thanks{%
Financial support of the second author by the Program of Concerted Research
Actions (ARC) of the Universit\'{e} libre de Bruxelles is gratefully
acknowledged. We thank two referees and a co-editor for helpful comments.
Address correspondence to Benedikt P\"{o}tscher, Department of Statistics,
University of Vienna, A-1090 Oskar-Morgenstern Platz 1. E-Mail:
benedikt.poetscher@univie.ac.at.}}
\author{Benedikt M. P\"{o}tscher and David Preinerstorfer \\
Department of Statistics, University of Vienna\\
SEPS-SEW, University of St.~Gallen}
\date{First version: April 2020\\
This version: November 2021\\
}
\maketitle

\begin{abstract}
We develop theoretical finite-sample results concerning the size of wild
bootstrap-based heteroskedasticity robust tests in linear regression models.
In particular, these results provide an efficient diagnostic check, which
can be used to weed out tests that are unreliable for a given testing
problem in the sense that they overreject substantially. This allows us to
assess the reliability of a large variety of wild bootstrap-based tests in
an extensive numerical study.
\end{abstract}

\section{Introduction}

Testing hypotheses on the parameters in a regression model with potentially
heteroskedastic errors is a time-honored problem in econometrics and
statistics. As the classical $t$-statistic ($F$-statistic, respectively) is
not pivotal, or asymptotically pivotal, in such a case in general, even
under Gaussianity of the errors, so-called heteroskedasticity robust (aka
heteroskedasticity consistent) modifications of these test statistics have
been proposed. These statistics are asymptotically standard normally
(chi-square, respectively) distributed under the null. The first generation
of such procedures is rooted in the results of \cite{E63,E67}, see also \cite%
{H77}, and has been popularized in econometrics by \cite{W80}. It soon
transpired that tests obtained from these heteroskedasticity robust test
statistics by relying on critical values obtained from the respective
asymptotic distributions are prone to overrejecting the null hypothesis in
finite samples, especially so if the design matrix contains leverage points;
see, e.g., \cite{MacW85}, \cite{DavidsonMacKinnon1985}, and \cite%
{CheshJewitt1987}. One factor contributing to this tendency to overreject is
a downward bias present in the covariance matrix estimators used in these
test statistics, see \cite{CheshJewitt1987}. Attempts at remedying the
overrejection problem have led to the development of second generation
heteroskedasticity robust test statistics (often denoted by HC1 through HC4,
with HC0 denoting the first generation test statistic). These statistics use
various ways of rescaling the least-squares residuals before computing the
covariance matrix estimator employed in the construction of the test
statistic; see \cite{H77}, \cite{MacW85}, and \cite{Crib2004}. Simulation
studies reported in, e.g., \cite{DavidsonMacKinnon1985} and \cite{Crib2004}
show that these modifications, especially HC3 and HC4, ameliorate the
overrejection problem to some extent, but do not eliminate it. Further
numerical results are provided in \cite{CheshAust_1991}, see also \cite%
{Chesh_1989}. \cite{DavidsonMacKinnon1985} also consider variants of
HC0-HC3, denoted by HC0R-HC3R, obtained by using restricted instead of
unrestricted least-squares residuals in the computation of the covariance
matrix estimators employed by the various test statistics (the restriction
alluded to being the restriction defining the null hypothesis). In their
simulation experiments, this typically leads to tests that do not overreject
(but that may underreject); see also the simulation results in \cite%
{Godfrey2006}, who additionally also considers HC4R. Of course, these
simulation results do not rule out that the tests based on HC0R-HC4R
(relying on critical values suggested by asymptotic theory) may overreject
in some situations outside of the scope of the simulation studies; in fact, 
\cite{PP5HC} provide numerical proof that also these tests can suffer from
considerable overrejection. Note that under the typical assumptions used in
the literature all of the modifications of HC0 discussed so far have the
same asymptotic distribution under the null, and thus use the same critical
value.

An alternative approach is to use bootstrap methods to compute critical
values for the test statistics HC0-HC4 or HC0R-HC4R, with the intention to
improve upon the critical values derived from the asymptotic null
distributions.\footnote{%
Another possibility is to use Edgeworth expansions to find better critical
values, see \cite{Rothenberg1988} for the case of the HC0 test statistic and 
\cite{DavidsonMacKinnon1985} for the HC0R test statistic. Simulation results
in \cite{MacW85} and \cite{DavidsonMacKinnon1985} indicate that this does
not work too well in practice. Of course, such expansions could also be
worked out for the other versions of the test statistics mentioned, but this
does not seem to have been pursued in the literature. Other adjustments are
discussed in \cite{Imbkoles2016}; however, as shown in \cite{PP5HC}, these
adjustments do also not resolve the overrejection problem in general.}
Inspired by earlier work in the statistics literature (e.g., \cite{WuCFJ1986}
and the discussion in \cite{beran1986}, \cite{LiuR1988}, \cite{Mammen1993}), 
\cite{horowitz1997} used the wild bootstrap to obtain critical values for
HC0. This was followed up by \cite{Flachaire1999, Flachaire2005} and \cite%
{DavidsFlach2008}, who considered the test statistics HC0-HC3 as well as
HC0R-HC3R, and who stressed the version of the wild bootstrap that imposes
the null restriction on the bootstrap data generating process; see also \cite%
{GodfreyOrme2004} and the more recent survey \cite{Mackinnon2013}. Further
simulation studies, some of which also include HC4 and HC4R, can be found in 
\cite{Crib2004}, \cite{CribariLima2009}, \cite{Godfrey2006}, and \cite%
{PRich2017}. Once one has turned to bootstrap methods, one can also think of
reverting to the classical (i.e., uncorrected) $t$-statistic ($F$-statistic,
respectively) and to apply the bootstrap methods to determine appropriate
critical values. This has been considered in \cite{Mammen1993}; see also 
\cite{Godfrey2006} for some Monte Carlo results. Since the majority of the
literature on bootstrap-based heteroskedasticity robust tests favors the
wild bootstrap over other bootstrap methods, we shall concentrate on the
wild bootstrap in the sequel.

While the before mentioned bootstrap procedures have their merits and
overrejection is ameliorated in many of the cases considered in the
simulation studies cited, it is unclear whether these observations
generalize beyond the situations studied in these simulation experiments. In
particular, it is unclear if -- and under which conditions -- these
bootstrap procedures (and which of the many variants thereof) are immune to
overrejection in finite samples.\footnote{%
We are not interested in asymptotic justifications here.}$^{\text{,}}$%
\footnote{%
In the quite special case where the number of restrictions tested equals the
number of regression parameters,\ \cite{DavidsFlach2008} have a result which
implies that certain wild bootstrap-based tests have size equal to the
nominal significance level (and hence do not overreject) in finite samples.
We note that this result in \cite{DavidsFlach2008} is not entirely correct
as stated, but needs some amendments and corrections; see also Footnote \ref%
{FN_counterex}.} In the present paper we set out to study this question
theoretically and numerically. On the theoretical side we show the following
finite-sample result: For any test statistic $T$ from a large class of test
statistics (including HC0-HC4, HC0R-HC4R, the classical $F$-statistic and a
variant thereof that uses restricted residuals) and for any bootstrap method
from a large class of wild bootstrap methods (including virtually all wild
bootstrap methods considered in the literature) there is a computable number 
$\vartheta $ (depending only on observables like the design matrix, the
restriction to be tested, etc),\ such that the size of the corresponding
bootstrap-based test is $1$ for nominal significance levels $\alpha $
satisfying $\alpha >\vartheta $.\footnote{%
The size is the maximal (i.e., worst-case) null rejection probability, where
one maximizes over all possible forms of heteroskedasticity, reflecting
agnosticism about the form of the heteroskedasticity; see (\ref{size}) for a
formal definition. It is also assumed that the normalized regression error
vector $(\mathbf{u}_{1}/\limfunc{Var}^{1/2}(\mathbf{u}_{1}),\ldots ,\mathbf{u%
}_{n}/\limfunc{Var}^{1/2}(\mathbf{u}_{n}))^{\prime }$ follows a given
(fixed) distribution (e.g., a normal distribution). Note that the
theoretical result mentioned in the main text does \textbf{not} depend on
the choice of this distribution (as long as it is absolutley continuous),
see Section \ref{gen}.} That is, for $\alpha >\vartheta $ the
bootstrap-based test fails miserably in that it has null rejection
probabilities arbitrarily close to $1$ for some forms of heteroskedasticity.%
\footnote{%
By construction $\vartheta \leq 1$ always holds. If $\vartheta <1$ (which
will often be the case) we can then conclude that the bootstrap-based test
has size $1$ at least for some values of $\alpha $.} We note that our
results also provide information concerning the infimal coverage
probabilities of confidence sets obtained by \textquotedblleft
inverting\textquotedblright\ the bootstrap-based tests under consideration.
We discuss this in more detail in Remark \ref{conf_set}.

In practice our theoretical finite-sample result can be used as a diagnostic
tool to weed out procedures in the following sense: as mentioned before,
there is a large menu of heteroskedasticity robust test statistics and wild
bootstrap methods available in the literature from which the applied
researcher has to choose. As it is unlikely that simulation results in the
literature are available that precisely fit the problem the researcher is
interested in (i.e., use the same design matrix and the same restriction to
be tested), the researcher is typically left with little guidance on which
of the many bootstrap-based test procedures to choose for the problem at
hand. Based on our theoretical results, the applied researcher can now
eliminate procedures that break down in the researchers problem, by
computing -- for any initially selected procedure -- the corresponding $%
\vartheta $ for the given design matrix and restriction to be tested. If it
turns out that $\vartheta <\alpha $ holds, this procedure should not be
used, because these tests have size equal to one according to our
theoretical results. Numerical routines for computing $\vartheta $ are
provided in the associated R-package \textbf{wbsd} by \cite{wbsd}.\footnote{%
As discussed in Section \ref{sec:comp} and Sections \ref{sec:stp2}, \ref%
{sec:numch} of Appendix \ref{App compu}, computing $\vartheta $ is a
nontrivial numerical problem. Supplementing the calculation of $\vartheta $
by numerically evaluating null rejection probabilities for strategically
chosen heteroskedasticity structures as discussed in Section \ref{sec:stp2}
of Appendix \ref{App compu} may be advisable.}

The before mentioned theoretical result will typically have practical
consequences only in testing problems for which $\vartheta $ is sufficiently
small so that standard choices of $\alpha $ like $0.05$ or $0.1$ satisfy the
condition $\alpha >\vartheta $. We hence investigate this numerically for
the test statistics HC0-HC4, HC0R-HC4R, for the classical $F$-statistic, and
for a variant thereof that uses restricted residuals, each combined with a
large variety of wild bootstrap methods.\footnote{%
In the wild bootstrap methods we vary the following elements: (i) centering
the bootstrap sample at the unrestricted versus at the restriced least
squares estimator, (ii) bootstrapping from unrestricted versus restricted
residuals, (iii)\ the distribution of the bootstrap noise, and (iv) various
bootstrap multiplicator weights. See Section \ref{sec_Num} for more details.}
We now summarize the results of our numerical experiments for $n=10$ (the
results for $n=20,30$ being similar): For each combination of test statistic
and wild bootstrap method ($960$ combinations) we compute $\vartheta $ for a
range of design matrices and null hypotheses (i.e., restrictions to be
tested) and report $\vartheta _{\min }$, the smallest of these values of $%
\vartheta $.\footnote{%
For reasons of numerical stability we actually compute an upper bound for $%
\vartheta $, see Section \ref{sec:numch} in Appendix \ref{App compu} for
more information.}$^{\text{,}}$\footnote{%
Some of the $960$ combinations actually give rise to one and the same
bootstrap-based test. The reasons for nevertheless considering all $960$
combinations are discussed in Sections \ref{sec:descr} and \ref{sec:results}.%
} We find that for $826$ of the $960$ combinations $\vartheta _{\min }$ is
less than $0.05$, and for $936$ combinations $\vartheta _{\min }$ is less
than $0.1$. As a consequence, for the bootstrap-based tests corresponding to
these $826$ ($936$, respectively) combinations our theoretical results imply
that size is equal to $1$ for some design matrices and null hypotheses, if a
nominal significance level of $0.05$ ($0.1$, respectively) is being used.
Thus these bootstrap-based tests are found not to be reliable in general, in
that they suffer from severe overrejection for some design matrices and null
hypotheses. Furthermore, for each combination of test statistic/wild
bootstrap method we also compute a lower bound for the size of the
bootstrap-based test conducted at nominal significance level $\alpha =0.05$
(as well as at $\alpha =0.1$).\footnote{%
For the size computations we assume the errors to be normally distributed.}
We find that for $95$ out of the remaining $134$ combinations ($11$ out of
the remaining $24$ combinations, respectively) the (lower bound for the)
size exceeds $3\alpha $ for some of the design matrices and null hypotheses,
sometimes by a considerable margin. Thus also these combinations do not lead
to reliable bootstrap-based tests. This leaves us with $39$ ($13$,
respectively) combinations. Exploiting that some of these combinations left
are in fact equivalent to some of the above mentioned \emph{unreliable}
procedures (see Sections \ref{sec:descr} and \ref{sec:results} for an
explanation), allows us to even conclude that in the end only $16$ ($4$,
respectively) bootstrap-based heteroskedasticity robust test procedures do
not exhibit severe overrejection within the range of our numerical study
when $n=10$.

Combining the just-described results with similar findings for the other
sample sizes $n=20,30$ leads to the sobering conclusion that \emph{none} of
the bootstrap-based tests considered is reliable for all sample sizes and
for $\alpha =0.05$ as well as $\alpha =0.1$. That is, for every combination
of test statistic and bootstrap method considered, there is a sample size $%
n\in \{10,20,30\}$, a significance level $\alpha \in \{0.05,0.1\}$, a
testing problem and a design matrix, such that the size of the corresponding
bootstrap-based test equals $1$ by our theoretical results or is numerically
found to exceed $3\alpha $. We must hence conclude that \emph{none }of the
bootstrap-based tests considered is guaranteed to be immune to
overrejection, and thus such tests are no reliable panacea for
heteroskedasticity robust testing.

If one considers a fixed $\alpha $, the situation is somewhat more
encouraging. While there is no bootstrap-based test that is reliable for all
sample sizes for the significance level $\alpha =0.1$, for $\alpha =0.05$
there are two bootstrap-based tests that are found not to break down in the
above sense for any of the sample sizes considered in the numerical study.
Both of these tests use a heteroskedasticity robust test statistic based on
a HC3R covariance estimator, a wild bootstrap method based on the Mammen
distribution, and impose the null restriction on the bootstrap data
generating process. For more details see Section \ref{sec_Num}. It is
interesting to note that these findings call into question the
recommendation in \cite{DavidsFlach2008} to base the wild bootstrap on the
Rademacher distribution.

Of course, the above are worst-case results in spirit and do not preclude a
given bootstrap-based test to be reasonably sized for certain instances of
design matrix and null hypothesis. Therefore, in a given application, one
could in principle imagine the following strategy: Numerically evaluate the
size of the given bootstrap-based test (this will require to commit to a
distributional assumption on the errors) and use the test only if the
so-evaluated size does not exceed the nominal level $\alpha $ (by much).%
\footnote{%
Of course, this could also be pursued with non-bootstrap-based tests.}
Otherwise, switch to another one of the many other bootstrap-based tests,
repeat, and stop upon finding an acceptable test. As mentioned before, a
partial shortcut for this strategy could be to compute $\vartheta $ first
and to check if $\alpha >\vartheta $, as we then know from our theoretical
results that size must be equal to $1$. Of course, such a strategy would be
computationally expensive and moreover would only be a stab into the dark,
as there is no guarantee that one would end up with a bootstrap-based test
that performs well in the sense of delivering size less than or equal to $%
\alpha $. It seems that a better and more direct strategy is to forgo the
bootstrap idea and rather to construct size-controlling critical values for
the original test statistics, e.g., for HC0-HC4 or HC0R-HC4R. This is
pursued in the companion paper \cite{PP5HC}. Certainly, this also leads to a
computationally intensive method, but one that comes with guaranteed size
control.

All test statistics mentioned so far are based on the ordinary least squares
estimator. An alternative is to start from a feasible generalized least
squares estimator, computed from a (potentially misspecified) model for
heteroskedasticity. Again heteroskedasticity robust test procedures can then
be developed in a similar manner, see, e.g., \cite{Cragg_1983, Cragg_1992}, 
\cite{Flachaire_2005b}, \cite{RomanoWolf2017}, \cite{Lin_Chou_2018}, \cite%
{DiCiccio_Romao_Wolf_2019}. While results similar to the ones given in the
present paper can probably also be developed for this alternative class of
heteroskedasticity robust test procedures, we do not pursue this avenue here.

\section{Framework\label{frame}}

Consider the linear regression model 
\begin{equation}
\mathbf{Y}=X\beta +\mathbf{U},  \label{lm}
\end{equation}%
where $X$ is a (real) nonstochastic regressor (design) matrix of dimension $%
n\times k$ and where $\beta \in \mathbb{R}^{k}$ denotes the unknown
regression parameter vector. We always assume $\limfunc{rank}(X)=k$ and $%
1\leq k<n$. We furthermore assume that the $n\times 1$ disturbance vector $%
\mathbf{U}=(\mathbf{u}_{1},\ldots ,\mathbf{u}_{n})^{\prime }$ has mean zero
and unknown covariance matrix $\sigma ^{2}\Sigma $, where $\Sigma $ varies
in a user-specified (nonempty) set $\mathfrak{C}$ describing the allowed
forms of heteroskedasticity, with $\mathfrak{C}$ satisfying $\mathfrak{C}%
\subseteq \mathfrak{C}_{Het}$, and where $0<\sigma ^{2}<\infty $ holds ($%
\sigma $ always denoting the positive square root).\footnote{%
Since we are concerned with finite-sample results only, the elements of $%
\mathbf{Y}$, $X$, and $\mathbf{U}$ (and even the probability space
supporting $\mathbf{Y}$ and $\mathbf{U}$) may depend on sample size $n$, but
this will not be expressed in the notation. Furthermore, the obvious
dependence of$\ \mathfrak{C}$ on $n$ will also not be shown in the notation.}
The set $\mathfrak{C}$ will be referred to as the \textquotedblleft
heteroskedasticity model\textquotedblright . Here%
\begin{equation*}
\mathfrak{C}_{Het}=\left\{ \limfunc{diag}(\tau _{1}^{2},\ldots ,\tau
_{n}^{2}):\tau _{i}^{2}>0\text{ for all }i\text{, }\sum_{i=1}^{n}\tau
_{i}^{2}=1\right\} ,
\end{equation*}%
where $\limfunc{diag}(\tau _{1}^{2},\ldots ,\tau _{n}^{2})$ denotes the $%
n\times n$ matrix with diagonal elements given by $\tau _{i}^{2}$. That is,
the errors in the regression model are uncorrelated but can be
heteroskedastic. In particular, if $\mathfrak{C}$ is chosen to be $\mathfrak{%
C}_{Het}$, one allows for heteroskedasticity of completely unknown form. The
normalization condition $\sum_{i=1}^{n}\tau _{i}^{2}=1$ is included here
only in order to guarantee identifiability of $\sigma ^{2}$ and $\Sigma $,
and could be replaced by any other normalization condition such as, e.g., $%
\max \tau _{i}^{2}=1$, or $\tau _{1}^{2}=1$, without affecting the final
results (because any of these normalizations leads to the same overall set
of covariance matrices $\sigma ^{2}\Sigma $ when $\sigma ^{2}$ varies
through the positive real line).

\emph{Although of no real significance for the results of this paper as
explained in Section \ref{gen}, we shall, for ease of exposition, maintain
in the sequel that the disturbance vector }$\mathbf{U}$\emph{\ is normally
distributed.} The linear model described in (\ref{lm}), together with the
just made Gaussianity assumption on $\mathbf{U}$ and with the given
heteroskedasticity model $\mathfrak{C}$, then induces a collection of
distributions on the Borel-sets of $\mathbb{R}^{n}$, the sample space of $%
\mathbf{Y}$. Denoting a Gaussian probability measure with mean $\mu \in 
\mathbb{R}^{n}$ and (possibly singular) covariance matrix $A$ by $P_{\mu ,A}$%
, the induced collection of distributions is then given by 
\begin{equation}
\left\{ P_{\mu ,\sigma ^{2}\Sigma }:\mu \in \mathrm{\limfunc{span}}%
(X),0<\sigma ^{2}<\infty ,\Sigma \in \mathfrak{C}\right\} .  \label{lm2}
\end{equation}%
Since every $\Sigma \in \mathfrak{C}$ is positive definite by assumption,
each element of the set in the previous display is absolutely continuous
with respect to (w.r.t.) Lebesgue measure on $\mathbb{R}^{n}$.

We shall consider the problem of testing a linear (better: affine)
hypothesis on the parameter vector $\beta \in \mathbb{R}^{k}$, i.e., the
problem of testing the null $R\beta =r$ against the alternative $R\beta \neq
r$, where $R$ is a $q\times k$ matrix always of rank $q\geq 1$ and $r\in 
\mathbb{R}^{q}$. Set $\mathfrak{M}=\limfunc{span}(X)$. Define the affine
space 
\begin{equation*}
\mathfrak{M}_{0}=\left\{ \mu \in \mathfrak{M}:\mu =X\beta \text{ and }R\beta
=r\right\}
\end{equation*}%
and let 
\begin{equation*}
\mathfrak{M}_{1}=\left\{ \mu \in \mathfrak{M}:\mu =X\beta \text{ and }R\beta
\neq r\right\} .
\end{equation*}%
Adopting these definitions, the above testing problem can then be written
more precisely as 
\begin{equation}
H_{0}:\mu \in \mathfrak{M}_{0},\ 0<\sigma ^{2}<\infty ,\ \Sigma \in 
\mathfrak{C}\quad \text{ vs. }\quad H_{1}:\mu \in \mathfrak{M}_{1},\
0<\sigma ^{2}<\infty ,\ \Sigma \in \mathfrak{C}.  \label{testing problem}
\end{equation}%
With $\mathfrak{M}_{0}^{lin}$ we shall denote the linear space parallel to $%
\mathfrak{M}_{0}$, i.e., $\mathfrak{M}_{0}^{lin}=\mathfrak{M}_{0}-\mu
_{0}=\left\{ X\beta :R\beta =0\right\} $ where $\mu _{0}\in \mathfrak{M}_{0}$%
. Of course, $\mathfrak{M}_{0}^{lin}$ does not depend on the choice of $\mu
_{0}\in \mathfrak{M}_{0}$.

As already mentioned, the assumption of Gaussianity is made for the sake of
exposition only and does not really restrict the scope of the results in the
paper as is discussed in Section \ref{gen}. The assumption of nonstochastic
regressors entails little loss of generality either: For example, if $X$ is
random and $\mathbf{U}$ is conditionally on $X$ distributed as $N(0,\sigma
^{2}\Sigma )$, with $\sigma ^{2}=\sigma ^{2}(X)$ and $\Sigma =\Sigma (X)\in 
\mathfrak{C}_{Het}$, the results of the paper can be applied after one
conditions on $X$ (and a similar statement applies to the generalizations to
non-Gaussianity discussed in Section \ref{gen}). See Section \ref{gen} for
more discussion. For arguments supporting conditional inference see, e.g., 
\cite{RO1979}. Note that such a "strict exogeneity" assumption is quite
natural in the situation considered here.

We next collect some further terminology and notation used throughout the
paper. A (nonrandomized) \textit{test} is the indicator function of a
Borel-set $W$ in $\mathbb{R}^{n}$, with $W$ called the corresponding \textit{%
rejection region}. The \textit{size} of such a test (rejection region) is --
as usual -- defined as the supremum over all rejection probabilities under
the null hypothesis $H_{0}$ given in (\ref{testing problem}), i.e., 
\begin{equation}
\sup_{\mu \in \mathfrak{M}_{0}}\sup_{0<\sigma ^{2}<\infty }\sup_{\Sigma \in 
\mathfrak{C}}P_{\mu ,\sigma ^{2}\Sigma }(W).  \label{size}
\end{equation}%
In slight abuse of terminology, we shall sometimes refer to this quantity as
`the size of $W$ over $\mathfrak{C}$' when we want to emphasize the r\^{o}le
of $\mathfrak{C}$. Throughout the paper we let $\hat{\beta}(y)=\left(
X^{\prime }X\right) ^{-1}X^{\prime }y$, where $X$ is the design matrix
appearing in (\ref{lm}) and $y\in \mathbb{R}^{n}$. The corresponding
ordinary least squares (OLS) residual vector is denoted by $\hat{u}(y)=y-X%
\hat{\beta}(y)$ and its elements are denoted by $\hat{u}_{t}(y)$. The
elements of $X$ are denoted by $x_{ti}$, while $x_{t\cdot }$ and $x_{\cdot
i} $ denote the $t$-th row and $i$-th column of $X$, respectively. For $%
\mathcal{A}$ an affine subspace of $\mathbb{R}^{n}$ satisfying $\mathcal{A}%
\subseteq \limfunc{span}(X)$ let $\tilde{\beta}_{\mathcal{A}}(y)$ denote the
restricted least-squares estimator, i.e., $X\tilde{\beta}_{\mathcal{A}}(y)$
solves 
\begin{equation*}
\min_{z\in \mathcal{A}}(y-z)^{\prime }(y-z).
\end{equation*}%
Lebesgue measure on the Borel-sets of $\mathbb{R}^{n}$ will be denoted by $%
\lambda _{\mathbb{R}^{n}}$. The set of real matrices of dimension $l\times m$
is denoted by $\mathbb{R}^{l\times m}$ (all matrices in the paper will be
real matrices). The Euclidean norm is denoted by $\left\Vert \cdot
\right\Vert $. Let $B^{\prime }$ denote the transpose of a matrix $B\in 
\mathbb{R}^{l\times m}$ and let $\mathrm{\limfunc{span}}(B)$ denote the
subspace in $\mathbb{R}^{l}$ spanned by its columns. For a symmetric and
nonnegative definite matrix $B$ we denote the unique symmetric and
nonnegative definite square root by $B^{1/2}$. For a linear subspace $%
\mathcal{L}$ of $\mathbb{R}^{n}$ we let $\mathcal{L}^{\bot }$ denote its
orthogonal complement and we let $\Pi _{\mathcal{L}}$ denote the orthogonal
projection onto $\mathcal{L}$. The $j$-th standard basis vector in $\mathbb{R%
}^{n}$ is written as $e_{j}(n)$. Furthermore, we let $\mathbb{N}$ denote the
set of all positive integers. A sum (product, respectively) over an empty
index set is to be interpreted as $0$ ($1$, respectively). For a subset $A$
of a topological space we denote by $\limfunc{int}(A)$ the interior of $A$
(w.r.t. the ambient space). Finally, for $\mathcal{A}$ an affine subspace of 
$\mathbb{R}^{n}$, let $G(\mathcal{A})$ denote the group of all affine
transformations $y\mapsto \delta (y-a)+a^{\ast }$ where $\delta \in \mathbb{R%
}$, $\delta \neq 0$, and $a$ as well as $a^{\ast }$ are elements of $%
\mathcal{A}$; for more information see Section 5.1 of \cite{PP2016}.

\section{Heteroskedasticity robust test statistics using unrestricted
residuals \label{test_statistic}}

We next introduce two test statistics that will feature prominently.
Variants of these statistics using restricted residuals are discussed in
Section \ref{restrict_res}. For a result pertaining to a more general class
of test statistics see Theorem \ref{thm:main} in Appendix \ref{App A}. The
test statistic we shall consider first is a standard heteroskedasticity
robust test statistic frequently considered in the literature and is given by%
\begin{equation}
T_{Het}\left( y\right) =\left\{ 
\begin{array}{cc}
(R\hat{\beta}\left( y\right) -r)^{\prime }\hat{\Omega}_{Het}^{-1}\left(
y\right) (R\hat{\beta}\left( y\right) -r) & \text{if }\det \hat{\Omega}%
_{Het}\left( y\right) \neq 0, \\ 
0 & \text{if }\det \hat{\Omega}_{Het}\left( y\right) =0,%
\end{array}%
\right.  \label{T_het}
\end{equation}%
where $\hat{\Omega}_{Het}=R\hat{\Psi}_{Het}R^{\prime }$ and where $\hat{\Psi}%
_{Het}$ is a heteroskedasticity robust estimator as considered in \cite%
{E63,E67}, which later on has found its way into the econometrics literature
(e.g., \cite{W80}). It is of the form%
\begin{equation*}
\hat{\Psi}_{Het}\left( y\right) =(X^{\prime }X)^{-1}X^{\prime }\limfunc{diag}%
\left( d_{1}\hat{u}_{1}^{2}\left( y\right) ,\ldots ,d_{n}\hat{u}%
_{n}^{2}\left( y\right) \right) X(X^{\prime }X)^{-1},
\end{equation*}%
where the constants $d_{i}>0$ sometimes depend on the design matrix. Typical
choices for $d_{i}$ suggested in the literature are $d_{i}=1$, $%
d_{i}=n/(n-k) $, $d_{i}=\left( 1-h_{ii}\right) ^{-1}$, or $d_{i}=\left(
1-h_{ii}\right) ^{-2}$, where $h_{ii}$ denotes the $i$-th diagonal element
of the projection matrix $X(X^{\prime }X)^{-1}X^{\prime }$, see \cite{LE2000}
for an overview. Another suggestion is $d_{i}=\left( 1-h_{ii}\right)
^{-\delta _{i}}$ for $\delta _{i}=\min (nh_{ii}/k,4)$, see \cite{Crib2004}.
For the last three choices of $d_{i}$ just given, we use the convention that
we set $d_{i}=1$ in case $h_{ii}=1$. Note that $h_{ii}=1$ implies $\hat{u}%
_{i}\left( y\right) =0$ for every $y$, and hence it is irrelevant which real
value is assigned to $d_{i}$ in case $h_{ii}=1$.\footnote{%
In fact, $h_{ii}=1$ is equivalent to $\hat{u}_{i}\left( y\right) =0$ for
every $y$, each of which in turn is equivalent to $e_{i}(n)\in $ $\limfunc{%
span}(X)$.} The five examples for the weights $d_{i}$ just given correspond
to what is often called HC0-HC4 weights in the literature.

In conjunction with the test statistic $T_{Het}$, we shall consider the
following mild assumption, which is Assumption 3 in \cite{PP2016}. As
discussed further below, this assumption is in a certain sense unavoidable
when using $T_{Het}$. It furthermore also entails that our choice of
assigning $T_{Het}\left( y\right) $ the value zero in case $\hat{\Omega}%
_{Het}\left( y\right) $ is singular has no import on the rejection
probabilities of the (non-bootstrap-based) tests obtained from $T_{Het}$
(because of Lemma \ref{lem_B}(c) below and absolute continuity of the
measures $P_{\mu ,\sigma ^{2}\Sigma }$). As will be seen later, our results
for the corresponding bootstrap-based tests do also not depend on this
choice.

\begin{assumption}
\label{R_and_X}Let $1\leq i_{1}<\ldots <i_{s}\leq n$ denote all the indices
for which $e_{i_{j}}(n)\in \limfunc{span}(X)$ holds where $e_{j}(n)$ denotes
the $j$-th standard basis vector in $\mathbb{R}^{n}$. If no such index
exists, set $s=0$. Let $X^{\prime }\left( \lnot (i_{1},\ldots i_{s})\right) $
denote the matrix which is obtained from $X^{\prime }$ by deleting all
columns with indices $i_{j}$, $1\leq i_{1}<\ldots <i_{s}\leq n$ (if $s=0$ no
column is deleted). Then $\limfunc{rank}\left( R(X^{\prime }X)^{-1}X^{\prime
}\left( \lnot (i_{1},\ldots i_{s})\right) \right) =q$ holds.
\end{assumption}

Observe that this assumption only depends on $X$ and $R$ and hence can be
checked. Obviously, a simple sufficient condition for Assumption \ref%
{R_and_X} to hold is that $s=0$ (i.e., that $e_{j}(n)\notin \limfunc{span}%
(X) $ for all $j$), a generically satisfied condition. Furthermore, we
introduce the matrix%
\begin{eqnarray}
B(y) &=&R(X^{\prime }X)^{-1}X^{\prime }\limfunc{diag}\left( \hat{u}%
_{1}(y),\ldots ,\hat{u}_{n}(y)\right)  \notag \\
&=&R(X^{\prime }X)^{-1}X^{\prime }\limfunc{diag}\left( e_{1}^{\prime }(n)\Pi
_{\limfunc{span}(X)^{\bot }}y,\ldots ,e_{n}^{\prime }(n)\Pi _{\limfunc{span}%
(X)^{\bot }}y\right) .  \label{B_matrix}
\end{eqnarray}%
The facts collected in the subsequent lemma will be used in the sequel.
Parts (a)-(c) have been shown in Lemma 4.1 in \cite{PP2016}, while Part (d)
is taken from Lemma 5.18 of \cite{PP3}. Part (e) is obvious (observe that $%
B(y)$ depends only on $\hat{u}(y)$ and that $\hat{u}(\gamma (y-\mu )+\mu
^{\bullet })=\gamma \hat{u}(y)$ for every $\gamma \in \mathbb{R}$, every $%
\mu \in \limfunc{span}(X)$, and every $\mu ^{\bullet }\in \limfunc{span}(X)$%
).

\begin{lemma}
\label{lem_B}(a) $\hat{\Omega}_{Het}\left( y\right) $ is nonnegative
definite for every $y\in \mathbb{R}^{n}$.

(b) $\hat{\Omega}_{Het}\left( y\right) $ is singular (zero, respectively) if
and only if $\limfunc{rank}\left( B(y)\right) <q$ ($B(y)=0$, respectively).

(c) The set $\mathsf{B}$ given by $\left\{ y\in \mathbb{R}^{n}:\limfunc{rank}%
\left( B(y)\right) <q\right\} $ (or in view of (b) equivalently given by $%
\{y\in \mathbb{R}^{n}:\det (\hat{\Omega}_{Het}\left( y\right) )=0\}$) is
either a $\lambda _{\mathbb{R}^{n}}$-null set or the entire sample space $%
\mathbb{R}^{n}$. The latter occurs if and only if Assumption \ref{R_and_X}
is violated (in which case the test based on $T_{Het}$ becomes trivial, as
then $T_{Het}$ is identically zero).

(d) Under Assumption \ref{R_and_X}, the set $\mathsf{B}$ is a finite union
of proper linear subspaces of $\mathbb{R}^{n}$; in case $q=1$, $\mathsf{B}$
is even a proper linear subspace itself.\footnote{%
If Assumption \ref{R_and_X} is violated, $\mathsf{B}$ equals $\mathbb{R}^{n}$
by Part (c).}

(e) $\mathsf{B}$ is a closed set and contains $\limfunc{span}(X)$\textsf{. }%
Furthermore, $\mathsf{B}$ is $G(\mathfrak{M})$-invariant and, in particular, 
$\mathsf{B}+\limfunc{span}(X)=\mathsf{B}$ holds.
\end{lemma}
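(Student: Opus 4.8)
The plan is to treat Parts (a)--(d) as imported results --- (a)--(c) being Lemma 4.1 of \cite{PP2016} and (d) being Lemma 5.18 of \cite{PP3}, applied to exactly the matrix $B$ of (\ref{B_matrix}) and the set $\mathsf{B}$ --- so that the only thing left to prove here is Part (e). For (e) I would establish, in order, (i) closedness of $\mathsf{B}$, (ii) the inclusion $\limfunc{span}(X)\subseteq \mathsf{B}$, and (iii) $G(\mathfrak{M})$-invariance of $\mathsf{B}$, and then read off $\mathsf{B}+\limfunc{span}(X)=\mathsf{B}$ as the special case of (iii) corresponding to translations.

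For (i): the map $y\mapsto B(y)$ is linear, hence continuous, by (\ref{B_matrix}), and the set $\{M\in \mathbb{R}^{q\times n}:\limfunc{rank}(M)<q\}$ is closed, being the common zero set of the finitely many $q\times q$ minors of $M$, each a polynomial in the entries of $M$. Thus $\mathsf{B}$, the preimage of this closed set under a continuous map, is closed. For (ii): if $y\in \limfunc{span}(X)$ then $\Pi _{\limfunc{span}(X)^{\bot }}y=0$, so by the second line of (\ref{B_matrix}) the diagonal factor in $B(y)$ is the zero matrix and $B(y)=0$; since $q\geq 1$, this has rank $0<q$, whence $y\in \mathsf{B}$. (Alternatively one may invoke Part (b): $B(y)=0$ forces $\hat{\Omega}_{Het}(y)=0$, so $y\in \mathsf{B}$.)

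For (iii): take $g\in G(\mathfrak{M})$, say $g(y)=\delta (y-a)+a^{\ast }$ with $\delta \neq 0$ and $a,a^{\ast }\in \limfunc{span}(X)$. Because $X\hat{\beta}(\cdot )$ is the orthogonal projection onto $\limfunc{span}(X)$, one has $\hat{u}(\delta (y-a)+a^{\ast })=\delta \hat{u}(y)$ for all such $\delta ,a,a^{\ast }$; this is the identity flagged in the statement of Part (e), and it also appears among the facts in Lemma 4.1 of \cite{PP2016}. Substituting it into (\ref{B_matrix}) and pulling $\delta $ out of the diagonal matrix gives $B(g(y))=\delta B(y)$, and since $\delta \neq 0$ we obtain $\limfunc{rank}(B(g(y)))=\limfunc{rank}(B(y))$, i.e., $y\in \mathsf{B}\Longleftrightarrow g(y)\in \mathsf{B}$, which is the asserted invariance. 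Finally, for $\mu ^{\prime }\in \limfunc{span}(X)$ the translation $y\mapsto y+\mu ^{\prime }$ is the member of $G(\mathfrak{M})$ with $\delta =1$, $a=0\in \limfunc{span}(X)$, $a^{\ast }=\mu ^{\prime }$; hence $\mathsf{B}+\mu ^{\prime }=\mathsf{B}$ for every $\mu ^{\prime }\in \limfunc{span}(X)$, that is, $\mathsf{B}+\limfunc{span}(X)=\mathsf{B}$.

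I do not expect any genuine obstacle: Part (e) is routine, as the paper itself notes. The only points worth a sentence of care are that $B$ depends on $y$ solely through $\hat{u}(y)$ in the scaling-covariant way used above (immediate from the first line of (\ref{B_matrix})), that the rank of a matrix is unaffected by multiplication by a nonzero scalar, and --- pedantically --- that $0\in \limfunc{span}(X)$, so that $a=0$ is an admissible choice in the definition of $G(\mathfrak{M})$.
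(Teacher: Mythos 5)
Your proposal is correct and follows the paper's own route exactly: Parts (a)--(c) are cited to Lemma 4.1 of \cite{PP2016}, Part (d) to Lemma 5.18 of \cite{PP3}, and Part (e) is derived from the observation that $B(y)$ depends on $y$ only through $\hat{u}(y)$ together with the identity $\hat{u}(\delta(y-a)+a^{\ast})=\delta\hat{u}(y)$ for $a,a^{\ast}\in\limfunc{span}(X)$, which is precisely the hint the paper gives when declaring (e) obvious. Your write-up of (e) --- continuity of the linear map $y\mapsto B(y)$ plus closedness of the rank-deficient locus for closedness, $\hat{u}(y)=0$ on $\limfunc{span}(X)$ for the inclusion, and rank-invariance under nonzero scaling for the $G(\mathfrak{M})$-invariance --- is a complete and accurate elaboration of that hint.
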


In light of Part (c) of the lemma, we see that Assumption \ref{R_and_X} is a
natural and unavoidable condition if one wants to obtain a sensible test
from $T_{Het}$.\footnote{%
If this assumption is violated then $T_{Het}$ is identically zero, an
uninteresting trivial case.} Furthermore, note that, if $\mathsf{B}=\limfunc{%
span}(X)$ is true, then Assumption \ref{R_and_X} must be satisfied (since $%
\limfunc{span}(X)$ is a $\lambda _{\mathbb{R}^{n}}$-null set due to the
maintained assumption $k<n$). As shown in Lemma A.3 in \cite{PP3}, for any
given restriction matrix $R$, the relation $\mathsf{B}=\limfunc{span}(X)$
holds generically in various universes of design matrices. For later use we
also mention that under Assumption \ref{R_and_X} the test statistic $T_{Het}$
is continuous at every $y\in \mathbb{R}^{n}\backslash \mathsf{B}$.\footnote{%
If Assumption \ref{R_and_X} is violated, then $T_{Het}$ is constant equal to
zero, and hence is trivially continuous everywhere.}

Next, we also consider the classical (i.e., uncorrected) F-test statistic,
i.e.,%
\begin{equation}
T_{uc}(y)=\left\{ 
\begin{array}{cc}
(R\hat{\beta}\left( y\right) -r)^{\prime }\left( \hat{\sigma}^{2}(y)R\left(
X^{\prime }X\right) ^{-1}R^{\prime }\right) ^{-1}(R\hat{\beta}\left(
y\right) -r) & \text{if }y\notin \limfunc{span}(X), \\ 
0 & \text{if }y\in \limfunc{span}(X),%
\end{array}%
\right.  \label{T_uncorr}
\end{equation}%
where $\hat{\sigma}^{2}(y)=\hat{u}\left( y\right) ^{\prime }\hat{u}\left(
y\right) /(n-k)\geq 0$ (which vanishes if and only if $y\in \limfunc{span}%
(X) $). Our choice to set $T_{uc}(y)=0$ for $y\in \limfunc{span}(X)$ has no
import on the rejection probabilities of the (non-bootstrap-based) tests
obtained from $T_{uc}$, since $\limfunc{span}(X)$ is a $\lambda _{\mathbb{R}%
^{n}}$-null set as a consequence of the maintained assumption that $k<n$
(and since the measures $P_{\mu ,\sigma ^{2}\Sigma }$ are absolutely
continuous). It will turn out also not to affect our results for
bootstrap-based tests obtained from $T_{uc}$. For reasons of comparability
with (\ref{T_het}) we have chosen not to normalize the numerator in (\ref%
{T_uncorr}) by $q$, the number of restrictions to be tested, as is often
done in the definition of the classical F-test statistic. This also has no
import on the results as the bootstrap automatically adapts to scaling. For
later use we also mention that the test statistic $T_{uc}$ is continuous at
every $y\in \mathbb{R}^{n}\backslash \limfunc{span}(X)$.

\begin{remark}
\label{rem:GM0}(i) The test statistics $T_{Het}$ as well as $T_{uc}$ are $G(%
\mathfrak{M}_{0})$-invariant as is easily seen (with the respective
exceptional sets $\mathsf{B}$ and $\limfunc{span}(X)$ being $G(\mathfrak{M})$%
-invariant).

(ii) Both statistics actually belong to the class of nonsphericity-corrected
F-type test statistics in the sense of Section 5.4 in \cite{PP2016}
(terminology being somewhat unfortunate in the case of $T_{uc}$ as no
correction for the non-sphericity is applied in this case). See Remark \ref%
{F-type} in Appendix \ref{App B} for more discussion.
\end{remark}

\section{Some intuition for the size one results}

The mechanism leading to the size one results put forward formally in the
next section is a concentration effect in the distribution generating the
data $\mathbf{Y}=(y_{1},\ldots ,y_{n})^{\prime }$, entailing a similar
effect in the distribution of $T(\mathbf{Y})$, where we denote by $T$ any of
the test statistics considered in the paper. This concentration effect
emerges when the data-generating process (DGP) is \textquotedblleft strongly
heteroskedastic\textquotedblright . For simplicity, in this section we call
a data-generating process strongly heteroskedastic, if a single observation
has a (relatively) high variance, whereas all other observations have
(relatively) low variance. We shall denote the index corresponding to the
highly varying observation by $i^{\ast }$. Denote the expectation of the
data vector $\mathbf{Y}$ by $\mu _{0}$, where we assume for the discussion
in this section that $\mu _{0}\in \mathfrak{M}_{0}$, i.e., that the null
hypothesis is satisfied.

We now provide a nonrigorous explanation of the above-mentioned
concentration effect and how it leads to the size one results:

\begin{enumerate}
\item If the DGP is strongly heteroskedastic, only the single highly varying
observation $y_{i^{\ast }}$ will substantially deviate from its expectation $%
\mu _{0}^{(i^{\ast })}$, whereas all other observations will be very close
to their expectations. That is, under such a DGP we approximately have%
\begin{equation*}
\mathbf{Y}\approx \mu _{0}+(y_{i^{\ast }}-\mu _{0}^{(i^{\ast })})e_{i^{\ast
}}(n),
\end{equation*}%
where we recall that $e_{i^{\ast }}(n)$ is the $i^{\ast }$-th $n\times 1$
standard basis vector. That is, essentially, the data are concentrated on a
one-dimensional affine subspace of the sample space $\mathbb{R}^{n}$.

\item Invariance properties of $T$ common to all test statistics used in
this paper (and in practice) imply that%
\begin{equation*}
T(\mu _{0}+ce_{i^{\ast }}(n))=T(\mu _{0}+e_{i^{\ast }}(n))\text{ for every }%
c\neq 0.
\end{equation*}%
That is, the test statistic under consideration is essentially constant on
the one-dimensional affine subspace just obtained in the previous item.

\item Combining the two previous observations (and ignoring the case where $%
y_{i^{\ast }}=\mu _{0}^{(i^{\ast })}$), suggests that for strongly
heteroskedastic DGPs we have%
\begin{equation*}
T(\mathbf{Y})\approx T(\mu _{0}+e_{i^{\ast }}(n)).
\end{equation*}%
That is, essentially, the distribution of the test statistic collapses at
the value $T(\mu _{0}+e_{i^{\ast }}(n))$.
\end{enumerate}

Now, recall that a wild bootstrap-based test rejects if the test statistic
evaluated at the data $T(\mathbf{Y})$ exceeds the bootstrap critical value.
This bootstrap critical value is a $1-\alpha $ quantile of the distribution
of the test statistic, but now induced by the distribution that corresponds
to a bootstrap scheme $\mathbf{Y}^{\ast }$, say. In general, the
distribution of $T(\mathbf{Y}^{\ast })$ depends on two sources of
randomness: first, the DGP itself, and second, the randomization mechanism
used to generate the bootstrap scheme $\mathbf{Y}^{\ast }$. Making use of
the concentration mechanism outlined above, one can, for the class of
bootstrap schemes considered, however, show that the dependence on the DGP
essentially vanishes for strongly heteroskedastic DGPs. That is, the
distribution function of $T(\mathbf{Y}^{\ast })$ approximately equals a
distribution function $\digamma _{i^{\ast }}$, say, which depends on $%
i^{\ast }$, but on no other aspect of the DGP. The bootstrap critical value
is then a $1-\alpha $ quantile of $\digamma _{i^{\ast }}$. Recalling from
3.~that for strongly heteroskedastic DGPs $T(\mathbf{Y})\approx T(\mu
_{0}+e_{i^{\ast }}(n))$, it follows that for such DGPs the event that the
wild bootstrap based test rejects the null hypothesis essentially coincides
with the event that $T(\mu _{0}+e_{i^{\ast }}(n))$ exceeds the $1-\alpha $
quantile of $\digamma _{i^{\ast }}$. Both $T(\mu _{0}+e_{i^{\ast }}(n))$ and 
$\digamma _{i^{\ast }}$ are non-random. Hence (recall that we are operating
under the null hypothesis) for strongly heteroskedastic DGPs the test will
have a rejection probability close to one if $\digamma _{i^{\ast }}(T(\mu
_{0}+e_{i^{\ast }}(n)))>1-\alpha $.\footnote{%
More precisely, if the left hand side limit $\digamma _{i^{\ast }}(T(\mu
_{0}+e_{i^{\ast }}(n))-)$ exceeds $1-\alpha $. We ignore this technical
detail here for the sake of simplicity.} In the above argument $i^{\ast }$
was fixed. Varying $i^{\ast }\in \{1,\ldots ,n\}$, we finally come to the
conclusion that the maximal rejection probability under the null will be
close to $1$ in case%
\begin{equation*}
\max_{i^{\ast }=1,\ldots ,n}\digamma _{i^{\ast }}(T(\mu _{0}+e_{i^{\ast
}}(n)))>1-\alpha .
\end{equation*}%
In other words, the bootstrap-based test under consideration will have
rejection probabilities close to one for all levels of significance
satisfying%
\begin{equation*}
\alpha >1-\max_{i^{\ast }=1,\ldots ,n}\digamma _{i^{\ast }}\left( T(\mu
_{0}+e_{i^{\ast }}(n))\right) .
\end{equation*}%
The quantity to the right is closely related to our constants $\vartheta $.
Note that the above reasoning is nonrigorous and, in particular, does not
take into consideration some technical subtleties that arise in the just
given approximation arguments and that we have tacitly ignored in the
preceding discussion. Therefore, the expressions for the constants $%
\vartheta $ we arrive at in the theorems in the subsequent section are
somewhat more complicated, albeit the underlying intuition is the same.

As transpires from the preceding heuristic discussion, the method for
establishing the size one results given in the next section relies on the
assumption that the heteroskedasticity model employed is rich enough to
approximate extreme cases of strongly heteroskedastic DGPs, namely the ones
where all but one observation have zero variance, arbitrarily well. This is
certainly so for the leading case of the heteroskedasticity model $\mathfrak{%
C}_{Het}$, which describes agnosticism about the form of heteroskedasticity.
Therefore, the results in the next section are presented for this case, and
a discussion to which other heteroskedasticity models these results
generalize is given in Section \ref{gen}.

If one maintains a heteroskedasticity model that does not allow one to
approximate any of the above mentioned extreme cases of strongly
heteroskedastic DGPs (such as, e.g., the heteroskedasticity model $\mathfrak{%
C}_{Het}(a)$ which consists of all error-covariance matrices in $\mathfrak{C}%
_{Het}$ with diagonal elements bounded from below by $a>0$), then the method
of proof underlying our size one results no longer is applicable. However,
this does \emph{not }imply that the size of a bootstrap-based test over $%
\mathfrak{C}_{Het}(a)$ is about right: Since the rejection probabilities are
continuous in the parameters (in particular, in $\Sigma $), the size over $%
\mathfrak{C}_{Het}(a)$ will be much larger than the nominal significance
level at least for small $a$ (in fact, it will be close to one if $a$ is
sufficiently small) in any situation where the size over $\mathfrak{C}_{Het}$
equals one (e.g., in the situations described in the theorems further
below). [The actual size over $\mathfrak{C}_{Het}(a)$ depends on the chosen
bound $a$ and on the design matrix, the hypothesis to be tested, the test
statistic, and also on the bootstrap scheme used.] Furthermore, the bound $a$
has to be decided on prior to the data analysis and is part of modeling the
form of heteroskedasticity. It is difficult to see how one would come up
with a reasonable bound $a$ in practice: if $a$ is chosen to be very small,
this may result in a heteroskedasticity model under which the tests are
still severely oversized as just discussed, while choosing $a$ large will
typically not be defendable as it presumes considerable knowledge about the
admissible forms of heteroskedasticity.

\section{Size one results\label{Theory}}

In this section we provide sufficient conditions for the size of
bootstrap-based heteroskedasticity robust tests to be equal to one when the
heteroskedasticity model is $\mathfrak{C}_{Het}$, which is the largest
possible heteroskedasticity model and which reflects agnosticism regarding
the form of heteroskedasticity. For extensions to other heteroskedasticity
models see Section \ref{gen}. We next discuss the bootstrap schemes that
will be considered and which all are based on the wild bootstrap idea. The
first bootstrap scheme is given by

\begin{equation}
y^{\ast }(y,\xi )=X\tilde{\beta}_{\mathfrak{M}_{0}}(y)+\limfunc{diag}(\xi
)(y-X\tilde{\beta}_{\mathcal{A}}(y))  \label{boot_1}
\end{equation}%
for every $y\in \mathbb{R}^{n}$, where $\mathcal{A}$ will always be an
affine subspace of $\mathbb{R}^{n}$ satisfying $\mathfrak{M}_{0}\subseteq 
\mathcal{A}\subseteq \limfunc{span}(X)$, and where $\xi $ is a draw from $%
\Xi $, a given (Borel) probability measure on $\mathbb{R}^{n}$. Typical
choices in the literature are $\mathcal{A}=\mathfrak{M}_{0}$, i.e., one uses
restricted residuals in the wild bootstrap, or $\mathcal{A}=\limfunc{span}%
(X) $, in which case unrestricted residuals are used.\footnote{%
Because all test statistics (and associated exceptional sets) considered
below are at least $G(\mathfrak{M}_{0})$-invariant (see Remarks \ref{rem:GM0}
and \ref{rem:tildeGM0}), the bootstrap scheme (\ref{boot_1}) can be replaced
by $y^{\ast \ast }(y,\xi )=\mu _{0}+\limfunc{diag}(\xi )(y-X\tilde{\beta}_{%
\mathcal{A}}(y))$ for an arbitrary value $\mu _{0}\in \mathfrak{M}_{0}$
without affecting the bootstrapped test statistic.} In practice only these
two choices will typically arise, but the theory given below covers the more
general case where $\mathfrak{M}_{0}\subseteq \mathcal{A}\subseteq \limfunc{%
span}(X)$ at no extra cost. The measure $\Xi $ may depend on observable
quantities like, e.g., $X$, $R$, or $\mathcal{A}$, but not on $y$. For
example, $\Xi $ could be the $n$-fold product of Mammen or Rademacher
distributions, but other choices (e.g., ones obtained by modifying the
aforementioned distributions by weights, or non-discrete distributions) are
also covered. See Section \ref{sec_Num} for some examples. For the
theoretical results in this section there is no need to specify a particular
form of $\Xi $.\footnote{%
Suppose $\Xi $ is the empirical distribution of $B$ draws (possibly modified
by weights) from an underlying distribution $\Xi _{0}$, which will often be
the case if $n$ is large and the ideal bootstrap using $\Xi _{0}$ is
infeasible. In this case $\Xi $ is strictly speaking a random probability
measure (depending on the particular sample of size $B$ drawn from $\Xi _{0}$%
) and the bootstrap-based tests also depend on this sample. However, working
conditionally on this sample, brings us back into the current framework.}

The second bootstrap scheme differs from the first one only insofar as
centering is at the unrestricted estimator $X\hat{\beta}(y)$ rather than at
the restricted estimator $X\tilde{\beta}_{\mathfrak{M}_{0}}(y)$. That is,
the second bootstrap scheme is given by 
\begin{equation}
y^{\maltese }(y,\xi )=X\hat{\beta}(y)+\limfunc{diag}(\xi )(y-X\tilde{\beta}_{%
\mathcal{A}}(y))  \label{boot_2}
\end{equation}%
for every $y\in \mathbb{R}^{n}$. Note that $y^{\ast }(y,\xi )$ as well as $%
y^{\maltese }(y,\xi )$ depend also on the choice of $\mathcal{A}$, but we
shall not show this dependence in the notation.

\subsection{Bootstrap-based tests derived from $T_{Het}$ and $T_{uc}$ \label%
{unrestr_res}}

In the subsequent theorems $\Xi $ is always a (Borel) probability measure on 
$\mathbb{R}^{n}$, and $\mathcal{A}$ is an affine subspace of $\mathbb{R}^{n}$
satisfying $\mathfrak{M}_{0}\subseteq \mathcal{A}\subseteq \limfunc{span}(X)$%
. If we use the first bootstrap scheme, i.e., (\ref{boot_1}), the
bootstrapped test statistic corresponding to $T_{Het}$ is given by $%
T_{Het}^{\ast }$, where $T_{Het}^{\ast }:\mathbb{R}^{n}\times \mathbb{R}%
^{n}\rightarrow \mathbb{R}$ is defined via 
\begin{equation*}
T_{Het}^{\ast }(y,\xi )=T_{Het}\left( y^{\ast }(y,\xi )\right) .
\end{equation*}%
Furthermore, for every $y\in \mathbb{R}^{n}$ denote the distribution
function of the bootstrapped test statistic under $\Xi $ by $F_{Het,y}$,
i.e., $F_{Het,y}(t)=\Xi \mathbb{(}T_{Het}^{\ast }(y,\xi )\leq t)$ for $t\in 
\mathbb{R}$. For reasons that are discussed further below, we also need to
consider a modification of $T_{Het}$ defined by $T_{Het}^{\blacktriangle
}\left( y\right) =T_{Het}\left( y\right) $ if $y\notin \mathsf{B}$ and $%
T_{Het}^{\blacktriangle }(y)=\infty $ otherwise. Its bootstrapped version is
then given by $T_{Het}^{\blacktriangle ,\ast }(y,\xi
)=T_{Het}^{\blacktriangle }\left( y^{\ast }(y,\xi )\right) $. Similarly as
before, for every $y\in \mathbb{R}^{n}$ we denote its distribution function
under $\Xi $ by $F_{Het,y}^{\blacktriangle }$, i.e., $F_{Het,y}^{%
\blacktriangle }(t)=\Xi \mathbb{(}T_{Het}^{\blacktriangle ,\ast }(y,\xi
)\leq t)$ for $t\in \mathbb{R\cup \{\infty \}}$.

\begin{theorem}
\label{Theo_Het_unrestr}Suppose Assumption \ref{R_and_X} holds.

(a) For every $\alpha \in (0,1)$, let $f_{Het,1-\alpha }(y)$ denote a $%
(1-\alpha )$-quantile of $F_{Het,y}$. Define $\vartheta _{Het}=1-\max
(\vartheta _{1,Het},\vartheta _{2,Het})$, where%
\begin{equation}
\vartheta _{1,Het}=\max_{\substack{ i=1,\ldots ,n,  \\ e_{i}(n)\notin 
\mathsf{B}}}\Xi \left( \left\{ \xi :T_{Het}^{\ast }(\mu _{0}+e_{i}(n),\xi
)<T_{Het}(\mu _{0}+e_{i}(n)),y^{\ast }(\mu _{0}+e_{i}(n),\xi )\notin \mathsf{%
B}\right\} \right)  \label{theta1_Het}
\end{equation}%
and%
\begin{equation}
\vartheta _{2,Het}=\max_{\substack{ i=1,\ldots ,n,  \\ e_{i}(n)\in \limfunc{%
span}(X),R\hat{\beta}(e_{i}(n))\neq 0}}\Xi \left( \left\{ \xi :y^{\ast }(\mu
_{0}+e_{i}(n),\xi )\notin \mathsf{B}\right\} \right)  \label{theta2_Het}
\end{equation}%
for some $\mu _{0}\in \mathfrak{M}_{0}$, with the convention that $\vartheta
_{1,Het}=0$ ($\vartheta _{2,Het}=0$, respectively) if the index set in the
maximum operator in (\ref{theta1_Het}) ((\ref{theta2_Het}), respectively) is
empty. Then neither $\vartheta _{1,Het}$ nor $\vartheta _{2,Het}$ depend on
the choice of $\mu _{0}\in \mathfrak{M}_{0}$. Furthermore, for every $\alpha
\in (0,1)$ such that $\alpha >\vartheta _{Het}$ holds, we have%
\begin{equation}
\sup_{\Sigma \in \mathfrak{C}_{Het}}P_{\mu _{0},\sigma ^{2}\Sigma }\left(
T_{Het}\geq f_{Het,1-\alpha }\right) \geq \sup_{\Sigma \in \mathfrak{C}%
_{Het}}P_{\mu _{0},\sigma ^{2}\Sigma }\left( T_{Het}>f_{Het,1-\alpha
}\right) =1  \label{eqn:bootsize_het}
\end{equation}%
for every $\mu _{0}\in \mathfrak{M}_{0}$ and every $0<\sigma ^{2}<\infty $
(where the probabilities in (\ref{eqn:bootsize_het}) are to be interpreted
as inner probabilities\footnote{%
This allows one to ignore measurability issues regarding $f_{Het,1-\alpha }$.%
}).

(b) For every $\alpha \in (0,1)$, let $f_{Het,1-\alpha }^{\blacktriangle
}(y) $ denote a $(1-\alpha )$-quantile of $F_{Het,y}^{\blacktriangle }$.
Then, with $\vartheta _{Het}$ defined in Part (a), for every $\alpha \in
(0,1)$ such that $\alpha >\vartheta _{Het}$ holds, we have%
\begin{equation}
\sup_{\Sigma \in \mathfrak{C}_{Het}}P_{\mu _{0},\sigma ^{2}\Sigma }\left(
T_{Het}\geq f_{Het,1-\alpha }^{\blacktriangle }\right) \geq \sup_{\Sigma \in 
\mathfrak{C}_{Het}}P_{\mu _{0},\sigma ^{2}\Sigma }\left(
T_{Het}>f_{Het,1-\alpha }^{\blacktriangle }\right) =1
\label{eqn:bootsize_het_triangle}
\end{equation}%
for every $\mu _{0}\in \mathfrak{M}_{0}$ and every $0<\sigma ^{2}<\infty $
(where the probabilities in (\ref{eqn:bootsize_het_triangle}) are to be
interpreted as inner probabilities).
\end{theorem}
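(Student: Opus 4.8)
The plan is to exploit the $G(\mathfrak{M}_0)$-invariance of $T_{Het}$ and of the bootstrap construction in order to reduce the problem to understanding the behaviour of the bootstrap critical value $f_{Het,1-\alpha}(y)$ along a carefully chosen sequence of ``designed'' data vectors $y$. The key idea, in the spirit of the rejection-probability-one arguments in \cite{PP2016} and \cite{PP3}, is this: to show that the size equals $1$ it suffices to produce, for each $\varepsilon>0$, a covariance matrix $\sigma^2\Sigma$ with $\Sigma\in\mathfrak{C}_{Het}$ under which $P_{\mu_0,\sigma^2\Sigma}(T_{Het}>f_{Het,1-\alpha})\ge 1-\varepsilon$. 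One does this by concentrating almost all the error mass in a single coordinate direction $e_i(n)$: letting $\tau_i^2\to 1$ (so $\tau_j^2\to 0$ for $j\ne i$), the distribution $P_{\mu_0,\sigma^2\Sigma}$ converges weakly to the law of $\mu_0+\sigma Z e_i(n)$ with $Z$ standard normal, and by absolute continuity and suitable continuity/semicontinuity of the map $y\mapsto \mathbf{1}\{T_{Het}(y)>f_{Het,1-\alpha}(y)\}$ off the exceptional set, the rejection probability under the limit is a lower bound (up to $\varepsilon$) for the limsup of the rejection probabilities. So the whole theorem reduces to showing: for $\alpha>\vartheta_{Het}$ there exists an index $i$ such that, for Lebesgue-almost every (equivalently, all but finitely many) scalars $t$, the designed point $y=\mu_0+t\,e_i(n)$ satisfies $T_{Het}(y)>f_{Het,1-\alpha}(y)$.

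\textbf{Second step: analyse the designed points.} Fix $i$ and consider $y=\mu_0+t e_i(n)$ for $t\ne 0$. Because $T_{Het}$ is $G(\mathfrak{M}_0)$-invariant and $\mathsf{B}$ is $G(\mathfrak{M})$-invariant (Lemma~\ref{lem_B}(e), Remark~\ref{rem:GM0}), and because the bootstrap scheme~(\ref{boot_1}) rescales covariantly (Lemma~\ref{lem_B}(e): $B(\gamma(y-\mu)+\mu')=\gamma B(y)$, and analogously $\hat\beta$, $\tilde\beta_{\mathcal{A}}$ transform affinely), the event $\{T_{Het}(y)>f_{Het,1-\alpha}(y)\}$ does not depend on $t\ne 0$: one shows $T_{Het}^\ast(\mu_0+te_i(n),\xi)=T_{Het}^\ast(\mu_0+e_i(n),\xi)$ for $t\ne0$ off a null set, hence $F_{Het,\mu_0+te_i(n)}=F_{Het,\mu_0+e_i(n)}$ and $f_{Het,1-\alpha}(\mu_0+te_i(n))=f_{Het,1-\alpha}(\mu_0+e_i(n))$, while $T_{Het}(\mu_0+te_i(n))=t^2\,c_i$ for a constant $c_i\ge0$. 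The value $T_{Het}(\mu_0+e_i(n))$ is either strictly positive (this happens exactly when $e_i(n)\notin\mathsf{B}$, by Lemma~\ref{lem_B}(b)) or it is zero. When it is zero we instead have $R\hat\beta(\mu_0+e_i(n))=r$ is possible or not; the relevant distinction producing the two maxima $\vartheta_{1,Het}$ and $\vartheta_{2,Het}$ is: (i) $e_i(n)\notin\mathsf{B}$, where $T_{Het}(\mu_0+e_i(n))$ is a fixed finite positive number and we need the bootstrap quantile to fall strictly below it; and (ii) $e_i(n)\in\limfunc{span}(X)$ with $R\hat\beta(e_i(n))\ne0$, a degenerate case where $\hat u(\mu_0+e_i(n))=0$ so $T_{Het}$ ``blows up'' (the point lies in $\mathsf{B}$ but $R\hat\beta-r\ne 0$) and one must separately control the bootstrap — this is exactly why the truncated statistic $T_{Het}^{\blacktriangle}$ is introduced for part (b).

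\textbf{Third step: the quantile estimate.} In case (i), one computes an upper bound for a $(1-\alpha)$-quantile of $F_{Het,\mu_0+e_i(n)}$. Observe that if $\xi$ lies in the complement of the set appearing in~(\ref{theta1_Het}), then either $y^\ast(\mu_0+e_i(n),\xi)\in\mathsf{B}$ (so $T_{Het}^\ast=0<T_{Het}(\mu_0+e_i(n))$ by our zero-convention, noting $T_{Het}(\mu_0+e_i(n))>0$ here) or $T_{Het}^\ast(\mu_0+e_i(n),\xi)\ge T_{Het}(\mu_0+e_i(n))$. Hence $\Xi\big(T_{Het}^\ast(\mu_0+e_i(n),\xi)<T_{Het}(\mu_0+e_i(n))\big)$ equals the probability of the set in~(\ref{theta1_Het}), which is $\le\vartheta_{1,Het}\le 1-\vartheta_{Het}$. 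So $F_{Het,\mu_0+e_i(n)}\big(T_{Het}(\mu_0+e_i(n))^-\big)\le 1-\vartheta_{Het}<1-\vartheta_{Het}+\text{(slack)}$; choosing $\alpha>\vartheta_{Het}$ gives $1-\alpha<1-\vartheta_{Het}$ hence $1-\alpha > F_{Het,\mu_0+e_i(n)}(s)$ fails to force $s\ge T_{Het}(\mu_0+e_i(n))$ — more carefully, $1-\alpha < $ the jump, so the $(1-\alpha)$-quantile $f_{Het,1-\alpha}(\mu_0+e_i(n))< T_{Het}(\mu_0+e_i(n))$, which is the desired strict inequality at the designed point. For case (ii) (part (b)) the identical computation uses $\vartheta_{2,Het}$ and the fact that $T_{Het}^{\blacktriangle}=\infty$ on $\mathsf{B}$, so that $T_{Het}^{\blacktriangle,\ast}(\mu_0+e_i(n),\xi)<\infty = T_{Het}^{\blacktriangle}(\mu_0+e_i(n))$ only on $\{y^\ast\notin\mathsf{B}\}$, whose probability is $\le\vartheta_{2,Het}$; again the quantile is finite and hence strictly below $\infty = T_{Het}^{\blacktriangle}(\mu_0+e_i(n))$, and since $T_{Het}\ge$ any finite level along the relevant limit this still yields rejection. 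The $\mu_0$-independence of $\vartheta_{1,Het},\vartheta_{2,Het}$ follows from $G(\mathfrak{M}_0)$-invariance of the ingredients together with $\mathsf{B}+\limfunc{span}(X)=\mathsf{B}$.

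\textbf{Main obstacle.} The delicate point is the weak-convergence / semicontinuity argument in step one: the map $y\mapsto f_{Het,1-\alpha}(y)$ need not be continuous (quantiles of a family of distributions can jump), and $T_{Het}$ itself is only continuous off $\mathsf{B}$; one must show that the bootstrap distribution functions $F_{Het,y}$ behave well enough (e.g. that the relevant quantile is upper semicontinuous, or that one can pass to a subsequence) along the designed sequence, \emph{and} that the limiting measure $P_{\mu_0,\sigma^2 e_i(n)e_i(n)'}$ (concentrated on the line $\mu_0+\mathbb{R}e_i(n)$) assigns probability $1$ to the event $\{T_{Het}>f_{Het,1-\alpha}\}$ despite that line possibly meeting $\mathsf{B}$ at isolated points. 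This is handled exactly by the $t$-independence established in step two (the event is the same for all $t\ne0$) combined with Lemma~\ref{lem_B}(d) (the line is not contained in $\mathsf{B}$ when $e_i(n)\notin\mathsf{B}$), so the ``bad'' set on the limiting line is Lebesgue-null and the absolutely continuous approximants ignore it in the limit. Making this limiting step fully rigorous — in particular verifying the inner-probability version and the interchange of $\sup_{\Sigma}$ with the limit — is where the real work lies; everything else is bookkeeping with the invariance relations from Lemma~\ref{lem_B} and Remark~\ref{rem:GM0}.
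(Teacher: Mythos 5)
Your overall strategy (concentrate the variance on one coordinate, exploit $G(\mathfrak{M}_{0})$-invariance, and show the bootstrap quantile falls strictly below the test statistic at the designed points) is the same as the paper's, which routes the argument through the general Theorem \ref{thm:main} in Appendix \ref{App A}. However, the reduction you make in your first step contains a genuine gap. You claim the theorem reduces to showing $T_{Het}(y)>f_{Het,1-\alpha }(y)$ for Lebesgue-almost every $t$ along the line $y=\mu _{0}+te_{i}(n)$. This is not sufficient: the approximating measures $P_{\mu _{0},\sigma ^{2}\Sigma _{m}}$ are absolutely continuous with respect to $\lambda _{\mathbb{R}^{n}}$ and therefore assign probability \emph{zero} to that line, so controlling the rejection event only on the line (even for every $t\neq 0$) yields no lower bound on $P_{\mu _{0},\sigma ^{2}\Sigma _{m}}(T_{Het}>f_{Het,1-\alpha })$. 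The Portmanteau theorem transfers the limit probability only for \emph{open} sets, so one must show that the rejection region contains an open, full-dimensional set whose limiting probability is one -- in the paper, the cone $\mu _{0}+\{\gamma z:\gamma \neq 0,\,z\in B(e_{i}(n),\delta )\}$. Establishing the two inequalities $T_{Het}(\mu _{0}+z)>x_{i}$ and $F_{Het,\mu _{0}+z}(x_{i})>1-\alpha $ for all $z$ in a ball around $e_{i}(n)$ (not just at $z=e_{i}(n)$) is exactly the step your proposal omits: the first uses continuity of $T_{Het}$ at $\mu _{0}+e_{i}(n)$ (available off the closed set $\mathsf{B}$, and replaced by the lower-semicontinuity constant $c_{i}=\infty $ in the degenerate case $e_{i}(n)\in \limfunc{span}(X)$, $R\hat{\beta}(e_{i}(n))\neq 0$), and the second requires a Fatou-type lower-semicontinuity result for $y\mapsto F_{Het,y}(x_{i})$ along sequences $y_{m}\rightarrow \mu _{0}+e_{i}(n)$ (Lemma \ref{lem:convdisc}), applied at a continuity point $x_{i}<c_{i}$ of the limiting cdf. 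You correctly identify this as ``where the real work lies,'' but your proposed resolution -- the $t$-independence of the event on the line together with Lemma \ref{lem_B}(d) -- only controls the limit measure's behaviour \emph{on} the line and does not supply the required neighbourhood argument.

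Two smaller slips: the identity $T_{Het}(\mu _{0}+te_{i}(n))=t^{2}c_{i}$ is incorrect -- by $G(\mathfrak{M}_{0})$-invariance the statistic is \emph{constant} in $t\neq 0$ (numerator and $\hat{\Omega}_{Het}$ both scale quadratically), which is in fact the invariance you rely on elsewhere; and in your third step the probability $\Xi (T_{Het}^{\ast }(\mu _{0}+e_{i}(n),\xi )<T_{Het}(\mu _{0}+e_{i}(n)))$ does not \emph{equal} the probability of the set in (\ref{theta1_Het}) but is only bounded below by it, since $\xi $ with $y^{\ast }(\mu _{0}+e_{i}(n),\xi )\in \mathsf{B}$ also contribute (as $T_{Het}^{\ast }=0$ there); fortunately a lower bound is what is needed, so this does not break the quantile estimate. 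These are repairable; the neighbourhood/open-set issue is the substantive missing ingredient.
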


Part (a) of the preceding theorem implies that for every nominal
significance level $\alpha >\vartheta _{Het}$ the size (over $\mathfrak{C}%
_{Het}$) of the bootstrap-based test derived from $T_{Het}$ is equal to $1$
and thus is inflated (and this is true whether the bootstrap-based test uses
the rejection region $\left\{ y:T_{Het}(y)\geq f_{Het,1-\alpha }(y)\right\} $
or $\left\{ y:T_{Het}(y)>f_{Het,1-\alpha }(y)\right\} $).\footnote{%
We note that in principle it is conceivable that these two rejection regions
have different probabilities under $P_{\mu _{0},\sigma ^{2}\Sigma }$.} Note
that the lower bound $\vartheta _{Het}$ is observable and can be computed,
see Section \ref{rems} for some more detail. As we shall see from the
numerical results in Section \ref{sec_Num}, the lower bound $\vartheta _{Het}
$ can be quite small, the results in the theorem thus covering standard
choices for $\alpha $ such as $\alpha =0.05$. A consequence of Theorem \ref%
{Theo_Het_unrestr} thus is, in particular, that there is in general no
guarantee for bootstrap-based tests derived from $T_{Het}$ (or from the
other statistics considered in the theorems further below), conducted at a
nominal significance level $\alpha $, to be truly level $\alpha $ tests.
Although trivial, we note that Theorem \ref{Theo_Het_unrestr} provides only
a \emph{sufficient} condition for size being equal to one and thus, in case $%
\alpha \leq \vartheta _{Het}$ holds, the size of the bootstrap-based test
may nevertheless be much larger than $\alpha $ (and may perhaps even be
equal to $1$).

The significance of Part (b) of the theorem is as follows: Recall from Lemma %
\ref{lem_B} that under Assumption \ref{R_and_X} the way $T_{Het}$ is defined
on $\mathsf{B}$ is immaterial for the rejection probabilities of
(non-bootstrap-based) tests obtained from this test statistic since the set $%
\mathsf{B}$ is a Lebesgue null set and since the probability measures in (%
\ref{lm2}) are all absolutely continuous w.r.t. Lebesgue measure; in
particular, the (non-bootstrap-based) tests derived from $T_{Het}$ and $%
T_{Het}^{\blacktriangle }$ have the same rejection probabilities. However,
when it comes to the \emph{bootstrapped} test statistics, the situation
becomes more complicated as $\Xi $ often will be a discrete measure. That
is, it is a priori conceivable that the value we assign to the test
statistic on the set $\mathsf{B}$ may have an effect on the \emph{%
bootstrapped} test statistic and thus on the $(1-\alpha )$-quantile computed
from it; in particular, it might be that an assignment of a value different
from zero on the set $\mathsf{B}$ may lead to a larger $(1-\alpha )$%
-quantile. This then raises the question, whether a bootstrap-based test
that uses such a (potentially) larger $(1-\alpha )$-quantile may have a
smaller size than when the quantile $f_{Het,1-\alpha }$ is being used.
Within the context of the theorem, Part (b) answers this in the negative by
showing that, even if one defines the \emph{bootstrapped} test statistic as $%
\infty $ on the event where the bootstrap sample $y^{\ast }(y,\xi )$ falls
into the exceptional set $\mathsf{B}$ and uses a resulting $(1-\alpha )$%
-quantile, the bootstrap-based test again has size $1$ under the same
condition on $\alpha $. [As any other way of defining the bootstrapped test
statistic on the event $y^{\ast }(y,\xi )\in \mathsf{B}$ obviously leads to $%
(1-\alpha )$-quantiles not larger than an (appropriately chosen) $(1-\alpha )
$-quantile of $F_{Het,y}^{\blacktriangle }$, Part (b) covers also any such
alternative definition of the bootstrapped test statistic.]\footnote{%
An alternative approach, which -- if successful -- would make considering
Part (b) obsolete, would be to try to show that the set of $y^{\prime }s$
for which $T_{Het}^{\blacktriangle ,\ast }$ and $T_{Het}^{\ast }$ coincide $%
\Xi $-a.e., and thus their quantiles coincide, is the complement of a
Lebesgue null set. While this alternative approach actually can be shown to
work in some special cases, it does not so in general, as can be seen from
examples.} For additional discussion see also Remark \ref{rem:null}.

We also stress that the results in the preceding theorem hold for \emph{any}
choice $f_{Het,1-\alpha }$ ($f_{Het,1-\alpha }^{\blacktriangle }(y)$,
respectively) from the set of $(1-\alpha )$-quantiles of $F_{Het,y}$ ($%
F_{Het,y}^{\blacktriangle }$, respectively).\footnote{\label{qu}Suppose $%
0<\delta <1$ and $F$ is a cdf defined on $\mathbb{R}$ ($\mathbb{R\cup
\{\infty \}}$, respectively). An element $q\in \mathbb{R}$ ($q\in \mathbb{%
R\cup \{\infty \}}$, respectively) is said to be a $\delta $-quantile of $F$
iff it satisfies $F(q)\geq \delta \geq F(q-)$, where $F(q-)$ denotes the
left-hand limit of $F$ at $q$. Note that $q$ need not be unique in general.
There is always a smallest and a largest $\delta $-quantile among all $%
\delta $-quantiles. The smallest one is given by $F^{-1}(\delta $), where $%
F^{-1}$ is the "generalized" inverse of $F$. If $\delta $ does not belong to
the range of $F$, then $F^{-1}(\delta $) is also the largest $\delta $%
-quantile. Otherwise, the largest $\delta $-quantile is given by $\sup
\left\{ x\in \mathbb{R}:F(x)=\delta \right\} $ ($\sup \left\{ x\in \mathbb{%
R\cup \{\infty \}}:F(x)=\delta \right\} $, respectively), which may or may
not coincide with $F^{-1}(\delta )$.}

Furthermore, we note that the preceding theorem holds with the \emph{same}
lower bound $\vartheta _{Het}$ for a much larger class of error
distributions than just Gaussian errors (an assumption we have made only for
convenience), see Section \ref{gen}. Hence, in this sense the lower bound $%
\vartheta _{Het}$ is \textquotedblleft distribution free\textquotedblright .

We next turn to the test statistic $T_{uc}$. Again using the first bootstrap
scheme, the bootstrapped test statistic is then given by $T_{uc}^{\ast }$
where $T_{uc}^{\ast }:\mathbb{R}^{n}\times \mathbb{R}^{n}\rightarrow \mathbb{%
R}$ is defined via 
\begin{equation*}
T_{uc}^{\ast }(y,\xi )=T_{uc}\left( y^{\ast }(y,\xi )\right) .
\end{equation*}%
Furthermore, for every $y\in \mathbb{R}^{n}$ denote the distribution
function of the bootstrapped test statistic under $\Xi $ by $F_{uc,y}$,
i.e., $F_{uc,y}(t)=\Xi \mathbb{(}T_{uc}^{\ast }(y,\xi )\leq t)$ for $t\in 
\mathbb{R}$. As before, we also need to consider the modification of $T_{uc}$
defined by $T_{uc}^{\blacktriangle }\left( y\right) =T_{uc}\left( y\right) $
if $y\notin \limfunc{span}(X)$ and $T_{uc}^{\blacktriangle }(y)=\infty $
otherwise. Its bootstrapped version is then given by $T_{uc}^{\blacktriangle
,\ast }(y,\xi )=T_{uc}^{\blacktriangle }\left( y^{\ast }(y,\xi )\right) $.
Similarly as before, for every $y\in \mathbb{R}^{n}$ we denote its
distribution function under $\Xi $ by $F_{uc,y}^{\blacktriangle }$, i.e., $%
F_{uc,y}^{\blacktriangle }(t)=\Xi \mathbb{(}T_{uc}^{\blacktriangle ,\ast
}(y,\xi )\leq t)$ for $t\in \mathbb{R\cup \{\infty \}}$.

\begin{theorem}
\label{Theo_uc_unrestr}(a) For every $\alpha \in (0,1)$, let $f_{uc,1-\alpha
}(y)$ denote a $(1-\alpha )$-quantile of $F_{uc,y}$. Define $\vartheta
_{uc}=1-\max (\vartheta _{1,uc},\vartheta _{2,uc})$, where%
\begin{equation}
\vartheta _{1,uc}=\max_{\substack{ i=1,\ldots ,n,  \\ e_{i}(n)\notin 
\limfunc{span}(X)}}\Xi \left( \left\{ \xi :T_{uc}^{\ast }(\mu
_{0}+e_{i}(n),\xi )<T_{uc}(\mu _{0}+e_{i}(n)),y^{\ast }(\mu
_{0}+e_{i}(n),\xi )\notin \limfunc{span}(X)\right\} \right)
\label{theta1_uncorr}
\end{equation}%
and%
\begin{equation}
\vartheta _{2,uc}=\max_{\substack{ i=1,\ldots ,n,  \\ e_{i}(n)\in \limfunc{%
span}(X),R\hat{\beta}(e_{i}(n))\neq 0}}\Xi \left( \left\{ \xi :y^{\ast }(\mu
_{0}+e_{i}(n),\xi )\notin \limfunc{span}(X)\right\} \right)
\label{theta2_uncorr}
\end{equation}%
for some $\mu _{0}\in \mathfrak{M}_{0}$, with the convention that $\vartheta
_{2,uc}=0$ if the index set in the maximum operator in (\ref{theta2_uncorr})
is empty.\footnote{%
Note that the index set in the maximum operator in (\ref{theta1_uncorr}) can
not be empty since we have assumed $k<n$.} Then neither $\vartheta _{1,uc}$
nor $\vartheta _{2,uc}$ depend on the choice of $\mu _{0}\in \mathfrak{M}%
_{0} $. Furthermore, for every $\alpha \in (0,1)$ such that $\alpha
>\vartheta _{uc}$ holds, we have%
\begin{equation}
\sup_{\Sigma \in \mathfrak{C}_{Het}}P_{\mu _{0},\sigma ^{2}\Sigma }\left(
T_{uc}\geq f_{uc,1-\alpha }\right) \geq \sup_{\Sigma \in \mathfrak{C}%
_{Het}}P_{\mu _{0},\sigma ^{2}\Sigma }\left( T_{uc}>f_{uc,1-\alpha }\right)
=1  \label{eqn:bootsize_uncorr}
\end{equation}%
for every $\mu _{0}\in \mathfrak{M}_{0}$ and every $0<\sigma ^{2}<\infty $
(where the probabilities in (\ref{eqn:bootsize_uncorr}) are to be
interpreted as inner probabilities).

(b) For every $\alpha \in (0,1)$, let $f_{uc,1-\alpha }^{\blacktriangle }(y)$
denote a $(1-\alpha )$-quantile of $F_{uc,y}^{\blacktriangle }$. Then, with $%
\vartheta _{uc}$ defined in Part (a), for every $\alpha \in (0,1)$ such that 
$\alpha >\vartheta _{uc}$ holds, we have%
\begin{equation}
\sup_{\Sigma \in \mathfrak{C}_{Het}}P_{\mu _{0},\sigma ^{2}\Sigma }\left(
T_{uc}\geq f_{uc,1-\alpha }^{\blacktriangle }\right) \geq \sup_{\Sigma \in 
\mathfrak{C}_{Het}}P_{\mu _{0},\sigma ^{2}\Sigma }\left(
T_{uc}>f_{uc,1-\alpha }^{\blacktriangle }\right) =1
\label{eqn:bootsize_uncorr_triangle}
\end{equation}%
for every $\mu _{0}\in \mathfrak{M}_{0}$ and every $0<\sigma ^{2}<\infty $
(where the probabilities in (\ref{eqn:bootsize_uncorr_triangle}) are to be
interpreted as inner probabilities).
\end{theorem}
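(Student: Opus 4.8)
One route is to obtain the statement as the $T_{uc}$-specialization of the general result for non-sphericity corrected F-type statistics (Theorem~\ref{thm:main} in Appendix~\ref{App A}): by Remark~\ref{F-type}(ii) the statistic $T_{uc}$ is of that form with associated exceptional set $\limfunc{span}(X)$, which plays exactly the part $\mathsf{B}$ plays for $T_{Het}$ --- it is a closed, proper, $\lambda _{\mathbb{R}^{n}}$-null (since $k<n$) linear subspace, it is $G(\mathfrak{M})$-invariant, it contains $\mathfrak{M}_{0}$, and $T_{uc}$ is continuous off it. Granting the general theorem, everything goes through verbatim with $\mathsf{B}$ replaced by $\limfunc{span}(X)$, the general $\vartheta$ collapsing to $\vartheta _{uc}$ because the role of $\mathsf{B}$ in (\ref{theta1_Het})--(\ref{theta2_Het}) is taken over by $\limfunc{span}(X)$ in (\ref{theta1_uncorr})--(\ref{theta2_uncorr}). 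The rest sketches the same underlying argument run directly.

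Fix $\mu _{0}\in \mathfrak{M}_{0}$ and $0<\sigma ^{2}<\infty$. Since $\{T_{uc}\geq f\}\supseteq \{T_{uc}>f\}$ and the supremum over $\mathfrak{C}_{Het}$ is trivially $\leq 1$, it suffices to produce $\Sigma _{m}\in \mathfrak{C}_{Het}$ with $P^{\ast }_{\mu _{0},\sigma ^{2}\Sigma _{m}}(T_{uc}>f_{uc,1-\alpha })\to 1$. First record the equivariance behind the construction: $\tilde{\beta }_{\mathfrak{M}_{0}}$ and $\tilde{\beta }_{\mathcal{A}}$ are affine in $y$ and $\mathfrak{M}_{0}\subseteq \mathcal{A}$, so $y\mapsto y^{\ast }(y,\xi )$ is affine and satisfies $y^{\ast }(\phi (y),\xi )=\phi (y^{\ast }(y,\xi ))$ for every $\phi \in G(\mathfrak{M}_{0})$; with $G(\mathfrak{M}_{0})$-invariance of $T_{uc}$ (Remark~\ref{rem:GM0}) this yields $F_{uc,\phi (y)}=F_{uc,y}$ (so a $(1-\alpha )$-quantile at $\phi (y)$ is one at $y$) and, taking $\phi $ a translation in $\mathfrak{M}_{0}^{lin}$, that $\vartheta _{1,uc}$ and $\vartheta _{2,uc}$ do not depend on $\mu _{0}$; also $\limfunc{span}(X)$ is $\phi $-invariant. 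Now $\alpha >\vartheta _{uc}$ means $\max (\vartheta _{1,uc},\vartheta _{2,uc})>1-\alpha $, so there is an index $i$ for which either (A) $e_{i}(n)\notin \limfunc{span}(X)$ and the $\Xi $-probability in (\ref{theta1_uncorr}) exceeds $1-\alpha $, or (B) $e_{i}(n)\in \limfunc{span}(X)$, $R\hat{\beta }(e_{i}(n))\neq 0$, and the $\Xi $-probability in (\ref{theta2_uncorr}) exceeds $1-\alpha $. Choose $\Sigma _{m}=\limfunc{diag}(\tau _{1,m}^{2},\dots ,\tau _{n,m}^{2})\in \mathfrak{C}_{Het}$ with $\tau _{i,m}^{2}\to 1$, $\tau _{j,m}^{2}\to 0$ $(j\neq i)$, and represent $Y\sim N(\mu _{0},\sigma ^{2}\Sigma _{m})$ on one probability space as $Y=\mu _{0}+\sigma \Sigma _{m}^{1/2}Z$ with a fixed $Z\sim N(0,I_{n})$; then $W_{m}:=(Y-\mu _{0})/(\sigma \tau _{i,m})\to Z_{i}e_{i}(n)$ a.s., with $Z_{i}\neq 0$ a.s. By the invariance just recorded, $T_{uc}(Y)=T_{uc}(\mu _{0}+W_{m})$ and $f_{uc,1-\alpha }(Y)$ is a $(1-\alpha )$-quantile of $F_{uc,\mu _{0}+W_{m}}$.

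It then remains to check that $T_{uc}>f_{uc,1-\alpha }$ holds on a neighbourhood of each ray $\{\mu _{0}+ze_{i}(n):z\neq 0\}$. In case (A) one has $\mu _{0}+ze_{i}(n)\notin \limfunc{span}(X)$, so $T_{uc}$ is continuous there; using that $y\mapsto y^{\ast }(y,\xi )$ is affine and that the clause ``$y^{\ast }\notin \limfunc{span}(X)$'' in (\ref{theta1_uncorr}) keeps $y\mapsto T_{uc}(y^{\ast }(y,\xi ))$ continuous near the ray for $\xi $ in the relevant $\Xi $-event $E$, Fatou's lemma gives $\liminf _{y\to \mu _{0}+ze_{i}(n)}\Xi (T_{uc}^{\ast }(y,\xi )<T_{uc}(y))\geq \Xi (E)>1-\alpha $; since any $(1-\alpha )$-quantile $q$ of $F_{uc,y}$ with $q\geq T_{uc}(y)$ forces $\Xi (T_{uc}^{\ast }(y,\xi )<T_{uc}(y))\leq F_{uc,y}(q-)\leq 1-\alpha $, this gives $T_{uc}(y)>f_{uc,1-\alpha }(y)$ near the ray. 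In case (B), as $y\to \mu _{0}+ze_{i}(n)\in \limfunc{span}(X)$ with $y\notin \limfunc{span}(X)$ one has $T_{uc}(y)\to \infty $ (the numerator of $T_{uc}(y)$ tends to a strictly positive limit by $R\hat{\beta }(e_{i}(n))\neq 0$, while $\hat{\sigma }^{2}(y)\to 0$), whereas the analogous Fatou estimate applied to the event in (\ref{theta2_uncorr}) gives $\limsup _{y\to \mu _{0}+ze_{i}(n)}f_{uc,1-\alpha }(y)<\infty $, so again $T_{uc}(y)>f_{uc,1-\alpha }(y)$ near the ray; one also uses that $Y\notin \limfunc{span}(X)$ a.s.\ since $\Sigma _{m}$ is nonsingular. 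In both cases $\mu _{0}+W_{m}\to \mu _{0}+Z_{i}e_{i}(n)$ a.s.\ with $Z_{i}\neq 0$, so $T_{uc}(Y)>f_{uc,1-\alpha }(Y)$ eventually a.s., and, inner measure being continuous from below along increasing sequences, $P^{\ast }_{\mu _{0},\sigma ^{2}\Sigma _{m}}(T_{uc}>f_{uc,1-\alpha })\to 1$; this yields (\ref{eqn:bootsize_uncorr}). For Part~(b) I would rerun everything with $F_{uc,y}^{\blacktriangle }$; this needs a separate argument since the $\blacktriangle $-quantile is in general \emph{larger} than the plain one, so Part~(a) does not apply directly, but in both cases the witnessing $\Xi $-event already lies inside $\{\xi :y^{\ast }(\cdot ,\xi )\notin \limfunc{span}(X)\}$, so $T_{uc}^{\blacktriangle ,\ast }$ agrees with $T_{uc}^{\ast }$ there and the Fatou bounds are unchanged, while the limit points relevant to $\mu _{0}+W_{m}$ lie off (case A), or are approached from the complement of (case B), $\limfunc{span}(X)$, where $T_{uc}^{\blacktriangle }=T_{uc}$. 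I expect the main work to be that third step --- extracting the neighbourhood comparison ``$T_{uc}>f_{uc,1-\alpha }$'' from the delicate definitions (\ref{theta1_uncorr})--(\ref{theta2_uncorr}), in particular using the ``$y^{\ast }\notin \limfunc{span}(X)$'' clauses correctly to secure the continuity that drives the Fatou estimates --- together with the Part~(b) bookkeeping; measurability of $y\mapsto f_{uc,1-\alpha }(y)$ is sidestepped by working with inner probabilities throughout.
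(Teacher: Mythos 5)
Your proposal is correct and follows essentially the paper's own route: the paper likewise obtains the result by specializing Theorem \ref{thm:main} (applied with $T_{1}=T_{2}=T_{uc}^{\blacktriangle }$), identifying $c_{i}=T_{uc}(\mu _{0}+e_{i}(n))$ for $e_{i}(n)\notin \limfunc{span}(X)$ by continuity and $c_{i}=\infty $ for $e_{i}(n)\in \limfunc{span}(X)$ with $R\hat{\beta}(e_{i}(n))\neq 0$ via exactly your numerator/denominator argument, and your direct sketch reproduces the proof of Theorem \ref{thm:main} with an a.s.\ coupling plus Fatou in place of the paper's Portmanteau step and intermediate continuity point $x_{i}$. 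The only structural difference is the order of the two parts: the paper proves Part (b) first and then obtains Part (a) for free by choosing $f_{uc,1-\alpha }^{\blacktriangle }(y)\geq f_{uc,1-\alpha }(y)$, whereas you run Part (a) first and rerun for Part (b) --- your observation that the witnessing $\Xi $-events already force $y^{\ast }\notin \limfunc{span}(X)$ is precisely the paper's identification $\vartheta _{1,uc}^{\blacktriangle }=\vartheta _{1,uc}$ and $\vartheta _{2,uc}^{\blacktriangle }=\vartheta _{2,uc}$.
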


Mutatis mutandis, a discussion similar to the one given subsequently to
Theorem \ref{Theo_Het_unrestr} also applies here.

So far we have only considered the bootstrap scheme (\ref{boot_1}). We now
turn to the second bootstrap scheme given by (\ref{boot_2}). Here the
bootstrapped version of $T_{Het}$ is given by%
\begin{equation*}
T_{Het}^{\maltese }(y,\xi )=(R\hat{\beta}\left( y^{\maltese }(y,\xi )\right)
-R\hat{\beta}\left( y\right) )^{\prime }\hat{\Omega}_{Het}^{-1}\left(
y^{\maltese }(y,\xi )\right) (R\hat{\beta}\left( y^{\maltese }(y,\xi
)\right) -R\hat{\beta}\left( y\right) )
\end{equation*}%
if $y^{\maltese }(y,\xi )\notin \mathsf{B}$, and by $T_{Het}^{\maltese
}(y,\xi )=0$ if $y^{\maltese }(y,\xi )\in \mathsf{B}$. And the bootstrapped
version of $T_{uc}$ is given by%
\begin{equation*}
T_{uc}^{\maltese }(y,\xi )=(R\hat{\beta}\left( y^{\maltese }(y,\xi )\right)
-R\hat{\beta}\left( y\right) )^{\prime }\left( \hat{\sigma}^{2}(y^{\maltese
}(y,\xi ))R\left( X^{\prime }X\right) ^{-1}R^{\prime }\right) ^{-1}(R\hat{%
\beta}\left( y^{\maltese }(y,\xi )\right) -R\hat{\beta}\left( y\right) )
\end{equation*}%
if $y^{\maltese }(y,\xi )\not\in \limfunc{span}(X)$, and by $%
T_{uc}^{\maltese }(y,\xi )=0$ if $y^{\maltese }(y,\xi )\in \limfunc{span}(X)$%
. Furthermore, $T_{Het}^{\blacktriangle ,\maltese }(y,\xi )$ and $%
T_{uc}^{\blacktriangle ,\maltese }(y,\xi )$, are defined in exactly the same
way, except that $T_{Het}^{\blacktriangle ,\maltese }(y,\xi )=\infty $ if $%
y^{\maltese }(y,\xi )\in \mathsf{B}$ and that $T_{uc}^{\blacktriangle
,\maltese }(y,\xi )=\infty $ if $y^{\maltese }(y,\xi )\in \limfunc{span}(X)$.

We will show in the next lemma that $T_{Het}^{\maltese }(y,\xi )$ coincides
with $T_{Het}^{\ast }(y,\xi )$, and that the same is true for $%
T_{uc}^{\maltese }(y,\xi )$ and $T_{uc}^{\ast }(y,\xi )$ (as well as for $%
T_{Het}^{\blacktriangle ,\maltese }(y,\xi )$ and $T_{Het}^{\blacktriangle
,\ast }(y,\xi )$, and $T_{uc}^{\blacktriangle ,\maltese }(y,\xi )$ and $%
T_{uc}^{\blacktriangle ,\ast }(y,\xi )$), provided the same affine space $%
\mathcal{A}$ is used in (\ref{boot_1}) and (\ref{boot_2}). As a consequence,
this -- together with Remark \ref{rem:vgk} -- shows that Theorems \ref%
{Theo_Het_unrestr} and \ref{Theo_uc_unrestr} also apply immediately to the
bootstrap-based test when the second bootstrap scheme, i.e., (\ref{boot_2}),
is used (with the same $\mathcal{A}$ and $\Xi $). The lemma is certainly not
new and is a variant of a similar result given as Proposition 1 in \cite%
{vG_K_2002}.

\begin{lemma}
\label{VGK}We have $T_{Het}^{\ast }(y,\xi )=T_{Het}^{\maltese }(y,\xi )$, $%
T_{uc}^{\ast }(y,\xi )=T_{uc}^{\maltese }(y,\xi )$, $T_{Het}^{\blacktriangle
,\ast }(y,\xi )=T_{Het}^{\blacktriangle ,\maltese }(y,\xi )$, and $%
T_{uc}^{\blacktriangle ,\ast }(y,\xi )=T_{uc}^{\blacktriangle ,\maltese
}(y,\xi )$ for every $y\in \mathbb{R}^{n}$ and every $\xi \in \mathbb{R}^{n}$%
. [Here it is understood that both bootstrap schemes are based on the same
affine space $\mathcal{A}$.]
\end{lemma}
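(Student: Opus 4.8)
The plan is to observe that the two bootstrap samples differ only by a vector lying in $\limfunc{span}(X)$, and that every ingredient of the test statistics is either unchanged under, or transforms in a controlled way under, translation by such a vector. First I would record that, by (\ref{boot_1}) and (\ref{boot_2}) with the same affine space $\mathcal{A}$ in both (so the $\limfunc{diag}(\xi)$-terms cancel), $y^{\maltese }(y,\xi )=y^{\ast }(y,\xi )+v$ with $v:=X\hat{\beta}(y)-X\tilde{\beta}_{\mathfrak{M}_{0}}(y)\in \limfunc{span}(X)$. Since $\hat{u}(y+w)=\hat{u}(y)$ for every $w\in \limfunc{span}(X)$ (the $\gamma =1$ instance of the relation noted just before Lemma \ref{lem_B}), and since $\hat{\Omega}_{Het}$ and $\hat{\sigma}^{2}$ are functions of $\hat{u}$ alone, we get $\hat{\Omega}_{Het}(y^{\maltese }(y,\xi ))=\hat{\Omega}_{Het}(y^{\ast }(y,\xi ))$ and $\hat{\sigma}^{2}(y^{\maltese }(y,\xi ))=\hat{\sigma}^{2}(y^{\ast }(y,\xi ))$. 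By Lemma \ref{lem_B}(e), $\mathsf{B}+\limfunc{span}(X)=\mathsf{B}$, and trivially $\limfunc{span}(X)+\limfunc{span}(X)=\limfunc{span}(X)$, so the events $\{y^{\ast }(y,\xi )\in \mathsf{B}\}$ and $\{y^{\maltese }(y,\xi )\in \mathsf{B}\}$ coincide, as do $\{y^{\ast }(y,\xi )\in \limfunc{span}(X)\}$ and $\{y^{\maltese }(y,\xi )\in \limfunc{span}(X)\}$.

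Next I would treat the remaining ``numerator'' terms. Since $\hat{\beta}(\cdot )=(X^{\prime }X)^{-1}X^{\prime }(\cdot )$ is linear with $\hat{\beta}(Xb)=b$, we obtain $\hat{\beta}(y^{\maltese }(y,\xi ))=\hat{\beta}(y^{\ast }(y,\xi ))+\hat{\beta}(y)-\tilde{\beta}_{\mathfrak{M}_{0}}(y)$, hence $R\hat{\beta}(y^{\maltese }(y,\xi ))-R\hat{\beta}(y)=R\hat{\beta}(y^{\ast }(y,\xi ))-R\tilde{\beta}_{\mathfrak{M}_{0}}(y)$; and since $X\tilde{\beta}_{\mathfrak{M}_{0}}(y)\in \mathfrak{M}_{0}$ and $X$ has full column rank, $R\tilde{\beta}_{\mathfrak{M}_{0}}(y)=r$, so this equals $R\hat{\beta}(y^{\ast }(y,\xi ))-r$. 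Combining this with the previous paragraph and with Lemma \ref{lem_B}(b) (which identifies $\mathsf{B}$ with the set of $z$ for which $\hat{\Omega}_{Het}(z)$ is singular), on the event $y^{\ast }(y,\xi )\notin \mathsf{B}$ both $T_{Het}^{\ast }(y,\xi )$ and $T_{Het}^{\maltese }(y,\xi )$ equal $(R\hat{\beta}(y^{\ast }(y,\xi ))-r)^{\prime }\hat{\Omega}_{Het}^{-1}(y^{\ast }(y,\xi ))(R\hat{\beta}(y^{\ast }(y,\xi ))-r)$, while on its complement both equal $0$; the argument for $T_{uc}$ is identical with $\hat{\Omega}_{Het}$ and $\mathsf{B}$ replaced by $\hat{\sigma}^{2}R(X^{\prime }X)^{-1}R^{\prime }$ and $\limfunc{span}(X)$. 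For the $\blacktriangle $-versions the only change is that the value on the respective exceptional event is $\infty $ rather than $0$; since those events agree across the two schemes, the identities $T_{Het}^{\blacktriangle ,\ast }=T_{Het}^{\blacktriangle ,\maltese }$ and $T_{uc}^{\blacktriangle ,\ast }=T_{uc}^{\blacktriangle ,\maltese }$ follow immediately from the non-triangle case.

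I do not expect a genuine obstacle here: the argument is a direct verification. The only points needing care are the bookkeeping — pairing the correct exceptional set ($\mathsf{B}$ with the $Het$-statistics, $\limfunc{span}(X)$ with the $uc$-statistics) — and invoking the translation invariance of $\hat{u}$ and of $\mathsf{B}$ from Lemma \ref{lem_B}(e) rather than re-deriving it, together with the elementary identity $R\tilde{\beta}_{\mathfrak{M}_{0}}(y)=r$.
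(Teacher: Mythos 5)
Your proof is correct and follows essentially the same route as the paper's: decompose $y^{\maltese}(y,\xi)-y^{\ast}(y,\xi)$ as an element of $\limfunc{span}(X)$, use invariance of $\hat{u}$ (hence of $\hat{\Omega}_{Het}$ and $\hat{\sigma}^{2}$) under such translations, the identity $R\tilde{\beta}_{\mathfrak{M}_{0}}(y)=r$ for the numerators, and $\mathsf{B}+\limfunc{span}(X)=\mathsf{B}$ from Lemma \ref{lem_B} to match the exceptional sets. The only difference is that you spell out the linearity bookkeeping slightly more explicitly than the paper does.
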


\begin{remark}
\label{rem:vgk}Define $\theta _{Het}$ exactly in the same way as $\vartheta
_{Het}$, except that $T_{Het}^{\ast }$ and $y^{\ast }$ are replaced by $%
T_{Het}^{\maltese }$ and $y^{\maltese }$. Similarly define $\theta _{uc}$.
Then $\theta _{Het}=\vartheta _{Het}$ and $\theta _{uc}=\vartheta _{uc}$ in
view of Lemma \ref{VGK} and the fact that $y^{\ast }(y,\xi )\notin \mathsf{B}
$ iff $y^{\maltese }(y,\xi )\notin \mathsf{B}$ and $y^{\ast }(y,\xi )\notin 
\limfunc{span}(X)$ iff $y^{\maltese }(y,\xi )\notin \limfunc{span}(X)$ (note
that $y^{\ast }(y,\xi )-y^{\maltese }(y,\xi )\in \limfunc{span}(X)$ and that 
$\mathsf{B}+\limfunc{span}(X)=\mathsf{B}$).
\end{remark}

\subsection{Bootstrap-based tests derived from $\tilde{T}_{Het}$ and $\tilde{%
T}_{uc}$ \label{restrict_res}}

Based on suggestions in the literature on bootstrapping heteroskedasticity
robust tests, we next consider two further test statistics, which are
versions of $T_{Het}$ and $T_{uc}$ with the only difference that the
covariance matrix estimators used are computed from restricted -- instead of
unrestricted -- residuals. We thus define%
\begin{equation}
\tilde{T}_{Het}\left( y\right) =\left\{ 
\begin{array}{cc}
(R\hat{\beta}\left( y\right) -r)^{\prime }\tilde{\Omega}_{Het}^{-1}\left(
y\right) (R\hat{\beta}\left( y\right) -r) & \text{if }\det \tilde{\Omega}%
_{Het}\left( y\right) \neq 0, \\ 
0 & \text{if }\det \tilde{\Omega}_{Het}\left( y\right) =0,%
\end{array}%
\right.  \label{T_Het_tilde}
\end{equation}%
where $\tilde{\Omega}_{Het}=R\tilde{\Psi}_{Het}R^{\prime }$ and where $%
\tilde{\Psi}_{Het}$ is given by%
\begin{equation*}
\tilde{\Psi}_{Het}\left( y\right) =(X^{\prime }X)^{-1}X^{\prime }\limfunc{%
diag}\left( \tilde{d}_{1}\tilde{u}_{1}^{2}\left( y\right) ,\ldots ,\tilde{d}%
_{n}\tilde{u}_{n}^{2}\left( y\right) \right) X(X^{\prime }X)^{-1},
\end{equation*}%
where the constants $\tilde{d}_{i}>0$ sometimes depend on the design matrix
and on the restriction matrix $R$. Here $\tilde{u}\left( y\right) =y-X\tilde{%
\beta}_{\mathfrak{M}_{0}}(y)=\Pi _{(\mathfrak{M}_{0}^{lin})^{\bot }}(y-\mu
_{0})$, where the last expression does not depend on the choice of $\mu
_{0}\in \mathfrak{M}_{0}$, and where $\tilde{u}_{t}\left( y\right) $ denotes
the $t$-th component of $\tilde{u}\left( y\right) $. Typical choices for $%
\tilde{d}_{i}$ are $\tilde{d}_{i}=1$, $\tilde{d}_{i}=n/(n-(k-q))$, $\tilde{d}%
_{i}=(1-\tilde{h}_{ii})^{-1}$, or $\tilde{d}_{i}=(1-\tilde{h}_{ii})^{-2}$
where $\tilde{h}_{ii}$ denotes the $i$-th diagonal element of the projection
matrix $\Pi _{\mathfrak{M}_{0}^{lin}}$, see, e.g., \cite%
{DavidsonMacKinnon1985}. Another suggestion is $\tilde{d}_{i}=(1-\tilde{h}%
_{ii})^{-\tilde{\delta}_{i}}$ for $\tilde{\delta}_{i}=\min (n\tilde{h}%
_{ii}/(k-q),4)$ with the convention that $\tilde{\delta}_{i}=0$ if $k=q$.%
\footnote{%
Note that in case $k=q$ we have $\tilde{h}_{ii}=0$, and hence $\tilde{d}%
_{i}=1$ regardless of our convention for $\tilde{\delta}_{i}$.} For the last
three choices of $\tilde{d}_{i}$ just given we use the convention that we
set $\tilde{d}_{i}=1$ in case $\tilde{h}_{ii}=1$. Note that $\tilde{h}%
_{ii}=1 $ implies $\tilde{u}_{i}\left( y\right) =0$ for every $y$, and hence
it is irrelevant which real value is assigned to $\tilde{d}_{i}$ in case $%
\tilde{h}_{ii}=1$.\footnote{%
In fact, $\tilde{h}_{ii}=1$ is equivalent to $\tilde{u}_{i}\left( y\right)
=0 $ for every $y$, each of which in turn is equivalent to $e_{i}(n)\in $ $%
\mathfrak{M}_{0}^{lin}$.} The five examples for the weights $\tilde{d}_{i}$
just given correspond to what is often called HC0R-HC4R weights in the
literature.\footnote{%
In the case $k=q$ the HC0R-HC4R weights all coincide ($\tilde{d}_{i}=1$ for
every $i$), and hence result in the same test statistic.}

The subsequent assumption ensures that the set of $y$'s for which $\tilde{%
\Omega}_{Het}\left( y\right) $ is singular is a Lebesgue null set, implying
that our choice of assigning $\tilde{T}_{Het}\left( y\right) $ the value
zero in case $\tilde{\Omega}_{Het}\left( y\right) $ is singular has no
import on the rejection probabilities of the (non-bootstrap-based) tests
obtained from $\tilde{T}_{Het}$ (as the measures $P_{\mu ,\sigma ^{2}\Sigma
} $ are absolutely continuous). As will be seen later, our results for the
corresponding bootstrap-based tests do also not depend on this choice. Also,
as discussed further below, the assumption is in a certain sense unavoidable
when using $\tilde{T}_{Het}$.

\begin{assumption}
\label{R_and_X_tilde}Let $1\leq i_{1}<\ldots <i_{s}\leq n$ denote all the
indices for which $e_{i_{j}}(n)\in \mathfrak{M}_{0}^{lin}$ holds where $%
e_{j}(n)$ denotes the $j$-th standard basis vector in $\mathbb{R}^{n}$. If
no such index exists, set $s=0$. Let $X^{\prime }\left( \lnot (i_{1},\ldots
i_{s})\right) $ denote the matrix which is obtained from $X^{\prime }$ by
deleting all columns with indices $i_{j}$, $1\leq i_{1}<\ldots <i_{s}\leq n$
(if $s=0$ no column is deleted). Then $\limfunc{rank}\left( R(X^{\prime
}X)^{-1}X^{\prime }\left( \lnot (i_{1},\ldots i_{s})\right) \right) =q$
holds.
\end{assumption}

Observe that this assumption only depends on $X$ and $R$ and hence can be
checked. Obviously, a simple sufficient condition for Assumption \ref%
{R_and_X_tilde} to hold is that $s=0$ (i.e., that $e_{j}(n)\notin \mathfrak{M%
}_{0}^{lin}$ for all $j$), a generically satisfied condition. Furthermore,
we introduce the matrix%
\begin{eqnarray}
\tilde{B}(y) &=&R(X^{\prime }X)^{-1}X^{\prime }\limfunc{diag}\left( \tilde{u}%
_{1}(y),\ldots ,\tilde{u}_{n}(y)\right)  \notag \\
&=&R(X^{\prime }X)^{-1}X^{\prime }\limfunc{diag}\left( e_{1}^{\prime }(n)\Pi
_{(\mathfrak{M}_{0}^{lin})^{\bot }}(y-\mu _{0}),\ldots ,e_{n}^{\prime
}(n)\Pi _{(\mathfrak{M}_{0}^{lin})^{\bot }}(y-\mu _{0})\right) .
\label{B_tilde_matrix}
\end{eqnarray}%
Note that this matrix does not depend on the choice of $\mu _{0}\in 
\mathfrak{M}_{0}$. The following lemma collects some important properties of 
$\tilde{\Omega}_{Het}$ and $\mathsf{\tilde{B}}$ (defined in that lemma). Its
proof is given in Appendix \ref{App C}.

\begin{lemma}
\label{lem:B_tilde}(a) $\tilde{\Omega}_{Het}\left( y\right) $ is nonnegative
definite for every $y\in \mathbb{R}^{n}$.

(b) $\tilde{\Omega}_{Het}\left( y\right) $ is singular (zero, respectively)
if and only if $\limfunc{rank}(\tilde{B}(y))<q$ ($\tilde{B}(y)=0$,
respectively).

(c) The set $\mathsf{\tilde{B}}$ given by $\{y\in \mathbb{R}^{n}:\limfunc{%
rank}(\tilde{B}(y))<q\}$ (or, in view of (b), equivalently given by $\{y\in 
\mathbb{R}^{n}:\det (\tilde{\Omega}_{Het}\left( y\right) )=0\}$) is either a 
$\lambda _{\mathbb{R}^{n}}$-null set or the entire sample space $\mathbb{R}%
^{n}$. The latter occurs if and only if Assumption \ref{R_and_X_tilde} is
violated (in which case the test based on $\tilde{T}_{Het}$ becomes trivial,
as then $\tilde{T}_{Het}$ is identically zero).

(d) Suppose Assumption \ref{R_and_X_tilde} holds. Then for every $\mu
_{0}\in \mathfrak{M}_{0}$ the set $\mathsf{\tilde{B}}-\mu _{0}$ is a finite
union of proper linear subspaces; in case $q=1$, $\mathsf{\tilde{B}}-\mu
_{0} $ is even a proper linear subspace itself. \footnote{%
Consequently, $\mathsf{\tilde{B}}$ is a finite union of proper affine
subspaces, and is a proper affine subspace itself in case $q=1$.}$^{\text{,}%
} $\footnote{%
If Assumption \ref{R_and_X_tilde} is violated, then $\mathsf{\tilde{B}}-\mu
_{0}=\mathsf{\tilde{B}}=\mathbb{R}^{n}$ in view of Part (c).} [Note that $%
\mathsf{\tilde{B}}-\mu _{0}$ does not depend on the choice of $\mu _{0}\in 
\mathfrak{M}_{0}$. In particular, if $r=0$, i.e., if $\mathfrak{M}_{0}$ is
linear, we thus may set $\mu _{0}=0$.]

(e) $\mathsf{\tilde{B}}$ is a closed set and contains $\mathfrak{M}_{0}$.
Also $\mathsf{\tilde{B}}$ is $G(\mathfrak{M}_{0})$-invariant, and in
particular $\mathsf{\tilde{B}}+\mathfrak{M}_{0}^{lin}=\mathsf{\tilde{B}}$.
\end{lemma}

In light of Part (c) of the lemma, we see that Assumption \ref{R_and_X_tilde}
is a natural and unavoidable condition if one wants to obtain a sensible
test from $\tilde{T}_{Het}$.\footnote{%
If this assumption is violated then $\tilde{T}_{Het}$ is identically zero,
an uninteresting trivial case.} Furthermore, note that if $\mathsf{\tilde{B}}%
=\mathfrak{M}_{0}$ is true, then Assumption \ref{R_and_X_tilde} must be
satisfied (since $\mathfrak{M}_{0}$ is a $\lambda _{\mathbb{R}^{n}}$-null
set as $k-q<n$ is always the case). For later use we also mention that under
Assumption \ref{R_and_X_tilde} the statistic $\tilde{T}_{Het}$ is continuous
at every $y\in \mathbb{R}^{n}\backslash \mathsf{\tilde{B}}$.\footnote{%
If Assumption \ref{R_and_X_tilde} is violated, then $\tilde{T}_{Het}$ is
constant equal to zero, and hence trivially continuous everywhere.}

We finally consider in analogy with $T_{uc}$%
\begin{equation}
\tilde{T}_{uc}(y)=\left\{ 
\begin{array}{cc}
(R\hat{\beta}\left( y\right) -r)^{\prime }\left( \tilde{\sigma}%
^{2}(y)R\left( X^{\prime }X\right) ^{-1}R^{\prime }\right) ^{-1}(R\hat{\beta}%
\left( y\right) -r) & \text{if }y\notin \mathfrak{M}_{0}, \\ 
0 & \text{if }y\in \mathfrak{M}_{0},%
\end{array}%
\right.  \label{T_uncorr_tilde}
\end{equation}%
where $\tilde{\sigma}^{2}(y)=\tilde{u}\left( y\right) ^{\prime }\tilde{u}%
\left( y\right) /(n-(k-q))\geq 0$ (which vanishes if and only if $y\in 
\mathfrak{M}_{0}$). Of course, our choice to set $\tilde{T}_{uc}(y)=0$ for $%
y\in \mathfrak{M}_{0}$ has no import on the rejection probabilities of the
(non-bootstrap-based) tests obtained from $\tilde{T}_{uc}$, since $\mathfrak{%
M}_{0}$ is a $\lambda _{\mathbb{R}^{n}}$-null set (and since the measures $%
P_{\mu ,\sigma ^{2}\Sigma }$ are absolutely continuous). It will turn out
also not to affect our results for bootstrap-based tests obtained from $%
\tilde{T}_{uc}$. For later use we also mention that $\tilde{T}_{uc}$ is
continuous at every $y\in \mathbb{R}^{n}\backslash \mathfrak{M}_{0}$.

\begin{remark}
\label{rem:tildeGM0}The test statistics $\tilde{T}_{Het}$ as well as $\tilde{%
T}_{uc}$ are $G(\mathfrak{M}_{0})$-invariant as is easily seen (with the
respective exceptional sets $\mathsf{\tilde{B}}$ and $\mathfrak{M}_{0}$ also
being $G(\mathfrak{M}_{0})$-invariant), but typically they are \emph{not}
nonsphericity-corrected F-type tests in the sense of Section 5.4 in \cite%
{PP2016}.
\end{remark}

In the theorems given in the next two subsections we use the same bootstrap
schemes as before (i.e., (\ref{boot_1}) and (\ref{boot_2})); in particular,
recall that $\Xi $ is a (Borel) probability measure on $\mathbb{R}^{n}$, and
that $\mathcal{A}$ is an affine subspace of $\mathbb{R}^{n}$ satisfying $%
\mathfrak{M}_{0}\subseteq \mathcal{A}\subseteq \limfunc{span}(X)$.

\subsubsection{The first bootstrap scheme}

We start with results where the first bootstrap scheme,\ i.e., (\ref{boot_1}%
), is being used. The bootstrapped test statistic corresponding to $\tilde{T}%
_{Het}$ is then given by $\tilde{T}_{Het}^{\ast }$, where $\tilde{T}%
_{Het}^{\ast }:\mathbb{R}^{n}\times \mathbb{R}^{n}\rightarrow \mathbb{R}$ is
defined via 
\begin{equation*}
\tilde{T}_{Het}^{\ast }(y,\xi )=\tilde{T}_{Het}\left( y^{\ast }(y,\xi
)\right) .
\end{equation*}%
Furthermore, for every $y\in \mathbb{R}^{n}$ denote the distribution
function of the bootstrapped test statistic under $\Xi $ by $\tilde{F}%
_{Het,y}$, i.e., $\tilde{F}_{Het,y}(t)=\Xi \mathbb{(}\tilde{T}_{Het}^{\ast
}(y,\xi )\leq t)$ for $t\in \mathbb{R}$. For similar reasons as in Section %
\ref{unrestr_res}, we also consider the modification of $\tilde{T}_{Het}$
defined by $\tilde{T}_{Het}^{\blacktriangle }\left( y\right) =\tilde{T}%
_{Het}\left( y\right) $ if $y\notin \mathsf{\tilde{B}}$ and $\tilde{T}%
_{Het}^{\blacktriangle }(y)=\infty $ otherwise. Its bootstrapped version is
then given by $\tilde{T}_{Het}^{\blacktriangle ,\ast }(y,\xi )=\tilde{T}%
_{Het}^{\blacktriangle }\left( y^{\ast }(y,\xi )\right) $. For every $y\in 
\mathbb{R}^{n}$ we denote its distribution function under $\Xi $ by $\tilde{F%
}_{Het,y}^{\blacktriangle }$, i.e., $\tilde{F}_{Het,y}^{\blacktriangle
}(t)=\Xi \mathbb{(}\tilde{T}_{Het}^{\blacktriangle ,\ast }(y,\xi )\leq t)$
for $t\in \mathbb{R\cup \{\infty \}}$.

\begin{theorem}
\label{Theo_Het_restr_1}Suppose Assumption \ref{R_and_X_tilde} holds.

(a) For every $\alpha \in (0,1)$, let $\tilde{f}_{Het,1-\alpha }(y)$ denote
a $(1-\alpha )$-quantile of $\tilde{F}_{Het,y}$. Define%
\begin{equation}
\tilde{\vartheta}_{Het}=1-\max_{\substack{ i=1,\ldots ,n,  \\ \mu
_{0}+e_{i}(n)\notin \mathsf{\tilde{B}}}}\Xi \left( \left\{ \xi :\tilde{T}%
_{Het}^{\ast }(\mu _{0}+e_{i}(n),\xi )<\tilde{T}_{Het}(\mu
_{0}+e_{i}(n)),y^{\ast }(\mu _{0}+e_{i}(n),\xi )\notin \mathsf{\tilde{B}}%
\right\} \right)  \label{theta1_Het_tilde}
\end{equation}%
for some $\mu _{0}\in \mathfrak{M}_{0}$, with the convention that $\tilde{%
\vartheta}_{Het}=1$ if the index set in the maximum operator in (\ref%
{theta1_Het_tilde}) is empty. Then $\tilde{\vartheta}_{Het}$ does not depend
on the choice of $\mu _{0}\in \mathfrak{M}_{0}$. Furthermore, for every $%
\alpha \in (0,1)$ such that $\alpha >\tilde{\vartheta}_{Het}$ holds, we have%
\begin{equation}
\sup_{\Sigma \in \mathfrak{C}_{Het}}P_{\mu _{0},\sigma ^{2}\Sigma }\left( 
\tilde{T}_{Het}\geq \tilde{f}_{Het,1-\alpha }\right) \geq \sup_{\Sigma \in 
\mathfrak{C}_{Het}}P_{\mu _{0},\sigma ^{2}\Sigma }\left( \tilde{T}_{Het}>%
\tilde{f}_{Het,1-\alpha }\right) =1  \label{eqn:bootsize_het_tilde}
\end{equation}%
for every $\mu _{0}\in \mathfrak{M}_{0}$ and every $0<\sigma ^{2}<\infty $
(where the probabilities in (\ref{eqn:bootsize_het_tilde}) are to be
interpreted as inner probabilities).

(b) For every $\alpha \in (0,1)$, let $\tilde{f}_{Het,1-\alpha
}^{\blacktriangle }(y)$ denote a $(1-\alpha )$-quantile of $\tilde{F}%
_{Het,y}^{\blacktriangle }$. Then, with $\tilde{\vartheta}_{Het}$ defined in
Part (a), for every $\alpha \in (0,1)$ such that $\alpha >\tilde{\vartheta}%
_{Het}$ holds, we have%
\begin{equation}
\sup_{\Sigma \in \mathfrak{C}_{Het}}P_{\mu _{0},\sigma ^{2}\Sigma }\left( 
\tilde{T}_{Het}\geq \tilde{f}_{Het,1-\alpha }^{\blacktriangle }\right) \geq
\sup_{\Sigma \in \mathfrak{C}_{Het}}P_{\mu _{0},\sigma ^{2}\Sigma }\left( 
\tilde{T}_{Het}>\tilde{f}_{Het,1-\alpha }^{\blacktriangle }\right) =1
\label{eqn:bootsize_het_tilde_triangle}
\end{equation}%
for every $\mu _{0}\in \mathfrak{M}_{0}$ and every $0<\sigma ^{2}<\infty $
(where the probabilities in (\ref{eqn:bootsize_het_tilde_triangle}) are to
be interpreted as inner probabilities).
\end{theorem}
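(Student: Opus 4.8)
The plan is to follow the strategy behind Theorem~\ref{Theo_Het_unrestr} and the size-one results in \cite{PP2016,PP3}: find a coordinate direction $e_{i_{0}}(n)$ such that the bootstrap-based rejection region contains an open neighbourhood of $\mu_{0}+e_{i_{0}}(n)$, and then let the heteroskedasticity concentrate (almost) all the variance on the $i_{0}$-th coordinate, which drives the null rejection probability to $1$. Since $\{\tilde{T}_{Het}\geq \tilde{f}_{Het,1-\alpha}\}\supseteq\{\tilde{T}_{Het}>\tilde{f}_{Het,1-\alpha}\}$ pointwise, only the displayed equality in (\ref{eqn:bootsize_het_tilde}) needs proof, and the same reduction applies in Part~(b).

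First I would carry out the invariance bookkeeping. Using $X\tilde{\beta}_{\mathfrak{M}_{0}}(y)=\mu_{0}+\Pi_{\mathfrak{M}_{0}^{lin}}(y-\mu_{0})$ and $X\tilde{\beta}_{\mathcal{A}}(y)=\mu_{0}+\Pi_{\mathcal{A}-\mu_{0}}(y-\mu_{0})$, a direct computation gives $y^{\ast}(g(y),\xi)=g(y^{\ast}(y,\xi))$ for every $g\in G(\mathfrak{M}_{0})$ and every $\xi$. Combined with the $G(\mathfrak{M}_{0})$-invariance of $\tilde{T}_{Het}$ and of $\tilde{T}_{Het}^{\blacktriangle}$ (Remark~\ref{rem:tildeGM0}, Lemma~\ref{lem:B_tilde}(e)), this yields $\tilde{T}_{Het}^{\ast}(g(y),\xi)=\tilde{T}_{Het}^{\ast}(y,\xi)$ for all $\xi$, hence $\tilde{F}_{Het,g(y)}=\tilde{F}_{Het,y}$; in particular the whole set of $(1-\alpha)$-quantiles of $\tilde{F}_{Het,y}$ depends on $y$ only through its $G(\mathfrak{M}_{0})$-orbit, and $\tilde{\vartheta}_{Het}$ does not depend on the choice of $\mu_{0}\in\mathfrak{M}_{0}$ (translating $\mu_{0}$ within $\mathfrak{M}_{0}$ lies in $G(\mathfrak{M}_{0})$, and $\mathsf{\tilde{B}}$ is $G(\mathfrak{M}_{0})$-invariant). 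Writing $q_{\max}(y)$ for the largest $(1-\alpha)$-quantile of $\tilde{F}_{Het,y}$, the set $W^{-}=\{y:\tilde{T}_{Het}(y)>q_{\max}(y)\}$ is therefore $G(\mathfrak{M}_{0})$-invariant and satisfies $W^{-}\subseteq W:=\{y:\tilde{T}_{Het}(y)>\tilde{f}_{Het,1-\alpha}(y)\}$; it suffices to show that $W^{-}$ has size $1$ over $\mathfrak{C}_{Het}$.

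The core step is locating a good direction and controlling the quantile there. Since $\alpha>\tilde{\vartheta}_{Het}$, there is an index $i_{0}$ with $\mu_{0}+e_{i_{0}}(n)\notin\mathsf{\tilde{B}}$ and $\Xi(N_{i_{0}})>1-\alpha$, where $N_{i_{0}}$ is the set in (\ref{theta1_Het_tilde}); non-emptiness of $N_{i_{0}}$ forces $t_{0}:=\tilde{T}_{Het}(y_{0})>0$ with $y_{0}:=\mu_{0}+e_{i_{0}}(n)$. Because $\tilde{T}_{Het}^{\ast}(y_{0},\xi)=0<t_{0}$ whenever $y^{\ast}(y_{0},\xi)\in\mathsf{\tilde{B}}$, we get $\tilde{F}_{Het,y_{0}}(t_{0}-)=\Xi(\tilde{T}_{Het}^{\ast}(y_{0},\xi)<t_{0})\geq\Xi(N_{i_{0}})>1-\alpha$. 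I would then propagate this strict inequality into a neighbourhood of $y_{0}$ with a quantitative gap: choose $\delta>0$ small enough that $\Xi(\{\xi\in N_{i_{0}}:\tilde{T}_{Het}^{\ast}(y_{0},\xi)\leq t_{0}-\delta\})>1-\alpha$ (possible since these sets increase to $N_{i_{0}}$ as $\delta\downarrow0$), and use that for each such $\xi$ the map $y\mapsto\tilde{T}_{Het}^{\ast}(y,\xi)=\tilde{T}_{Het}(y^{\ast}(y,\xi))$ is continuous at $y_{0}$ (since $y^{\ast}(\cdot,\xi)$ is affine, $y^{\ast}(y_{0},\xi)\notin\mathsf{\tilde{B}}$, and $\mathsf{\tilde{B}}$ is closed with $\tilde{T}_{Het}$ continuous off it); Fatou's lemma then gives $\liminf_{y\to y_{0}}\Xi(\tilde{T}_{Het}^{\ast}(y,\xi)<t_{0}-\delta/2)>1-\alpha$, so on a neighbourhood of $y_{0}$ one has $\tilde{F}_{Het,y}(t_{0}-\delta/2)>1-\alpha$ and hence $q_{\max}(y)\leq t_{0}-\delta/2$, while $\tilde{T}_{Het}(y)\to t_{0}$; shrinking the neighbourhood, $\tilde{T}_{Het}(y)>q_{\max}(y)$ there, i.e. $y_{0}\in\limfunc{int}(W^{-})$. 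This quantile-control step — extracting a uniform gap $\delta$ from the single-point inequality while the bootstrap sample may land in $\mathsf{\tilde{B}}$ and $\Xi$ may be atomic — is the step I expect to be the main obstacle; it is also exactly why Part~(b) is needed, for atoms of $\Xi$ can make the $\blacktriangle$-quantile strictly larger, yet on $N_{i_{0}}$ one has $\tilde{T}_{Het}^{\blacktriangle,\ast}=\tilde{T}_{Het}^{\ast}<t_{0}$, so the identical computation yields $\tilde{F}_{Het,y}^{\blacktriangle}(t_{0}-\delta/2)>1-\alpha$ near $y_{0}$ and the same conclusion for the corresponding set $W^{\blacktriangle,-}$.

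Finally I would run the concentration argument. As $W^{-}$ is $G(\mathfrak{M}_{0})$-invariant, $W^{-}-\mu_{0}$ is invariant under $w\mapsto\delta w$ for all $\delta\neq0$ and under translations by $\mathfrak{M}_{0}^{lin}$, so $\limfunc{int}(W^{-}-\mu_{0})$ is a cone with the same invariances containing $e_{i_{0}}(n)$; in particular there is $\rho>0$ with $\{w:\Vert w-e_{i_{0}}(n)\Vert<\rho\}\subseteq W^{-}-\mu_{0}$. Take $\Sigma_{m}=\limfunc{diag}(\tau_{1,m}^{2},\ldots,\tau_{n,m}^{2})\in\mathfrak{C}_{Het}$ with $\tau_{i_{0},m}^{2}\to1$ and $\tau_{j,m}^{2}\to0$ for $j\neq i_{0}$. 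Writing $Y=\mu_{0}+\sigma\sum_{j}\tau_{j,m}Z_{j}e_{j}(n)$ with $Z\sim N(0,I_{n})$ and using scale-invariance of $W^{-}-\mu_{0}$, one has $Y\in W^{-}$ whenever $Z_{i_{0}}\neq0$ and $\Vert\sum_{j\neq i_{0}}(\tau_{j,m}/\tau_{i_{0},m})Z_{j}e_{j}(n)\Vert<|Z_{i_{0}}|\rho$; this event is measurable and, since the perturbation tends to $0$ almost surely, has probability tending to $1$. Hence the inner probability of $\{Y\in W^{-}\}$ under $P_{\mu_{0},\sigma^{2}\Sigma_{m}}$ tends to $1$, and by $W^{-}\subseteq W$ and monotonicity of inner probability $\sup_{\Sigma\in\mathfrak{C}_{Het}}P_{\mu_{0},\sigma^{2}\Sigma}(\tilde{T}_{Het}>\tilde{f}_{Het,1-\alpha})=1$, which is (\ref{eqn:bootsize_het_tilde}); Part~(b) follows verbatim with $W^{\blacktriangle,-}$ in place of $W^{-}$. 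The case of an empty index set in (\ref{theta1_Het_tilde}) is vacuous, since then $\tilde{\vartheta}_{Het}=1\geq\alpha$.
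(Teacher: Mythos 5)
Your proof is correct and follows the same overall strategy as the paper's: the paper proves Part (b) by feeding $T_{1}=\tilde{T}_{Het}$, $T_{2}=\tilde{T}_{Het}^{\blacktriangle}$ into the abstract Theorem \ref{thm:main} and then deduces Part (a) by quantile monotonicity, whereas you inline that machinery into a self-contained argument; the skeleton (good direction $e_{i_{0}}(n)$, quantile bounded away from $\tilde{T}_{Het}(y_{0})$ on a neighbourhood, $G(\mathfrak{M}_{0})$-invariance to blow the neighbourhood up to a punctured cone, variance concentration on coordinate $i_{0}$) is identical. Two implementation details differ and are both sound. First, where the paper picks a continuity point $x_{i}$ of the cdf below $c_{i}$ and invokes Lemma \ref{lem:convdisc} (which needs $\Xi(\bar S(V)=x_{i})=0$), you extract a quantitative gap $\delta$ from $\Xi(N_{i_{0}})>1-\alpha$ by continuity from below and apply Fatou directly to the indicators on $N_{i_{0}}$, where $y^{\ast}(y_{0},\xi)\notin\mathsf{\tilde{B}}$ guarantees the pointwise convergence you need; this neatly sidesteps the continuity-point selection, and your observation that $\tilde{T}_{Het}^{\blacktriangle,\ast}=\tilde{T}_{Het}^{\ast}$ on $N_{i_{0}}$ is exactly why the same computation bounds the (larger) $\blacktriangle$-quantile and yields Part (b). Second, in the concentration step the paper uses weak convergence of $P_{\mu_{0},\sigma^{2}\Sigma_{m}}$ to $P_{\mu_{0},\sigma^{2}e_{i_{0}}(n)e_{i_{0}}(n)'}$ plus Portmanteau applied to the open cone, while you compute directly with the Gaussian representation and the scale/translation invariance of $W^{-}-\mu_{0}$; your version is more elementary and handles the measurability issue correctly via a measurable subevent and inner probabilities. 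The only caveats are cosmetic: the conclusion for an empty index set is that the theorem is vacuous (there is no $\alpha\in(0,1)$ with $\alpha>1$), which you note, and your claim $y^{\ast}(g(y),\xi)=g(y^{\ast}(y,\xi))$ for $g\in G(\mathfrak{M}_{0})$ is a slightly stronger (but easily verified) form of the invariance the paper records inside the proof of Theorem \ref{thm:main}.
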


We next turn to the test statistic $\tilde{T}_{uc}$. Again using the first
bootstrap scheme, the bootstrapped test statistic is then given by $\tilde{T}%
_{uc}^{\ast }$, where $\tilde{T}_{uc}^{\ast }:\mathbb{R}^{n}\times \mathbb{R}%
^{n}\rightarrow \mathbb{R}$ is defined via 
\begin{equation*}
\tilde{T}_{uc}^{\ast }(y,\xi )=\tilde{T}_{uc}\left( y^{\ast }(y,\xi )\right)
.
\end{equation*}%
Furthermore, for every $y\in \mathbb{R}^{n}$ denote the distribution
function of the bootstrapped test statistic under $\Xi $ by $\tilde{F}%
_{uc,y} $, i.e., $\tilde{F}_{uc,y}(t)=\Xi \mathbb{(}\tilde{T}_{uc}^{\ast
}(y,\xi )\leq t)$ for $t\in \mathbb{R}$. We also consider the modification
of $\tilde{T}_{uc}$ defined by $\tilde{T}_{uc}^{\blacktriangle }\left(
y\right) =\tilde{T}_{uc}\left( y\right) $ if $y\notin \mathfrak{M}_{0}$ and $%
\tilde{T}_{uc}^{\blacktriangle }(y)=\infty $ otherwise. Its bootstrapped
version is then given by $\tilde{T}_{uc}^{\blacktriangle ,\ast }(y,\xi )=%
\tilde{T}_{uc}^{\blacktriangle }\left( y^{\ast }(y,\xi )\right) $. For every 
$y\in \mathbb{R}^{n}$ we denote its distribution function under $\Xi $ by $%
\tilde{F}_{uc,y}^{\blacktriangle }$, i.e., $\tilde{F}_{uc,y}^{\blacktriangle
}(t)=\Xi \mathbb{(}\tilde{T}_{uc}^{\blacktriangle ,\ast }(y,\xi )\leq t)$
for $t\in \mathbb{R\cup \{\infty \}}$.

\begin{theorem}
\label{Theo_uc_restr_1}(a) For every $\alpha \in (0,1)$, let $\tilde{f}%
_{uc,1-\alpha }(y)$ denote a $(1-\alpha )$-quantile of $\tilde{F}_{uc,y}$.
Define%
\begin{equation}
\tilde{\vartheta}_{uc}=1-\max_{\substack{ i=1,\ldots ,n,  \\ \mu
_{0}+e_{i}(n)\notin \mathfrak{M}_{0}}}\Xi \left( \left\{ \xi :\tilde{T}%
_{uc}^{\ast }(\mu _{0}+e_{i}(n),\xi )<\tilde{T}_{uc}(\mu
_{0}+e_{i}(n)),y^{\ast }(\mu _{0}+e_{i}(n),\xi )\notin \mathfrak{M}%
_{0}\right\} \right)  \label{theta1_uncorr_tilde}
\end{equation}%
for some $\mu _{0}\in \mathfrak{M}_{0}$.\footnote{%
Note that the index set in the maximum operator in (\ref{theta1_uncorr_tilde}%
) can not be empty since $k-q\leq k$ and we have assumed $k<n$.} Then $%
\tilde{\vartheta}_{uc}$ does not depend on the choice of $\mu _{0}\in 
\mathfrak{M}_{0}$. Furthermore, for every $\alpha \in (0,1)$ such that $%
\alpha >\tilde{\vartheta}_{uc}$ holds, we have%
\begin{equation}
\sup_{\Sigma \in \mathfrak{C}_{Het}}P_{\mu _{0},\sigma ^{2}\Sigma }\left( 
\tilde{T}_{uc}\geq \tilde{f}_{uc,1-\alpha }\right) \geq \sup_{\Sigma \in 
\mathfrak{C}_{Het}}P_{\mu _{0},\sigma ^{2}\Sigma }\left( \tilde{T}_{uc}>%
\tilde{f}_{uc,1-\alpha }\right) =1  \label{eqn:bootsize_uncorr_tilde}
\end{equation}%
for every $\mu _{0}\in \mathfrak{M}_{0}$ and every $0<\sigma ^{2}<\infty $
(where the probabilities in (\ref{eqn:bootsize_uncorr_tilde}) are to be
interpreted as inner probabilities).

(b) For every $\alpha \in (0,1)$, let $\tilde{f}_{uc,1-\alpha
}^{\blacktriangle }(y)$ denote a $(1-\alpha )$-quantile of $\tilde{F}%
_{uc,y}^{\blacktriangle }$. Then, with $\tilde{\vartheta}_{uc}$ defined in
Part (a), for every $\alpha \in (0,1)$ such that $\alpha >\tilde{\vartheta}%
_{uc}$ holds, we have%
\begin{equation}
\sup_{\Sigma \in \mathfrak{C}_{Het}}P_{\mu _{0},\sigma ^{2}\Sigma }\left( 
\tilde{T}_{uc}\geq \tilde{f}_{uc,1-\alpha }^{\blacktriangle }\right) \geq
\sup_{\Sigma \in \mathfrak{C}_{Het}}P_{\mu _{0},\sigma ^{2}\Sigma }\left( 
\tilde{T}_{uc}>\tilde{f}_{uc,1-\alpha }^{\blacktriangle }\right) =1
\label{eqn:bootsize_uncorr_tilde_triangle}
\end{equation}%
for every $\mu _{0}\in \mathfrak{M}_{0}$ and every $0<\sigma ^{2}<\infty $
(where the probabilities in (\ref{eqn:bootsize_uncorr_tilde_triangle}) are
to be interpreted as inner probabilities).
\end{theorem}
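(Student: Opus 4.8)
The plan is to deduce both parts from a single argument. Fix $\alpha \in (0,1)$ with $\alpha > \tilde{\vartheta}_{uc}$ (for $\alpha \leq \tilde{\vartheta}_{uc}$ there is nothing to prove), let $i^{\ast}$ attain the maximum in the definition of $\tilde{\vartheta}_{uc}$, set $v^{\ast} = e_{i^{\ast}}(n)$, and write $E$ for the corresponding $\xi$-set, so that $\Xi(E) = 1 - \tilde{\vartheta}_{uc} > 1 - \alpha$; note that $\mu_{0} + v^{\ast} \notin \mathfrak{M}_{0}$ and that the index set in the maximum is nonempty because $\dim \mathfrak{M}_{0}^{lin} = k-q < n$. \emph{Step 1 (scaling invariance).} A short computation shows $y^{\ast}(\mu_{0}+tw,\xi) - \mu_{0} = t\left( y^{\ast}(\mu_{0}+w,\xi) - \mu_{0}\right)$ for all $t \in \mathbb{R}$ and $w,\xi \in \mathbb{R}^{n}$; combined with the $G(\mathfrak{M}_{0})$-invariance of $\tilde{T}_{uc}$ (Remark~\ref{rem:tildeGM0}), which in particular gives $\tilde{T}_{uc}(\mu_{0}+\delta z) = \tilde{T}_{uc}(\mu_{0}+z)$ for $\delta \neq 0$, this yields $\tilde{T}_{uc}(\mu_{0}+tw) = \tilde{T}_{uc}(\mu_{0}+w)$, $\tilde{T}_{uc}^{\ast}(\mu_{0}+tw,\xi) = \tilde{T}_{uc}^{\ast}(\mu_{0}+w,\xi)$ and $\tilde{T}_{uc}^{\blacktriangle ,\ast}(\mu_{0}+tw,\xi) = \tilde{T}_{uc}^{\blacktriangle ,\ast}(\mu_{0}+w,\xi)$ for $t \neq 0$, hence also $\tilde{F}_{uc,\mu_{0}+tw} = \tilde{F}_{uc,\mu_{0}+w}$ and $\tilde{F}_{uc,\mu_{0}+tw}^{\blacktriangle} = \tilde{F}_{uc,\mu_{0}+w}^{\blacktriangle}$ for $t \neq 0$. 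The same identities (taking $t=1$ together with an additive shift from $\mathfrak{M}_{0}^{lin}$) show that the $\xi$-set whose $\Xi$-measure defines $\tilde{\vartheta}_{uc}$ does not change when $\mu_{0}$ is replaced by another point of $\mathfrak{M}_{0}$, so $\tilde{\vartheta}_{uc}$ is independent of $\mu_{0}$.

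\emph{Step 2 (rejection on a neighbourhood of the bad ray).} For $\xi \in E$ one has $y^{\ast}(\mu_{0}+v^{\ast},\xi) \notin \mathfrak{M}_{0}$, so $\tilde{T}_{uc}$ is continuous at that point (and coincides there with $\tilde{T}_{uc}^{\blacktriangle}$), while $\tilde{T}_{uc}$ is also continuous at $\mu_{0}+v^{\ast} \notin \mathfrak{M}_{0}$; since $v \mapsto y^{\ast}(\mu_{0}+v,\xi)$ is affine, the strict inequality $\tilde{T}_{uc}^{\ast}(\mu_{0}+v^{\ast},\xi) < \tilde{T}_{uc}(\mu_{0}+v^{\ast})$ — and likewise with $\tilde{T}_{uc}^{\blacktriangle ,\ast}$, because $y^{\ast}(\mu_{0}+v,\xi)$ remains off $\mathfrak{M}_{0}$ for $v$ near $v^{\ast}$ — persists on a $\xi$-dependent neighbourhood of $v^{\ast}$. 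By Fatou's lemma, for every sequence $v_{m} \to v^{\ast}$ one gets $\liminf_{m} \Xi\left( \{ \xi : \tilde{T}_{uc}^{\ast}(\mu_{0}+v_{m},\xi) < \tilde{T}_{uc}(\mu_{0}+v_{m}) \} \right) \geq \Xi(E) > 1-\alpha$, and the same with $\tilde{T}_{uc}^{\blacktriangle ,\ast}$; hence there is $\rho > 0$ such that for all $v$ with $\left\Vert v - v^{\ast} \right\Vert < \rho$ both $\Xi$-probabilities (with $v$ in place of $v_{m}$) exceed $1-\alpha$. Since every $(1-\alpha)$-quantile $q$ of a cdf $F$ satisfies $F(q-) \leq 1-\alpha$, this forces every $(1-\alpha)$-quantile of $\tilde{F}_{uc,\mu_{0}+v}$ and of $\tilde{F}_{uc,\mu_{0}+v}^{\blacktriangle}$ to be strictly below $\tilde{T}_{uc}(\mu_{0}+v)$ for such $v$; by Step~1 the same then holds at $\mu_{0}+tv$ for every $t \neq 0$. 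Consequently, for the (arbitrarily chosen) quantile functions $\tilde{f}_{uc,1-\alpha}$ and $\tilde{f}_{uc,1-\alpha}^{\blacktriangle}$ of the statement, the tests $\{ \tilde{T}_{uc} > \tilde{f}_{uc,1-\alpha} \}$ and $\{ \tilde{T}_{uc} > \tilde{f}_{uc,1-\alpha}^{\blacktriangle} \}$ both reject at every $\mu_{0}+tv$ with $t \neq 0$ and $\left\Vert v - v^{\ast} \right\Vert < \rho$.

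\emph{Step 3 (concentrating the heteroskedasticity).} For $m > n-1$ put $\Sigma_{m} = \limfunc{diag}(\tau_{1,m}^{2},\ldots,\tau_{n,m}^{2}) \in \mathfrak{C}_{Het}$ with $\tau_{i^{\ast},m}^{2} = 1-(n-1)/m$ and $\tau_{j,m}^{2} = 1/m$ for $j \neq i^{\ast}$. Under $P_{\mu_{0},\sigma^{2}\Sigma_{m}}$ decompose $\mathbf{Y} - \mu_{0} = \mathbf{U} = \mathbf{U}_{i^{\ast}}e_{i^{\ast}}(n) + \mathbf{U}_{-i^{\ast}}$ into the $i^{\ast}$-th coordinate and the rest. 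Then $\mathbf{U}_{i^{\ast}} \neq 0$ almost surely; the entries of $\mathbf{U}_{-i^{\ast}}$ have total variance $\sigma^{2}(n-1)/m \to 0$, so $\left\Vert \mathbf{U}_{-i^{\ast}} \right\Vert \to 0$ in probability; and $\sup_{m} P_{\mu_{0},\sigma^{2}\Sigma_{m}}(|\mathbf{U}_{i^{\ast}}| < \delta) \to 0$ as $\delta \downarrow 0$ (since $\mathbf{U}_{i^{\ast}} \sim N(0,\sigma^{2}\tau_{i^{\ast},m}^{2})$ with $\tau_{i^{\ast},m}^{2} \to 1$); hence $\left\Vert \mathbf{U}_{-i^{\ast}} \right\Vert / |\mathbf{U}_{i^{\ast}}| \to 0$ in probability, i.e.\ $P_{\mu_{0},\sigma^{2}\Sigma_{m}}( \left\Vert \mathbf{U}_{-i^{\ast}} \right\Vert < \rho |\mathbf{U}_{i^{\ast}}| ) \to 1$. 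On the latter event $\mathbf{Y} = \mu_{0} + \mathbf{U}_{i^{\ast}}v$ with $v = v^{\ast} + \mathbf{U}_{-i^{\ast}}/\mathbf{U}_{i^{\ast}}$ and $\left\Vert v - v^{\ast} \right\Vert < \rho$, so the two tests reject by Step~2; interpreting the probabilities as inner probabilities, this gives $P_{\mu_{0},\sigma^{2}\Sigma_{m}}(\tilde{T}_{uc} > \tilde{f}_{uc,1-\alpha}) \to 1$ and $P_{\mu_{0},\sigma^{2}\Sigma_{m}}(\tilde{T}_{uc} > \tilde{f}_{uc,1-\alpha}^{\blacktriangle}) \to 1$, which yields the two asserted right-hand equalities (the left-hand inequalities being trivial).

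\emph{Main obstacle.} The delicate point is Step~2: the quantile functions in the statement are arbitrary and $\Xi$ is typically atomic, so there is no continuous quantile map to invoke; the $\liminf$/Fatou estimate is what pushes the rejection from the single point $\mu_{0}+v^{\ast}$ out to a neighbourhood, and this is exactly where the clause $y^{\ast}(\mu_{0}+e_{i}(n),\xi) \notin \mathfrak{M}_{0}$ in the definition of $\tilde{\vartheta}_{uc}$ enters — it guarantees continuity of $\tilde{T}_{uc}$ at the bootstrap samples that matter and makes the $\blacktriangle$-modification harmless, so that a single argument serves both parts. (Alternatively, one can verify that $\tilde{T}_{uc}$, with exceptional set $\mathfrak{M}_{0}$, fits the abstract framework behind Theorem~\ref{thm:main} in Appendix~\ref{App A} — which amounts to Remark~\ref{rem:tildeGM0} together with the homogeneity identities of Step~1 — and appeal to that theorem directly.)
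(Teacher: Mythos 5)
Your proof is correct, and its architecture is essentially that of the paper's, which establishes Part (b) by specializing the abstract Theorem \ref{thm:main} of Appendix \ref{App A} to $T_{1}=\tilde{T}_{uc}$ and $T_{2}=\tilde{T}_{uc}^{\blacktriangle}$ (using exactly the observations you make: continuity of $\tilde{T}_{uc}$ off the closed set $\mathfrak{M}_{0}$, so that $c_{i}=\tilde{T}_{uc}(\mu_{0}+e_{i}(n))$ for the admissible indices, and the clause $y^{\ast}(\mu_{0}+e_{i}(n),\xi)\notin\mathfrak{M}_{0}$ guaranteeing continuity at the bootstrap points), and then deduces Part (a) by choosing $\tilde{f}_{uc,1-\alpha}^{\blacktriangle}$ as the largest $(1-\alpha)$-quantile so that it dominates $\tilde{f}_{uc,1-\alpha}$. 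Your Steps 1 and 2 reproduce the substance of that theorem's proof — the $G(\mathfrak{M}_{0})$-invariance and homogeneity of $y^{\ast}$ that extend the conclusion from a ball around $e_{i^{\ast}}(n)$ to a punctured cone, the Fatou-type lower semicontinuity (the paper's Lemma \ref{lem:convdisc}), and the quantile bound (Lemma \ref{lem:quant}) — with two harmless variations: you handle both quantile functions in a single pass rather than deriving (a) from (b), and your Fatou step compares the bootstrap statistic to the moving threshold $\tilde{T}_{uc}(\mu_{0}+v_{m})$ instead of to a fixed continuity point $x_{i}$ of the limiting cdf; both devices are valid. The one genuinely different ingredient is Step 3: where the paper invokes weak convergence of $P_{\mu_{0},\sigma^{2}\Sigma_{m}}$ to the degenerate Gaussian $P_{\mu_{0},\sigma^{2}e_{i^{\ast}}(n)e_{i^{\ast}}(n)^{\prime}}$ (Lemma E.1 of \cite{PP2016}) together with the Portmanteau theorem applied to the open cone, you work with an explicit sequence $\Sigma_{m}$ and show directly that $\Vert\mathbf{U}_{-i^{\ast}}\Vert/|\mathbf{U}_{i^{\ast}}|\rightarrow 0$ in probability, which is more elementary and self-contained (and still extends beyond Gaussianity whenever the marginal of the $i^{\ast}$-th standardized error has no atom at zero, matching the generality claimed in Section \ref{gen}).
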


Mutatis mutandis, a discussion similar to the one given subsequently to
Theorem \ref{Theo_Het_unrestr} also applies to the preceding two theorems.

\subsubsection{The second bootstrap scheme}

For the test statistics $\tilde{T}_{Het}$ and $\tilde{T}_{uc}$ an analogon
to Lemma \ref{VGK} is not available. Hence, we need to provide separate
theorems for the case where the second bootstrap scheme, i.e., (\ref{boot_2}%
), is being used. This is done next. With this bootstrap scheme, the
bootstrapped test statistic corresponding to $\tilde{T}_{Het}$ is given by%
\begin{equation*}
\tilde{T}_{Het}^{\maltese }(y,\xi )=(R\hat{\beta}\left( y^{\maltese }(y,\xi
)\right) -R\hat{\beta}\left( y\right) )^{\prime }\tilde{\Omega}%
_{Het}^{-1}\left( y^{\maltese }(y,\xi )\right) (R\hat{\beta}\left(
y^{\maltese }(y,\xi )\right) -R\hat{\beta}\left( y\right) )
\end{equation*}%
if $y^{\maltese }(y,\xi )\notin \mathsf{\tilde{B}}$, and by $\tilde{T}%
_{Het}^{\maltese }(y,\xi )=0$ if $y^{\maltese }(y,\xi )\in \mathsf{\tilde{B}}
$. For every $y\in \mathbb{R}^{n}$ denote the distribution function of the
bootstrapped test statistic under $\Xi $ by $\tilde{H}_{Het,y}$, i.e., $%
\tilde{H}_{Het,y}(t)=\Xi \mathbb{(}\tilde{T}_{Het}^{\maltese }(y,\xi )\leq
t) $ for $t\in \mathbb{R}$. Furthermore, $\tilde{T}_{Het}^{\blacktriangle
,\maltese }(y,\xi )$ is defined exactly as is $\tilde{T}_{Het}^{\maltese
}(y,\xi )$, except that $\tilde{T}_{Het}^{\blacktriangle ,\maltese }(y,\xi
)=\infty $ if $y^{\maltese }(y,\xi )\in \mathsf{\tilde{B}}$. For every $y\in 
\mathbb{R}^{n}$ denote its distribution function under $\Xi $ by $\tilde{H}%
_{Het,y}^{\blacktriangle }$, i.e., $\tilde{H}_{Het,y}^{\blacktriangle
}(t)=\Xi \mathbb{(}\tilde{T}_{Het}^{\blacktriangle ,\maltese }(y,\xi )\leq
t) $ for $t\in \mathbb{R\cup \{\infty \}}$.

\begin{theorem}
\label{Theo_Het_restr_2}Suppose Assumption \ref{R_and_X_tilde} holds.

(a) For every $\alpha \in (0,1)$, let $\tilde{h}_{Het,1-\alpha }(y)$ denote
a $(1-\alpha )$-quantile of $\tilde{H}_{Het,y}$. Define%
\begin{equation}
\tilde{\theta}_{Het}=1-\max_{\substack{ i=1,\ldots ,n,  \\ \mu
_{0}+e_{i}(n)\notin \mathsf{\tilde{B}}}}\Xi \left( \left\{ \xi :\tilde{T}%
_{Het}^{\maltese }(\mu _{0}+e_{i}(n),\xi )<\tilde{T}_{Het}(\mu
_{0}+e_{i}(n)),y^{\maltese }(\mu _{0}+e_{i}(n),\xi )\notin \mathsf{\tilde{B}}%
\right\} \right)  \label{alttheta1_Het_tilde}
\end{equation}%
for some $\mu _{0}\in \mathfrak{M}_{0}$, with the convention that $\tilde{%
\theta}_{Het}=1$ if the index set in the maximum operator in (\ref%
{alttheta1_Het_tilde}) is empty. Then $\tilde{\theta}_{Het}$ does not depend
on the choice of $\mu _{0}\in \mathfrak{M}_{0}$. Furthermore, for every $%
\alpha \in (0,1)$ such that $\alpha >\tilde{\theta}_{Het}$ holds, we have%
\begin{equation}
\sup_{\Sigma \in \mathfrak{C}_{Het}}P_{\mu _{0},\sigma ^{2}\Sigma }\left( 
\tilde{T}_{Het}\geq \tilde{h}_{Het,1-\alpha }\right) \geq \sup_{\Sigma \in 
\mathfrak{C}_{Het}}P_{\mu _{0},\sigma ^{2}\Sigma }\left( \tilde{T}_{Het}>%
\tilde{h}_{Het,1-\alpha }\right) =1  \label{eqn:bootsize_het_tilde_malt}
\end{equation}%
for every $\mu _{0}\in \mathfrak{M}_{0}$ and every $0<\sigma ^{2}<\infty $
(where the probabilities in (\ref{eqn:bootsize_het_tilde_malt}) are to be
interpreted as inner probabilities).

(b) For every $\alpha \in (0,1)$, let $\tilde{h}_{Het,1-\alpha
}^{\blacktriangle }(y)$ denote a $(1-\alpha )$-quantile of $\tilde{H}%
_{Het,y}^{\blacktriangle }$. Then, with $\tilde{\theta}_{Het}$ defined in
Part (a), for every $\alpha \in (0,1)$ such that $\alpha >\tilde{\theta}%
_{Het}$ holds, we have%
\begin{equation}
\sup_{\Sigma \in \mathfrak{C}_{Het}}P_{\mu _{0},\sigma ^{2}\Sigma }\left( 
\tilde{T}_{Het}\geq \tilde{h}_{Het,1-\alpha }^{\blacktriangle }\right) \geq
\sup_{\Sigma \in \mathfrak{C}_{Het}}P_{\mu _{0},\sigma ^{2}\Sigma }\left( 
\tilde{T}_{Het}>\tilde{h}_{Het,1-\alpha }^{\blacktriangle }\right) =1
\label{eqn:bootsize_het_tilde_triangle_malt}
\end{equation}%
for every $\mu _{0}\in \mathfrak{M}_{0}$ and every $0<\sigma ^{2}<\infty $
(where the probabilities in (\ref{eqn:bootsize_het_tilde_triangle_malt}) are
to be interpreted as inner probabilities).
\end{theorem}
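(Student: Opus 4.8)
The plan is to argue exactly as in the proofs of Theorems~\ref{Theo_Het_unrestr}--\ref{Theo_uc_restr_1}: first verify that $\tilde{\theta}_{Het}$ is well defined and independent of $\mu_0$; then locate a single point $y^{\ast}$ at which the bootstrap test rejects strictly; then show that the rejection region contains an entire neighbourhood of $y^{\ast}$; and finally let $\Sigma$ degenerate so that $P_{\mu_0,\sigma^{2}\Sigma}$ piles its mass up (after rescaling) on that neighbourhood. For the independence-of-$\mu_0$ step, note that if $\mu_0'\in\mathfrak{M}_0$ then $\mu_0'-\mu_0\in\mathfrak{M}_0^{lin}$, and a short computation using $\mathfrak{M}_0\subseteq\mathcal A\subseteq\limfunc{span}(X)$ gives $y^{\maltese}(\mu_0'+e_i(n),\xi)-\mu_0'=y^{\maltese}(\mu_0+e_i(n),\xi)-\mu_0$, $\tilde{T}_{Het}^{\maltese}(\mu_0'+e_i(n),\xi)=\tilde{T}_{Het}^{\maltese}(\mu_0+e_i(n),\xi)$, and $\tilde{T}_{Het}(\mu_0'+e_i(n))=\tilde{T}_{Het}(\mu_0+e_i(n))$, while the memberships $\mu_0'+e_i(n)\in\tilde{\mathsf B}$ and $y^{\maltese}(\mu_0'+e_i(n),\xi)\in\tilde{\mathsf B}$ hold iff the corresponding memberships with $\mu_0$ do, because $\tilde{\mathsf B}+\mathfrak{M}_0^{lin}=\tilde{\mathsf B}$ (Lemma~\ref{lem:B_tilde}(e)). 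Hence \eqref{alttheta1_Het_tilde} is unchanged under $\mu_0\mapsto\mu_0'$.

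Now fix $\mu_0$ and $\sigma^{2}$ and let $\alpha>\tilde{\theta}_{Het}$. Then the index set in \eqref{alttheta1_Het_tilde} is nonempty and there is $i^{\ast}$ with, setting $y^{\ast}:=\mu_0+e_{i^{\ast}}(n)$,
\[
\Xi\bigl(\bigl\{\xi:\tilde{T}_{Het}^{\maltese}(y^{\ast},\xi)<\tilde{T}_{Het}(y^{\ast}),\ y^{\maltese}(y^{\ast},\xi)\notin\tilde{\mathsf B}\bigr\}\bigr)>1-\alpha .
\]
Nonemptiness of this set forces $\tilde{T}_{Het}(y^{\ast})>0$ (the bootstrap statistic being nonnegative), hence $y^{\ast}\notin\tilde{\mathsf B}$ by Lemma~\ref{lem:B_tilde}(b). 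Letting $s\uparrow\tilde{T}_{Het}(y^{\ast})$ and using continuity of the measure $\Xi$, I can pick $s_0\in(0,\tilde{T}_{Het}(y^{\ast}))$ with
\[
\Xi\bigl(\bigl\{\xi:\tilde{T}_{Het}^{\maltese}(y^{\ast},\xi)\le s_0,\ y^{\maltese}(y^{\ast},\xi)\notin\tilde{\mathsf B}\bigr\}\bigr)>1-\alpha .
\]
Since $\tilde{H}_{Het,y^{\ast}}(s_0)$ dominates, and $\tilde{H}_{Het,y^{\ast}}^{\blacktriangle}(s_0)$ equals, this last quantity (the event $\{y^{\maltese}\notin\tilde{\mathsf B}\}$ in \eqref{alttheta1_Het_tilde} is built in precisely so as to cover the $\blacktriangle$-version, on which $\tilde{T}_{Het}^{\maltese}$ is inflated to $\infty$), the elementary quantile inequality of Footnote~\ref{qu} gives $\tilde{h}_{Het,1-\alpha}(y^{\ast})\le s_0<\tilde{T}_{Het}(y^{\ast})$, and likewise $\tilde{h}_{Het,1-\alpha}^{\blacktriangle}(y^{\ast})\le s_0<\tilde{T}_{Het}(y^{\ast})$, for every admissible choice of quantile.

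Next I would show the rejection set is a neighbourhood of $y^{\ast}$. Since $y^{\ast}\notin\tilde{\mathsf B}$, $\tilde{T}_{Het}$ is continuous at $y^{\ast}$; since $\tilde{\mathsf B}$ is closed and $y\mapsto y^{\maltese}(y,\xi)$ is affine, the set $\{y:y^{\maltese}(y,\xi)\notin\tilde{\mathsf B}\}$ is open and $y\mapsto\tilde{T}_{Het}^{\maltese}(y,\xi)$ is continuous on it. Hence for every $\xi$ in the set in the second display, $\liminf_{y\to y^{\ast}}\mathbf{1}\{\tilde{T}_{Het}^{\maltese}(y,\xi)\le s_0\}=1$ (and the same with the extra indicator of $\{y^{\maltese}(y,\xi)\notin\tilde{\mathsf B}\}$), so Fatou gives $\liminf_{y\to y^{\ast}}\tilde{H}_{Het,y}(s_0)>1-\alpha$ and $\liminf_{y\to y^{\ast}}\tilde{H}_{Het,y}^{\blacktriangle}(s_0)>1-\alpha$. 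Therefore on a neighbourhood $\mathcal N$ of $y^{\ast}$, shrunk so that also $\tilde{T}_{Het}(y)>s_0$ there, the quantile inequality yields $\tilde{T}_{Het}(y)>\tilde{h}_{Het,1-\alpha}(y)$ and $\tilde{T}_{Het}(y)>\tilde{h}_{Het,1-\alpha}^{\blacktriangle}(y)$ for all $y\in\mathcal N$ and all choices of quantile. Since $\tilde{T}_{Het}$ and the laws $\tilde{H}_{Het,y},\tilde{H}_{Het,y}^{\blacktriangle}$ are invariant under the maps $y\mapsto\mu_0+\delta(y-\mu_0)$, $\delta\neq0$ (direct computation, using $\mathfrak{M}_0\subseteq\mathcal A\subseteq\limfunc{span}(X)$ and Lemma~\ref{lem:B_tilde}(e)), the rejection region contains the measurable open set $W'=\{y:\mu_0+(y-\mu_0)/\delta\in\mathcal N\text{ for some }\delta\neq0\}$.

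It remains to concentrate the mass. Take $\Sigma_m=\limfunc{diag}(\tau_1^{2,(m)},\ldots,\tau_n^{2,(m)})\in\mathfrak{C}_{Het}$ with $\tau_{i^{\ast}}^{2,(m)}\to1$ and $\tau_j^{2,(m)}\to0$ for $j\neq i^{\ast}$. Under $P_{\mu_0,\sigma^{2}\Sigma_m}$ write $Y-\mu_0=a\,e_{i^{\ast}}(n)+v$ with $v\perp e_{i^{\ast}}(n)$; then $a\sim N(0,\sigma^{2}\tau_{i^{\ast}}^{2,(m)})$ and $v$ has second moment $\sigma^{2}\sum_{j\neq i^{\ast}}\tau_j^{2,(m)}\to0$, while $Y\in W'$ whenever $a\neq0$ and $\|v\|/|a|$ is below the radius $\rho$ of a ball around $y^{\ast}$ contained in $\mathcal N$. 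Hence $P_{\mu_0,\sigma^{2}\Sigma_m}(W')\ge 1-P_{\mu_0,\sigma^{2}\Sigma_m}(|a|<\varepsilon)-P_{\mu_0,\sigma^{2}\Sigma_m}(\|v\|\ge\varepsilon\rho)$, and sending first $m\to\infty$ and then $\varepsilon\to0$ drives the right-hand side to $1$. As $W'$ is a measurable subset of the rejection region and $\sigma^{2}$ was arbitrary, this proves $\sup_{\Sigma\in\mathfrak{C}_{Het}}P_{\mu_0,\sigma^{2}\Sigma}(\tilde{T}_{Het}>\tilde{h}_{Het,1-\alpha})=1$ (as an inner probability), whence the full chain \eqref{eqn:bootsize_het_tilde_malt}, and the identical argument delivers \eqref{eqn:bootsize_het_tilde_triangle_malt}. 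The only genuinely delicate point — exactly as in Theorems~\ref{Theo_Het_unrestr}--\ref{Theo_uc_restr_1} — is the bookkeeping of the $\blacktriangle$-modification; the extra event $\{y^{\maltese}(\mu_0+e_i(n),\xi)\notin\tilde{\mathsf B}\}$ in \eqref{alttheta1_Het_tilde} is precisely what makes one and the same $\tilde{\theta}_{Het}$ work for both parts.
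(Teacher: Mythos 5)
Your proposal follows essentially the same route as the paper's proof of Theorem \ref{Theo_Het_restr_2}: establish $G(\mathfrak{M}_{0})$-invariance of $\tilde{T}_{Het}^{\maltese }(\cdot ,\xi )$ and of the relevant exceptional sets (the paper isolates this as Lemma \ref{lem:T_tilde_invariance}), pick the maximizing index $i^{\ast }$, push the bootstrap quantile strictly below $\tilde{T}_{Het}(\mu _{0}+e_{i^{\ast }}(n))$ via the quantile inequality of Lemma \ref{lem:quant}, propagate the two rejection conditions to a punctured double cone around $\limfunc{span}(e_{i^{\ast }}(n))$ by continuity and scale invariance, and let $\Sigma $ degenerate towards $e_{i^{\ast }}(n)e_{i^{\ast }}(n)^{\prime }$. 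Your final concentration step replaces the paper's weak-convergence/Portmanteau argument by a direct Gaussian computation; that is a cosmetic difference and is correct.

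There is, however, one step that fails as written. You choose $s_{0}<\tilde{T}_{Het}(y^{\ast })$ with $\Xi (\{\xi :\tilde{T}_{Het}^{\maltese }(y^{\ast },\xi )\leq s_{0},\ y^{\maltese }(y^{\ast },\xi )\notin \mathsf{\tilde{B}}\})>1-\alpha $ and then assert that $\liminf_{y\rightarrow y^{\ast }}\mathbf{1}\{\tilde{T}_{Het}^{\maltese }(y,\xi )\leq s_{0}\}=1$ for \emph{every} $\xi $ in that set. Continuity of $\tilde{T}_{Het}^{\maltese }(\cdot ,\xi )$ at $y^{\ast }$ only yields this for $\xi $ with $\tilde{T}_{Het}^{\maltese }(y^{\ast },\xi )<s_{0}$; if $\tilde{T}_{Het}^{\maltese }(y^{\ast },\xi )=s_{0}$ the indicator can equal $0$ along sequences $y_{m}\rightarrow y^{\ast }$, and since $\Xi $ is typically discrete (Rademacher or Mammen products), the set of such $\xi $ can carry positive mass. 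Fatou then only gives $\liminf_{y\rightarrow y^{\ast }}\tilde{H}_{Het,y}(s_{0})\geq \Xi (\{\xi :\tilde{T}_{Het}^{\maltese }(y^{\ast },\xi )<s_{0},\ldots \})$, which need not exceed $1-\alpha $. The repair is exactly the one the paper makes: because $\Xi (\{\xi :\tilde{T}_{Het}^{\maltese }(y^{\ast },\xi )<\tilde{T}_{Het}(y^{\ast }),\ldots \})>1-\alpha $ and a cdf has at most countably many discontinuities, you may additionally choose $s_{0}$ (the paper's $x_{i}$) to be a continuity point of $\tilde{H}_{Het,y^{\ast }}^{\blacktriangle }$, which makes the exceptional set $\Xi $-null and lets Lemma \ref{lem:convdisc} (the paper's formalization of your Fatou step) go through. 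With that one amendment your argument is complete.
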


With the bootstrap scheme considered in this subsection, the bootstrapped
test statistic corresponding to $\tilde{T}_{uc}$ is given by%
\begin{equation*}
\tilde{T}_{uc}^{\maltese }(y,\xi )=(R\hat{\beta}\left( y^{\maltese }(y,\xi
)\right) -R\hat{\beta}\left( y\right) )^{\prime }\left( \tilde{\sigma}%
^{2}(y^{\maltese }(y,\xi ))R\left( X^{\prime }X\right) ^{-1}R^{\prime
}\right) ^{-1}(R\hat{\beta}\left( y^{\maltese }(y,\xi )\right) -R\hat{\beta}%
\left( y\right) )
\end{equation*}%
if $y^{\maltese }(y,\xi )\notin \mathfrak{M}_{0}$, and by $\tilde{T}%
_{uc}^{\maltese }(y,\xi )=0$ if $y^{\maltese }(y,\xi )\in \mathfrak{M}_{0}$.
For every $y\in \mathbb{R}^{n}$ denote the distribution function of the
bootstrapped test statistic under $\Xi $ by $\tilde{H}_{uc,y}$, i.e., $%
\tilde{H}_{uc,y}(t)=\Xi \mathbb{(}\tilde{T}_{uc}^{\maltese }(y,\xi )\leq t)$
for $t\in \mathbb{R}$. Furthermore, $\tilde{T}_{uc}^{\blacktriangle
,\maltese }(y,\xi )$ is defined exactly as is $\tilde{T}_{uc}^{\maltese
}(y,\xi )$, except that $\tilde{T}_{uc}^{\blacktriangle ,\maltese }(y,\xi
)=\infty $ if $y^{\maltese }(y,\xi )\in \mathfrak{M}_{0}$. For every $y\in 
\mathbb{R}^{n}$ denote its distribution function under $\Xi $ by $\tilde{H}%
_{uc,y}^{\blacktriangle }$, i.e., $\tilde{H}_{uc,y}^{\blacktriangle }(t)=\Xi 
\mathbb{(}\tilde{T}_{uc}^{\blacktriangle ,\maltese }(y,\xi )\leq t)$ for $%
t\in \mathbb{R\cup \{\infty \}}$.

\begin{theorem}
\label{Theo_uc_restr_2}(a) For every $\alpha \in (0,1)$, let $\tilde{h}%
_{uc,1-\alpha }(y)$ denote a $(1-\alpha )$-quantile of $\tilde{H}_{uc,y}$.
Define%
\begin{equation}
\tilde{\theta}_{uc}=1-\max_{\substack{ i=1,\ldots ,n,  \\ \mu
_{0}+e_{i}(n)\notin \mathfrak{M}_{0}}}\Xi \left( \left\{ \xi :\tilde{T}%
_{uc}^{\maltese }(\mu _{0}+e_{i}(n),\xi )<\tilde{T}_{uc}(\mu
_{0}+e_{i}(n)),y^{\maltese }(\mu _{0}+e_{i}(n),\xi )\notin \mathfrak{M}%
_{0}\right\} \right)  \label{alttheta1_uncorr_tilde}
\end{equation}%
for some $\mu _{0}\in \mathfrak{M}_{0}$.\footnote{%
Note that the index set in the maximum operator in (\ref%
{alttheta1_uncorr_tilde}) can not be empty since $k-q\leq k$ and we have
assumed $k<n$.} Then $\tilde{\theta}_{uc}$ does not depend on the choice of $%
\mu _{0}\in \mathfrak{M}_{0}$. Furthermore, for every $\alpha \in (0,1)$
such that $\alpha >\tilde{\theta}_{uc}$ holds, we have%
\begin{equation}
\sup_{\Sigma \in \mathfrak{C}_{Het}}P_{\mu _{0},\sigma ^{2}\Sigma }\left( 
\tilde{T}_{uc}\geq \tilde{h}_{uc,1-\alpha }\right) \geq \sup_{\Sigma \in 
\mathfrak{C}_{Het}}P_{\mu _{0},\sigma ^{2}\Sigma }\left( \tilde{T}_{uc}>%
\tilde{h}_{uc,1-\alpha }\right) =1  \label{eqn:bootsize_uncorr_tilde_malt}
\end{equation}%
for every $\mu _{0}\in \mathfrak{M}_{0}$ and every $0<\sigma ^{2}<\infty $
(where the probabilities in (\ref{eqn:bootsize_uncorr_tilde_malt}) are to be
interpreted as inner probabilities).

(b) For every $\alpha \in (0,1)$, let $\tilde{h}_{uc,1-\alpha
}^{\blacktriangle }(y)$ denote a $(1-\alpha )$-quantile of $\tilde{H}%
_{uc,y}^{\blacktriangle }$. Then, with $\tilde{\theta}_{uc}$ defined in Part
(a), for every $\alpha \in (0,1)$ such that $\alpha >\tilde{\theta}_{uc}$
holds, we have%
\begin{equation}
\sup_{\Sigma \in \mathfrak{C}_{Het}}P_{\mu _{0},\sigma ^{2}\Sigma }\left( 
\tilde{T}_{uc}\geq \tilde{h}_{uc,1-\alpha }^{\blacktriangle }\right) \geq
\sup_{\Sigma \in \mathfrak{C}_{Het}}P_{\mu _{0},\sigma ^{2}\Sigma }\left( 
\tilde{T}_{uc}>\tilde{h}_{uc,1-\alpha }^{\blacktriangle }\right) =1
\label{eqn:bootsize_uncorr_tilde_triangle_malt}
\end{equation}%
for every $\mu _{0}\in \mathfrak{M}_{0}$ and every $0<\sigma ^{2}<\infty $
(where the probabilities in (\ref{eqn:bootsize_uncorr_tilde_triangle_malt})
are to be interpreted as inner probabilities).
\end{theorem}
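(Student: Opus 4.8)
The plan is to argue along the lines of the proofs of Theorems~\ref{Theo_Het_unrestr}--\ref{Theo_Het_restr_2} — or, more economically, to check that the pair $(\tilde T_{uc},\mathfrak M_0)$ together with the second bootstrap scheme \eqref{boot_2} meets the hypotheses of the general result Theorem~\ref{thm:main} in Appendix~\ref{App A} and to invoke it. Since an analogue of Lemma~\ref{VGK} is unavailable for $\tilde T_{uc}$ under the $\maltese$-scheme, a genuinely separate argument (rather than a reduction to Theorem~\ref{Theo_uc_restr_1}) is needed. Throughout fix $\mu_0\in\mathfrak M_0$, $0<\sigma^2<\infty$, and write $W$ for the rejection region $\{y:\tilde T_{uc}(y)>\tilde h_{uc,1-\alpha}(y)\}$ in Part~(a) (and $\{y:\tilde T_{uc}(y)>\tilde h^{\blacktriangle}_{uc,1-\alpha}(y)\}$ in Part~(b)).

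\emph{Equivariance and choice of a leverage direction.} For $g\in G(\mathfrak M_0)$, $g(y)=\delta(y-a)+a^{\ast}$ with $a,a^{\ast}\in\mathfrak M_0$, I would first record $X\hat\beta(g(y))=g(X\hat\beta(y))$, $R\hat\beta(g(y))-r=\delta(R\hat\beta(y)-r)$, $y-X\tilde\beta_{\mathcal A}(y)\mapsto\delta(y-X\tilde\beta_{\mathcal A}(y))$ and $\tilde u(g(y))=\delta\tilde u(y)$, all immediate from $a,a^{\ast}\in\mathfrak M_0\subseteq\mathcal A\subseteq\limfunc{span}(X)$ and the definitions of the projections involved; these give $y^{\maltese}(g(y),\xi)=g(y^{\maltese}(y,\xi))$ and, tracking the recentering at $R\hat\beta(y)$ and the scaling $\tilde\sigma^2(g(z))=\delta^2\tilde\sigma^2(z)$, that $\tilde T_{uc}^{\maltese}(g(y),\xi)=\tilde T_{uc}^{\maltese}(y,\xi)$ and $\tilde T_{uc}^{\blacktriangle,\maltese}(g(y),\xi)=\tilde T_{uc}^{\blacktriangle,\maltese}(y,\xi)$ for every $\xi$. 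Together with the $G(\mathfrak M_0)$-invariance of $\tilde T_{uc}$ and of $\mathfrak M_0$ (Remark~\ref{rem:tildeGM0}) this shows that $\tilde H_{uc,y}$, $\tilde H^{\blacktriangle}_{uc,y}$ and $W$ are $G(\mathfrak M_0)$-invariant, and (taking $\delta=1$) that $\tilde\theta_{uc}$ is independent of $\mu_0$. Since $\alpha>\tilde\theta_{uc}$ means $1-\tilde\theta_{uc}>1-\alpha$, pick $i^{\ast}$ attaining the maximum in \eqref{alttheta1_uncorr_tilde}: then $y^{\ast}:=\mu_0+e_{i^{\ast}}(n)\notin\mathfrak M_0$ and $p^{\ast}:=\Xi(\{\xi:\tilde T_{uc}^{\maltese}(y^{\ast},\xi)<\tilde T_{uc}(y^{\ast}),\ y^{\maltese}(y^{\ast},\xi)\notin\mathfrak M_0\})>1-\alpha$, so in particular $t^{\ast}:=\tilde T_{uc}(y^{\ast})>0$. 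On the event defining $p^{\ast}$ one has $\tilde T_{uc}^{\blacktriangle,\maltese}(y^{\ast},\xi)=\tilde T_{uc}^{\maltese}(y^{\ast},\xi)$, so $\tilde H_{uc,y^{\ast}}(t^{\ast}-)\ge\tilde H^{\blacktriangle}_{uc,y^{\ast}}(t^{\ast}-)\ge p^{\ast}>1-\alpha$; since every $(1-\alpha)$-quantile $q$ of a cdf $F$ satisfies $F(q-)\le1-\alpha$, it follows that \emph{every} $(1-\alpha)$-quantile of $\tilde H_{uc,y^{\ast}}$ (resp. of $\tilde H^{\blacktriangle}_{uc,y^{\ast}}$, hence also under any other convention for $\tilde T_{uc}^{\maltese}$ on $\{y^{\maltese}\in\mathfrak M_0\}$) is $<t^{\ast}$, i.e. $y^{\ast}\in W$. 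By $G(\mathfrak M_0)$-invariance of $W$ and $\mathfrak M_0$ and $e_{i^{\ast}}(n)\notin\mathfrak M_0^{lin}$, the punctured line $L^{\ast}:=\{\delta e_{i^{\ast}}(n)+\mu_0:\delta\neq0\}$ satisfies $L^{\ast}\subseteq W$ and $L^{\ast}\cap\mathfrak M_0=\emptyset$.

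\emph{Concentrating the heteroskedasticity.} Take $Z\sim N(0,I_n)$ and, for $m$ large, $\Sigma_m=\limfunc{diag}(\tau^{(m)2}_1,\dots,\tau^{(m)2}_n)\in\mathfrak C_{Het}$ with $\tau^{(m)2}_{i^{\ast}}\to1$ and $\tau^{(m)2}_j\to0$ for $j\neq i^{\ast}$; put $Y^{(m)}=\mu_0+\sigma\Sigma_m^{1/2}Z$, so $Y^{(m)}\to Y^{(\infty)}:=\mu_0+\sigma Z_{i^{\ast}}e_{i^{\ast}}(n)$ everywhere and $Y^{(\infty)}\in L^{\ast}$ a.s. For a realization with $Z_{i^{\ast}}\neq0$ write $y_m=Y^{(m)}$, $y_\infty=Y^{(\infty)}\in L^{\ast}$, $t_m=\tilde T_{uc}(y_m)$; since $\tilde T_{uc}$ is continuous off $\mathfrak M_0$ and $y_\infty\notin\mathfrak M_0$, $t_m\to t^{\ast}$. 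For each $\xi$ in the invariantly translated event $E$ (with $\Xi(E)=p^{\ast}$), $y^{\maltese}(\cdot,\xi)$ is affine, so $y^{\maltese}(y_m,\xi)\to y^{\maltese}(y_\infty,\xi)\notin\mathfrak M_0$, hence for $m$ large $\tilde T_{uc}^{\maltese}(y_m,\xi)$ is given by its rational formula ($\tilde\sigma^2>0$) and tends to $\tilde T_{uc}^{\maltese}(y_\infty,\xi)<t^{\ast}$, whence $\tilde T_{uc}^{\maltese}(y_m,\xi)<t_m$ eventually (with $\tilde T_{uc}^{\blacktriangle,\maltese}$ agreeing with it there). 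Fatou's lemma then gives $\liminf_m\tilde H_{uc,y_m}(t_m-)\ge\Xi(E)=p^{\ast}>1-\alpha$ (and likewise for $\tilde H^{\blacktriangle}_{uc,y_m}$), so for $m$ large every $(1-\alpha)$-quantile of $\tilde H_{uc,y_m}$ is $<t_m=\tilde T_{uc}(y_m)$, i.e. $y_m\in W$ eventually. Thus $\mathbf 1_W(Y^{(m)})\to1$ a.s., whence $P_{\mu_0,\sigma^2\Sigma_m}(W)\to1$; since $\{\tilde T_{uc}\ge\tilde h\}\supseteq W$, both suprema in (\ref{eqn:bootsize_uncorr_tilde_malt}) and (\ref{eqn:bootsize_uncorr_tilde_triangle_malt}) equal $1$.

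\emph{Main obstacle.} The rejection region $W$ need not be open, since $y\mapsto\tilde h_{uc,1-\alpha}(y)$ is an arbitrary quantile selection carrying no a priori (semi)continuity; the whole point of the last step is to bypass the quantile map and instead prove the sharper statement $\tilde H_{uc,y_m}(\tilde T_{uc}(y_m)-)>1-\alpha$ for $m$ large, using only continuity of $y^{\maltese}(\cdot,\xi)$ and of $\tilde T_{uc}^{\maltese}(\cdot,\xi)$ off $\mathfrak M_0$ plus Fatou. The secondary, bookkeeping-heavy obstacle is the $\maltese$-scheme equivariance computation, where the recentering at $R\hat\beta(y)$ rather than at $r$ must be tracked carefully — this is exactly why Lemma~\ref{VGK} has no counterpart here and a dedicated theorem is required.
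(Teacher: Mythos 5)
Your proof is correct and, in substance, is the paper's own: the equivariance computations in your first paragraph are exactly Lemma \ref{lem:T_tilde_invariance}, the reformulation of $\alpha>\tilde{\theta}_{uc}$ as $\tilde{H}_{uc,\mu_{0}+e_{i^{\ast}}(n)}^{\blacktriangle}\bigl(\tilde{T}_{uc}(\mu_{0}+e_{i^{\ast}}(n))-\bigr)>1-\alpha$ is the paper's starting point, and your continuity-plus-Fatou step is Lemma \ref{lem:convdisc} applied to $V_{m}=(\mu_{0}+z_{m},y^{\maltese}(\mu_{0}+z_{m},\xi))$. Where you genuinely diverge is the endgame: the paper fixes a continuity point $x_{i}<t^{\ast}$ of the bootstrap cdf, establishes $\tilde{T}_{uc}>x_{i}$ and $\tilde{H}_{uc,\cdot}^{\blacktriangle}(x_{i})>1-\alpha$ uniformly on a ball, scales to an open cone contained in the rejection region, and finishes with weak convergence and the Portmanteau theorem; you instead couple the $\Sigma_{m}$ through a single Gaussian $Z$, run the argument along the random sequence $y_{m}=Y^{(m)}(\omega)$ with the moving threshold $t_{m}=\tilde{T}_{uc}(y_{m})$, and conclude by a.s.\ convergence of indicators. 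Your version is leaner (no continuity point of the cdf is needed, since $\tilde{T}_{uc}^{\maltese}(y_{m},\xi)\rightarrow\tilde{T}_{uc}^{\maltese}(y_{\infty},\xi)<t^{\ast}$ and $t_{m}\rightarrow t^{\ast}$ already give eventual strict inequality for every $\xi$ in the event), but it owes the reader two clarifications. First, $W$ itself need not be $G(\mathfrak{M}_{0})$-invariant for an arbitrary quantile selection $\tilde{h}_{uc,1-\alpha}$; what is invariant, and what your argument actually uses, is the smaller set $\{y\notin\mathfrak{M}_{0}:\tilde{H}_{uc,y}(\tilde{T}_{uc}(y)-)>1-\alpha\}$, which lies in $W$ for every selection. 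Second, passing from a.s.\ convergence of $\mathbf{1}_{W}(Y^{(m)})$ to convergence of the inner probabilities requires exhibiting a measurable subset of $W$ carrying the mass -- the set just named will do, its measurability following from joint Borel measurability of $(y,t)\mapsto\tilde{H}_{uc,y}(t-)$ -- whereas the paper's open-cone/Portmanteau route sidesteps this at the cost of the auxiliary $x_{i}$. Neither point is a gap in the idea, only in the write-up.
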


Mutatis mutandis, a discussion similar to the one given subsequently to
Theorem \ref{Theo_Het_unrestr} also applies to the preceding two theorems.

\subsection{Further remarks\label{rems}}

\begin{remark}
As already noted earlier, the various lower bounds for $\alpha $ given in
the theorems depend only on observable quantities, and can thus be computed
numerically. In particular, $\vartheta _{Het}$ in Theorem \ref%
{Theo_Het_unrestr} depends only on $X$, $R$, $r$, $\mathcal{A}$, $\Xi $, and
on the $d_{i}$'s appearing in the definition of the test statistic, while $%
\vartheta _{uc}$ in Theorem \ref{Theo_uc_unrestr} depends only on $X$, $R$, $%
r$, $\mathcal{A}$, and $\Xi $. Similarly, $\tilde{\vartheta}_{Het}$ in
Theorem \ref{Theo_Het_restr_1} and $\tilde{\theta}_{Het}$ in Theorem \ref%
{Theo_Het_restr_2} depend only on $X$, $R$, $r$, $\mathcal{A}$, $\Xi $, and
on the $\tilde{d}_{i}$'s appearing in the definition of the test statistic.
Finally, $\tilde{\vartheta}_{uc}$ in Theorem \ref{Theo_uc_restr_1} and $%
\tilde{\theta}_{uc}$ in Theorem \ref{Theo_uc_restr_2} depend only on $X$, $R$%
, $r$, $\mathcal{A}$, and $\Xi $.
\end{remark}

\begin{remark}
\label{rem:null}(i) Relations (\ref{eqn:bootsize_het}) and (\ref%
{eqn:bootsize_het_triangle}) continue to hold a fortiori if $T_{Het}$ is
replaced by $T_{Het}^{\blacktriangle }$, as $T_{Het}^{\blacktriangle }$ is
never smaller than $T_{Het}$ (in fact, $T_{Het}^{\blacktriangle }$ and $%
T_{Het}$ coincide except on a Lebesgue null set under Assumption \ref%
{R_and_X}).

(ii) Relations (\ref{eqn:bootsize_uncorr}) and (\ref%
{eqn:bootsize_uncorr_triangle}) continue to hold a fortiori if $T_{uc}$ is
replaced by $T_{uc}^{\blacktriangle }$, as $T_{uc}^{\blacktriangle }$ is
never smaller than $T_{uc}$ (in fact, both coincide except on $\limfunc{span}%
(X)$, a Lebesgue null set).

(iii) Relations (\ref{eqn:bootsize_het_tilde}), (\ref%
{eqn:bootsize_het_tilde_triangle}), (\ref{eqn:bootsize_het_tilde_malt}), and
(\ref{eqn:bootsize_het_tilde_triangle_malt}) continue to hold a fortiori if $%
\tilde{T}_{Het}$ is replaced by $\tilde{T}_{Het}^{\blacktriangle }$, as $%
\tilde{T}_{Het}^{\blacktriangle }$ is never smaller than $\tilde{T}_{Het}$
(in fact, both coincide except on a Lebesgue null set under Assumption \ref%
{R_and_X_tilde}).

(iv) Relations (\ref{eqn:bootsize_uncorr_tilde}), (\ref%
{eqn:bootsize_uncorr_tilde_triangle}), (\ref{eqn:bootsize_uncorr_tilde_malt}%
), and (\ref{eqn:bootsize_uncorr_tilde_triangle_malt}) continue to hold a
fortiori if $\tilde{T}_{uc}$ is replaced by $\tilde{T}_{uc}^{\blacktriangle
} $, as $\tilde{T}_{uc}^{\blacktriangle }$ is never smaller than $\tilde{T}%
_{uc}$ (in fact, both coincide except on $\mathfrak{M}_{0}$, a Lebesgue null
set).
\end{remark}

\begin{remark}
In case $q=1$, inspection of the proof of Theorem \ref{Theo_Het_unrestr}
shows that the bound $\vartheta _{Het}$ can be somewhat improved by allowing
in the definition of $\vartheta _{2,Het}$ the index $i$ to range over all
indices such that $e_{i}(n)\in \mathsf{B}$ and $R\hat{\beta}(e_{i}(n))\neq 0$%
. [This is so, since in case $q=1$ singularity of $\hat{\Omega}_{Het}\left(
y\right) $ is equivalent to $\hat{\Omega}_{Het}\left( y\right) =0$.]
\end{remark}

\begin{remark}
\label{rem:equiv}The test statistics $T_{Het}$ using HC0 and HC1 weights,
respectively, differ only by a multiplicative constant, and hence result in
the same bootstrap-based test. For design matrices $X$ with $h_{ii}$ not
depending on $i$, the same conclusion applies for all weights HC0-HC4. A
similar remark applies to $\tilde{T}_{Het}$ (with $\tilde{h}_{ii}$ taking
the r\^{o}le of $h_{ii}$).
\end{remark}

\begin{remark}
\label{conf_set}The size one results for bootstrap-based tests given in the
preceding theorems are easily seen to imply infimal coverage zero results
for the corresponding confidence sets for $R\beta $ obtained by
\textquotedblleft inverting\textquotedblright\ the tests. The computation of
such confidence sets is straightforward and leads to ellipsoids in the case
where the bootstrap-based test is obtained from $T_{Het}$ or $T_{uc}$ and
the bootstrap scheme (\ref{boot_2}) with $\mathcal{A}=\limfunc{span}(X)$ is
used. This is so, since the covariance matrix estimator employed in $T_{Het}$
(or $T_{uc}$, respectively) does not depend on $r$, and since the quantile
of the bootstrap distribution is easily seen also not to depend on $r$ in
this case. By Lemma \ref{VGK} the same is true for the bootstrap-based tests
obtained from $T_{Het}$ or $T_{uc}$ and the bootstrap scheme (\ref{boot_1})
with $\mathcal{A}=\limfunc{span}(X)$. In all other combinations of test
statistics and bootstrap schemes the "inversion" is typically more
complicated and becomes numerically burdensome, as then the covariance
matrix estimator employed in the test statistic and/or the quantile of the
bootstrap distribution will typically depend on $r$.
\end{remark}

\section{Some special cases\label{special}}

Here we consider the special case where the null hypothesis is simple (i.e., 
$q=k$). If restricted residuals are used in the bootstrap scheme (\ref%
{boot_1}) (i.e., if $\mathcal{A}=\mathfrak{M}_{0}$ holds), the following
result shows that in these cases our theorems become vacuous, and thus do
not allow us to draw any conclusion about the sizes of the corresponding
bootstrap-based tests. [Of course, this by itself does not preclude the
possibility that in these cases the size may be equal to one or may
substantially exceed $\alpha $.] The observations made in the theorem below
are in line with a result in \cite{DavidsFlach2008} implying that -- in the
case corresponding to Part (c) of the subsequent theorem -- the
bootstrap-based test using the bootstrap scheme (\ref{boot_1}) with $%
\mathcal{A}=\mathfrak{M}_{0}$ indeed has size equal to the nominal
significance level $\alpha $, \emph{provided} a particular choice of $\Xi $
and particular values of $\alpha $ are used. [In fact, this result, which is
Theorem 1 in \cite{DavidsFlach2008}, is not entirely correct in the form
given, but needs some amendments and corrections, which we shall not provide
here.\footnote{\label{FN_counterex}A simple counterexample to Theorem 1 in 
\cite{DavidsFlach2008} is provided by a regression model which has a
standard basis vector as its only regressor. It is then easy to see that the
test statistic is (almost surely) constant and coincides with the
bootstrapped test statistic. Consequently, the bootstrap-based test becomes
trivial. Its null rejection probabilities are equal to $0$ if the p-value is
defined as in \cite{DavidsFlach2008}. [They are equal to $1$ if an
alternative definition of the p-value is used.]}]

\begin{theorem}
\label{special_cases}Suppose $q=k$ and $\Xi (\left\{ \xi \in \mathbb{R}%
^{n}:\xi _{i}\neq 0\right\} )=1$ for every $i=1,\ldots ,n$. Then:

(a) $\vartheta _{Het}=1$ holds in Theorem \ref{Theo_Het_unrestr}, if $%
\mathcal{A}=\mathfrak{M}_{0}$ is used in the bootstrap scheme.\footnote{%
\label{FN1}Theorem \ref{Theo_Het_unrestr} maintains Assumption \ref{R_and_X}%
. If this assumption is violated, then $T_{Het}$ is identically equal to
zero, leading to a useless test. If one would formally apply bootstrap
scheme (\ref{boot_1}), the bootstrapped test statistic $T_{Het}^{\ast }$
would then also be identically zero, leading to a bootstrap critical value
of zero. The rejection probability is then always equal to zero or always
equal to one, depending on whether one uses a strict or weak inequality in
the definition of the rejection region.}

(b) $\vartheta _{uc}=1$ holds in Theorem \ref{Theo_uc_unrestr}, if $\mathcal{%
A}=\mathfrak{M}_{0}$ is used in the bootstrap scheme.

(c) $\tilde{\vartheta}_{Het}=1$ holds in Theorem \ref{Theo_Het_restr_1}, if $%
\mathcal{A}=\mathfrak{M}_{0}$ is used in the bootstrap scheme.\footnote{%
Theorem \ref{Theo_Het_restr_1} maintains Assumption \ref{R_and_X_tilde}. If
this assumption is violated, then a similar comment as in Footnote \ref{FN1}
applies.}

(d) $\tilde{\vartheta}_{uc}=1$ holds in Theorem \ref{Theo_uc_restr_1}, if $%
\mathcal{A}=\mathfrak{M}_{0}$ is used in the bootstrap scheme.
\end{theorem}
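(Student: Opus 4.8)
The plan is to exploit that $q=k$ forces the null to pin down the regression function completely, so that $\mathfrak{M}_{0}$ is a single point, and then to combine this with the $G(\mathfrak{M}_{0})$-invariance of all four test statistics together with the hypothesis $\Xi(\{\xi:\xi_{i}\neq 0\})=1$. All four claims will then follow by essentially the same computation; parts (c) and (d) are immediate once the invariance step is in place, while parts (a) and (b) additionally require disposing of the $\vartheta_{2}$-terms.

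First I would record the elementary structural facts. Since $q=k$ and $R$ has rank $k$, the matrix $R$ is invertible, so $\mathfrak{M}_{0}=\{\mu_{0}\}$ with $\mu_{0}=XR^{-1}r$, and $\mathfrak{M}_{0}^{lin}=\{0\}$. Consequently $G(\mathfrak{M}_{0})=\{y\mapsto\delta(y-\mu_{0})+\mu_{0}:\delta\in\mathbb{R}\setminus\{0\}\}$. Moreover, when $\mathcal{A}=\mathfrak{M}_{0}$, the feasible set for the restricted least squares problem is the singleton $\{\mu_{0}\}$, so both $X\tilde{\beta}_{\mathcal{A}}(y)$ and $X\tilde{\beta}_{\mathfrak{M}_{0}}(y)$ are identically equal to $\mu_{0}$, and the bootstrap scheme (\ref{boot_1}) collapses to $y^{\ast}(y,\xi)=\mu_{0}+\limfunc{diag}(\xi)(y-\mu_{0})$; evaluating at $y=\mu_{0}+e_{i}(n)$ gives $y^{\ast}(\mu_{0}+e_{i}(n),\xi)=\mu_{0}+\xi_{i}e_{i}(n)$.

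The heart of the argument is a one-line invariance observation. For $\xi$ with $\xi_{i}\neq 0$ the affine map $g_{\xi}:y\mapsto\xi_{i}(y-\mu_{0})+\mu_{0}$ lies in $G(\mathfrak{M}_{0})$ and satisfies $g_{\xi}(\mu_{0}+e_{i}(n))=\mu_{0}+\xi_{i}e_{i}(n)=y^{\ast}(\mu_{0}+e_{i}(n),\xi)$. Since each of $T_{Het}$, $T_{uc}$, $\tilde{T}_{Het}$, $\tilde{T}_{uc}$ is $G(\mathfrak{M}_{0})$-invariant (Remarks \ref{rem:GM0} and \ref{rem:tildeGM0}) and the corresponding exceptional sets $\mathsf{B}$, $\limfunc{span}(X)$, $\tilde{\mathsf{B}}$, $\mathfrak{M}_{0}$ are $G(\mathfrak{M}_{0})$-invariant as well, it follows that, for such $\xi$, the bootstrapped statistic value at $\mu_{0}+e_{i}(n)$ equals the original one and membership of the bootstrap sample in the exceptional set is equivalent to membership of $\mu_{0}+e_{i}(n)$ in it. Because $\Xi(\{\xi:\xi_{i}\neq 0\})=1$ by hypothesis, the strict inequality ``$T^{\ast}(\mu_{0}+e_{i}(n),\xi)<T(\mu_{0}+e_{i}(n))$'' appearing in the definitions of $\vartheta_{1,Het}$, $\vartheta_{1,uc}$, $\tilde{\vartheta}_{Het}$ and $\tilde{\vartheta}_{uc}$ can occur only on a $\Xi$-null set; hence each of these ``$\vartheta_{1}$-type'' probabilities is $0$ for every admissible index $i$. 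This already yields $\tilde{\vartheta}_{Het}=1$ and $\tilde{\vartheta}_{uc}=1$, proving (c) and (d) (and if the relevant index set is empty the stated conventions give $1$ directly).

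For parts (a) and (b) it remains to handle $\vartheta_{2,Het}$ and $\vartheta_{2,uc}$. Their index sets contain only $i$ with $e_{i}(n)\in\limfunc{span}(X)$; for such $i$ we have $y^{\ast}(\mu_{0}+e_{i}(n),\xi)=\mu_{0}+\xi_{i}e_{i}(n)\in\limfunc{span}(X)$, and since $\limfunc{span}(X)\subseteq\mathsf{B}$ by Lemma \ref{lem_B}(e), the set $\{\xi:y^{\ast}(\mu_{0}+e_{i}(n),\xi)\notin\mathsf{B}\}$ (respectively $\{\xi:y^{\ast}(\mu_{0}+e_{i}(n),\xi)\notin\limfunc{span}(X)\}$) is empty, so $\vartheta_{2,Het}=0$ (respectively $\vartheta_{2,uc}=0$). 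Combining, $\vartheta_{Het}=1-\max(0,0)=1$ and $\vartheta_{uc}=1$. I do not anticipate a genuine obstacle here; the only points requiring care are verifying that $g_{\xi}\in G(\mathfrak{M}_{0})$ precisely when $\xi_{i}\neq 0$ — which is exactly where the hypothesis on $\Xi$ enters — and noting that $G(\mathfrak{M}_{0})\subseteq G(\mathfrak{M})$ since $\mathfrak{M}_{0}\subseteq\limfunc{span}(X)$, so that the $G(\mathfrak{M})$-invariance of $\mathsf{B}$ and $\limfunc{span}(X)$ indeed suffices.
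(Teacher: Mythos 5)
Your proposal is correct and takes essentially the same route as the paper's proof: reduce to $\mathfrak{M}_{0}=\{\mu_{0}\}$, observe that $y^{\ast}(\mu_{0}+e_{i}(n),\xi)=\mu_{0}+\xi_{i}e_{i}(n)$ is the image of $\mu_{0}+e_{i}(n)$ under an element of $G(\mathfrak{M}_{0})$ whenever $\xi_{i}\neq 0$, and use the invariance of the statistics and exceptional sets to make all $\vartheta_{1}$-type probabilities vanish. The only (harmless) cosmetic difference is in part (a): the paper notes that under Assumption \ref{R_and_X} with $q=k$ the index set defining $\vartheta_{2,Het}$ is empty, whereas you show each probability in that maximum is zero via $\limfunc{span}(X)\subseteq\mathsf{B}$; both arguments are valid.
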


\begin{remark}
(i) Part (a) (Part (b), respectively) of the preceding theorem applies to
the bootstrap-based test derived from $T_{Het}$ ($T_{uc}$, respectively)
when the bootstrap scheme (\ref{boot_1}) with $\mathcal{A}=\mathfrak{M}_{0}$
is employed. In view of Lemma \ref{VGK} and Remark \ref{rem:vgk}, these
results also apply if the bootstrap scheme (\ref{boot_2}), again with $%
\mathcal{A}=\mathfrak{M}_{0}$, is used. However, as can be seen from simple
examples, this is not so in the context of Parts (c) and (d) of the
preceding theorem (i.e., $\tilde{\theta}_{Het}<1$ and $\tilde{\theta}_{uc}<1$
can occur in Theorems \ref{Theo_Het_restr_2} and \ref{Theo_uc_restr_2}.
respectively, even if $q=k$, $\mathcal{A}=\mathfrak{M}_{0}$, and $\Xi $ is
as in Theorem \ref{special_cases}).

(ii) If bootstrap schemes (\ref{boot_1}) or (\ref{boot_2}) are used for the
bootstrap-based tests derived from any of $T_{Het}$, $T_{uc}$, $\tilde{T}%
_{Het}$, and $\tilde{T}_{uc}$, but now with $\mathfrak{M}_{0}\subsetneqq 
\mathcal{A}$, simple examples show that $\vartheta _{Het}<1$, $\vartheta
_{uc}<1$, $\tilde{\vartheta}_{Het}<1$, $\tilde{\vartheta}_{uc}<1$, $\tilde{%
\theta}_{Het}<1$, and $\tilde{\theta}_{uc}<1$ can occur.
\end{remark}

As a consequence of the preceding theorem, the function in the R-package 
\textbf{wbsd} for computing $\vartheta _{Het}$, etc. first checks if the
condition of the theorem are satisfied, and if so, outputs $1$ for $%
\vartheta _{Het}$, etc.

\section{Extensions and generalizations\label{gen}}

\subsection{Other covariance models}

The results given so far refer to the size of bootstrap-based tests when the
covariance model $\mathfrak{C}_{Het}$ is maintained. Inspection of the
proofs of the theorems in Sections \ref{Theory} and \ref{special} as well as
of Theorem \ref{thm:main} in Appendix \ref{App A} shows that they also hold
with $\mathfrak{C}_{Het}$ replaced by any covariance model $\mathfrak{C}%
\subseteq \mathfrak{C}_{Het}$ other than $\mathfrak{C}_{Het}$, provided the
closure of $\mathfrak{C}$ contains, for $i=1,\ldots ,n$, the matrices $%
e_{i}(n)e_{i}(n)^{\prime }$. More generally, if the closure of $\mathfrak{C}$
contains the matrices $e_{i}(n)e_{i}(n)^{\prime }$ only for $i\in I\subseteq
\{1,\ldots ,n\}$, then Theorem \ref{thm:main} continues to hold with $%
\mathfrak{C}_{Het}$ replaced by $\mathfrak{C}$, provided the range of the
maximum operator in (\ref{eqn:newbound}) is intersected with $I$, and the
other theorems mentioned before continue to hold with $\mathfrak{C}_{Het}$
replaced by $\mathfrak{C}$, provided the ranges of the maximum operators
appearing in the definitions of the various quantities $\vartheta _{1,Het}$, 
$\vartheta _{2,Het}$, $\vartheta _{1,uc}$, $\vartheta _{2,uc}$, $\tilde{%
\vartheta}_{Het}$, $\tilde{\vartheta}_{uc}$, $\tilde{\theta}_{Het}$, and $%
\tilde{\theta}_{uc}$ are intersected with $I$.\footnote{%
It is here understood that a maximum is interpreted as zero if it extends
over an empty range.}

\subsection{Nongaussian errors}

Consider now the regression model as in Section \ref{frame}, except for the
Gaussianity assumption.

(i) If we assume a (possibly semiparametric) model for the distribution of
the errors such that the implied model for the distributions of $\mathbf{Y}$
contains all the Gaussian distributions shown in (\ref{lm2}), then the
results of the paper continue to hold a fortiori (with the same lower bounds
for $\alpha $), since the size of any test computed w.r.t. such a larger
model for the distributions of $\mathbf{Y}$ is certainly not smaller than
the size of the same test when computed w.r.t. to the Gaussian model (\ref%
{lm2}).

(ii) Suppose next we assume that the standardized errors $\sigma ^{-1}\Sigma
^{-1/2}\mathbf{U}$ follow a (fixed) distribution $G$ that does not depend on 
$(\mu ,\sigma ,\Sigma )$, and let $Q_{\mu ,\sigma ^{2}\Sigma ,G}$ denote the
implied distribution of $\mathbf{Y}$. If $G$ is absolutely continuous w.r.t. 
$\lambda _{\mathbb{R}^{n}}$, then the results of the paper continue to hold
with $P_{\mu ,\sigma ^{2}\Sigma }$ replaced by $Q_{\mu ,\sigma ^{2}\Sigma
,G} $ (and with the same lower bounds for $\alpha $). This is easily seen
from an inspection of the proofs.\footnote{\label{FNabs}The assumption of
absolute continuity of $G$ can, in fact, be relaxed to the assumption that
none of its one-dimensional marginals has positive mass at zero in case of
Theorems \ref{thm:main}, \ref{Theo_Het_restr_1} - \ref{Theo_uc_restr_2}, and
of the weaker versions of Theorems \ref{Theo_Het_unrestr} and \ref%
{Theo_uc_unrestr} discussed in Remark \ref{rem:weaker} in Appendix \ref{App
B}. The proofs of Theorems \ref{Theo_Het_unrestr} and \ref{Theo_uc_unrestr}
also extend to the situation discussed here under any additional condition
that guarantees that the exceptional sets $\mathsf{B}$ and $\limfunc{span}%
(X) $, respectively, have probability zero under any $Q_{\mu _{0},\sigma
^{2}\Sigma ,G}$.}

(iii) Suppose we have the same framework as in (ii), except that now $G$
varies in a set $\mathfrak{G}$ (independently of $(\mu ,\sigma ,\Sigma )$),
i.e., we have a semiparametric model. If at least one member $G\in \mathfrak{%
G}$ is absolutely continuous, then the results of the paper continue to hold
a fortiori (with the same lower bounds for $\alpha $) for reasons similar to
the ones given in (i).\footnote{%
Again, the absolute continuity assumption can be weakened, cf. Footnote \ref%
{FNabs}.}

(iv) Also note that the lower bounds for $\alpha $ in all the result do not
involve the distribution of $\mathbf{Y}$ (and thus of $\mathbf{U})$, and, in
particular, do not involve the Gaussianity assumption. Hence, in this sense
the lower bounds are \textquotedblleft distribution free\textquotedblright .

\subsection{Stochastic regressors}

The assumption of nonstochastic regressors can be easily relaxed as follows:
Suppose $X$ is random and $\mathbf{U}$ is conditionally on $X$ distributed
as $N(0,\sigma ^{2}\Sigma )$, with $\sigma ^{2}=\sigma ^{2}(X)>0$ and $%
\Sigma =\Sigma (X)\in \mathfrak{C}_{Het}$, where $\sigma ^{2}(\cdot )$ and $%
\Sigma (\cdot )$ vary in given classes of functions. Suppose further that $%
\sigma ^{2}(X)$ and $\Sigma (X)$ vary independently through all of $%
(0,\infty )$ and $\mathfrak{C}_{Het}$, respectively, for (almost) every
realization of $X$, when the functions $\sigma ^{2}(\cdot )$ and $\Sigma
(\cdot )$ vary in the before mentioned function classes.\footnote{%
This is certainly the case if no restrictions on the functions $\sigma
^{2}(\cdot )$ and $\Sigma (\cdot )$ are imposed beyond $\sigma ^{2}(\cdot )$
and $\Sigma (\cdot )$, respectively, taking values in $(0,\infty )$ and in
the set of diagonal matrices with positive diagnal elements. Another
instance where this is seen to be satisfied is the case where $\sigma
^{2}(X)\Sigma (X)=\limfunc{diag}(\varpi (x_{1\cdot }),\ldots ,\varpi
(x_{n\cdot }))$ with no further restrictions on the function $\varpi $
(besides positivity) and with at least one regressor being absolutely
continuous.} Then the results of the paper obviously apply after one
conditions on $X$ provided (almost) all realizations of $X$ satisfy the
assumptions of our theorems, which will typically be the case (for brevity
we do not provide a formal statement here). And again similar
generalizations to non-Gaussianity as discussed in the preceding subsection
are possible here.

\section{Numerical results\label{sec_Num}}

There is a considerable body of simulation studies investigating finite
sample properties of bootstrap-based heteroskedasticity robust tests, see
the references mentioned in the Introduction. While these studies provide
helpful information, there is -- as always with simulation studies -- an
issue to what extent conclusions of such a study generalize. This is
particularly so with \emph{positive }findings (such as, e.g., that a
particular bootstrap-based test has null rejection probabilities close to
the nominal significance level) as it is less than clear that such a finding
allows for generalization beyond the design matrices $X$, the restrictions
(given by $R$, $r$), and the forms of heteroskedasticity considered in the
simulation study. It is less of an issue with \emph{negative} results (such
as, e.g., that a particular bootstrap-based test has null rejection
probabilities much larger than the nominal significance level), since they
can be viewed as counterexamples disproving good behavior of the
bootstrap-based test in general.

For these reasons we set out to study the \emph{worst-case size performance}
of a variety of bootstrap-based tests.\footnote{%
The concept of the size of a test is by itself already a worst-case concept,
as it is the supremum of the rejection probabilites over the null hypothesis
(where $X$ and the restrictions are being held fixed), cf. (\ref{size}). The
term "worst-case" in "worst-case size performance" here refers to varying $X$
and the restrictions to be tested.} That is, for any given bootstrap-based
test in a large class, we try to "break" the test by searching for a design
matrix $X$, a restriction (given by $R$, $r$) to be tested, and a form of
heteroskedasticity, such that the null rejection probability of the test is
substantially larger than the nominal significance level $\alpha $. Note
that this is equivalent to finding $X$, $R$, and $r$ such that the size of
the bootstrap-based test computed over the heteroskedasticity model $%
\mathfrak{C}_{Het}$ is substantially larger than $\alpha $. A
bootstrap-based test that is "broken" in our study, should probably not be
used by practitioners (at least not without first assessing its properties
in the particular testing problem put before the practitioner, e.g., by
attempting to determining the size of the test in that problem by Monte
Carlo methods). A bootstrap-based test that "survives" the "stress test"
imposed by our study may perhaps be considered to be a better choice, but
note that our numerical results do \emph{not} provide any guarantee for good
performance in the practitioner's testing problem either (and thus again an
assessment of its properties in the practitioner's testing problem may be
called for).

Our theoretical results obtained in the previous sections play an important r%
\^{o}le in our study of the size performance of bootstrap-based tests, as
these results allow us to deduce abysmal size behavior (i.e., size equal to $%
1$) of a bootstrap-based test (for a given design matrix $X$ and restriction 
$(R$, $r)$ to be tested) by comparing the (numerically evaluated) $\vartheta 
$ with the nominal significance level $\alpha $; in this section the symbol $%
\vartheta $ serves as a generic abbreviation for $\vartheta _{Het}$($=\theta
_{Het}$) (cf.~Theorem \ref{Theo_Het_unrestr}), $\vartheta _{uc}$($=\theta
_{uc}$) (cf.~Theorem \ref{Theo_uc_unrestr}), $\tilde{\vartheta}_{Het}$
(cf.~Theorem \ref{Theo_Het_restr_1}), $\tilde{\theta}_{Het}$ (cf.~Theorem %
\ref{Theo_Het_restr_2}), $\tilde{\vartheta}_{uc}$ (cf.~Theorem \ref%
{Theo_uc_restr_1}), or $\tilde{\theta}_{uc}$ (cf.~Theorem \ref%
{Theo_uc_restr_2}), depending on which of the theorems listed in parentheses
(possibly after an appeal to Lemma \ref{VGK} and Remark \ref{rem:vgk})
applies to the bootstrap-based test under consideration. Recall that these
theorems show that if $\alpha >\vartheta $ holds, then the corresponding
bootstrap-based test has size $1$ (over the heteroskedasticity model $%
\mathfrak{C}_{Het}$), and thus certainly breaks down (for the given testing
problem, i.e., for the given $X$, $R$, $r$). Hence, we shall search for
worst-case $X$, $R$, and $r$ that lead to small values of $\vartheta $.%
\footnote{%
Evaluating $\vartheta $ numerically is a nontrivial task as discussed in
Section \ref{sec:numch} in Appendix \ref{App compu}. In order to be on the
safe side and to bias our results in favor of the tests (recall that we are
after negative results), the $\vartheta $ we shall report will actually be a
numerical obtained upper bound for the true $\vartheta $.}

More precisely, we shall consider three \emph{settings}, where \emph{setting}
refers to sample size $n=10,20$, and $30$, and various \emph{scenarios},
where \emph{scenario }refers to a combination of $k$ (number of regressors)
and $q$ (number of restrictions to be tested and where $R=(0:I_{q})$, $r=0$%
). In every setting, we roughly do the following: we compute for every
bootstrap-based test included in our study the value of $\vartheta $ for a
variety of design matrices in a range of scenarios. We then determine the
minimal value of $\vartheta $ over all design matrices considered. Then, we
compare this minimum with two commonly used levels of significance ($\alpha
=0.05$ and $\alpha =0.1$). In addition to studying the behavior of this
minimal value of $\vartheta $, we shall complement this by numerical size
lower bound computations.

All computations were carried out in R (\cite{R}) version 3.6.3 using
version 1.0.0 of the R-package \textbf{wbsd} (\textquotedblleft wild
bootstrap size diagnostics\textquotedblright ) by \cite{wbsd} generated with
Rtools35. The package \textbf{wbsd} provides computationally efficient
routines for determining the quantities $\vartheta _{Het}$($=\theta _{Het}$%
), $\vartheta _{uc}$($=\theta _{uc}$), $\tilde{\vartheta}_{Het}$, $\tilde{%
\theta}_{Het}$, $\tilde{\vartheta}_{uc}$, and $\tilde{\theta}_{uc}$, and for
obtaining bootstrap p-values in order to obtain the numerical results
reported here. The tools for computing~$\vartheta $ provided in the
R-package \textbf{wbsd} can be used by practitioners as a diagnostic device
to check whether a bootstrap-based test is provably unreliable (in that $%
\alpha >\vartheta $, which implies size equal to $1$) in a given testing
problem. The package is available on CRAN.

In the following subsections we describe the bootstrap-based tests studied,
we explain the computations carried out for each test, and discuss the
results obtained. Some of the details are deferred to Appendix \ref{App
compu}.

\subsection{Description of the bootstrap-based tests studied\label{sec:descr}%
}

The number of bootstrap-based tests we cover in our study is vast: In total
we consider the $960$ possible combinations of the $12$ test statistics
discussed in Section \ref{test_statistic} and at the beginning of Section %
\ref{restrict_res} around Equations (\ref{T_het}), (\ref{T_uncorr}), (\ref%
{T_Het_tilde}), and (\ref{T_uncorr_tilde}) with the bootstrap schemes
discussed further below ($80$ in total), which are popular special cases of
the two general bootstrap schemes discussed in Section \ref{Theory}.

To be precise, the $12$ \emph{test statistics studied} are:\footnote{\label%
{FNconv}We use here the conventions for $d_{i}$ and $\tilde{d}_{i}$ given
below (\ref{T_het}) and (\ref{T_Het_tilde}), respectively.}

\begin{enumerate}
\item \textbf{Test statistics based on unrestricted residuals}: $T_{uc}$;
and $T_{Het}$ with $d_{i}=1$ (HC0); with $d_{i}=n/(n-k)$ (HC1); with $%
d_{i}=(1-h_{ii})^{-1}$ (HC2); with $d_{i}=(1-h_{ii})^{-2}$ (HC3); and with $%
d_{i}=(1-h_{ii})^{\delta _{i}}$ for $\delta _{i}=\min (nh_{ii}/k,4)$ (HC4).

\item \textbf{Test statistics based on restricted residuals}: $\tilde{T}%
_{uc} $; $\tilde{T}_{Het}$ with $\tilde{d}_{i}=1$ (HC0R); with $\tilde{d}%
_{i}=n/(n-(k-q))$ (HC1R); with $\tilde{d}_{i}=(1-\tilde{h}_{ii})^{-1}$
(HC2R); with $\tilde{d}_{i}=(1-\tilde{h}_{ii})^{-2}$ (HC3R); and with $%
\tilde{d}_{i}=(1-\tilde{h}_{ii})^{\tilde{\delta}_{i}}$ for $\tilde{\delta}%
_{i}=\min (n\tilde{h}_{ii}/(k-q),4)$ (HC4R).
\end{enumerate}

The \emph{bootstrap schemes we study} are $y^{\ast }$ as defined in (\ref%
{boot_1}), and $y^{\maltese }$ as defined in (\ref{boot_2}). Both bootstrap
schemes are applied with $\mathcal{A}=\mathfrak{M}_{0}$ as well as with $%
\mathcal{A}=\limfunc{span}(X)$. In addition to choosing $\mathcal{A}$, both
bootstrap schemes require a concrete choice of $\Xi $. All distributions $%
\Xi $ we consider are constructed in the following way: first an auxiliary
distribution $\Xi ^{\bullet }$ on $\{-1,1\}^{n}$ has to be chosen. The way
we choose this auxiliary distribution depends on the magnitude of $n$. We
consider three cases: Setting A ($n=10$), Setting B ($n=20$), and Setting C (%
$n=30$).

\begin{itemize}
\item In Setting A, we consider (i) $\Xi ^{\bullet }$ equal to the $n$-fold
Rademacher distribution\ (i.e., the $n$-fold product of the uniform
distribution on $\{-1,1\}$), and (ii) $\Xi ^{\bullet }$ equal to the $n$%
-fold Mammen distribution\ (i.e., the $n$-fold product of the distribution
on $\{-(\sqrt{5}-1)/2,(\sqrt{5}+1)/2\}$ that assigns mass $(\sqrt{5}+1)/(2%
\sqrt{5})$ to $-(\sqrt{5}-1)/2$).

\item In Settings B and C, we consider $\Xi ^{\bullet }$ equal to an \emph{%
empirical} distribution of a sample of size $10n-1$ from the $n$-fold
Rademacher distribution and from the $n$-fold Mammen distribution,
respectively.\footnote{%
The reason for treating Settings B and C differently from Setting A is that
for values of $n$ such as $20$ or $30$ an enumeration of all support points
of the $n$-fold Rademacher or $n$-fold Mammen distribution is numerically
too costly.}
\end{itemize}

Given an auxiliary distribution $\Xi ^{\bullet }$, the distribution $\Xi $
actually used in the bootstrap scheme depends on a vector of weights $w$,
itself typically depending on $X$ or on $X$ and $R$. Given a weights vector $%
w$, $\Xi $ is then obtained as the distribution of $\limfunc{diag}(w)\xi
^{^{\bullet }}$ where $\xi ^{^{\bullet }}$ follows the distribution $\Xi
^{\bullet }$. We consider the following choices for the vector of weights $w$%
:\footnote{%
We use here the same conventions as mentioned in Footnote \ref{FNconv}.}

\begin{enumerate}
\item \textbf{Unrestricted HC0-HC4 weights} $w=(w_{1},\ldots ,w_{n})$\textbf{%
:} $w_{i}=1$ (HC0), $w_{i}=[n/(n-k)]^{1/2}$ (HC1), $w_{i}=(1-h_{ii})^{-1/2}$
(HC2), $w_{i}=(1-h_{ii})^{-1}$ (HC3), and $w_{i}=(1-h_{ii})^{\delta _{i}/2}$
for $\delta _{i}=\min (nh_{ii}/k,4)$ (HC4).

\item \textbf{Null-restricted HC0R-HC4R weights} $w=(\tilde{w}_{1},\ldots ,%
\tilde{w}_{n})$\textbf{:} $\tilde{w}_{i}=1$ (HC0R), $\tilde{w}%
_{i}=[n/(n-(k-q))]^{1/2}$ (HC1R), $\tilde{w}_{i}=(1-\tilde{h}_{ii})^{-1/2}$
(HC2R), $\tilde{w}_{i}=(1-\tilde{h}_{ii})^{-1}$ (HC3R), and $\tilde{w}%
_{i}=(1-\tilde{h}_{ii})^{\tilde{\delta}_{i}/2}$ for $\tilde{\delta}_{i}=\min
(n\tilde{h}_{ii}/(k-q),4)$ (HC4R).
\end{enumerate}

In total this gives $960$ possible combinations of test statistics and
bootstrap schemes. We emphasize that some of these combinations result in
the same bootstrap-based test: (i) For reasons discussed in Lemma \ref{VGK},
(ii) when changing HC0 weights to HC0R weights in the bootstrap scheme, and
(iii) when changing HC0 (HC0R) weights to HC1 (HC1R) weights in the
definition of $T_{Het}$ ($\tilde{T}_{Het}$), cf.~Remark \ref{rem:equiv}.
Concerning the run-time of the simulations, one could certainly argue that,
for the computations, one should keep only one of the combinations that lead
to the same bootstrap-based test. However, we have chosen not to, because we
can then exploit the ensuing additional computations as a double-check for
the methods that \textquotedblleft survive\textquotedblright\ the worst-case
analysis (as the design matrices are generated separately for each of the $%
960$ combinations).

Given a test statistic, a bootstrap scheme, and a level of significance $%
\alpha $, the corresponding bootstrap-based test is throughout taken as the
test that, observing $y$, rejects the null hypothesis, if the bootstrap
p-value computed for $y$ is strictly smaller than $\alpha $. Here, bootstrap
p-value refers to the mass assigned by $\Xi $ to the points $\xi $ that give
rise to elements in the bootstrap sample at which the test statistic is
greater than or equal to the test statistic evaluated at $y$. To be precise,
if, e.g., $T_{Het}$ is used as a test statistic, we define the bootstrap
p-value as $\Xi (\xi :T_{Het}^{\blacktriangle ,\ast }(y,\xi )\geq
T_{Het}(y)) $ if a bootstrap scheme of the form $y^{\ast }$ is used, and as $%
\Xi (\xi :T_{Het}^{\blacktriangle ,\maltese }(y,\xi )\geq T_{Het}(y))$ if a
bootstrap scheme of the form $y^{\maltese }$ is used; for the other test
statistics $T_{uc}$, $\tilde{T}_{Het}$, and $\tilde{T}_{uc}$ we proceed
similarly. Recall that $T_{Het}^{\blacktriangle }$ coincides with $T_{Het}$,
except on the exceptional set $\mathsf{B}$, on which $T_{Het}^{%
\blacktriangle }$ is set equal to $\infty $ (and a similar statement applies
for the other test statistics). The reason for using $T_{Het}^{%
\blacktriangle ,\ast }$ ($T_{Het}^{\blacktriangle ,\maltese }$,
respectively) rather than $T_{Het}^{\ast }$ ($T_{Het}^{\maltese }$,
respectively) in the definition of the p-value (and similarly for the other
test statistics)\ is that this potentially gives a smaller rejection region,
thus biasing the result in favor of the test (recall we are after negative
results!); cf. the discussion relating to Part (b) of Theorem \ref%
{Theo_Het_unrestr} given subsequent to this theorem. For the same reason we
use $\geq $, and not $>$, in the definition of the p-value. It is easy to
see that -- in case a bootstrap scheme of the form $y^{\ast }$ is used --
the bootstrap-based test just defined via p-values can be rewritten as the
test that rejects if $T_{Het}(y)>f_{Het,1-\alpha }^{\blacktriangle
,upper}(y) $, where $f_{Het,1-\alpha }^{\blacktriangle ,upper}(y)$ is the 
\emph{upper} (i.e., largest) $(1-\alpha )$-quantile of $F_{Het,y}^{%
\blacktriangle }$; and a similar statement applies if a bootstrap scheme of
the form $y^{\maltese }$ (or one of the other test statistics) is being
used. [This also shows that the above defined rejection region is the
smallest among all the rejection regions that can appear in the formulations
of the theorems in Section \ref{Theory}.]

\subsection{Computations carried out in each setting\label{sec:comp}}

In each setting ($n=10,20,30$) and for each of the $960$ combinations of
test statistics and bootstrap schemes described above we perform a two-step
procedure. A detailed description of the computations carried out can be
found in Sections \ref{sec:stp1} and \ref{sec:stp2} in Appendix \ref{App
compu}. Here we only provide a brief summary of the two-step procedure to
the extent needed for an understanding of the results presented in Section %
\ref{sec:results}.

\begin{enumerate}
\item The main goal of Step 1 is to find a scenario and a corresponding
design matrix leading to a small value of $\vartheta $.

Essentially, this is done by randomly generating $n\times k$ design matrices
(with first column the intercept, and the remaining coordinates
i.i.d.~log-(standard) normally distributed) and by computing the
corresponding values of $\vartheta $ for the testing problems $R=(0:I_{q})$
and $r=0$. In preparation for Step 2, for a suitably chosen subset of the
design matrices generated, we also compute null rejection probabilities for
strategically chosen variance parameters (assuming normality). All this is
done for every pair $(k,q)$ with $k=2,\ldots ,5$ and $q=1,\ldots ,k-1$.

\item The goals of Step 2 are twofold: (a) to check the numerical
reliability of the computation of $\vartheta $ in Step 1; and (b) to compute
lower bounds on the size of the test, if necessary. We do the following for $%
\alpha \in \{0.05,0.1\}$:

If $\vartheta _{\min }$, the overall smallest $\vartheta $ identified in
Step 1, turns out to be smaller than $\alpha $, we further check the
numerical reliability of $\vartheta _{\min }$ by making use of the null
rejection probabilities computed in Step 1. If this numerical check,
described in Section \ref{sec:stp2} in Appendix \ref{App compu}, is not
passed, we update $\vartheta _{\min }$. Once this check is passed, we
distinguish two cases: (i) If $\vartheta _{\min }<\alpha $ or if the maximum
of the null rejection probabilities just referred to (maximized over the
strategically chosen variance parameters) exceeds $3\alpha $, we stop. For
these cases we report the value of $\vartheta _{\min }$ together with the
maximal rejection probability obtained for the design matrix pertaining to $%
\vartheta _{\min }$. (ii) For the exceptional set of tests for which $%
\vartheta _{\min }\geq \alpha $ and the maximum of the null rejection
probabilities does not exceed $3\alpha $ we perform a second search (again
sampling as in Step 1) to find design matrices leading to high rejection
probabilities under the null. For these cases we report the highest null
rejection probability found, and the value of $\vartheta $ corresponding to
the design matrix that led to the highest null rejection probability.
\end{enumerate}

\subsection{Results and discussion\label{sec:results}}

The results of the two-step procedure described in Section \ref{sec:comp}
are summarized in Figure \ref{fig:p} in the form of $6$ plots corresponding
to the six combinations of the three settings A, B, and C, and of the two
values for $\alpha $ ($\alpha \in \{0.05,0.1\}$). In each plot, the vertical
dashed line intersects the axis at $\alpha $, the lower (upper,
respectively) horizontal dashed line intersects the axis at $\alpha $ (at $%
3\alpha $, respectively).

For every combination of the setting and the value of $\alpha $, the plot is
obtained as follows: for every bootstrap-based test procedure (i.e.,
combination of test statistic and bootstrap scheme), a null rejection
probability is plotted against a corresponding $\vartheta $ indicated by a
black or red circle. The black circles correspond to test procedures for
which Step 2 terminated without starting a second set of searches (which was
the case for the vast majority of procedures, see Footnote \ref{FN_vast} in
Appendix \ref{App compu}). The red circles correspond to the remaining
(exceptional) test procedures for which further null rejection probabilities
were computed in Step 2.

For all test procedures corresponding to black circles with $\vartheta
<\alpha $, the reliability check applied in Step 1 guarantees that the
corresponding null rejection probability found in Step 1 is greater than $%
0.4 $. Note that these null rejection probabilities do \emph{not} coincide
with the sizes of the respective tests, which actually all are equal to $1$
by our theoretical results; they are only lower bounds for the size that are
reported for completeness. For all black circles with $\vartheta \geq \alpha 
$, the null rejection probabilities are not less than $3\alpha $, which can
be gathered from an inspection of the plots (and which is so by construction
of the two-step procedure, see Section \ref{sec:stp2} in Appendix \ref{App
compu}); hence, they are much too large compared to the nominal significance
level $\alpha $. A bootstrap-based test procedure corresponding to a black
circle hence \textquotedblleft fails the worst-case check\textquotedblright\
(in the setting and for the $\alpha $ considered), because a scenario (i.e., 
$k$ and $q$) and a corresponding design matrix has been found for which the
size of the test is $1$, or is exceedingly large, i.e., larger than or equal
to $3\alpha $.

For the test procedures corresponding to red circles extra computations were
carried out in Step 2. A test procedure resulting in a red circle is
declared to \textquotedblleft fail the worst-case check\textquotedblright\
(in the setting and for the $\alpha $ considered) if the null rejection
probability plotted is not less than $3\alpha $ (as then a scenario (i.e., $k
$ and $q$) and a corresponding design matrix have been found for which the
size of the test is exceedingly large, namely larger than or equal to $%
3\alpha $); otherwise it is declared to \textquotedblleft pass the
worst-case check\textquotedblright\ (in the setting and for the $\alpha $
considered) \emph{for the time being}. [There are a few instances of red
circles for which the null rejection probability plotted is less than $%
3\alpha $, but $\vartheta <\alpha $ holds. While our theoretical results
then tell us that the size of the test should be equal to $1$, and we hence
should classify the test procedure as "failing the worst-case check", we do
not do so as we do not want to rely too much on the information provided by $%
\vartheta $ in such cases, since no reliability check for computing $%
\vartheta $ is included in the computation in Step 2.]

Bootstrap-based test procedures that fail the worst-case check (in at least
one setting and for one of the values of $\alpha $) should thus not be
expected to be reliable in general. Therefore, such a test procedure should
not be used in practice without first obtaining further guarantees
concerning its size properties in the specific problem at hand, e.g., by
running additional simulations geared towards the problem at hand.

Figure \ref{fig:p} shows that in all settings and for both significance
levels considered, the vast majority of circles are black (see Footnote \ref%
{FN_vast} in Appendix \ref{App compu}), already leading to the conclusion
that most bootstrap-based test procedures fail the worst-case check.
Furthermore, most of these test procedures fail in such a way that the
corresponding $\vartheta $ is smaller than the significance level $\alpha $,
and thus the test is known to have size equal to $1$ (at least in one of the
scenarios and for one of the design matrices considered) as a consequence of
our theoretical results. Inspection of Figure \ref{fig:p} also shows that a
good portion of the test procedures corresponding to red circles fail the
worst-case check in that the red circles are above or on the $3\alpha $%
-line. Comparing the figures across different settings and values of $\alpha 
$, we also see that the number of red circles decreases when passing from $%
\alpha =0.05$ to $\alpha =0.1$. The figure also shows that the number of red
circles increases when increasing $n$. One reason could be that the
randomized search for design matrices leading to low values of $\vartheta $
becomes more difficult as $n$ increases. We used the same randomized search
algorithm in all three scenarios, which could explain the difference. A
conceptual difference between the methods used in Setting A and Settings B
and C is that for Setting A ($n=10$) exact computations (concerning $\Xi
^{\bullet }$) are carried out, while for Settings B ($n=20$) and C ($n=30$)
approximate computations (based on empirical distributions $\Xi ^{\bullet }$%
) are done. This introduces an additional source of variation in the
computations in Settings B and C, which could also be responsible for the
increase in the number of red circles.

Important questions now are whether (i) there is a bootstrap-based test
procedure left that passes the worst-case check in all settings and for both
significance levels considered; (ii) there is a test procedure that passes
the worst-case check in all settings considered for a fixed $\alpha $; (iii)
there is a pattern, in the sense that certain combinations of test
statistics and bootstrap schemes often pass the worst-case check? To answer
such questions, we shall next provide information on the procedures that
pass the worst-case check in each setting and for each $\alpha $ considered.

\begin{figure}
	\centering
	\includegraphics[width=\linewidth]{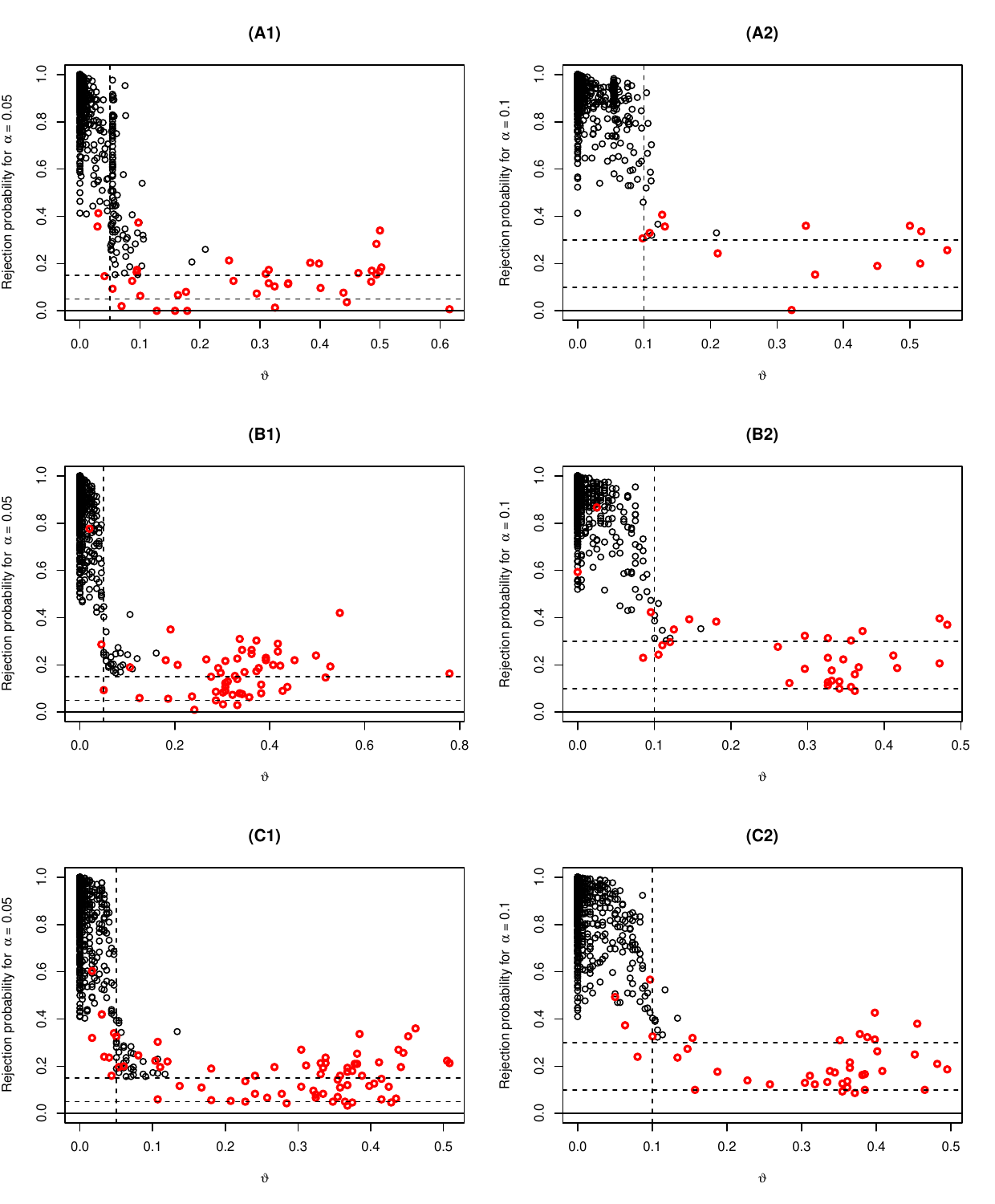}
	\caption{Results in Settings A, B, and C.}
	\label{fig:p}
\end{figure}


The bootstrap-based test procedures which (for the time being) pass the
worst-case check in Setting A, B, and C, respectively, are summarized in
Tables \ref{tab:A}, \ref{tab:B}, and \ref{tab:C}. In each table, the first
row contains the test procedures that pass the worst-case check for $\alpha
=0.05$, the second row the ones that pass for $\alpha =0.1$, and the third
row the ones that pass at both nominal levels of significance.

In these tables, to facilitate the exposition, we use the following way of
encoding a bootstrap-based test procedure: to each of the $960$ possible
procedures (i.e., combinations of test statistics and bootstrap schemes)
considered we associate a $7$ digit code: $x=x_{1}{:}x_{2}{:}x_{3}{:}x_{4}{:}%
x_{5}{:}x_{6}{:}x_{7}$. The encoding of the digits is as follows:

\begin{enumerate}
\item[$x_{1}$ ...] indicates which covariance matrix estimator is used in
the test statistic (\textquotedblleft -1" stands for the uncorrected
estimator based on restricted or unrestricted residuals depending on whether 
$x_{2}$ is set to F or T; and $0,...,4$ stand for HC0,...,HC4, respectively,
or for HC0R,...,HC4R, respectively, depending on whether $x_{2}$ is set to F
or T).

\item[$x_{2}$ ...] indicates whether the covariance matrix estimator used in
the test statistic is based on null-restricted residuals (T) or not (F).

\item[$x_{3}$ ...] indicates the distribution underlying $\Xi ^{\bullet }$
(\textquotedblleft r\textquotedblright\ stands for Rademacher,
\textquotedblleft m\textquotedblright\ stands for Mammen).

\item[$x_{4}$ ...] indicates the weights $w$ used in constructing $\Xi $
from $\Xi ^{\bullet }$ ($0,...,4$ stands for HC0,...,HC4, respectively, or
for HC0R,...,HC4R, respectively, depending on whether $x_{5}$ is set to F or
T).

\item[$x_{5}$ ...] indicates whether the weights $w$ are null-restricted (T)
or not (F).

\item[$x_{6}$ ...] indicates whether the bootstrap-scheme was based on
null-restricted residuals (T) (i.e., $\mathcal{A}=\mathfrak{M}_{0}$) or not
(F) (i.e. $\mathcal{A}=\limfunc{span}(X)$).

\item[$x_{7}$ ...] indicates whether the bootstrap scheme $y^{\ast }$ (T) or 
$y^{\maltese }$ (F) was used.
\end{enumerate}

As an example, the code\textquotedblleft -1:T:m:2:F:T:F\textquotedblright\
translates to the bootstrap-based test procedure which uses the test
statistic $\tilde{T}_{uc}$ (determined by the first two digits of the code)
and the following bootstrap scheme: $\Xi ^{\bullet }$ based on the Mammen
distribution, modified by HC2 weights (based on unrestricted residuals), and 
$y^{\maltese }$ with $\mathcal{A}=\mathfrak{M}_{0}$.

Before interpreting the results, we also need to explain why some test
procedures are struck out in Tables \ref{tab:A}, \ref{tab:B}, and \ref{tab:C}%
. Recall (e.g., from the discussion in the last but one paragraph of Section %
\ref{sec:descr}) that some of the $960$ procedures studied are in fact \emph{%
equivalent}, meaning that they lead to exactly the same bootstrap-based
test. Because the design matrices are generated anew for every of the $960$
procedures, the results found for equivalent procedures can be different.
Therefore, it can happen that a procedure passes the worst-case check, but
an equivalent version of this procedure does not. Now, a procedure is struck
out in a given table, if -- while this procedure passed the worst-case check
underlying the table -- an \emph{equivalent} procedure did not (and thus
does not appear in this table). Procedures that are struck out in a table
are now \emph{no longer} considered as having passed the worst-case check
(although they appear in that table).

We exploit the following three reasons for equivalence between test
procedures: (1) Lemma \ref{VGK} shows that a bootstrap-based test using a
covariance matrix estimator based on unrestricted residuals does not depend
on whether $y^{\ast }$ or $y^{\maltese }$ is used as a bootstrap scheme.
Therefore, two procedures with codes $x$ and $x^{\prime }$ are equivalent in
case $x_{2}=x_{2}^{\prime }=F$ and $x_{i}=x_{i}^{\prime }$ for $i=1,\ldots
,6 $. (2) Changing the weights vector from HC0 to HC0R (or vice versa) in
the construction of $\Xi $ does not change the bootstrap-based test.
Therefore, two procedures with codes $x$ and $x^{\prime }$ are equivalent in
case $x_{4}=x_{4}^{\prime }=0$, and $x_{i}=x_{i}^{\prime }$ for all $i\neq 5$%
. (3) Changing HC0 (HC0R) weights to HC1 (HC1R) weights in the definition of 
$T_{Het}$ ($\tilde{T}_{Het}$) also does not change the resulting
bootstrap-based test, cf.~Remark \ref{rem:equiv}. Therefore, two procedures
with codes $x$ and $x^{\prime }$ are equivalent in case $x_{1}\in \{0,1\}$, $%
x_{1}^{\prime }\in \{0,1\}$, and $x_{i}=x_{i}^{\prime }$ for all $i=2,\ldots
,7$. Procedures $x$ that are struck out by a slash, i.e., $\cancel{x}$, were
eliminated based on reason (1); procedures that are struck out by a
backslash, i.e., $\bcancel{x}$, were eliminated based on reason (2); and
procedures that are struck out by a horizontal line, i.e., \sout{$x$}, were
eliminated based on reason (3). Note that a procedure can be struck out,
e.g., based on reasons (1) and (2), and thus is then crossed out, i.e., is
marked by $\xcancel{x}$, etc. 
\begin{table}[th]
\centering
\begin{tabular}{l|llll}
\hline\hline
\multirow{6}{*}{$\alpha = .05$} & -1:T:m:0:F:T:F & -1:T:m:0:T:T:F & 
-1:T:m:1:F:T:F & -1:T:m:1:T:T:F \\ 
& -1:T:m:2:F:T:F & -1:T:m:2:T:T:F & -1:T:m:3:F:T:F & -1:T:m:3:T:T:F \\ 
& \xcancel{3:F:r:0:F:T:T} & \xcancel{3:F:m:0:T:T:F} & \cancel{3:F:m:1:F:T:F}
& \cancel{3:F:m:1:T:T:F} \\ 
& 3:F:m:2:F:T:F & 3:F:m:2:F:T:T & 3:T:r:2:T:T:T & 3:T:r:3:F:T:F \\ 
& 3:T:r:3:T:T:T & \bcancel{3:T:m:0:F:T:T} & 3:T:m:1:T:T:T & 3:T:m:2:T:T:T \\ 
& 3:T:m:3:F:F:F & \bcancel{4:T:m:0:T:T:T} &  &  \\ \hline
\multirow{2}{*}{$\alpha = .1$} & -1:T:m:2:F:T:F & -1:T:m:3:F:T:F & %
\cancel{3:F:r:2:F:T:F} & 3:F:m:2:F:T:F \\ 
& 3:F:m:2:F:T:T & \cancel{4:F:m:2:F:T:F} &  &  \\ \hline
\multirow{1}{*}{both} & -1:T:m:2:F:T:F & -1:T:m:3:F:T:F & 3:F:m:2:F:T:F & 
3:F:m:2:F:T:T \\ \hline\hline
\end{tabular}%
\caption{Surviving test procedures in Setting A.}
\label{tab:A}
\end{table}

\begin{table}[th]
\centering
\begin{tabular}{l|llll}
\hline\hline
\multirow{7}{*}{$\alpha = .05$} & -1:T:r:3:F:T:F & \sout{\cancel{1:F:m:3:F:F:F}} & %
\cancel{2:F:r:3:F:F:F} & \xcancel{2:F:m:0:T:T:F} \\ 
& \xcancel{3:F:m:0:F:T:F} & \cancel{3:F:m:2:F:T:F} & \cancel{3:F:m:3:F:F:F}
& \bcancel{3:T:r:0:T:F:T} \\ 
& \bcancel{3:T:r:0:T:T:T} & 3:T:r:1:T:F:T & 3:T:r:1:T:T:T & 3:T:r:2:T:F:T \\ 
& 3:T:r:3:F:T:F & 3:T:m:0:F:F:T & 3:T:m:0:F:T:T & 3:T:m:0:T:F:T \\ 
& 3:T:m:0:T:T:T & 3:T:m:1:F:F:T & 3:T:m:1:F:T:T & 3:T:m:1:T:F:T \\ 
& 3:T:m:1:T:T:T & 3:T:m:2:F:F:T & 3:T:m:2:T:F:T & 3:T:m:2:T:T:T \\ 
& \bcancel{4:T:m:0:T:T:T} &  &  &  \\ \hline
\multirow{6}{*}{$\alpha = .1$} & -1:T:r:3:F:F:F & -1:T:r:3:F:T:F & %
\sout{\cancel{1:F:r:3:F:F:F}} & \sout{\cancel{1:F:m:3:F:F:F}} \\ 
& \xcancel{3:F:m:0:T:T:T} & 3:F:m:2:F:T:F & 3:F:m:2:F:T:T & %
\cancel{3:F:m:3:F:T:T} \\ 
& \bcancel{3:T:r:0:T:T:T} & 3:T:r:2:F:F:T & 3:T:r:2:F:T:T & 3:T:r:2:T:F:T \\ 
& 3:T:r:3:F:F:F & \bcancel{3:T:m:0:F:F:T} & 3:T:m:0:F:T:T & 3:T:m:0:T:T:T \\ 
& 3:T:m:1:F:F:T & 3:T:m:1:F:T:T & 3:T:m:1:T:F:T & 3:T:m:1:T:T:T \\ 
& 3:T:m:2:F:F:T & 3:T:m:2:T:T:T &  &  \\ \hline
\multirow{4}{*}{both} & -1:T:r:3:F:T:F & \sout{\cancel{1:F:m:3:F:F:F}} & %
\cancel{3:F:m:2:F:T:F} & \cancel{3:T:r:0:T:T:T} \\ 
& 3:T:r:2:T:F:T & \bcancel{3:T:m:0:F:F:T} & 3:T:m:0:F:T:T & 3:T:m:0:T:T:T \\ 
& 3:T:m:1:F:F:T & 3:T:m:1:F:T:T & 3:T:m:1:T:F:T & 3:T:m:1:T:T:T \\ 
& 3:T:m:2:F:F:T & 3:T:m:2:T:T:T &  &  \\ \hline\hline
\end{tabular}%
\caption{Surviving test procedures in Setting B.}
\label{tab:B}
\end{table}

\begin{table}[th]
\centering
\begin{tabular}{l|llll}
\hline\hline
\multirow{8}{*}{$\alpha = .05$} & \sout{\cancel{0:F:m:3:F:F:F}} & %
\sout{\cancel{1:F:m:3:F:F:T}} & \xcancel{2:F:m:0:T:T:F} & \cancel{2:F:m:3:T:T:T} \\ 
& \xcancel{3:F:r:0:T:T:T} & \cancel{3:F:r:1:T:T:F} & 3:F:r:2:F:T:F & 
3:F:r:2:F:T:T \\ 
& \xcancel{3:F:m:0:F:T:T} & \cancel{3:F:m:1:F:T:T} & 3:F:m:2:F:T:F & 
3:F:m:2:F:T:T \\ 
& \cancel{3:F:m:2:T:T:F} & \bcancel{3:T:r:0:T:F:T} & \bcancel{3:T:r:0:T:T:T}
& 3:T:r:2:F:F:T \\ 
& 3:T:r:2:T:T:T & 3:T:m:0:F:F:T & 3:T:m:0:F:T:T & 3:T:m:0:T:F:T \\ 
& 3:T:m:0:T:T:T & 3:T:m:1:F:F:T & 3:T:m:1:F:T:T & 3:T:m:1:T:F:T \\ 
& 3:T:m:1:T:T:T & 3:T:m:2:F:F:T & 3:T:m:2:T:F:T & 3:T:m:2:T:T:T \\ 
& \bcancel{4:T:r:0:F:T:T} & 4:T:m:0:F:T:T & 4:T:m:0:T:T:T &  \\ \hline
\multirow{8}{*}{$\alpha = .1$} & -1:T:r:3:F:F:F & \cancel{0:F:r:3:F:F:F} & %
\sout{\cancel{0:F:m:3:F:F:F}} & \cancel{1:F:r:3:F:F:F} \\ 
& \sout{\cancel{1:F:m:3:F:T:F}} & \cancel{2:F:m:3:F:T:T} & \cancel{2:F:m:3:T:T:T} & %
\cancel{3:F:r:2:F:T:F} \\ 
& \cancel{3:F:r:2:T:T:F} & \cancel{3:F:m:1:T:T:T} & \cancel{3:F:m:2:F:T:F} & 
3:T:r:0:F:F:T \\ 
& 3:T:r:0:T:F:T & \bcancel{3:T:r:0:T:T:T} & 3:T:r:1:F:F:T & 3:T:r:1:T:T:T \\ 
& 3:T:r:2:F:F:T & 3:T:m:0:F:F:T & 3:T:m:0:F:T:T & 3:T:m:0:T:F:T \\ 
& 3:T:m:0:T:T:T & 3:T:m:1:F:F:T & 3:T:m:1:T:F:T & 3:T:m:1:T:T:T \\ 
& 3:T:m:2:F:F:T & 3:T:m:2:T:F:T & 3:T:m:2:T:T:T & 3:T:m:3:F:F:T \\ 
& 3:T:m:3:T:F:T &  &  &  \\ \hline
\multirow{5}{*}{both} & \sout{\cancel{0:F:m:3:F:F:F}} & \cancel{2:F:m:3:T:T:T} & %
\cancel{3:F:r:2:F:T:F} & \cancel{3:F:m:2:F:T:F} \\ 
& \bcancel{3:T:r:0:T:F:T} & \bcancel{3:T:r:0:T:T:T} & 3:T:r:2:F:F:T & 
3:T:m:0:F:F:T \\ 
& 3:T:m:0:F:T:T & 3:T:m:0:T:F:T & 3:T:m:0:T:T:T & 3:T:m:1:F:F:T \\ 
& 3:T:m:1:T:F:T & 3:T:m:1:T:T:T & 3:T:m:2:F:F:T & 3:T:m:2:T:F:T \\ 
& 3:T:m:2:T:T:T &  &  &  \\ \hline\hline
\end{tabular}%
\caption{Surviving test procedures in Setting C.}
\label{tab:C}
\end{table}

Concerning the questions (i)-(iii) raised above, inspection of Tables \ref%
{tab:A}, \ref{tab:B}, and \ref{tab:C} now delivers the following answers:

\begin{enumerate}
\item There is no bootstrap-based test procedure that passes the worst-case
check in all settings and for both significance levels.

\item The tests 3:T:m:1:T:T:T and 3:T:m:2:T:T:T pass the worst-case checks
in all three settings for the significance level $\alpha =0.05$. The
corresponding rejection probabilities shown in Figure \ref{fig:p} for the
tests 3:T:m:1:T:T:T and 3:T:m:2:T:T:T are $0.077$ and $0.067$ (Setting A), $%
0.033$ and $0.080$ (Setting B), $0.050$ and $0.033$ (Setting C),
respectively. For $\alpha =0.1$ there is no test that passes the worst-case
checks in all three settings.

\item The majority of test procedures appearing in the tables is based on
the \emph{HC3 or HC3R covariance estimator}, and uses a bootstrap scheme
based on the \emph{Mammen-distribution}. In Settings B and C, the tests
passing the worst-case check are typically based on $\tilde{T}_{Het}$, i.e.,
they use \emph{restricted residuals in the construction of the covariance
estimator}.
\end{enumerate}

On the one hand our results issue a distinct warning: overall \emph{none} of
the bootstrap-based tests considered comes with a guarantee that its size is
(about) right. On the other hand, when restricting attention only to $\alpha
=0.05$, the tests 3:T:m:1:T:T:T and 3:T:m:2:T:T:T did not break down in our
worst-case analysis.\footnote{%
However, recall that we have only computed a lower bound for the size.}
These two tests are based on $\tilde{T}_{Het}$ using a HC3R covariance
estimator, and use the Mammen-distribution in the bootstrap scheme;
properties that are common to many of the tests that pass the check (in one
of the settings considered). While this obviously does not prove that
3:T:m:1:T:T:T and 3:T:m:2:T:T:T always will have perfect size properties for 
$\alpha =0.05$, it shows that in the settings considered (and for $\alpha
=0.05$) they seem to have the best size performance among all
bootstrap-based tests considered, and should therefore perhaps be preferred
(over the other procedures) by practitioners who insist on applying a
bootstrap-based test. However one should keep in mind that, while we have
examined a considerable and reasonable range of scenarios and design
matrices, the size-behavior of the bootstrap-based tests outside of the
range studied can potentially be even worse.

In light of the findings above a better way forward seems to use
heteroskedasticity robust test procedures that guarantee size-control as
expounded in \cite{PP5HC}.

\appendix{}

\section{Appendix: A basic theorem\label{App A}}

\begin{theorem}
\label{thm:main}Let $T_{i}:\mathbb{R}^{n}\rightarrow \mathbb{R\cup \{\infty
\}}$ be Borel-measurable and $G(\mathfrak{M}_{0})$-invariant for $i=1,2$. Let%
$\mathbb{\ }\Xi $ be a (Borel) probability measure on $\mathbb{R}^{n}$, and
let $\mathcal{A}$ be an affine subspace of $\mathbb{R}^{n}$ with $\mathfrak{M%
}_{0}\subseteq \mathcal{A}\subseteq \limfunc{span}(X)$. Define the function $%
T_{2}^{\ast }:\mathbb{R}^{n}\times \mathbb{R}^{n}\rightarrow \mathbb{R\cup
\{\infty \}}$ via 
\begin{equation}
T_{2}^{\ast }(y,\xi )=T_{2}\left( y^{\ast }(y,\xi )\right) ,
\label{eqn:T2boot}
\end{equation}%
where $y^{\ast }(y,\xi )$ has been defined in (\ref{boot_1}), and set 
\begin{equation*}
K(y)=\{\xi \in \mathbb{R}^{n}:y^{\ast }(y,\xi )\text{ is a continuity point
of }T_{2}\}.
\end{equation*}%
For every $y\in \mathbb{R}^{n}$ denote by $F_{y}$ the distribution function
of $\Xi \circ T_{2}^{\ast }(y,\cdot )$, i.e., $F_{y}(t)=\Xi \mathbb{(\{}\xi
:T_{2}^{\ast }(y,\xi )\leq t\})$ for $t\in \mathbb{R\cup \{\infty \}}$. For
every $\alpha \in (0,1)$, let $f_{1-\alpha }(y)$ denote a $(1-\alpha )$%
-quantile of $F_{y}$. For every $i=1,\ldots ,n$ define%
\begin{equation}
c_{i}=\sup_{\delta >0}\inf_{z\in B(e_{i}(n),\delta )}T_{1}(\mu _{0}+z),
\label{eqn:c_i}
\end{equation}%
for some $\mu _{0}\in \mathfrak{M}_{0}$, where $B(e_{i}(n),\delta )=\{z\in 
\mathbb{R}^{n}:\Vert z-e_{i}(n)\Vert <\delta \}$ (note that $c_{i}$ does not
depend on the choice of $\mu _{0}\in \mathfrak{M}_{0}$). Then, for every $%
\alpha \in (0,1)$ such that 
\begin{equation}
\alpha >1-\max_{i=1,\ldots ,n}\Xi \left( \left\{ \xi :T_{2}^{\ast }(\mu
_{0}+e_{i}(n),\xi )<c_{i},\xi \in K(\mu _{0}+e_{i}(n))\right\} \right)
\label{eqn:newbound}
\end{equation}%
for some (and hence all) $\mu _{0}\in \mathfrak{M}_{0}$, we have%
\begin{equation}
\sup_{\Sigma \in \mathfrak{C}_{Het}}P_{\mu _{0},\sigma ^{2}\Sigma }\left(
T_{1}\geq f_{1-\alpha }\right) \geq \sup_{\Sigma \in \mathfrak{C}%
_{Het}}P_{\mu _{0},\sigma ^{2}\Sigma }\left( T_{1}>f_{1-\alpha }\right) =1
\label{eqn:bootsize}
\end{equation}%
for every $\mu _{0}\in \mathfrak{M}_{0}$ and every $0<\sigma ^{2}<\infty $
(where the probabilities in (\ref{eqn:bootsize}) are to be interpreted as
inner probabilities\footnote{%
This allows one to ignore measurability issues regarding $f_{1-\alpha }$.}).%
\footnote{%
The set appearing in (\ref{eqn:newbound}) is a Borel set since $T_{2}^{\ast
}(y,\cdot )$ is Borel measurable and since $K(y)$ is a Borel set.}
\end{theorem}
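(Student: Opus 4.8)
The plan is to exploit the $G(\mathfrak{M}_0)$-invariance of $T_1$ together with the structure of the bootstrap scheme to show that, for suitable heteroskedasticity structures concentrating mass near a single coordinate $i$, the null rejection probability $P_{\mu_0,\sigma^2\Sigma}(T_1 > f_{1-\alpha})$ can be pushed arbitrarily close to $1$. The key mechanism is that when $\Sigma$ is (close to) $e_i(n)e_i(n)^\prime$, the observation $\mathbf{Y}$ under $P_{\mu_0,\sigma^2\Sigma}$ is (close to) $\mu_0 + \sigma \mathbf{u}_i e_i(n)$ with $\mathbf{u}_i$ a scalar standard normal. By $G(\mathfrak{M}_0)$-invariance, the value $T_1(\mu_0 + \sigma w\, e_i(n))$ does not depend on $\sigma w \neq 0$ and equals $T_1(\mu_0 + e_i(n))$, or more precisely can be made $\geq c_i$ in the limit via the lower semicontinuous regularization $c_i$ defined in (\ref{eqn:c_i}). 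So on this sequence of distributions, $T_1$ is essentially frozen at (a value at least) $c_i$, while the bootstrap cutoff $f_{1-\alpha}(\mathbf{Y})$ is determined by the distribution function $F_{\mathbf{Y}}$ of the bootstrapped statistic.

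First I would make the reduction precise: fix $i$, and for $m \in \mathbb{N}$ choose $\Sigma_m \in \mathfrak{C}_{Het}$ with $\Sigma_m \to e_i(n)e_i(n)^\prime$ (e.g. put mass $1 - (n-1)/m$ on coordinate $i$ and $1/m$ on each other coordinate). Write $\mathbf{Y}_m \sim P_{\mu_0,\sigma^2\Sigma_m}$. The event $\{T_1(\mathbf{Y}_m) > f_{1-\alpha}(\mathbf{Y}_m)\}$ should be shown to have inner probability tending to $1$. The natural route is: (i) with probability tending to $1$, $\mathbf{Y}_m$ lies in a small ball around $\mu_0 + \sigma \mathbf{u}_i e_i(n)$ for a nondegenerate scalar $\mathbf{u}_i$; on such a ball $T_1(\mathbf{Y}_m) \geq c_i - \varepsilon$ by definition of $c_i$ as a $\sup_\delta \inf_{B(e_i(n),\delta)}$ (using invariance to rescale the ball to one around $e_i(n)$); and (ii) $f_{1-\alpha}(\mathbf{Y}_m) < c_i - \varepsilon$ with probability tending to $1$. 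For (ii), the crucial observation is that $f_{1-\alpha}(y)$ being a $(1-\alpha)$-quantile of $F_y$ means $F_y(f_{1-\alpha}(y)-) \leq 1-\alpha$, so if we can show $F_{\mathbf{Y}_m}(c_i - \varepsilon -) > 1-\alpha$ — i.e. the bootstrapped statistic is strictly below $c_i - \varepsilon$ with $\Xi$-probability exceeding $1-\alpha$ — then necessarily $f_{1-\alpha}(\mathbf{Y}_m) \leq c_i - \varepsilon$, hence $< c_i - \varepsilon$ after shrinking, contradicting nothing and giving the desired strict inequality against $T_1$.

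The heart of the argument, and the main obstacle, is therefore controlling the bootstrap distribution function $F_{\mathbf{Y}_m}$ near the value $c_i$: one must show that $\Xi(\{\xi : T_2^\ast(\mathbf{Y}_m,\xi) < c_i - \varepsilon\})$ exceeds $1-\alpha$ eventually. Here the hypothesis (\ref{eqn:newbound}) enters: it says precisely that at the anchor point $y = \mu_0 + e_i(n)$ we have $\Xi(\{\xi : T_2^\ast(\mu_0+e_i(n),\xi) < c_i,\ \xi \in K(\mu_0+e_i(n))\}) > 1-\alpha$ for the maximizing $i$. The plan is to transfer this to nearby $\mathbf{Y}_m$: on the set $K(\mu_0 + e_i(n))$ of those $\xi$ for which $y^\ast(\mu_0+e_i(n),\xi)$ is a continuity point of $T_2$, and where $T_2^\ast(\mu_0+e_i(n),\xi) < c_i$, continuity of $T_2$ at $y^\ast(\mu_0+e_i(n),\xi)$ plus continuity of the map $y \mapsto y^\ast(y,\xi)$ (which is affine in $y$, see (\ref{boot_1})) gives that $T_2^\ast(y,\xi) < c_i - \varepsilon'$ for all $y$ in a neighborhood of $\mu_0 + e_i(n)$, for a slightly smaller threshold — but here one must be careful, since the relevant $y$'s are $\mathbf{Y}_m$, which lie near $\mu_0 + \sigma\mathbf{u}_i e_i(n)$, not near $\mu_0 + e_i(n)$. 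This is where $G(\mathfrak{M}_0)$-invariance of $T_2$ is used again: $T_2^\ast(y,\xi)$ is invariant under replacing $y$ by $\gamma(y - \mu_0) + \mu_0$ because both $X\tilde\beta_{\mathfrak{M}_0}$ and $X\tilde\beta_{\mathcal{A}}$ and the residual maps commute appropriately with this transformation, so one can rescale $\mathbf{Y}_m$ back to a neighborhood of $\mu_0 + e_i(n)$ (modulo the event $\mathbf{u}_i \neq 0$, which has probability $1$). Combining, for each such $\xi$ in the good set, and for $m$ large (depending on $\xi$ through the neighborhood — so one needs a Fatou/dominated-convergence or uniformity argument over $\xi$, e.g. via Egorov or by a countable exhaustion), $T_2^\ast(\mathbf{Y}_m,\xi)$ stays below the threshold; integrating against $\Xi$ and using the hypothesis yields $F_{\mathbf{Y}_m}((c_i-\varepsilon)-) > 1-\alpha$ eventually. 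The delicate point to get right is the interchange of "for $\Xi$-a.e.\ $\xi$ eventually in $m$" with "eventually in $m$ for a $\Xi$-mass $> 1-\alpha$ of $\xi$"; I expect this to be handled by reverse Fatou (the limsup of the indicator events contains the good set up to null sets) combined with the strictness in (\ref{eqn:newbound}), and finally taking $\alpha$'s slack over the $\vartheta$ bound to absorb the $\varepsilon$-losses. Once both (i) and (ii) hold along the sequence $\Sigma_m$, $\sup_{\Sigma}P_{\mu_0,\sigma^2\Sigma}(T_1 > f_{1-\alpha}) \geq \limsup_m P_{\mu_0,\sigma^2\Sigma_m}(T_1 > f_{1-\alpha}) = 1$, and the inequality with $\geq$ in place of $>$ is then immediate; independence from $\mu_0$ and $\sigma^2$ follows from the invariance already invoked throughout.
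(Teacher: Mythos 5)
Your proposal is correct and follows essentially the same route as the paper's proof: fix the maximizing index $i$, use $G(\mathfrak{M}_{0})$-invariance to freeze $T_{1}$ above a threshold $x_{i}<c_{i}$ on a punctured cone around $\mu _{0}+\limfunc{span}(e_{i}(n))$, transfer the hypothesis (\ref{eqn:newbound}) from the anchor point $\mu _{0}+e_{i}(n)$ to nearby (rescaled) $y$ via continuity of $y\mapsto y^{\ast }(y,\xi )$ on the continuity set $K$ together with a Fatou argument on indicators (the paper's Lemma \ref{lem:convdisc}), apply the quantile inequality to get $f_{1-\alpha }(y)\leq x_{i}<T_{1}(y)$ there, and conclude by letting $\Sigma _{m}\rightarrow e_{i}(n)e_{i}(n)^{\prime }$. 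The only cosmetic difference is that the paper phrases the final step deterministically as a set inclusion followed by the Portmanteau theorem, and uses ordinary (not reverse) Fatou via $\liminf_{m}\mathbf{1}_{A_{m}}\geq \mathbf{1}_{A}$, exactly the interchange you flag as the delicate point.
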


\begin{proof}
By $G(\mathfrak{M}_{0})$-invariance of $T_{i}$, for every $z\in \mathbb{R}%
^{n}$ the expression $T_{i}(\mu _{0}+\gamma z)$ depends neither on the
choice of $\mu _{0}\in \mathfrak{M}_{0}$ nor on the value of $\gamma \in 
\mathbb{R}\backslash \{0\}$ (for $i=1,2$).\footnote{\label{FN:equi}Let $\mu
_{0}^{\prime }\in \mathfrak{M}_{0}$ and $\gamma ^{\prime }\in \mathbb{R}%
\backslash \{0\}$ be arbitrary. By $G(\mathfrak{M}_{0})$-invariance, $%
T_{i}(\mu _{0}+\gamma z)=T_{i}(h(\mu _{0}+\gamma z))$ where $h(v)=\gamma
^{\prime }\gamma ^{-1}(v-\mu _{0})+\mu _{0}^{\prime }\in G(\mathfrak{M}_{0})$%
. This shows that $T_{i}(\mu _{0}+\gamma z)=T_{i}(\mu _{0}^{\prime }+\gamma
^{\prime }z)$.} This shows, in particular, that the $c_{i}$'s do not depend
on the choice of $\mu _{0}\in \mathfrak{M}_{0}$. Furthermore, since $%
\mathfrak{M}_{0}\subseteq \mathcal{A}$ has been assumed, it is easy to see
that $\mathcal{A-}\mu _{0}$ is a linear space containing $\mathfrak{M}%
_{0}^{lin}$ for every choice of $\mu _{0}\in \mathfrak{M}_{0}$ (with $%
\mathcal{A-}\mu _{0}$ being the same space regardless of the choice of $\mu
_{0}\in \mathfrak{M}_{0}$), and that $y-X\tilde{\beta}_{\mathcal{A}}(y)=\Pi
_{(\mathcal{A-}\mu _{0})^{\bot }}(y-\mu _{0})$ holds for every $y\in \mathbb{%
R}^{n}$ and for every choice of $\mu _{0}\in \mathfrak{M}_{0}$. It now
easily follows that $y^{\ast }(\mu _{0}+y,\xi )=y^{\ast }(\mu _{0}^{\prime
}+y,\xi )-\mu _{0}^{\prime }+\mu _{0}$ for every $\mu _{0}$, $\mu
_{0}^{\prime }\in \mathfrak{M}_{0}$. In view of $G(\mathfrak{M}_{0})$%
-invariance of $T_{2}$ we can conclude that the r.h.s. of (\ref{eqn:newbound}%
) also does not depend on the choice of $\mu _{0}\in \mathfrak{M}_{0}$. For
later use we also make the following observation: Since $X\tilde{\beta}_{%
\mathfrak{M}_{0}}(y)$ obviously belongs to $\mathfrak{M}_{0}$, we have for
every $y\in \mathbb{R}^{n}$, every $\xi \in \mathbb{R}^{n}$, and every $\mu
_{0}\in \mathfrak{M}_{0}$%
\begin{eqnarray}
T_{2}^{\ast }(y,\xi ) &=&T_{2}(X\tilde{\beta}_{\mathfrak{M}_{0}}(y)+\limfunc{%
diag}(\xi )(y-X\tilde{\beta}_{\mathcal{A}}(y))=T_{2}(\mu _{0}+\limfunc{diag}%
(\xi )(y-X\tilde{\beta}_{\mathcal{A}}(y))  \notag \\
&=&T_{2}(\mu _{0}+\limfunc{diag}(\xi )\Pi _{(\mathcal{A-}\mu _{0})^{\bot
}}(y-\mu _{0}))  \label{eqn:invboot}
\end{eqnarray}%
in view of what has been said at the beginning of the proof.

Now, denote the set of continuity points of $T_{2}$ by $K_{2}$, and define $%
\bar{T}_{2}:=T_{2}\mathbf{1}_{K_{2}}+\infty \mathbf{1}_{\mathbb{R}%
^{n}\backslash K_{2}}$ (with the convention $\infty \cdot 0=0$). Define $%
\bar{T}_{2}^{\ast }$ analogously to $T_{2}^{\ast }$ (cf. (\ref{eqn:T2boot}%
)), but with $T_{2}$ replaced by $\bar{T}_{2}$. For every $y\in \mathbb{R}%
^{n}$, we let $\bar{F}_{y}$ denote the distribution function of $\Xi \circ 
\bar{T}_{2}^{\ast }(y,\cdot )$, i.e., $\bar{F}_{y}(t)=\Xi (\mathbb{\{}\xi :%
\bar{T}_{2}^{\ast }(y,\xi )\leq t\})$ for $t\in \mathbb{R\cup \{\infty \}}$.
It easily follows that for every $y\in \mathbb{R}^{n}$ and every $t\in 
\mathbb{R}$ we have $\bar{F}_{y}(t)=\Xi (\{\xi :T_{2}^{\ast }(y,\xi )\leq
t,\xi \in K(y)\})$. Consequently, for every $y\in \mathbb{R}^{n}$ and every $%
t\in \mathbb{R}\cup \{\infty \}$, we have $\bar{F}_{y}(t-)=\Xi (\{\xi
:T_{2}^{\ast }(y,\xi )<t,\xi \in K(y)\})$, where $\bar{F}_{y}(t-)$ denotes
the left-hand side limit of $\bar{F}_{y}$ at $t$. In particular, (\ref%
{eqn:newbound}) is equivalent to 
\begin{equation*}
\max_{i=1,\ldots ,n}\bar{F}_{\mu _{0}+e_{i}(n)}(c_{i}-)>1-\alpha
\end{equation*}%
(with the convention that $\bar{F}_{\mu _{0}+e_{i}(n)}(c_{i}-)=0$ in case $%
c_{i}=-\infty $). From now on, let $i$ be an index that realizes the maximum
in the previous display. If $\bar{F}_{\mu _{0}+e_{i}(n)}(c_{i}-)=0$, there
is nothing to prove and we are done. Hence, it remains to consider the case
where $\bar{F}_{\mu _{0}+e_{i}(n)}(c_{i}-)>0$. Note that this implies $%
c_{i}>-\infty $. In this case, let $\alpha \in (0,1)$ now be such that $\bar{%
F}_{\mu _{0}+e_{i}(n)}(c_{i}-)>1-\alpha $ (where $\mu _{0}\in \mathfrak{M}%
_{0}$ can be chosen arbitrarily). From $\bar{F}_{\mu
_{0}+e_{i}(n)}(c_{i}-)>1-\alpha $ we can then conclude existence of a real
number $x_{i}$ smaller than $c_{i}$ (recall $c_{i}>-\infty $ must hold) such
that $\bar{F}_{\mu _{0}+e_{i}(n)}(x_{i})>1-\alpha $ holds and such that $%
x_{i}$ is a continuity point of $\bar{F}_{\mu _{0}+e_{i}(n)}$. In view of (%
\ref{eqn:c_i}), there exists a $\delta >0$ such that every $z\in
B(e_{i}(n),\delta )$ satisfies $T_{1}(\mu _{0}+z)>x_{i}$ (and the same is
true if we replace $\delta $ by a smaller positive number). We claim that
for every sequence $z_{m}\rightarrow e_{i}(n)$ ($z_{m}\in \mathbb{R}^{n}$)
we have 
\begin{equation*}
\liminf_{m\rightarrow \infty }F_{\mu _{0}+z_{m}}(x_{i})\geq \bar{F}_{\mu
_{0}+e_{i}(n)}(x_{i}).
\end{equation*}%
Define $V_{m}=V_{m}(\xi )=y^{\ast }(\mu _{0}+z_{m},\xi )$ and $V=V(\xi
)=y^{\ast }(\mu _{0}+e_{i}(n),\xi )$, which can be viewed as random vectors
defined on $\mathbb{R}^{n}$ (equipped with the Borel $\sigma $-field) and
where the probability measure is given by $\Xi $. Note that $V_{m}$
converges to $V$ everywhere as $m\rightarrow \infty $ (as $y^{\ast }(y,\xi )$
is continuous w.r.t. $y$). Furthermore, $F_{\mu _{0}+z_{m}}(x_{i})=\Xi
(T_{2}(V_{m})\leq x_{i})$ and $\bar{F}_{\mu _{0}+e_{i}(n)}(x_{i})=\Xi (\bar{T%
}_{2}(V)\leq x_{i})$ hold (recall that $x_{i}$ is a real number). The
statement in the previous display now follows from Lemma \ref{lem:convdisc},
recalling that we have chosen $x_{i}$ as a continuity point of $\bar{F}_{\mu
_{0}+e_{i}(n)}$, which implies $\Xi (\bar{T}_{2}(V)=x_{i})=0$. Summarizing,
we hence arrive, replacing $\delta $ by another element of $(0,\delta )$ if
necessary, at%
\begin{equation}
T_{1}(\mu _{0}+z)>x_{i}\text{ and }F_{\mu _{0}+z}(x_{i})>1-\alpha \text{ for
every }z\in B(e_{i}(n),\delta ).  \label{eqn:bootdelta_2}
\end{equation}%
From (\ref{eqn:invboot}) and the observation in the first sentence in this
proof it readily follows that for every $z\in \mathbb{R}^{n}$ and every $%
\gamma \neq 0$ we have 
\begin{align*}
F_{\mu _{0}+\gamma z}(x_{i})& =\Xi (\xi :T_{2}^{\ast }(\mu _{0}+\gamma z,\xi
)\leq x_{i}) \\
& =\Xi (\xi :T_{2}(\mu _{0}+\limfunc{diag}(\xi )\Pi _{(\mathcal{A}-\mu
_{0})^{\bot }}(\mu _{0}+\gamma z-\mu _{0}))\leq x_{i}) \\
& =\Xi (\xi :T_{2}(\mu _{0}+\gamma \limfunc{diag}(\xi )\Pi _{(\mathcal{A}%
-\mu _{0})^{\bot }}(z))\leq x_{i}) \\
& =\Xi (\xi :T_{2}(\mu _{0}+\limfunc{diag}(\xi )\Pi _{(\mathcal{A}-\mu
_{0})^{\bot }}(z))\leq x_{i})=F_{\mu _{0}+z}(x_{i}).
\end{align*}%
Using the observation in the first sentence of the proof again, the
preceding display and (\ref{eqn:bootdelta_2}) thus give%
\begin{equation*}
T_{1}(\mu _{0}+y)>x_{i}\text{ and }F_{\mu _{0}+y}(x_{i})>1-\alpha \text{ for
every }y\in \{\gamma z:\gamma \neq 0,z\in B(e_{i}(n),\delta )\}=:\Delta ,
\end{equation*}%
or equivalently%
\begin{equation}
T_{1}(y)>x_{i}\text{ and }F_{y}(x_{i})>1-\alpha \text{ for every }y\in \mu
_{0}+\Delta .  \label{eqn:bootdelta_3}
\end{equation}%
By Lemma \ref{lem:quant} we see that $F_{y}(x_{i})>1-\alpha $ implies $%
x_{i}\geq f_{1-\alpha }(y)$. Hence,%
\begin{equation*}
\left\{ y\in \mathbb{R}^{n}:T_{1}(y)>x_{i},\text{ }F_{y}(x_{i})>1-\alpha
\right\} \subseteq \left\{ y\in \mathbb{R}^{n}:T_{1}(y)>f_{1-\alpha
}(y)\right\} .
\end{equation*}%
This, together with (\ref{eqn:bootdelta_3}), gives 
\begin{equation}
\left\{ y\in \mathbb{R}^{n}:T_{1}(y)>f_{1-\alpha }(y)\right\} \supseteq \mu
_{0}+\Delta \supseteq \mu _{0}+\limfunc{int}(\Delta )\supseteq \mu _{0}+(%
\limfunc{span}(e_{i}(n))\backslash \{0\}),  \label{eqn:bootincl_2}
\end{equation}%
where $\limfunc{int}(\Delta )$ denotes the interior of $\Delta $. Finally,
to complete the proof of (\ref{eqn:bootsize}), let $\Sigma _{m}$ be a
sequence in $\mathfrak{C}_{Het}$ that converges to $e_{i}(n)e_{i}(n)^{\prime
}$ and let $0<\sigma ^{2}<\infty $. Lemma E.1 in \cite{PP2016} then shows
that $P_{\mu _{0},\sigma ^{2}\Sigma _{m}}$ converges weakly to $P_{\mu
_{0},\sigma ^{2}e_{i}(n)e_{i}(n)^{\prime }}$. The previous display implies
that for every $m\in \mathbb{N}$%
\begin{equation*}
P_{\mu _{0},\sigma ^{2}\Sigma _{m}}\left( \left\{ y\in \mathbb{R}%
^{n}:T_{1}(y)>f_{1-\alpha }(y)\right\} \right) \geq P_{\mu _{0},\sigma
^{2}\Sigma _{m}}\left( \mu _{0}+\limfunc{int}(\Delta )\right) ,
\end{equation*}%
where the left-hand side is to be interpreted as an inner probability. The
Portmanteau theorem delivers%
\begin{eqnarray*}
\liminf_{m\rightarrow \infty }P_{\mu _{0},\sigma ^{2}\Sigma _{m}}\left( \mu
_{0}+\limfunc{int}(\Delta )\right) &\geq &P_{\mu _{0},\sigma
^{2}e_{i}(n)e_{i}(n)^{\prime }}\left( \mu _{0}+\limfunc{int}(\Delta )\right)
\\
&\geq &P_{\mu _{0},\sigma ^{2}e_{i}(n)e_{i}(n)^{\prime }}\left( \mu _{0}+(%
\limfunc{span}(e_{i}(n))\backslash \{0\})\right) =1,
\end{eqnarray*}%
where the second inequality follows from (\ref{eqn:bootincl_2}), and the
final equality from $P_{\mu _{0},\sigma ^{2}e_{i}(n)e_{i}(n)^{\prime }}$
being supported by $\mu _{0}+\limfunc{span}(e_{i}(n))$ and assigning
probability zero to $\{\mu _{0}\}$. This establishes (\ref{eqn:bootsize}).
\end{proof}

\begin{remark}
(i) If $T_{1}$ is lower semi-continuous at $\mu _{0}+e_{i}(n)$ for some (and
hence all) $\mu _{0}\in \mathfrak{M}_{0}$, then $c_{i}=T_{1}(\mu
_{0}+e_{i}(n))$.

(ii) If the set $K(\mu _{0}+e_{i}(n))$ has $\Xi $-measure $1$, then $\Xi
\left( \xi :T_{2}^{\ast }(\mu _{0}+e_{i}(n),\xi )<c_{i},\xi \in K(\mu
_{0}+e_{i}(n))\right) $ reduces to $F_{\mu _{0}+e_{i}(n)}(c_{i}-)$.

(iii) The lower bound for $\alpha $ in (\ref{eqn:newbound}) depends only on
observable quantities and thus can be computed.
\end{remark}

\begin{lemma}
\label{lem:convdisc} Let $(\Omega ,\mathcal{A},\mathbb{P})$ be a probability
space, and let $V_{m}$ be a sequence of $\mathbb{R}^{p}$-valued random
vectors ($p\in \mathbb{N}$) defined on that space that converges almost
everywhere to the random vector $V$. Let $T:\mathbb{R}^{p}\rightarrow 
\mathbb{R}\cup \{\infty \}$ be Borel measurable, denote the set of
continuity points of $T$ by $K_{T}$, and define $\bar{T}:=T\mathbf{1}%
_{K_{T}}+\infty \mathbf{1}_{\mathbb{R}^{p}\backslash K_{T}}$ (with the
convention $\infty \cdot 0=0$). Then, for every $t\in \mathbb{R}$ such that $%
\mathbb{P}(\bar{T}(V)=t)=0$ as well as for $t=\infty $, it holds that 
\begin{equation}
\mathbb{P}\left( \bar{T}(V)\leq t\right) \leq \liminf_{m\rightarrow \infty }%
\mathbb{P}\left( T(V_{m})\leq t\right) .  \label{eqn:ineqaux}
\end{equation}
\end{lemma}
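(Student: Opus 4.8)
The plan is to deduce the inequality from a pointwise $\liminf$-estimate on indicator functions together with Fatou's lemma. First dispose of the case $t=\infty$: since $T$ takes values in $\mathbb{R}\cup\{\infty\}$ we have $\{T(V_{m})\leq\infty\}=\Omega$ for every $m$, so the right-hand side of (\ref{eqn:ineqaux}) equals $1$ and the inequality holds trivially (and, as it should, this case does not invoke the hypothesis $\mathbb{P}(\bar{T}(V)=t)=0$). For the rest fix a real number $t$ with $\mathbb{P}(\bar{T}(V)=t)=0$.

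Let $N$ denote the $\mathbb{P}$-null set outside of which $V_{m}\to V$, and put $A=\{\omega:\bar{T}(V(\omega))<t\}\backslash N$. The key observation is that for $\omega\in A$ one has $\bar{T}(V(\omega))<t<\infty$, which — by the definition of $\bar{T}$ — forces $V(\omega)\in K_{T}$, hence $T$ is continuous at $V(\omega)$ and $T(V(\omega))=\bar{T}(V(\omega))<t$. Since $V_{m}(\omega)\to V(\omega)$ and $T$ is continuous at the limit point $V(\omega)$, we get $T(V_{m}(\omega))\to T(V(\omega))<t$, so $T(V_{m}(\omega))\leq t$ for all large $m$. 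Thus $\liminf_{m\to\infty}\mathbf{1}\{T(V_{m})\leq t\}\geq\mathbf{1}_{A}$ holds pointwise on $\Omega$ (the inequality being automatic off $A$). Note that only continuity of $T$ at the relevant limit point is used here, never global continuity of $T$; this is precisely why the statement concerns $\bar{T}(V)$ on the left, not $T(V)$.

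Fatou's lemma, applied to the nonnegative measurable functions $\mathbf{1}\{T(V_{m})\leq t\}$, then gives
\begin{equation*}
\liminf_{m\to\infty}\mathbb{P}\left(T(V_{m})\leq t\right)\geq\int\liminf_{m\to\infty}\mathbf{1}\{T(V_{m})\leq t\}\,d\mathbb{P}\geq\mathbb{P}(A),
\end{equation*}
and since $\mathbb{P}(N)=0$ and $\mathbb{P}(\bar{T}(V)=t)=0$ we conclude $\mathbb{P}(A)=\mathbb{P}(\bar{T}(V)<t)=\mathbb{P}(\bar{T}(V)\leq t)$, which is (\ref{eqn:ineqaux}). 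The only point requiring a little care — and essentially the only obstacle — is the measurability bookkeeping: $K_{T}$ is a Borel (in fact $G_{\delta}$) subset of $\mathbb{R}^{p}$, being the set of continuity points of a map into the metrizable space $\mathbb{R}\cup\{\infty\}$, so $\bar{T}$ is Borel measurable and the events $\{T(V_{m})\leq t\}$ and $\{\bar{T}(V)\leq t\}$ are well-defined; one also keeps the convention $\infty\cdot 0=0$ in mind when unwinding the definition of $\bar{T}$, although it plays no essential role in the estimate itself.
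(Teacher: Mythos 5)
Your proof is correct and follows essentially the same route as the paper's: the case $t=\infty$ is dispatched trivially, the key step is that on the event $\{\bar{T}(V)<t\}$ (off the convergence null set) one has $V(\omega)\in K_{T}$ so that continuity of $T$ at the limit forces $T(V_{m}(\omega))\leq t$ eventually, and Fatou's lemma together with $\mathbb{P}(\bar{T}(V)=t)=0$ finishes the argument. The paper packages the continuity step as the slightly more general observation $\limsup_{m}T(V_{m}(\omega))\leq\bar{T}(V(\omega))$, but this is the same idea.
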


\begin{proof}
It is well known that $K_{T}$ is a countable intersection of open sets, and
hence is a Borel set. As a consequence $\bar{T}:\mathbb{R}^{p}\rightarrow 
\mathbb{R}\cup \{\infty \}$ is Borel measurable. For $t=\infty $ the
probabilities in (\ref{eqn:ineqaux}) are all equal to $1$, and the
inequality therefore trivially holds in this case. Next, let $t\in \mathbb{R}
$ be such that $\mathbb{P}(\bar{T}(V)=t)=0$. Set $A_{m}:=\{\omega \in \Omega
:T(V_{m}(\omega ))\leq t\}$, $A:=\{\omega \in \Omega :\bar{T}(V(\omega
))\leq t\}$ and $\Omega ^{\prime }=\{\omega \in \Omega :\bar{T}(V(\omega
))\neq t,V_{m}(\omega )\rightarrow V(\omega )\}$. From the definition of $%
\bar{T}$ we obtain 
\begin{equation}
\limsup_{m\rightarrow \infty }T(V_{m}(\omega ))\leq \bar{T}(V(\omega ))\text{
for every }\omega \in \Omega \text{ such that }V_{m}(\omega )\rightarrow
V(\omega ).  \label{eqn:obsaux}
\end{equation}%
Furthermore, if $\omega \in A\cap \Omega ^{\prime }$, we have that $\bar{T}%
(V(\omega ))<t$ must hold and hence $T(V_{m}(\omega ))<t$ must hold
eventually in view of (\ref{eqn:obsaux}), i.e., that $\omega \in A_{m}$
eventually must be true. This implies that $\liminf_{m\rightarrow \infty }%
\mathbf{1}_{A_{m}}(\omega )\geq \mathbf{1}_{A}(\omega )$ for every $\omega
\in \Omega ^{\prime }$ (the inequality being trivial for $\omega \notin A$),
and thus almost everywhere. Fatou's lemma now yields 
\begin{equation*}
\liminf_{m\rightarrow \infty }\mathbb{P}(T(V_{m})\leq
t)=\liminf_{m\rightarrow \infty }\mathbb{E}(\mathbf{1}_{A_{m}})\geq \mathbb{E%
}(\liminf_{m\rightarrow \infty }\mathbf{1}_{A_{m}})\geq \mathbb{E}(\mathbf{1}%
_{A})=\mathbb{P}(\bar{T}(V)\leq t),
\end{equation*}%
where $\mathbb{E}$ denotes the expectation operator associated with $\mathbb{%
P}$.
\end{proof}

\begin{lemma}
\label{lem:quant}Let $F$ be the distribution function of a random variable
taking values in $\mathbb{R\cup \{\infty \}}$ and let $\delta \in (0,1)$. If 
$s\in \mathbb{\ R\cup \{\infty \}}$ satisfies $F(s)>\delta $, then $s$ is
larger than or equal to any $\delta $-quantile of $F$.
\end{lemma}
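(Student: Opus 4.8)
The plan is to argue by contradiction, using nothing beyond the monotonicity of a distribution function and the defining property of a $\delta$-quantile recalled in Footnote~\ref{qu}: an element $q\in\mathbb{R}\cup\{\infty\}$ is a $\delta$-quantile of $F$ precisely when $F(q)\geq\delta\geq F(q-)$, where $F(q-)$ denotes the left-hand limit of $F$ at $q$ (interpreted as $\lim_{t\to\infty}F(t)$ in case $q=\infty$).

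First I would suppose, towards a contradiction, that some $\delta$-quantile $q$ of $F$ satisfies $s<q$. This automatically forces $s$ to be a real number (whether $q$ is finite or equal to $\infty$), so $F(s)$ is an ordinary value in $[0,1]$. Since $F$ is nondecreasing and $s<q$, every real $t$ with $s\leq t<q$ satisfies $F(t)\geq F(s)$; taking the supremum over such $t$, which equals $F(q-)$, yields $F(s)\leq F(q-)$. Next I would invoke the quantile property $F(q-)\leq\delta$, so that, combining the two inequalities, $F(s)\leq\delta$. This contradicts the hypothesis $F(s)>\delta$. Hence no $\delta$-quantile of $F$ can be strictly larger than $s$, i.e.\ $s\geq q$ for every $\delta$-quantile $q$, which is the assertion. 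The remaining case $s=\infty$ needs no separate treatment, since then $s\geq q$ holds trivially for every $q\in\mathbb{R}\cup\{\infty\}$.

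There is essentially no obstacle in this argument; the only point worth a word of care is the bookkeeping at $q=\infty$ (handled by reading $F(q-)$ as $\lim_{t\to\infty}F(t)$) and the observation that $s<q$ rules out $s=\infty$, so that $F(s)$ is always a well-defined number when the contradiction step is invoked. No measurability, continuity, or topological input is needed.
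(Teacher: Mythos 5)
Your proof is correct and is essentially the paper's own argument in contrapositive-versus-contradiction form: both rest on the monotonicity inequality $F(s)\leq F(q-)$ for $q>s$ combined with the quantile condition $F(q-)\leq\delta$ from Footnote \ref{qu}. Nothing further is needed.
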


\begin{proof}
If $r\in \mathbb{R\cup \{\infty \}}$ satisfies $r>s$, it follows that $%
F(s)\leq F(r-)$ must hold. Together with the hypothesis, we conclude that $%
F(r-)>\delta $. But this shows that $r$ can not be a $\delta $-quantile of $%
F $, cf. Footnote \ref{qu}.
\end{proof}

\section{Appendix: Proofs for Section \protect\ref{unrestr_res}\label{App B}}

The facts collected in the subsequent remark will be used in the proofs
further below.

\begin{remark}
\label{F-type}(i) Suppose Assumption \ref{R_and_X} holds. Then the test
statistic $T_{Het}$ is a non-sphericity corrected F-type test statistic in
the sense of Section 5.4 in \cite{PP2016}. More precisely, $T_{Het}$ is of
the form (28) in \cite{PP2016} and Assumption 5 in the same reference is
satisfied with $\check{\beta}=\hat{\beta}$, $\check{\Omega}=\hat{\Omega}%
_{Het}$, and $N=\emptyset $. Furthermore, the set $N^{\ast }$ defined in
(27) of \cite{PP2016} satisfies $N^{\ast }=\mathsf{B}$. And also Assumptions
6 and 7 of \cite{PP2016} are satisfied. All these claims follow easily in
view of Lemma 4.1 in \cite{PP2016}, see also the proof of Theorem 4.2 in
that reference.

(ii) The test statistic $T_{uc}$ is also a non-sphericity corrected F-type
test statistic in the sense of Section 5.4 in \cite{PP2016} (terminology
being somewhat unfortunate here as no correction for the non-sphericity is
being attempted). More precisely, $T_{uc}$ is of the form (28) in \cite%
{PP2016} and Assumption 5 in the same reference is satisfied with $\check{%
\beta}=\hat{\beta}$, $\check{\Omega}=\hat{\sigma}^{2}R\left( X^{\prime
}X\right) ^{-1}R^{\prime }$, and $N=\emptyset $. Furthermore, the set $%
N^{\ast }$ defined in (27) of \cite{PP2016} satisfies $N^{\ast }=\limfunc{%
span}(X)$. And also Assumptions 6 and 7 of \cite{PP2016} are satisfied. All
these claims are evident (and obviously do not rely on Assumption \ref%
{R_and_X}).

(iii) We note that any non-sphericity corrected F-type test statistic (for
testing (\ref{testing problem})) in the sense of Section 5.4 in \cite{PP2016}%
, i.e., any test statistic $T$ of the form (28) in \cite{PP2016} that also
satisfies Assumption 5 in that reference, is invariant under the group $G(%
\mathfrak{M}_{0})$. Furthermore, the associated set $N^{\ast }$ defined in
(27) of \cite{PP2016} is even invariant under the larger group $G(\mathfrak{M%
})$. See Sections 5.1 and 5.4 of \cite{PP2016} as well as Lemma 5.16 in \cite%
{PP3} for more information.
\end{remark}

\textbf{Proof of Theorem \ref{Theo_Het_unrestr}:} We first prove Part (b)
and apply Theorem \ref{thm:main} in Appendix \ref{App A} with $%
T_{1}=T_{2}=T_{Het}^{\blacktriangle }$. Borel-measurability and $G(\mathfrak{%
M}_{0})$-invariance of $T_{1}$ and $T_{2}$ follow from the corresponding
properties of $T_{Het}$, see the discussion in Section \ref{test_statistic}
and, in particular, Remark \ref{rem:GM0}. Furthermore, we see that $F_{y}$
in Theorem \ref{thm:main} coincides with $F_{Het,y}^{\blacktriangle }$,
implying that we may set $f_{1-\alpha }(y)=f_{Het,1-\alpha }^{\blacktriangle
}(y)$. For indices $i$ such that $e_{i}(n)\notin \mathsf{B}$, we have also $%
\mu _{0}+e_{i}(n)\notin \mathsf{B}$ and thus continuity of $T_{1}$ at $\mu
_{0}+e_{i}(n)$ (as $T_{1}$ coincides with $T_{Het}$ on the complement of the
closed set $\mathsf{B}$); cf. Remarks \ref{rem:GM0}, \ref{F-type}, and Lemma
5.15 in \cite{PP2016}. In particular, $c_{i}=T_{1}(\mu
_{0}+e_{i}(n))=T_{Het}(\mu _{0}+e_{i}(n))$ follows for such indices $i$.
Also note that $y^{\ast }(\mu _{0}+e_{i}(n),\xi )\notin \mathsf{B}$ implies
that $y^{\ast }(\mu _{0}+e_{i}(n),\xi )$ is a continuity point of $T_{2}$.
This shows that $\vartheta _{1,Het}^{\blacktriangle }$ defined by 
\begin{equation*}
\vartheta _{1,Het}^{\blacktriangle }=\max_{\substack{ i=1,\ldots ,n, \\ %
e_{i}(n)\notin \mathsf{B}}}\Xi \left( \left\{ \xi :T_{Het}^{\blacktriangle
,\ast }(\mu _{0}+e_{i}(n),\xi )<T_{Het}(\mu _{0}+e_{i}(n)),y^{\ast }(\mu
_{0}+e_{i}(n),\xi )\notin \mathsf{B}\right\} \right) 
\end{equation*}%
is less than or equal to the maximum appearing on the r.h.s. of (\ref%
{eqn:newbound}) (where we set $\vartheta _{1,Het}^{\blacktriangle }=0$ if
the maximum operator is taken over an empty index set). It is also obvious
that $\vartheta _{1,Het}^{\blacktriangle }=\vartheta _{1,Het}$ holds, since $%
T_{Het}^{\blacktriangle ,\ast }(\mu _{0}+e_{i}(n),\xi )$ coincides with $%
T_{Het}^{\ast }(\mu _{0}+e_{i}(n),\xi )$ when $y^{\ast }(\mu
_{0}+e_{i}(n),\xi )\notin \mathsf{B}$. Next, consider an index $i$ such that 
$e_{i}(n)\in \limfunc{span}(X)$ and $R\hat{\beta}(e_{i}(n))\neq 0$ hold. We
show that then $c_{i}=\infty $: For this it suffices to show that $T_{1}(\mu
_{0}+v_{m})\rightarrow \infty $ for any sequence $v_{m}$ with $%
v_{m}\rightarrow e_{i}(n)$ for $m\rightarrow \infty $. Now, for all $m$ such
that $\mu _{0}+v_{m}\in \mathsf{B}$ holds we have $T_{1}(\mu
_{0}+v_{m})=T_{Het}^{\blacktriangle }(\mu _{0}+v_{m})=\infty $; and for all $%
m$ with $\mu _{0}+v_{m}\notin \mathsf{B}$ we obtain%
\begin{equation*}
T_{1}(\mu _{0}+v_{m})=T_{Het}(\mu _{0}+v_{m})\geq \left\Vert R\hat{\beta}%
(\mu _{0}+v_{m})-r\right\Vert ^{2}\lambda _{\max }^{-1}(\hat{\Omega}%
_{Het}(\mu _{0}+v_{m})),
\end{equation*}%
where $\lambda _{\max }(\cdot )$ denotes the largest eigenvalue of the
matrix indicated. Note that $R\hat{\beta}(\mu _{0}+v_{m})-r\rightarrow R\hat{%
\beta}(\mu _{0}+e_{i}(n))-r=R\hat{\beta}(e_{i}(n))\neq 0$ and that $\hat{%
\Omega}_{Het}(\mu _{0}+v_{m})\rightarrow \hat{\Omega}_{Het}(\mu
_{0}+e_{i}(n))=0$, the last equality following from $\mu _{0}+e_{i}(n)\in 
\limfunc{span}(X)$, which in turn is a consequence of $e_{i}(n)\in \limfunc{%
span}(X)$. Obviously, $T_{1}(\mu _{0}+v_{m})\rightarrow \infty $ for $%
m\rightarrow \infty $ now follows. Now define%
\begin{equation*}
\vartheta _{2,Het}^{\blacktriangle }=\max_{\substack{ i=1,\ldots ,n, \\ %
e_{i}(n)\in \limfunc{span}(X),R\hat{\beta}(e_{i}(n))\neq 0}}\Xi \left(
\left\{ \xi :T_{Het}^{\blacktriangle ,\ast }(\mu _{0}+e_{i}(n),\xi )<\infty
,y^{\ast }(\mu _{0}+e_{i}(n),\xi )\notin \mathsf{B}\right\} \right) ,
\end{equation*}%
where we set $\vartheta _{2,Het}^{\blacktriangle }=0$ if the maximum
operator is taken over an empty index set. Then $\vartheta
_{2,Het}^{\blacktriangle }$ is obviously less than or equal to the maximum
appearing on the r.h.s. of (\ref{eqn:newbound}), again using that $y^{\ast
}(\mu _{0}+e_{i}(n),\xi )\notin \mathsf{B}$ implies $y^{\ast }(\mu
_{0}+e_{i}(n),\xi )$ being a continuity point of $T_{2}$. Observe now that
the condition $T_{Het}^{\blacktriangle ,\ast }(\mu _{0}+e_{i}(n),\xi
)<\infty $ in the above set is always satisfied because of $y^{\ast }(\mu
_{0}+e_{i}(n),\xi )\notin \mathsf{B}$. Hence, $\vartheta
_{2,Het}^{\blacktriangle }=\vartheta _{2,Het}$ holds (of course, also in the
trivial case where the index set in question is empty). Consequently, any $%
\alpha $ satisfying $\alpha >\vartheta _{Het}=1-\max (\vartheta
_{1,Het},\vartheta _{2,Het})$ also satisfies (\ref{eqn:newbound}). An
application of Theorem \ref{thm:main} now yields (\ref%
{eqn:bootsize_het_triangle}) but with $T_{Het}$ replaced by $%
T_{Het}^{\blacktriangle }$. Since both the latter functions agree outside of 
$\mathsf{B}$, a $\lambda _{\mathbb{R}^{n}}$-null set, and since the measures 
$P_{\mu _{0},\sigma ^{2}\Sigma }$ are absolutely continuous with respect to $%
\lambda _{\mathbb{R}^{n}}$, also (\ref{eqn:bootsize_het_triangle}) as given
in the theorem follows. Independence of $\vartheta _{1,Het}$ and $\vartheta
_{2,Het}$ from the choice of $\mu _{0}\in \mathfrak{M}_{0}$ follows since $%
y^{\ast }(\mu _{0}+y,\xi )=y^{\ast }(\mu _{0}^{\prime }+y,\xi )-\mu
_{0}^{\prime }+\mu _{0}$ for every $\mu _{0}^{\prime }\in \mathfrak{M}_{0}$
as shown in the proof of Theorem \ref{thm:main} and since $T_{Het}$ and $%
\mathsf{B}$ are $G(\mathfrak{M}_{0})$-invariant, see Remark \ref{rem:GM0}.

Part (a) now follows from the already established Part (b), noting that we
may choose $f_{Het,1-\alpha }^{\blacktriangle }$ such that $f_{Het,1-\alpha
}^{\blacktriangle }(y)\geq f_{Het,1-\alpha }(y)$ holds for every $y\in 
\mathbb{R}^{n}$.\footnote{%
E.g., by choosing $f_{Het,1-\alpha }^{\blacktriangle }(y)$ as the largest $%
(1-\alpha )$-quantile of $F_{Het,y}^{\blacktriangle }$.} $\blacksquare $

\textbf{Proof of Theorem \ref{Theo_uc_unrestr}:} We first prove Part (b) and
apply Theorem \ref{thm:main} in Appendix \ref{App A} with $%
T_{1}=T_{2}=T_{uc}^{\blacktriangle }$. Borel-measurability and $G(\mathfrak{M%
}_{0})$-invariance of $T_{1}$ and $T_{2}$ follow from the corresponding
properties of $T_{uc}$, see the discussion in Section \ref{test_statistic}
and, in particular, Remark \ref{rem:GM0}. Furthermore, we see that $F_{y}$
in Theorem \ref{thm:main} coincides with $F_{uc,y}^{\blacktriangle }$,
implying that we may set $f_{1-\alpha }(y)=f_{uc,1-\alpha }^{\blacktriangle
}(y)$. For indices $i$ such that $e_{i}(n)\notin \limfunc{span}(X)$, we have
also $\mu _{0}+e_{i}(n)\notin \limfunc{span}(X)$ and thus continuity of $%
T_{1}$ at $\mu _{0}+e_{i}(n)$ (as $T_{1}$ coincides with $T_{uc}$ on the
complement of the closed set $\limfunc{span}(X)$); cf. Remarks \ref{rem:GM0}%
, \ref{F-type}, and Lemma 5.15 in \cite{PP2016}. In particular, $%
c_{i}=T_{1}(\mu _{0}+e_{i}(n))=T_{uc}(\mu _{0}+e_{i}(n))$ follows for such
indices $i$. Also note that $y^{\ast }(\mu _{0}+e_{i}(n),\xi )\notin 
\limfunc{span}(X)$ implies that $y^{\ast }(\mu _{0}+e_{i}(n),\xi )$ is a
continuity point of $T_{2}$. This shows that $\vartheta
_{1,uc}^{\blacktriangle }$ defined by 
\begin{equation*}
\vartheta _{1,uc}^{\blacktriangle }=\max_{\substack{ i=1,\ldots ,n, \\ %
e_{i}(n)\notin \limfunc{span}(X)}}\Xi \left( \left\{ \xi
:T_{uc}^{\blacktriangle ,\ast }(\mu _{0}+e_{i}(n),\xi )<T_{uc}(\mu
_{0}+e_{i}(n)),y^{\ast }(\mu _{0}+e_{i}(n),\xi )\notin \limfunc{span}%
(X)\right\} \right) 
\end{equation*}%
is less than or equal to the maximum appearing on the r.h.s. of (\ref%
{eqn:newbound}). It is also obvious that $\vartheta _{1,uc}^{\blacktriangle
}=\vartheta _{1,uc}$ holds, since $T_{uc}^{\blacktriangle ,\ast }(\mu
_{0}+e_{i}(n),\xi )$ coincides with $T_{uc}^{\ast }(\mu _{0}+e_{i}(n),\xi )$
when $y^{\ast }(\mu _{0}+e_{i}(n),\xi )\notin \limfunc{span}(X)$. Next,
consider an index $i$ such that $e_{i}(n)\in \limfunc{span}(X)$ and $R\hat{%
\beta}(e_{i}(n))\neq 0$ hold. We show that then $c_{i}=\infty $: For this it
suffices to show that $T_{1}(\mu _{0}+v_{m})\rightarrow \infty $ for any
sequence $v_{m}$ with $v_{m}\rightarrow e_{i}(n)$ for $m\rightarrow \infty $%
. Now, for all $m$ such that $\mu _{0}+v_{m}\in \limfunc{span}(X)$ holds we
have $T_{1}(\mu _{0}+v_{m})=T_{uc}^{\blacktriangle }(\mu _{0}+v_{m})=\infty $%
; and for all $m$ with $\mu _{0}+v_{m}\notin \limfunc{span}(X)$ we obtain%
\begin{equation*}
T_{1}(\mu _{0}+v_{m})=T_{uc}(\mu _{0}+v_{m})\geq \left\Vert R\hat{\beta}(\mu
_{0}+v_{m})-r\right\Vert ^{2}\lambda _{\max }^{-1}(R\left( X^{\prime
}X\right) ^{-1}R^{\prime })\hat{\sigma}^{-2}(\mu _{0}+v_{m}),
\end{equation*}%
where $\lambda _{\max }(\cdot )$ denotes the largest eigenvalue of the
matrix indicated. Note that $R\hat{\beta}(\mu _{0}+v_{m})-r\rightarrow R\hat{%
\beta}(\mu _{0}+e_{i}(n))-r=R\hat{\beta}(e_{i}(n))\neq 0$ and that $\hat{%
\sigma}^{2}(\mu _{0}+v_{m})\rightarrow \hat{\sigma}^{2}(\mu _{0}+e_{i}(n))=0$%
, the last equality following from $\mu _{0}+e_{i}(n)\in \limfunc{span}(X)$,
which in turn is a consequence of $e_{i}(n)\in \limfunc{span}(X)$.
Obviously, $T_{1}(\mu _{0}+v_{m})\rightarrow \infty $ for $m\rightarrow
\infty $ now follows. Now define%
\begin{equation*}
\vartheta _{2,uc}^{\blacktriangle }=\max_{\substack{ i=1,\ldots ,n, \\ %
e_{i}(n)\in \limfunc{span}(X),R\hat{\beta}(e_{i}(n))\neq 0}}\Xi \left(
\left\{ \xi :T_{uc}^{\blacktriangle ,\ast }(\mu _{0}+e_{i}(n),\xi )<\infty
,y^{\ast }(\mu _{0}+e_{i}(n),\xi )\notin \limfunc{span}(X)\right\} \right) ,
\end{equation*}%
where we set $\vartheta _{2,uc}^{\blacktriangle }=0$ if the maximum operator
is taken over an empty index set. Then $\vartheta _{2,uc}^{\blacktriangle }$
is obviously less than or equal to the maximum appearing on\ the r.h.s. of (%
\ref{eqn:newbound}), again using that $y^{\ast }(\mu _{0}+e_{i}(n),\xi
)\notin \limfunc{span}(X)$ implies $y^{\ast }(\mu _{0}+e_{i}(n),\xi )$ being
a continuity point of $T_{2}$. Observe now that the condition $%
T_{uc}^{\blacktriangle ,\ast }(\mu _{0}+e_{i}(n),\xi )<\infty $ in the above
set is always satisfied because of $y^{\ast }(\mu _{0}+e_{i}(n),\xi )\notin 
\limfunc{span}(X)$. Hence, $\vartheta _{2,uc}^{\blacktriangle }=\vartheta
_{2,uc}$ holds (of course, also in the trivial case where the index set in
question is empty). Consequently, any $\alpha $ satisfying $\alpha
>\vartheta _{uc}=1-\max (\vartheta _{1,uc},\vartheta _{2,uc})$ also
satisfies (\ref{eqn:newbound}). An application of Theorem \ref{thm:main} now
yields (\ref{eqn:bootsize_uncorr_triangle}) but with $T_{uc}$ replaced by $%
T_{uc}^{\blacktriangle }$. Since both the latter functions agree outside of $%
\limfunc{span}(X)$, a $\lambda _{\mathbb{R}^{n}}$-null set, and since the
measures $P_{\mu _{0},\sigma ^{2}\Sigma }$ are absolutely continuous with
respect to $\lambda _{\mathbb{R}^{n}}$, also (\ref%
{eqn:bootsize_uncorr_triangle}) as given follows. Independence of $\vartheta
_{1,uc}$ and $\vartheta _{2,uc}$ from the choice of $\mu _{0}\in \mathfrak{M}%
_{0}$ follows since $y^{\ast }(\mu _{0}+y,\xi )=y^{\ast }(\mu _{0}^{\prime
}+y,\xi )-\mu _{0}^{\prime }+\mu _{0}$ for every $\mu _{0}^{\prime }\in 
\mathfrak{M}_{0}$ as shown in the proof of Theorem \ref{thm:main} and since $%
T_{uc}$ and $\limfunc{span}(X)$ are $G(\mathfrak{M}_{0})$-invariant, see
Remark \ref{rem:GM0}.

Part (a) now follows from the already established Part (b), noting that we
may choose $f_{uc,1-\alpha }^{\blacktriangle }$ such that $f_{uc,1-\alpha
}^{\blacktriangle }(y)\geq f_{uc,1-\alpha }(y)$ holds for every $y\in 
\mathbb{R}^{n}$. $\blacksquare $

\begin{remark}
\label{rem:weaker}A weaker version of Theorem \ref{Theo_Het_unrestr}
(Theorem \ref{Theo_uc_unrestr}, respectively) is obtained if $\vartheta
_{Het}$ is replaced by $1-\vartheta _{1,Het}$ ($\vartheta _{uc}$ is replaced
by $1-\vartheta _{1,uc}$, respectively). These weaker versions not only have
simpler proofs in that one does not need to deal with the quantities $%
\vartheta _{2,Het}$ ($\vartheta _{2,uc}$, respectively), but can also be
derived from Theorem \ref{thm:main} more directly by setting $T_{1}=T_{Het}$
and $T_{2}=T_{Het}^{\blacktriangle }$ ($T_{1}=T_{uc}$ and $%
T_{2}=T_{uc}^{\blacktriangle }$, respectively), leading to a somewhat
simpler proof that directly establishes (\ref{eqn:bootsize_het_triangle}) ((%
\ref{eqn:bootsize_uncorr_triangle}), respectively) instead of establishing
these relations for $T_{Het}^{\blacktriangle }$ ($T_{uc}^{\blacktriangle }$,
respectively) first. Cf. the proofs of Theorems \ref{Theo_Het_restr_1} and %
\ref{Theo_uc_restr_1} further below, which have a similar structure.
\end{remark}

\textbf{Proof of Lemma \ref{VGK}:} Let $y\in \mathbb{R}^{n}$ and $\xi \in 
\mathbb{R}^{n}$ be arbitrary. Observe that $y^{\ast }(y,\xi )-y^{\maltese
}(y,\xi )=X\tilde{\beta}_{\mathfrak{M}_{0}}(y)-X\hat{\beta}(y)$. It is then
elementary to see that $R\hat{\beta}(y^{\ast }(y,\xi ))-r=R\hat{\beta}%
(y^{\maltese }(y,\xi ))-R\hat{\beta}(y)$ since $R\tilde{\beta}_{\mathfrak{M}%
_{0}}(y)=r$ certainly holds. Another immediate consequence is that $\hat{u}%
(y^{\ast }(y,\xi ))=\hat{u}(y^{\maltese }(y,\xi ))$ holds, which implies $%
\hat{\Omega}_{Het}(y^{\ast }(y,\xi ))=\hat{\Omega}_{Het}(y^{\maltese }(y,\xi
))$ as well as $\hat{\sigma}^{2}(y^{\ast }(y,\xi ))=\hat{\sigma}%
^{2}(y^{\maltese }(y,\xi ))$. From the first observation we see that $%
y^{\ast }(y,\xi )$ and $y^{\maltese }(y,\xi )$ differ only by an element of $%
\limfunc{span}(X)$. Hence, $y^{\ast }(y,\xi )\in \mathsf{B}$ if and only if $%
y^{\maltese }(y,\xi )\in \mathsf{B}$, since $\mathsf{B}+\limfunc{span}(X)=%
\mathsf{B}$ holds as noted in Lemma \ref{lem_B}. And similarly, $y^{\ast
}(y,\xi )\in \limfunc{span}(X)$ if and only if $y^{\maltese }(y,\xi )\in 
\limfunc{span}(X)$. This proves all the claims. $\blacksquare $

\section{Appendix: Proofs for Section \protect\ref{restrict_res}\label{App C}%
}

\textbf{Proof of Lemma \ref{lem:B_tilde}: }Observe that $\tilde{\Omega}%
_{Het}\left( y\right) =\tilde{B}\left( y\right) \limfunc{diag}\left( \tilde{d%
}_{1},\ldots ,\tilde{d}_{n}\right) \tilde{B}^{\prime }\left( y\right) $.
Given that $\tilde{d}_{i}>0$, this immediately establishes Parts (a) and (b)
of the lemma. We next prove Part (c). Let $s$ be as in Assumption \ref%
{R_and_X_tilde} and consider first the case where this assumption is
satisfied. If now $y\in \mathsf{\tilde{B}}$ it follows that $\tilde{u}%
_{l}(y)=0$ must hold at least for some $l\notin \left\{ i_{1},\ldots
i_{s}\right\} $ where $l$ may depend on $y$. But this means that $y$
satisfies $e_{l}^{\prime }(n)\Pi _{(\mathfrak{M}_{0}^{lin})^{\bot }}(y-\mu
_{0})=0$. Since $e_{l}^{\prime }(n)\Pi _{(\mathfrak{M}_{0}^{lin})^{\bot
}}\neq 0$ by construction of $l$, it follows that $\mathsf{\tilde{B}}$ is
contained in a finite union of proper affine subspaces, and hence is a $%
\lambda _{\mathbb{R}^{n}}$-null set. Next consider the case where Assumption %
\ref{R_and_X_tilde} is not satisfied. Observe that then $s>0$ must hold.
Note that $\tilde{u}_{i}(y)=0$ holds for all $y\in \mathbb{R}^{n}$ and all $%
i\in \left\{ i_{1},\ldots i_{s}\right\} $ by construction of $\left\{
i_{1},\ldots i_{s}\right\} $. But then for every $y\in \mathbb{R}^{n}$%
\begin{eqnarray*}
\limfunc{rank}(\tilde{B}\left( y\right) ) &=&\limfunc{rank}\left(
R(X^{\prime }X)^{-1}X^{\prime }\left( \lnot (i_{1},\ldots i_{s})\right)
A(y)\right) \\
&\leq &\limfunc{rank}\left( R(X^{\prime }X)^{-1}X^{\prime }\left( \lnot
(i_{1},\ldots i_{s})\right) \right) <q
\end{eqnarray*}%
is satisfied where $A(y)$ is obtained from $\limfunc{diag}\left( \tilde{u}%
_{1}(y),\ldots ,\tilde{u}_{n}(y)\right) $ by deleting rows and columns $i$
with $i\in \left\{ i_{1},\ldots i_{s}\right\} $. This completes the proof of
Part (c). To prove Part (d) recall that $\mathsf{\tilde{B}}$ is the set
where 
\begin{eqnarray*}
\tilde{B}(y) &=&R(X^{\prime }X)^{-1}X^{\prime }\limfunc{diag}\left(
e_{1}^{\prime }(n)\Pi _{(\mathfrak{M}_{0}^{lin})^{\bot }}(y-\mu _{0}),\ldots
,e_{n}^{\prime }(n)\Pi _{(\mathfrak{M}_{0}^{lin})^{\bot }}(y-\mu _{0})\right)
\\
&=&R(X^{\prime }X)^{-1}X^{\prime }\left[ e_{1}(n)e_{1}^{\prime }(n)\Pi _{(%
\mathfrak{M}_{0}^{lin})^{\bot }}(y-\mu _{0}),\ldots ,e_{n}(n)e_{n}^{\prime
}(n)\Pi _{(\mathfrak{M}_{0}^{lin})^{\bot }}(y-\mu _{0})\right]
\end{eqnarray*}%
has rank less than $q$. Define the set 
\begin{equation*}
D=\left\{ (j_{1},\ldots ,j_{s}):1\leq s\leq n,1\leq j_{1}<\ldots <j_{s}\leq
n,\limfunc{rank}\left( R(X^{\prime }X)^{-1}X^{\prime }\left[
e_{j_{1}}(n),\ldots ,e_{j_{s}}(n)\right] \right) <q\right\} ,
\end{equation*}%
which may be empty in case $q=1$. Consider first the case where $D$ is
nonempty: Since $R(X^{\prime }X)^{-1}X^{\prime }$ has rank $q$, it is then
easy to see that we have $y\in \mathsf{\tilde{B}}$ if and only if there
exists $(j_{1},\ldots ,j_{s})\in D$ such that $e_{j}^{\prime }(n)\Pi _{(%
\mathfrak{M}_{0}^{lin})^{\bot }}(y-\mu _{0})=0$ for $j\neq j_{i}$ for $%
i=1,\ldots ,s$. This shows, that $\mathsf{\tilde{B}}-\mu _{0}$ is a finite
union of (not necessarily distinct) linear subspaces. In case $D$ is empty, $%
\limfunc{rank}(R(X^{\prime }X)^{-1}X^{\prime })=q$ implies that $y\in 
\mathsf{\tilde{B}}$ if and only if $e_{j}^{\prime }(n)\Pi _{(\mathfrak{M}%
_{0}^{lin})^{\bot }}(y-\mu _{0})=0$ for all $1\leq j\leq n$, i.e., if and
only if $y\in \mathfrak{M}_{0}$. That is, in this case $\mathsf{\tilde{B}}%
-\mu _{0}=\mathfrak{M}_{0}^{lin}$, a linear subspace. That the linear
subspaces making up $\mathsf{\tilde{B}}-\mu _{0}$ are proper, follows since
otherwise $\mathsf{\tilde{B}}-\mu _{0}$, and thus $\mathsf{\tilde{B}}$,
would be all of $\mathbb{R}^{n}$, which is impossible under Assumption \ref%
{R_and_X_tilde} as shown in Part (c). To prove the second claim, observe
that in case $q=1$ the condition that $\limfunc{rank}(\tilde{B}(y))$ is less
than $q$ is equivalent to $\tilde{B}(y)=0$. Since the expressions $%
R(X^{\prime }X)^{-1}X^{\prime }e_{j}(n)$ are now scalar, we may thus write
the condition $\tilde{B}(y)=0$ equivalently as%
\begin{equation*}
\left[ R(X^{\prime }X)^{-1}X^{\prime }e_{1}(n)e_{1}(n),\ldots ,R(X^{\prime
}X)^{-1}X^{\prime }e_{n}(n)e_{n}(n)\right] \Pi _{(\mathfrak{M}%
_{0}^{lin})^{\bot }}(y-\mu _{0})=0.
\end{equation*}%
But this shows that $\mathsf{B}-\mu _{0}$ is a linear subspace, namely the
kernel of the matrix appearing on the l.h.s. of the preceding display. It
remains to prove Part (e). Closedness of $\mathsf{\tilde{B}}$ follows from
Parts (c) and (d), and the remaining claims are trivial (note that $\tilde{B}%
(y)$ only depends on $\tilde{u}(y)$, which obviously is $G(\mathfrak{M}_{0})$%
-invariant). $\blacksquare $

\textbf{Proof of Theorem \ref{Theo_Het_restr_1}:} We first prove Part (b)
and apply Theorem \ref{thm:main} in Appendix \ref{App A} with $T_{1}=\tilde{T%
}_{Het}$ and $T_{2}=\tilde{T}_{Het}^{\blacktriangle }$. Borel-measurability
and $G(\mathfrak{M}_{0})$-invariance of $T_{1}$ and $T_{2}$ follow from the
corresponding properties of $\tilde{T}_{Het}$, see Remark \ref{rem:tildeGM0}%
. Furthermore, we see that $F_{y}$ in Theorem \ref{thm:main} coincides with $%
\tilde{F}_{Het,y}^{\blacktriangle }$, implying that we may set $f_{1-\alpha
}(y)=\tilde{f}_{Het,1-\alpha }^{\blacktriangle }(y)$. For indices $i$ such
that $\mu _{0}+e_{i}(n)\notin \mathsf{\tilde{B}}$, the function $T_{1}$ is
continuous at $\mu _{0}+e_{i}(n)$. In particular, $c_{i}=T_{1}(\mu
_{0}+e_{i}(n))=\tilde{T}_{Het}(\mu _{0}+e_{i}(n))$ follows for such indices $%
i$. Also note that $y^{\ast }(\mu _{0}+e_{i}(n),\xi )\notin \mathsf{\tilde{B}%
}$ implies that $y^{\ast }(\mu _{0}+e_{i}(n),\xi )$ is a continuity point of 
$T_{2}$. This shows that $\tilde{\vartheta}_{Het}^{\blacktriangle }$ defined
by 
\begin{equation*}
\tilde{\vartheta}_{Het}^{\blacktriangle }=1-\max_{\substack{ i=1,\ldots ,n, 
\\ \mu _{0}+e_{i}(n)\notin \mathsf{\tilde{B}}}}\Xi \left( \left\{ \xi :%
\tilde{T}_{Het}^{\blacktriangle ,\ast }(\mu _{0}+e_{i}(n),\xi )<\tilde{T}%
_{Het}(\mu _{0}+e_{i}(n)),y^{\ast }(\mu _{0}+e_{i}(n),\xi )\notin \mathsf{%
\tilde{B}}\right\} \right)
\end{equation*}%
is not less than the r.h.s. of (\ref{eqn:newbound}) (where we set $\tilde{%
\vartheta}_{Het}^{\blacktriangle }=1$ if the maximum operator is taken over
an empty index set). It is also obvious that $\tilde{\vartheta}%
_{Het}^{\blacktriangle }=\tilde{\vartheta}_{Het}$ holds, since $\tilde{T}%
_{Het}^{\blacktriangle ,\ast }(\mu _{0}+e_{i}(n),\xi )$ coincides with $%
\tilde{T}_{Het}^{\ast }(\mu _{0}+e_{i}(n),\xi )$ when $y^{\ast }(\mu
_{0}+e_{i}(n),\xi )\notin \mathsf{\tilde{B}}$. Consequently, any $\alpha $
satisfying $\alpha >\tilde{\vartheta}_{Het}$ also satisfies (\ref%
{eqn:newbound}). An application of Theorem \ref{thm:main} now yields (\ref%
{eqn:bootsize_het_tilde_triangle}). Independence of $\tilde{\vartheta}_{Het}$
from the choice of $\mu _{0}\in \mathfrak{M}_{0}$ follows since $y^{\ast
}(\mu _{0}+y,\xi )=y^{\ast }(\mu _{0}^{\prime }+y,\xi )-\mu _{0}^{\prime
}+\mu _{0}$ for every $\mu _{0}^{\prime }\in \mathfrak{M}_{0}$ as shown in
the proof of Theorem \ref{thm:main} and since $\tilde{T}_{Het}$ and $\mathsf{%
\tilde{B}}$ are $G(\mathfrak{M}_{0})$-invariant, see Remark \ref%
{rem:tildeGM0}.

Part (a) now follows from the already established Part (b), noting that we
may choose $\tilde{f}_{Het,1-\alpha }^{\blacktriangle }$ such that $\tilde{f}%
_{Het,1-\alpha }^{\blacktriangle }(y)\geq \tilde{f}_{Het,1-\alpha }(y)$
holds for every $y\in \mathbb{R}^{n}$. $\blacksquare $

\textbf{Proof of Theorem \ref{Theo_uc_restr_1}:}\footnote{%
In view of the relationship between $\tilde{T}_{uc}$ and $T_{uc}$ discussed
in Section 5.1.1 in \cite{PP5HC}, one could attempt to use this relationship
to derive Theorem \ref{Theo_uc_restr_1} from Theorem \ref{Theo_uc_unrestr}.
However, it is not obvious how this can actually be executed since the
relationship mentioned only holds outside of a Lebesque null-set, but $%
y^{\ast }(y,\xi )$ typically follows a discrete distribution.} We first
prove Part (b) and apply Theorem \ref{thm:main} in Appendix \ref{App A} with 
$T_{1}=\tilde{T}_{uc}$ and $T_{2}=\tilde{T}_{uc}^{\blacktriangle }$.
Borel-measurability and $G(\mathfrak{M}_{0})$-invariance of $T_{1}$ and $%
T_{2}$ follow from the corresponding properties of $\tilde{T}_{uc}$, see
Remark \ref{rem:tildeGM0}. Furthermore, we see that $F_{y}$ in Theorem \ref%
{thm:main} coincides with $\tilde{F}_{uc,y}^{\blacktriangle }$, implying
that we may set $f_{1-\alpha }(y)=\tilde{f}_{uc,1-\alpha }^{\blacktriangle
}(y)$. For indices $i$ such that $\mu _{0}+e_{i}(n)\notin \mathfrak{M}_{0}$,
the function $T_{1}$ is continuous at $\mu _{0}+e_{i}(n)$. In particular, $%
c_{i}=T_{1}(\mu _{0}+e_{i}(n))=\tilde{T}_{uc}(\mu _{0}+e_{i}(n))$ follows
for such indices $i$. Also note that $y^{\ast }(\mu _{0}+e_{i}(n),\xi
)\notin \mathfrak{M}_{0}$ implies that $y^{\ast }(\mu _{0}+e_{i}(n),\xi )$
is a continuity point of $T_{2}$. This shows that $\tilde{\vartheta}%
_{uc}^{\blacktriangle }$ defined by 
\begin{equation*}
\tilde{\vartheta}_{uc}^{\blacktriangle }=1-\max_{\substack{ i=1,\ldots ,n, 
\\ \mu _{0}+e_{i}(n)\notin \mathfrak{M}_{0}}}\Xi \left( \left\{ \xi :\tilde{T%
}_{uc}^{\blacktriangle ,\ast }(\mu _{0}+e_{i}(n),\xi )<\tilde{T}_{uc}(\mu
_{0}+e_{i}(n)),y^{\ast }(\mu _{0}+e_{i}(n),\xi )\notin \mathfrak{M}%
_{0}\right\} \right)
\end{equation*}%
is not less than the r.h.s. of (\ref{eqn:newbound}). It is also obvious that 
$\tilde{\vartheta}_{uc}^{\blacktriangle }=\tilde{\vartheta}_{uc}$ holds,
since $\tilde{T}_{uc}^{\blacktriangle ,\ast }(\mu _{0}+e_{i}(n),\xi )$
coincides with $\tilde{T}_{uc}^{\ast }(\mu _{0}+e_{i}(n),\xi )$ when $%
y^{\ast }(\mu _{0}+e_{i}(n),\xi )\notin \mathfrak{M}_{0}$. Consequently, any 
$\alpha $ satisfying $\alpha >\tilde{\vartheta}_{uc}$ also satisfies (\ref%
{eqn:newbound}). An application of Theorem \ref{thm:main} now yields (\ref%
{eqn:bootsize_uncorr_tilde_triangle}). Independence of $\tilde{\vartheta}%
_{uc}$ from the choice of $\mu _{0}\in \mathfrak{M}_{0}$ follows since $%
y^{\ast }(\mu _{0}+y,\xi )=y^{\ast }(\mu _{0}^{\prime }+y,\xi )-\mu
_{0}^{\prime }+\mu _{0}$ for every $\mu _{0}^{\prime }\in \mathfrak{M}_{0}$
as shown in the proof of Theorem \ref{thm:main} and since $\tilde{T}_{uc}$
and $\mathfrak{M}_{0}$ are $G(\mathfrak{M}_{0})$-invariant, see Remark \ref%
{rem:tildeGM0}.

Part (a) now follows from the already established Part (b), noting that we
may choose $\tilde{f}_{uc,1-\alpha }^{\blacktriangle }$ such that $\tilde{f}%
_{uc,1-\alpha }^{\blacktriangle }(y)\geq \tilde{f}_{uc,1-\alpha }(y)$ holds
for every $y\in \mathbb{R}^{n}$. $\blacksquare $

\begin{lemma}
\label{lem:T_tilde_invariance}Let $\mathcal{A}$ be an affine subspace of $%
\mathbb{R}^{n}$ satisfying $\mathfrak{M}_{0}\subseteq \mathcal{A}\subseteq 
\limfunc{span}(X)$, and let $y^{\maltese }(y,\xi )$ be as defined in (\ref%
{boot_2}).

(a) For every $\gamma \in \mathbb{R}$, every $\mu _{0}^{\prime }\in 
\mathfrak{M}_{0}$, every $\mu _{0}^{\prime \prime }\in \mathfrak{M}_{0}$,
every $y\in \mathbb{R}$, and every $\xi \in \mathbb{R}^{n}$ we have%
\begin{equation*}
y^{\maltese }(\gamma (y-\mu _{0}^{\prime })+\mu _{0}^{\prime \prime },\xi
)=\gamma \left( y^{\maltese }(y,\xi )-\mu _{0}^{\prime }\right) +\mu
_{0}^{\prime \prime },
\end{equation*}%
\begin{equation*}
R\hat{\beta}(y^{\maltese }(\gamma (y-\mu _{0}^{\prime })+\mu _{0}^{\prime
\prime },\xi ))-R\hat{\beta}(\gamma (y-\mu _{0}^{\prime })+\mu _{0}^{\prime
\prime })=\gamma \left[ R\hat{\beta}(y^{\maltese }(y,\xi ))-R\hat{\beta}(y)%
\right] ,
\end{equation*}%
\begin{equation*}
\tilde{u}(y^{\maltese }(\gamma (y-\mu _{0}^{\prime })+\mu _{0}^{\prime
\prime },\xi ))=\gamma \tilde{u}(y^{\maltese }(y,\xi )),
\end{equation*}%
\begin{equation*}
\tilde{\Omega}_{Het}\left( y^{\maltese }(\gamma (y-\mu _{0}^{\prime })+\mu
_{0}^{\prime \prime },\xi )\right) =\gamma ^{2}\tilde{\Omega}_{Het}\left(
y^{\maltese }(y,\xi )\right)
\end{equation*}%
where $\tilde{u}(y)=y-X\tilde{\beta}_{\mathfrak{M}_{0}}(y)$ denotes the
vector of restricted residuals corresponding to the restricted estimator $%
\tilde{\beta}_{\mathfrak{M}_{0}}$.

(b) The set $\{y\in \mathbb{R}^{n}:\det \tilde{\Omega}_{Het}\left(
y^{\maltese }(y,\xi )\right) =0\}$ as well as $\tilde{T}_{Het}^{\maltese
}(\cdot ,\xi )$ are $G(\mathfrak{M}_{0})$-invariant for every $\xi \in 
\mathbb{R}^{n}$.

(c) The set $\{y\in \mathbb{R}^{n}:y^{\maltese }(y,\xi )\in \mathfrak{M}%
_{0}\}$ as well as $\tilde{T}_{uc}^{\maltese }(\cdot ,\xi )$ are $G(%
\mathfrak{M}_{0})$-invariant for every $\xi \in \mathbb{R}^{n}$.
\end{lemma}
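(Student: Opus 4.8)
The plan is to verify the four identities in Part (a) by direct, essentially algebraic manipulation, and then to deduce Parts (b) and (c) from them together with the known $G(\mathfrak{M}_0)$-invariance of the underlying objects. First I would fix $\gamma\in\mathbb{R}$, $\mu_0',\mu_0''\in\mathfrak{M}_0$, $y\in\mathbb{R}^n$ and $\xi\in\mathbb{R}^n$, and abbreviate $z=\gamma(y-\mu_0')+\mu_0''$. The key elementary facts I would use are: (i) $X\tilde\beta_{\mathcal{A}}$ is the orthogonal projection onto $\mathcal{A}$, hence $z-X\tilde\beta_{\mathcal{A}}(z)=\Pi_{(\mathcal{A}-\mu_0)^{\bot}}(z-\mu_0)$ for any $\mu_0\in\mathfrak{M}_0$ (this is exactly the identity recalled in the proof of Theorem \ref{thm:main}), which because $\mathfrak{M}_0\subseteq\mathcal{A}$ and $\mu_0',\mu_0''\in\mathfrak{M}_0\subseteq\mathcal{A}$ gives $z-X\tilde\beta_{\mathcal{A}}(z)=\gamma(y-X\tilde\beta_{\mathcal{A}}(y))$; and (ii) $X\hat\beta$ is the orthogonal projection onto $\limfunc{span}(X)$, so $X\hat\beta(z)=\gamma(X\hat\beta(y)-\mu_0')+\mu_0''$ since $\mu_0',\mu_0''\in\mathfrak{M}_0\subseteq\limfunc{span}(X)$. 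Plugging (i) and (ii) into the definition $y^{\maltese}(z,\xi)=X\hat\beta(z)+\limfunc{diag}(\xi)(z-X\tilde\beta_{\mathcal{A}}(z))$ immediately yields the first displayed identity, $y^{\maltese}(z,\xi)=\gamma(y^{\maltese}(y,\xi)-\mu_0')+\mu_0''$.

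From the first identity the remaining three follow quickly. For $R\hat\beta$: applying the affine map $w\mapsto R\hat\beta(w)$, using that $X\hat\beta$ is linear and idempotent on $\limfunc{span}(X)$ and that $R\tilde\beta_{\mathfrak{M}_0}$-type cancellations occur — more directly, $R\hat\beta$ is affine, $R\hat\beta(\gamma(u-\mu_0')+\mu_0'')=\gamma R\hat\beta(u)-\gamma R\hat\beta(\mu_0')+R\hat\beta(\mu_0'')$, and the $-\gamma R\hat\beta(\mu_0')+R\hat\beta(\mu_0'')$ terms are the same whether one evaluates at $u=y^{\maltese}(y,\xi)$ or at $u=y$, so they cancel in the difference, giving $R\hat\beta(y^{\maltese}(z,\xi))-R\hat\beta(z)=\gamma[R\hat\beta(y^{\maltese}(y,\xi))-R\hat\beta(y)]$. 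For $\tilde u$: since $\tilde u(w)=w-X\tilde\beta_{\mathfrak{M}_0}(w)=\Pi_{(\mathfrak{M}_0^{lin})^{\bot}}(w-\mu_0)$ and $y^{\maltese}(z,\xi)-\mu_0''=\gamma(y^{\maltese}(y,\xi)-\mu_0')$ with $\mu_0',\mu_0''\in\mathfrak{M}_0$, one gets $\tilde u(y^{\maltese}(z,\xi))=\Pi_{(\mathfrak{M}_0^{lin})^{\bot}}(\gamma(y^{\maltese}(y,\xi)-\mu_0'))=\gamma\,\tilde u(y^{\maltese}(y,\xi))$ because $\Pi_{(\mathfrak{M}_0^{lin})^{\bot}}$ annihilates $\mathfrak{M}_0^{lin}$ and hence the difference $\mu_0'-\mu_0''\in\mathfrak{M}_0^{lin}$ contributes nothing. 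Finally $\tilde\Omega_{Het}(w)=R\tilde\Psi_{Het}(w)R'$ depends on $w$ only through the squared restricted residuals $\tilde d_i\tilde u_i^2(w)$, so the scaling $\tilde u\mapsto\gamma\tilde u$ produces exactly the factor $\gamma^2$, giving the fourth identity (and trivially $\tilde\Omega_{Het}(y^{\maltese}(z,\xi))=\gamma^2\tilde\Omega_{Het}(y^{\maltese}(y,\xi))$, including when $\gamma=0$).

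For Part (b), an element $g\in G(\mathfrak{M}_0)$ has the form $g(y)=\delta(y-a)+a^{\ast}$ with $\delta\neq0$ and $a,a^{\ast}\in\mathfrak{M}_0$, so it is of the form $z=\gamma(y-\mu_0')+\mu_0''$ with $\gamma=\delta\neq0$. By the last identity in Part (a), $\det\tilde\Omega_{Het}(y^{\maltese}(g(y),\xi))=\delta^{2q}\det\tilde\Omega_{Het}(y^{\maltese}(y,\xi))$, which vanishes iff $\det\tilde\Omega_{Het}(y^{\maltese}(y,\xi))$ does; hence the set $\{y:\det\tilde\Omega_{Het}(y^{\maltese}(y,\xi))=0\}$ is $G(\mathfrak{M}_0)$-invariant. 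On its complement, $\tilde T_{Het}^{\maltese}(g(y),\xi)$ equals the quadratic form in $R\hat\beta(y^{\maltese}(g(y),\xi))-R\hat\beta(g(y))=\delta[\,R\hat\beta(y^{\maltese}(y,\xi))-R\hat\beta(y)\,]$ with middle matrix $\tilde\Omega_{Het}^{-1}(y^{\maltese}(g(y),\xi))=\delta^{-2}\tilde\Omega_{Het}^{-1}(y^{\maltese}(y,\xi))$; the two powers of $\delta$ cancel, giving $\tilde T_{Het}^{\maltese}(g(y),\xi)=\tilde T_{Het}^{\maltese}(y,\xi)$, and on the exceptional set both sides are $0$, so $\tilde T_{Het}^{\maltese}(\cdot,\xi)$ is $G(\mathfrak{M}_0)$-invariant. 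Part (c) is entirely analogous: by the first identity of Part (a), $y^{\maltese}(g(y),\xi)-\mu_0''=\delta(y^{\maltese}(y,\xi)-\mu_0')$ with $\mu_0',\mu_0''\in\mathfrak{M}_0$, so (since $\mathfrak{M}_0$ is an affine space invariant under scaling about its own points) $y^{\maltese}(g(y),\xi)\in\mathfrak{M}_0$ iff $y^{\maltese}(y,\xi)\in\mathfrak{M}_0$; on the complement of that set, the numerator vector scales by $\delta$ and $\tilde\sigma^2(y^{\maltese}(g(y),\xi))=\delta^2\tilde\sigma^2(y^{\maltese}(y,\xi))$ by the $\tilde u$-identity, so again the $\delta$'s cancel and $\tilde T_{uc}^{\maltese}(g(y),\xi)=\tilde T_{uc}^{\maltese}(y,\xi)$, with both sides $0$ on the exceptional set.

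I do not anticipate a serious obstacle here; the only point requiring mild care is the bookkeeping of the two base points $\mu_0'$ and $\mu_0''$ (which is why the lemma is stated with both, since $y^{\maltese}$ is built from two different projections $X\hat\beta$ and $X\tilde\beta_{\mathcal{A}}$, and one wants the statement flexible enough to be applied with, say, $\mu_0'\neq\mu_0''$ when composing transformations), together with checking that $\mathfrak{M}_0\subseteq\mathcal{A}$ is genuinely used — it is what makes $(\mathcal{A}-\mu_0)^{\bot}$ annihilate the difference $\mu_0'-\mu_0''$ and more basically what lets one write $z-\mu_0\in\mathcal{A}-\mu_0$ cleanly. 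The degenerate case $\gamma=0$ (allowed in the display, since $G(\mathfrak{M}_0)$ only forbids $\delta=0$, not $\gamma$ in Part (a)) should be noted but causes no trouble because all four identities are continuous/polynomial in $\gamma$ and hold trivially at $\gamma=0$.
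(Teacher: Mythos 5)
Your proof is correct and follows essentially the same route as the paper: establish the first identity via $z-X\tilde{\beta}_{\mathcal{A}}(z)=\Pi_{(\mathcal{A}-\mu_{0})^{\bot}}(z-\mu_{0})$ and the fact that $\mathfrak{M}_{0}^{lin}\subseteq\mathcal{A}-\mu_{0}$, then derive the other three identities and Parts (b), (c) from it. The paper dispatches (b) and (c) with "follows from (a)"; your explicit cancellation of the powers of $\delta$ in the quadratic forms is exactly the intended argument.
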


\begin{proof}
(a) Using elementary properties of the least squares estimator $\hat{\beta}$%
, using that $y-X\tilde{\beta}_{\mathcal{A}}(y)=\Pi _{(\mathcal{A-}\mu
_{0})^{\bot }}(y-\mu _{0})$ as noted in the proof of Theorem \ref{thm:main}
in Appendix \ref{App A}, and noting that $\Pi _{(\mathcal{A-}\mu _{0})^{\bot
}}(\mu _{0}^{\prime \prime }-\mu _{0})=0=\Pi _{(\mathcal{A-}\mu _{0})^{\bot
}}(\mu _{0}-\mu _{0}^{\prime })$ (since $\mu _{0}^{\prime \prime }-\mu _{0}$
as well as $\mu _{0}-\mu _{0}^{\prime }$ belong to $\mathfrak{M}_{0}^{lin}$
and since $(\mathcal{A-}\mu _{0})^{\bot }\subseteq (\mathfrak{M}%
_{0}^{lin})^{\bot })$ we get%
\begin{eqnarray*}
y^{\maltese }(\gamma (y-\mu _{0}^{\prime })+\mu _{0}^{\prime \prime },\xi )
&=&X\hat{\beta}(\gamma (y-\mu _{0}^{\prime })+\mu _{0}^{\prime \prime })+%
\limfunc{diag}(\xi )\Pi _{(\mathcal{A-}\mu _{0})^{\bot }}(\gamma (y-\mu
_{0}^{\prime })+\mu _{0}^{\prime \prime }-\mu _{0}) \\
&=&\gamma (X\hat{\beta}(y)-\mu _{0}^{\prime })+\mu _{0}^{\prime \prime
}+\gamma \limfunc{diag}(\xi )\Pi _{(\mathcal{A-}\mu _{0})^{\bot }}(y-\mu
_{0}) \\
&=&\gamma \left( y^{\maltese }(y,\xi )-\mu _{0}^{\prime }\right) +\mu
_{0}^{\prime \prime }.
\end{eqnarray*}%
The second displayed equation is then an immediate consequence. Using what
has already been established%
\begin{eqnarray*}
\tilde{u}(y^{\maltese }(\gamma (y-\mu _{0}^{\prime })+\mu _{0}^{\prime
\prime },\xi )) &=&\tilde{u}(\gamma \left( y^{\maltese }(y,\xi )-\mu
_{0}^{\prime }\right) +\mu _{0}^{\prime \prime }) \\
&=&\Pi _{(\mathfrak{M}_{0}^{lin})^{\bot }}(\gamma \left( y^{\maltese }(y,\xi
)-\mu _{0}^{\prime }\right) +\mu _{0}^{\prime \prime }-\mu _{0})=\gamma \Pi
_{(\mathfrak{M}_{0}^{lin})^{\bot }}\left( y^{\maltese }(y,\xi )-\mu
_{0}^{\prime }\right) \\
&=&\gamma \Pi _{(\mathfrak{M}_{0}^{lin})^{\bot }}\left( y^{\maltese }(y,\xi
)-\mu _{0}\right) =\gamma \tilde{u}(y^{\maltese }(y,\xi )),
\end{eqnarray*}%
which then also immediately gives the fourth displayed equation.

(b)\&(c). Follows from (a).
\end{proof}

\textbf{Proof of Theorem \ref{Theo_Het_restr_2}:} We prove Part (b) first.
By $G(\mathfrak{M}_{0})$-invariance of $\tilde{T}_{Het}^{\maltese }(\cdot
,\xi )$ and of the set $\{y\in \mathbb{R}^{n}:y^{\maltese }(y,\xi )\in 
\mathsf{\tilde{B}}\}$ (for every $\xi $) established in Lemma \ref%
{lem:T_tilde_invariance} we can conclude that for every $z\in \mathbb{R}^{n}$
the expression $\tilde{T}_{Het}^{\maltese }(\mu _{0}+\gamma z,\xi )$ depends
neither on the choice of $\mu _{0}\in \mathfrak{M}_{0}$ nor on the value of $%
\gamma \in \mathbb{R}\backslash \{0\}$, cf. Footnote \ref{FN:equi}. By $G(%
\mathfrak{M}_{0})$-invariance (see Remark \ref{rem:tildeGM0}), the same is
true also for $\tilde{T}_{Het}(\mu _{0}+\gamma z)$. By Lemma \ref%
{lem:T_tilde_invariance} and by $G(\mathfrak{M}_{0})$-invariance of $\mathsf{%
\tilde{B}}$, the truth-value of the statement $y^{\maltese }(\mu _{0}+\gamma
z,\xi )\notin \mathsf{\tilde{B}}$ depends neither on the choice of $\mu
_{0}\in \mathfrak{M}_{0}$ nor on the value of $\gamma \in \mathbb{R}%
\backslash \{0\}$. A similar comment applies to the statement $\mu
_{0}+e_{i}(n)\notin \mathsf{\tilde{B}}$. This shows, in particular, that $%
\tilde{\theta}_{Het}$ does not depend on the choice of $\mu _{0}\in 
\mathfrak{M}_{0}$. Next, observe that for every $y\in \mathbb{R}^{n}$ and
every $t\in \mathbb{R}$ we have $\tilde{H}_{Het,y}^{\blacktriangle }(t)=\Xi
(\{\xi :\tilde{T}_{Het}\left( y^{\maltese }(y,\xi )\right) \leq
t,y^{\maltese }(y,\xi )\notin \mathsf{\tilde{B}}\})$. In particular, the
condition $\alpha >\tilde{\theta}_{Het}$ is equivalent to 
\begin{equation*}
\max_{\substack{ i=1,\ldots ,n,  \\ \mu _{0}+e_{i}(n)\notin \mathsf{\tilde{B}%
} }}\tilde{H}_{Het,\mu _{0}+e_{i}(n)}^{\blacktriangle }(\tilde{T}_{Het}(\mu
_{0}+e_{i}(n))-)>1-\alpha ,
\end{equation*}%
with the convention that the left-hand side is zero if the maximum operator
extends over an empty index set, in which case there is then nothing to
prove. Otherwise, let from now on $i$ be an index that realizes the maximum
in the previous display. In the case where $\tilde{H}_{Het,\mu
_{0}+e_{i}(n)}^{\blacktriangle }(\tilde{T}_{Het}(\mu _{0}+e_{i}(n))-)=0$,
there is again nothing to prove and we are done. Hence, it remains to
consider the case where $\tilde{H}_{Het,\mu _{0}+e_{i}(n)}^{\blacktriangle }(%
\tilde{T}_{Het}(\mu _{0}+e_{i}(n))-)>0$. In this case then, let $\alpha \in
(0,1)$ be such that $\tilde{H}_{Het,\mu _{0}+e_{i}(n)}^{\blacktriangle }(%
\tilde{T}_{Het}(\mu _{0}+e_{i}(n))-)>1-\alpha $ (where $\mu _{0}\in 
\mathfrak{M}_{0}$ can be chosen arbitrarily). From this inequality we can
conclude existence of a real number $x_{i}$ smaller than $\tilde{T}%
_{Het}(\mu _{0}+e_{i}(n))$ such that $\tilde{H}_{Het,\mu
_{0}+e_{i}(n)}^{\blacktriangle }(x_{i})>1-\alpha $ holds and such that $%
x_{i} $ is a continuity point of $\tilde{H}_{Het,\mu
_{0}+e_{i}(n)}^{\blacktriangle }$. Since $\mu _{0}+e_{i}(n)\notin \mathsf{%
\tilde{B}}$, it is obvious that $\tilde{T}_{Het}$ is continuous at $\mu
_{0}+e_{i}(n)$. In view of this, there exists a $\delta >0$ such that every $%
z\in B(e_{i}(n),\delta )$ satisfies $\tilde{T}_{Het}(\mu _{0}+z)>x_{i}$ (and
the same is true if we replace $\delta $ by a smaller positive number). We
claim that for every sequence $z_{m}\rightarrow e_{i}(n)$ ($z_{m}\in \mathbb{%
R}^{n}$) we have%
\begin{equation}
\liminf_{m\rightarrow \infty }\tilde{H}_{Het,\mu _{0}+z_{m}}^{\blacktriangle
}(x_{i})\geq \tilde{H}_{Het,\mu _{0}+e_{i}(n)}^{\blacktriangle }(x_{i}).
\label{eqn:ineq}
\end{equation}%
Define $V_{m}=V_{m}(\xi )=(\mu _{0}+z_{m},y^{\maltese }(\mu _{0}+z_{m},\xi
)) $ and $V=V(\xi )=(\mu _{0}+e_{i}(n),y^{\maltese }(\mu _{0}+e_{i}(n),\xi
)) $, which can be viewed as random vectors defined on $\mathbb{R}^{n}$
(equipped with the Borel $\sigma $-field) and where the probability measure
is given by $\Xi $. Note that $V_{m}$ converges to $V$ everywhere as $%
m\rightarrow \infty $ (since $y^{\maltese }(y,\xi )$ is continuous w.r.t. $y$%
). Define $S:\mathbb{R}^{n}\times \mathbb{R}^{n}\rightarrow \mathbb{R\cup
\{\infty \}}$ via%
\begin{equation*}
S(z,x)=\left\{ 
\begin{array}{cc}
(R\hat{\beta}\left( x\right) -R\hat{\beta}\left( z\right) )^{\prime }\tilde{%
\Omega}_{Het}^{-1}\left( x\right) (R\hat{\beta}\left( x)\right) -R\hat{\beta}%
\left( z\right) ) & \text{if }x\notin \mathsf{\tilde{B}} \\ 
\infty & \text{if }x\in \mathsf{\tilde{B}}%
\end{array}%
\right.
\end{equation*}%
and note that $\tilde{H}_{Het,\mu _{0}+z_{m}}^{\blacktriangle }(t)=\Xi
(S(V_{m})\leq t)$ and $\tilde{H}_{Het,\mu _{0}+e_{i}(n)}^{\blacktriangle
}(t)=\Xi (S(V)\leq t)$ holds for $t\in \mathbb{R}$ and thus also for $%
t=x_{i} $ (recall that $x_{i}$ is a real number). The statement in (\ref%
{eqn:ineq}) now follows from Lemma \ref{lem:convdisc} in Appendix \ref{App A}%
, recalling that we have chosen $x_{i}$ as a continuity point of $\tilde{H}%
_{Het,\mu _{0}+e_{i}(n)}^{\blacktriangle }$, which implies $\Xi
(S(V)=x_{i})=0$, and noting that $\bar{S}=S$ here holds. Summarizing, we
hence arrive, replacing $\delta $ by another element of $(0,\delta )$ if
necessary, at%
\begin{equation}
\tilde{T}_{Het}(\mu _{0}+z)>x_{i}\text{ and }\tilde{H}_{Het,\mu
_{0}+z}^{\blacktriangle }(x_{i})>1-\alpha \text{ for every }z\in
B(e_{i}(n),\delta ).  \label{eqn:bootdelta_het_tilde_triangle_malt2}
\end{equation}%
Now, for every $z\in \mathbb{R}^{n}$ and every $\gamma \neq 0$ we have%
\begin{eqnarray*}
\tilde{H}_{Het,\mu _{0}+\gamma z}^{\blacktriangle }(x_{i}) &=&\Xi \left(
\left\{ \xi :\tilde{T}_{Het}^{\blacktriangle ,\maltese }(\mu _{0}+\gamma
z,\xi )\leq x_{i}\right\} \right) \\
&=&\Xi \left( \left\{ \xi :\tilde{T}_{Het}^{\blacktriangle ,\maltese }(\mu
_{0}+z,\xi )\leq x_{i}\right\} \right) =\tilde{H}_{Het,\mu
_{0}+z}^{\blacktriangle }(x_{i})
\end{eqnarray*}%
by what has been shown at the very beginning of the proof. Using also the
observation for $\tilde{T}_{Het}(\mu _{0}+\gamma z)$ made at the beginning
of the proof, the preceding display and (\ref%
{eqn:bootdelta_het_tilde_triangle_malt2}) thus give%
\begin{equation*}
\tilde{T}_{Het}(\mu _{0}+y)>x_{i}\text{ and }\tilde{H}_{Het,\mu
_{0}+y}^{\blacktriangle }(x_{i})>1-\alpha \text{ for every }y\in \{\gamma
z:\gamma \neq 0,z\in B(e_{i}(n),\delta )\}=:\Delta ,
\end{equation*}%
or equivalently%
\begin{equation}
\tilde{T}_{Het}(y)>x_{i}\text{ and }\tilde{H}_{Het,y}^{\blacktriangle
}(x_{i})>1-\alpha \text{ for every }y\in \mu _{0}+\Delta .
\label{eqn:bootdelta_het_tilde_triangle_malt3}
\end{equation}%
By Lemma \ref{lem:quant} we see that $\tilde{H}_{Het,y}^{\blacktriangle
}(x_{i})>1-\alpha $ implies $x_{i}\geq \tilde{h}_{Het,1-\alpha
}^{\blacktriangle }(y)$. Hence,%
\begin{equation*}
\left\{ y\in \mathbb{R}^{n}:\tilde{T}_{Het}(y)>x_{i},\text{ }\tilde{H}%
_{Het,y}^{\blacktriangle }(x_{i})>1-\alpha \right\} \subseteq \left\{ y\in 
\mathbb{R}^{n}:\tilde{T}_{Het}(y)>\tilde{h}_{Het,1-\alpha }^{\blacktriangle
}(y)\right\} .
\end{equation*}%
This, together with (\ref{eqn:bootdelta_het_tilde_triangle_malt3}), gives%
\begin{equation*}
\left\{ y\in \mathbb{R}^{n}:\tilde{T}_{Het}(y)>\tilde{h}_{Het,1-\alpha
}^{\blacktriangle }(y)\right\} \supseteq \mu _{0}+\Delta \supseteq \mu _{0}+%
\limfunc{int}(\Delta )\supseteq \mu _{0}+(\limfunc{span}(e_{i}(n))\backslash
\{0\}),
\end{equation*}%
where $\limfunc{int}(\Delta )$ denotes the interior of $\Delta $. The proof
is now completed by following the steps after (\ref{eqn:bootincl_2}) in the
proof of Theorem \ref{thm:main} in Appendix \ref{App A}.

Part (a) now follows from the already established Part (b), noting that we
may choose $\tilde{h}_{Het,1-\alpha }^{\blacktriangle }$ such that $\tilde{h}%
_{Het,1-\alpha }^{\blacktriangle }(y)\geq \tilde{h}_{Het,1-\alpha }(y)$
holds for every $y\in \mathbb{R}^{n}$. $\blacksquare $

\textbf{Proof of Theorem \ref{Theo_uc_restr_2}:} We prove Part (b) first. By 
$G(\mathfrak{M}_{0})$-invariance of $\tilde{T}_{uc}^{\maltese }(\cdot ,\xi )$
and of the set $\{y\in \mathbb{R}^{n}:y^{\maltese }(y,\xi )\in \mathfrak{M}%
_{0}\}$ (for every $\xi $) established in Lemma \ref{lem:T_tilde_invariance}
we can conclude that for every $z\in \mathbb{R}^{n}$ the expression $\tilde{T%
}_{uc}^{\maltese }(\mu _{0}+\gamma z,\xi )$ depends neither on the choice of 
$\mu _{0}\in \mathfrak{M}_{0}$ nor on the value of $\gamma \in \mathbb{R}%
\backslash \{0\}$, cf. Footnote \ref{FN:equi}. By $G(\mathfrak{M}_{0})$%
-invariance (see Remark \ref{rem:tildeGM0}), the same is true also for $%
\tilde{T}_{uc}(\mu _{0}+\gamma z)$. By Lemma \ref{lem:T_tilde_invariance},
the truth-value of the statement $y^{\maltese }(\mu _{0}+\gamma z,\xi
)\notin \mathfrak{M}_{0}$ depends neither on the choice of $\mu _{0}\in 
\mathfrak{M}_{0}$ nor on the value of $\gamma \in \mathbb{R}\backslash \{0\}$%
. A similar comment applies to the statement $\mu _{0}+e_{i}(n)\notin 
\mathfrak{M}_{0}$. This shows, in particular, that $\tilde{\theta}_{uc}$
does not depend on the choice of $\mu _{0}\in \mathfrak{M}_{0}$. Next,
observe that for every $y\in \mathbb{R}^{n}$ and every $t\in \mathbb{R}$ we
have $\tilde{H}_{uc,y}^{\blacktriangle }(t)=\Xi (\{\xi :\tilde{T}_{uc}\left(
y^{\maltese }(y,\xi )\right) \leq t,y^{\maltese }(y,\xi )\notin \mathfrak{M}%
_{0}\})$. In particular, the condition $\alpha >\tilde{\theta}_{uc}$ is
equivalent to 
\begin{equation*}
\max_{\substack{ i=1,\ldots ,n,  \\ \mu _{0}+e_{i}(n)\notin \mathfrak{M}_{0} 
}}\tilde{H}_{uc,\mu _{0}+e_{i}(n)}^{\blacktriangle }(\tilde{T}_{uc}(\mu
_{0}+e_{i}(n))-)>1-\alpha .
\end{equation*}%
From now on let $i$ be an index that realizes the maximum in the previous
display (note that the index set is not empty since $k-q\leq k<n$ by
assumption). In the case where $\tilde{H}_{uc,\mu
_{0}+e_{i}(n)}^{\blacktriangle }(\tilde{T}_{uc}(\mu _{0}+e_{i}(n))-)=0$,
there is nothing to prove and we are done. Hence, it remains to consider the
case where $\tilde{H}_{uc,\mu _{0}+e_{i}(n)}^{\blacktriangle }(\tilde{T}%
_{uc}(\mu _{0}+e_{i}(n))-)>0$. In this case then, let $\alpha \in (0,1)$ be
such that $\tilde{H}_{uc,\mu _{0}+e_{i}(n)}^{\blacktriangle }(\tilde{T}%
_{uc}(\mu _{0}+e_{i}(n))-)>1-\alpha $ (where $\mu _{0}\in \mathfrak{M}_{0}$
can be chosen arbitrarily). From this inequality we can conclude existence
of a real number $x_{i}$ smaller than $\tilde{T}_{uc}(\mu _{0}+e_{i}(n))$
such that $\tilde{H}_{uc,\mu _{0}+e_{i}(n)}^{\blacktriangle
}(x_{i})>1-\alpha $ holds and such that $x_{i}$ is a continuity point of $%
\tilde{H}_{uc,\mu _{0}+e_{i}(n)}^{\blacktriangle }$. Since $\mu
_{0}+e_{i}(n)\notin \mathfrak{M}_{0}$, it is obvious that $\tilde{T}_{uc}$
is continuous at $\mu _{0}+e_{i}(n)$. In view of this, there exists a $%
\delta >0$ such that every $z\in B(e_{i}(n),\delta )$ satisfies $\tilde{T}%
_{uc}(\mu _{0}+z)>x_{i}$ (and the same is true if we replace $\delta $ by a
smaller positive number). We claim that for every sequence $z_{m}\rightarrow
e_{i}(n)$ ($z_{m}\in \mathbb{R}^{n}$) we have%
\begin{equation}
\liminf_{m\rightarrow \infty }\tilde{H}_{uc,\mu _{0}+z_{m}}^{\blacktriangle
}(x_{i})\geq \tilde{H}_{uc,\mu _{0}+e_{i}(n)}^{\blacktriangle }(x_{i}).
\label{eqn:ineq_2}
\end{equation}%
Define $V_{m}=V_{m}(\xi )=(\mu _{0}+z_{m},y^{\maltese }(\mu _{0}+z_{m},\xi
)) $ and $V=V(\xi )=(\mu _{0}+e_{i}(n),y^{\maltese }(\mu _{0}+e_{i}(n),\xi
)) $, which can be viewed as random vectors defined on $\mathbb{R}^{n}$
(equipped with the Borel $\sigma $-field) and where the probability measure
is given by $\Xi $. Note that $V_{m}$ converges to $V$ everywhere as $%
m\rightarrow \infty $ (since $y^{\maltese }(y,\xi )$ is continuous w.r.t. $y$%
). Define $S:\mathbb{R}^{n}\times \mathbb{R}^{n}\rightarrow \mathbb{R\cup
\{\infty \}}$ via%
\begin{equation*}
S(z,x)=\left\{ 
\begin{array}{cc}
(R\hat{\beta}\left( x\right) -R\hat{\beta}\left( z\right) )^{\prime }\left( 
\tilde{\sigma}^{2}(x)R\left( X^{\prime }X\right) ^{-1}R^{\prime }\right)
^{-1}(R\hat{\beta}\left( x)\right) -R\hat{\beta}\left( z\right) ) & \text{if 
}x\notin \mathfrak{M}_{0} \\ 
\infty & \text{if }x\in \mathfrak{M}_{0}%
\end{array}%
\right.
\end{equation*}%
and note that $\tilde{H}_{uc,\mu _{0}+z_{m}}^{\blacktriangle }(t)=\Xi
(S(V_{m})\leq t)$ and $\tilde{H}_{uc,\mu _{0}+e_{i}(n)}^{\blacktriangle
}(t)=\Xi (S(V)\leq t)$ holds for $t\in \mathbb{R}$ and thus also for $%
t=x_{i} $ (recall that $x_{i}$ is a real number). The statement in (\ref%
{eqn:ineq_2}) now follows from Lemma \ref{lem:convdisc} in Appendix \ref{App
A}, recalling that we have chosen $x_{i}$ as a continuity point of $\tilde{H}%
_{uc,\mu _{0}+e_{i}(n)}^{\blacktriangle }$, which implies $\Xi
(S(V)=x_{i})=0 $, and noting that $\bar{S}=S$ here holds. Summarizing, we
hence arrive, replacing $\delta $ by another element of $(0,\delta )$ if
necessary, at%
\begin{equation}
\tilde{T}_{uc}(\mu _{0}+z)>x_{i}\text{ and }\tilde{H}_{uc,\mu
_{0}+z}^{\blacktriangle }(x_{i})>1-\alpha \text{ for every }z\in
B(e_{i}(n),\delta ).  \label{eqn:bootdelta_uncorr_tilde_triangle_malt2}
\end{equation}%
Now, for every $z\in \mathbb{R}^{n}$ and every $\gamma \neq 0$ we have%
\begin{eqnarray*}
\tilde{H}_{uc,\mu _{0}+\gamma z}^{\blacktriangle }(x_{i}) &=&\Xi \left(
\left\{ \xi :\tilde{T}_{uc}^{\blacktriangle ,\maltese }(\mu _{0}+\gamma
z,\xi )\leq x_{i}\right\} \right) \\
&=&\Xi \left( \left\{ \xi :\tilde{T}_{uc}^{\blacktriangle ,\maltese }(\mu
_{0}+z,\xi )\leq x_{i}\right\} \right) =\tilde{H}_{uc,\mu
_{0}+z}^{\blacktriangle }(x_{i})
\end{eqnarray*}%
by what has been shown at the very beginning of the proof. Using also the
observation for $\tilde{T}_{uc}(\mu _{0}+\gamma z)$ made at the beginning of
the proof, the preceding display and (\ref%
{eqn:bootdelta_uncorr_tilde_triangle_malt2}) thus give%
\begin{equation*}
\tilde{T}_{uc}(\mu _{0}+y)>x_{i}\text{ and }\tilde{H}_{uc,\mu
_{0}+y}^{\blacktriangle }(x_{i})>1-\alpha \text{ for every }y\in \{\gamma
z:\gamma \neq 0,z\in B(e_{i}(n),\delta )\}=:\Delta ,
\end{equation*}%
or equivalently%
\begin{equation}
\tilde{T}_{uc}(y)>x_{i}\text{ and }\tilde{H}_{uc,y}^{\blacktriangle
}(x_{i})>1-\alpha \text{ for every }y\in \mu _{0}+\Delta .
\label{eqn:bootdelta_uncorr_tilde_triangle_malt3}
\end{equation}%
By Lemma \ref{lem:quant} we see that $\tilde{H}_{uc,y}^{\blacktriangle
}(x_{i})>1-\alpha $ implies $x_{i}\geq \tilde{h}_{uc,1-\alpha
}^{\blacktriangle }(y)$. Hence,%
\begin{equation*}
\left\{ y\in \mathbb{R}^{n}:\tilde{T}_{uc}(y)>x_{i},\text{ }\tilde{H}%
_{uc,y}^{\blacktriangle }(x_{i})>1-\alpha \right\} \subseteq \left\{ y\in 
\mathbb{R}^{n}:\tilde{T}_{uc}(y)>\tilde{h}_{uc,1-\alpha }^{\blacktriangle
}(y)\right\} .
\end{equation*}%
This, together with (\ref{eqn:bootdelta_uncorr_tilde_triangle_malt3}), gives%
\begin{equation*}
\left\{ y\in \mathbb{R}^{n}:\tilde{T}_{uc}(y)>\tilde{h}_{uc,1-\alpha
}^{\blacktriangle }(y)\right\} \supseteq \mu _{0}+\Delta \supseteq \mu _{0}+%
\limfunc{int}(\Delta )\supseteq \mu _{0}+(\limfunc{span}(e_{i}(n))\backslash
\{0\}),
\end{equation*}%
where $\limfunc{int}(\Delta )$ denotes the interior of $\Delta $. The proof
is now completed by following the steps after (\ref{eqn:bootincl_2}) in the
proof of Theorem \ref{thm:main} in Appendix \ref{App A}.

Part (a) now follows from the already established Part (b), noting that we
may choose $\tilde{h}_{uc,1-\alpha }^{\blacktriangle }$ such that $\tilde{h}%
_{uc,1-\alpha }^{\blacktriangle }(y)\geq \tilde{h}_{uc,1-\alpha }(y)$ holds
for every $y\in \mathbb{R}^{n}$. $\blacksquare $

\section{Appendix: Proofs for Section \protect\ref{special}\label%
{app_special_cases}}

\textbf{Proof of Theorem \ref{special_cases}:} Because of the assumption $%
q=k $, we have $\mathcal{A}=\mathfrak{M}_{0}=\{\mu _{0}\}$, where $\mu
_{0}=X\beta _{0}$ and $\beta _{0}$ is defined as $R^{-1}r$. Observe that $%
y^{\ast }(y,\xi )=X\beta _{0}+\limfunc{diag}(\xi )(y-X\beta _{0})$, and
hence $y^{\ast }(\mu _{0}+e_{i}(n),\xi )=\mu _{0}+\xi _{i}e_{i}(n)$ for
every $i=1,\ldots ,n$. Furthermore, note that our assumption on $\Xi $ is
equivalent to $\Xi (\left\{ \xi \in \mathbb{R}^{n}:\tprod_{i=1}^{n}\xi
_{i}\neq 0\right\} )=1$.

(a) Because Assumption \ref{R_and_X} is assumed in Theorem \ref%
{Theo_Het_unrestr}, we may conclude that $e_{i}(n)\notin \limfunc{span}(X)$
holds for every $i=1,\ldots ,n$.\footnote{%
If $e_{i}(n)\in \limfunc{span}(X)$ would hold for some $i$, then there would
exist a $k\times 1$ vector $a$, sucht that $Xa=e_{i}(n)$. It would follow
that $(X^{\prime }\left( \lnot (i_{1},\ldots i_{s})\right) )^{\prime }a=0$,
where $X^{\prime }\left( \lnot (i_{1},\ldots i_{s})\right) $ is the matrix
appearing in Assumption \ref{R_and_X}. As a consequence, this assumption
would be violated, a contradiction.} This shows that $\vartheta _{2,Het}=0$.
We now turn to $\vartheta _{1,Het}$: Consider $i=1,\ldots ,n$, such that $%
e_{i}(n)\notin \mathsf{B}$ (if no such $i$ exists $\vartheta _{1,Het}=0$
follows immediately from our convention). Observe that $T_{Het}^{\ast }(\mu
_{0}+e_{i}(n),\xi )=T_{Het}(\mu _{0}+\xi _{i}e_{i}(n))$ by definition and
that, in view of $G(\mathfrak{M}_{0})$-invariance of $T_{Het}$ (Remark \ref%
{rem:GM0}), this equals $T_{Het}(\mu _{0}+e_{i}(n))$ provided that $\xi
_{i}\neq 0$ (cf. the argument at the beginning of the proof of Theorem \ref%
{thm:main}). By our assumption on $\Xi $, we can conclude that $\vartheta
_{1,Het}=0$. This now gives $\vartheta _{Het}=1$.

(b) Consider first $i=1,\ldots ,n$, such that $e_{i}(n)\in \limfunc{span}(X)$%
. Then $y^{\ast }(\mu _{0}+e_{i}(n),\xi )=\mu _{0}+\xi _{i}e_{i}(n)\in 
\limfunc{span}(X)$ also holds for every $\xi $, since $\mu _{0}\in \mathfrak{%
M}_{0}\subseteq \limfunc{span}(X)$ and since $\limfunc{span}(X)$ is a linear
space. This shows that $\vartheta _{2,uc}=0$. We turn now to $\vartheta
_{1,uc}$: Consider $i=1,\ldots ,n$, such that $e_{i}(n)\notin \limfunc{span}%
(X)$ (which has to be the case for at least one $i$ in view of the
assumption $k<n$). Observe that $T_{uc}^{\ast }(\mu _{0}+e_{i}(n),\xi
)=T_{uc}(\mu _{0}+\xi _{i}e_{i}(n))$ by definition and that, in view of $G(%
\mathfrak{M}_{0})$-invariance of $T_{uc}$ (Remark \ref{rem:GM0}), this
equals $T_{uc}(\mu _{0}+e_{i}(n))$ provided that $\xi _{i}\neq 0$ (cf. the
argument at the beginning of the proof of Theorem \ref{thm:main}). By our
assumption on $\Xi $, we can conclude that $\vartheta _{1,uc}=0$. This now
gives $\vartheta _{uc}=1$.

(c) Consider $i=1,\ldots ,n$, such that $\mu _{0}+e_{i}(n)\notin \mathsf{%
\tilde{B}}$ (if no such $i$ exists $\tilde{\vartheta}_{Het}=0$ follows
immediately from our convention). Observe that $\tilde{T}_{Het}^{\ast }(\mu
_{0}+e_{i}(n),\xi )=\tilde{T}_{Het}(\mu _{0}+\xi _{i}e_{i}(n))$ by
definition and that, in view of $G(\mathfrak{M}_{0})$-invariance of $\tilde{T%
}_{Het}$ (Remark \ref{rem:tildeGM0}), this equals $\tilde{T}_{Het}(\mu
_{0}+e_{i}(n))$ provided that $\xi _{i}\neq 0$ (cf. the argument at the
beginning of the proof of Theorem \ref{thm:main}). By our assumption on $\Xi 
$, we can conclude that $\tilde{\vartheta}_{Het}=1$.

(d) Consider $i=1,\ldots ,n$ such that $\mu _{0}+e_{i}(n)\notin \mathfrak{M}%
_{0}$ (which is actually the case here for every $i$ since $\mathfrak{M}%
_{0}=\{\mu _{0}\}$). Observe that $\tilde{T}_{uc}^{\ast }(\mu
_{0}+e_{i}(n),\xi )=\tilde{T}_{uc}(\mu _{0}+\xi _{i}e_{i}(n))$ by definition
and that, in view of $G(\mathfrak{M}_{0})$-invariance of $\tilde{T}_{uc}$
(Remark \ref{rem:tildeGM0}), this equals $\tilde{T}_{uc}(\mu _{0}+e_{i}(n))$
provided that $\xi _{i}\neq 0$ (cf. the argument at the beginning of the
proof of Theorem \ref{thm:main}). By our assumption on $\Xi $, we can
conclude that $\tilde{\vartheta}_{uc}=1$.

\section{Appendix: Computational details\label{App compu}}

For every Setting A, B, and C and every bootstrap-based test procedure
(i.e., combination of test statistic and bootstrap scheme) the two-step
procedure of computations outlined in Section \ref{sec:comp} proceeds as
detailed in the following two subsections.

\subsection{Step 1: Search for design matrices leading to a small $\protect%
\vartheta $\label{sec:stp1}}

In every scenario, i.e., for every combination of $k=2,\ldots ,5$ and $%
q=1,\ldots ,k-1$, we do the following: In a loop, we generate $n\times k$%
-dimensional design matrices $X$ with first column the intercept, and the
remaining coordinates drawn independently from a log-(standard) normal
distribution. For every $X$ generated in this way, we determine the
corresponding value of $\vartheta $ for testing the restriction $R=(0:I_{q})$
and $r=0$ (after checking whether $X$ has full rank and satisfies the
assumption in the theorem corresponding to $\vartheta $ (if applicable)).%
\footnote{%
That is, Assumption \ref{R_and_X} (Assumption \ref{R_and_X_tilde},
respectively) is checked if the test under consideration falls under the
regime of Theorem \ref{Theo_Het_unrestr} (Theorem \ref{Theo_Het_restr_1} or %
\ref{Theo_Het_restr_2}, respectively). All these checks, including the full
rank check, were always passed (which should not come as a surprise as these
conditions are generically satisfied).} If a matrix $X$ is found such that
the corresponding $\vartheta $ satisfies $\vartheta <0.01$, or if $150$
design matrices have been generated in total we stop the loop.

Then, we determine a matrix $X_{\ast }(k,q)$, say, that realizes the minimal
value of $\vartheta $ among the (at most $150$) matrices considered. In
preparation for Step 2, for $X_{\ast }(k,q)$ (and the restriction given by
the associated $q$) we also compute, by Monte Carlo based on $300$
replications for computing each probability (and assuming normality), the
null rejection probabilities $\pi _{\alpha ,\rho }$, say, of the
bootstrap-based test under consideration for $\alpha =0.05$ and $\alpha =0.1$
under the parameters $\beta =0$, $\sigma ^{2}=1$, and $\Sigma (\rho ,i^{\ast
})=\limfunc{diag}(\tau _{1}^{2}(\rho ,i^{\ast }),\ldots ,\tau _{n}^{2}(\rho
,i^{\ast }))$ given by 
\begin{equation}
\tau _{i^{\ast }}^{2}(\rho ,i^{\ast })=\rho ,\text{ and }\tau _{i}^{2}(\rho
,i^{\ast })=(1-\rho )/(n-1)\text{ for }i\neq i^{\ast },  \label{eqn:extrvar}
\end{equation}%
for $\rho \in \{n^{-1},0.9,0.99,0.999,0.9999\}$ and where $i^{\ast }$
denotes the first index which realizes the optimum in the optimization
problem defining $\vartheta =\vartheta (X_{\ast }(k,q))$ in the appropriate
theorem that applies to the bootstrap-based test under consideration. The
rationale for computing $\pi _{\alpha ,\rho }$ is the following: Inspection
of the proofs of the just mentioned theorems shows that if $\vartheta
<\alpha $ holds, then the null rejection probabilities of the
bootstrap-based test converge to $1$ along a sequence of $\Sigma _{m}\in 
\mathfrak{C}_{Het}$ converging to $e_{i^{\ast }}(n)e_{i^{\ast }}(n)^{\prime }
$. Thus if $\vartheta <\alpha $, the null rejection probability $\pi
_{\alpha ,\rho }$ should be large for the variance specification considered
in (\ref{eqn:extrvar}) and $\rho $ close to $1$. This will be exploited as a
numerical check in Step 2.

Note also that $\rho =n^{-1}$ corresponds to homoskedastic errors.

For $n\in \{20,30\}$ (i.e., in Settings B and C) the \emph{empirical}
distribution $\Xi ^{\bullet }$ used in the bootstrap scheme is generated
once for each combination of $k$ and $q$ and then held fixed in the
computations of $\vartheta $ and the rejection probabilities described
before.

\subsection{Step 2: Numerical check and computation of additional size lower
bounds if necessary\label{sec:stp2}}

First, we determine the matrix $X_{\ast \ast }$, say, that corresponds to
the minimal value of $\vartheta $ among the matrices $\{X_{\ast
}(k,q):k=2,\ldots ,5,\ q=1,\ldots ,k-1\}$ determined in Step 1. We then
conduct the following numerical check: if, for $\alpha =0.05$ or $\alpha =0.1
$, the value of $\vartheta $ corresponding to $X_{\ast \ast }$ (and the
associated $q$) was smaller than $\alpha $, but $\max_{\rho }\pi _{\alpha
,\rho }<0.4$, we took this as an indication of numerical unreliability of $%
\vartheta $ (cf.~the discussion after (\ref{eqn:extrvar}) for an
explanation, and see the discussion in Section \ref{sec:numch} below
concerning some numerical issues that make determining $\vartheta $
nontrivial). If the value of $\vartheta $ corresponding to $X_{\ast \ast }$
was deemed unreliable, we replaced $X_{\ast \ast }$ by the matrix that led
to the minimal value of $\vartheta $ among all matrices in $\{X_{\ast
}(k,q):k=2,\ldots ,5,\ q=1,\ldots ,k-1\}\backslash \{X_{\ast \ast }\}$, and
iterated this procedure until the check was passed (which eventually was
always the case before the set of all matrices $X_{\ast }(k,q)$ was
exhausted). Typically, the check was passed right away.\footnote{%
More specifically, in Setting A, B, and C no replacements were necessary for~%
$99.38\%$,~$98.96\%$, and $98.33\%$, respectively, of the $960$ bootstrap
based test procedures considered.} Denote by $\vartheta _{\min }$ the value
of $\vartheta $ corresponding to the so-obtained matrix $X_{\ast \ast }$.

Once the check was passed, we proceeded as follows, separately for $\alpha
=0.05$ and $\alpha =0.1$: On the one hand, if $\vartheta _{\min }<\alpha $
(and the maximal rejection probability computed thus was guaranteed to be at
least $0.4$ by the numerical check), or if $\max_{\rho }\pi _{\alpha ,\rho
}\geq 3\alpha $, no additional computations were carried out, which was the
case for the vast majority of test procedures.\footnote{\label{FN_vast}For~$%
\alpha =0.05$ no additional computations were necessary for~$921$,~$904$,
and~$887$ of the~$960$ bootstrap based test procedures in Setting A, B, and
C, respectively; for~$\alpha =0.1$ no additional computations were necessary
for $947$, $926$, and $920$ of the $960$ bootstrap based test procedures in
Setting A, B, and C, respectively} In this case the values of $\vartheta
_{\min }$ and of $\max_{\rho }\pi _{\alpha ,\rho }$ are reported. Note that
if $\vartheta _{\min }<\alpha $ our theoretical results indicate that the
test has size $1$ (in the scenario corresponding to $X_{\ast \ast }$); and
if $\max_{\rho }\pi _{\alpha ,\rho }\geq 3\alpha $, the null rejection
probability is exceedingly large (even if $\vartheta _{\min }\not<\alpha $)
and thus the test is also not reliable. On the other hand, if $\vartheta
_{\min }\geq \alpha $ and $\max_{\rho }\pi _{\alpha ,\rho }<3\alpha $, we
started a second set of computations to determine null rejection
probabilities of the test over a range of additional design matrices.

In this second set of computations, we focus exclusively on the scenario $k$
and $q$ pertaining to $X_{\ast \ast }$. In this scenario we first generate
in a loop further design matrices in the same way as in Step 1, and compute
for each matrix the value of $\vartheta $ (after checking whether $X$ has
full rank and satisfies the assumption in the theorem corresponding to $%
\vartheta $ (if applicable)).\footnote{%
Again all these checks, including the full rank check, were always passed.}
If one of these new design matrices leads to a $\vartheta $ smaller than $%
\alpha $, or once $150$ matrices have been considered, we stop the loop. The
matrix corresponding to the smallest value of $\vartheta $ is then used as
the starting value in the following routine, in addition to (at most) $29$
newly generated design matrices (of dimension $n\times k$) that are
generated in the same way as the design matrices in Step 1:

\begin{enumerate}
\item[(I)] For every $X$ considered we compute via a Monte Carlo method
(again based on $300$ replications and normality) the maximal null rejection
probability for $\beta =0$, $\sigma ^{2}=1$ and $\Sigma =\limfunc{diag}(\tau
_{1}^{2},\ldots ,\tau _{n}^{2})$ where the vector $(\tau _{1}^{2},\ldots
,\tau _{n}^{2})^{\prime }$ varies in the set of vectors that is obtained from%
\begin{equation*}
\{(\tau _{1}^{2}(0.99,i),\ldots ,\tau _{n}^{2}(0.99,i))^{\prime
}+s|G|:i=1,\ldots ,n,\ s=0,0.1,0.5\}
\end{equation*}%
after dividing each vector by its $l_{1}$-norm (cf. (\ref{eqn:extrvar}));
here the vector $|G|$ denotes coordinate-wise absolute values of $G$, and
the latter is obtained as a draw from an $n$-variate standard normal
distribution (here $G$ was re-drawn for each $X$, $i$ and $s$). Once an $X$
is found such that the maximal null rejection probability exceeds $4\alpha $%
, or once all $30$ design matrices have been considered, we stop these
computations.

\item[(II)] Denote by $X_{\ast \ast \ast }$ the matrix for which the largest
rejection probability was obtained in (I). All of the following computations
are done on $X_{\ast \ast \ast }$.

If the largest rejection probability in (I) is greater than $4\alpha $, we
re-compute this rejection probability on a new Monte Carlo sample (of size $%
300$ and under normality) and stop.

Otherwise, we run (at most) $20$ iterations of a Nelder-Mead optimization
algorithm initialized at the vector $(\tau _{1}^{2},\ldots ,\tau
_{n}^{2})^{\prime }$ that leads to the maximal rejection probability in (I)
to maximize the rejection probabilities further (all probabilities are again
determined by Monte Carlo with $300$ replications assuming normality each
time a rejection probability is required during the Nelder-Mead
optimization). After these iterations, the rejection probability for the
\textquotedblleft optimal\textquotedblright\ $(\tau _{1}^{2},\ldots ,\tau
_{n}^{2})^{\prime }$ so obtained is re-computed on a new Monte Carlo sample
(of size 300 and assuming normality).

In both cases we report the last null rejection probability computed.
Furthermore, we compute the value of $\vartheta $ corresponding to $X_{\ast
\ast \ast }$ (and the restrictions implied by the value of $q$ under
consideration) and report this value.\footnote{%
The matrix $X_{\ast \ast \ast }$ was also checked to have full rank and to
satisfy the assumption in the theorem corresponding to $\vartheta $ (if
applicable). This checks were always passed.}$^{\text{,}}$\footnote{%
The values of $\vartheta $ computed in the second set of computations are
not subject to a numerical reliability check. While we report these
(unvetted) values of $\vartheta $ as auxiliary information also in cases
where this second set of computations are needed, we base our classification
of the corresponding bootstrap-based test procedure as reliable or
unreliable only on the magnitude of the computed null rejection
probabilities and not on the value of such a $\vartheta $. See Section \ref%
{sec:results}.}
\end{enumerate}

In the second set of computations in Step 2, for $n\in \{20,30\}$, and in
contrast to Step 1, whenever a new design matrix $X$ was considered, the
empirical distribution $\Xi ^{\bullet }$ used in the bootstrap scheme was
newly initialized, and computations of $\vartheta $ and of rejection
probabilities for this $X$ were done based on this corresponding empirical
distribution $\Xi ^{\bullet }$.

As a point of interest we note that for every bootstrap-based test we also
performed the following check: For every design matrix $X$ (with associated
scenario $(k,q)$) that underlies a result shown in Figure \ref{fig:p} or
Tables \ref{tab:A}-\ref{tab:C} we have checked that all the relevant
assumptions for size-controllability of the corresponding \emph{non}%
-bootstrap-based test given in \cite{PP5HC} are satisfied.\footnote{%
We have also checked that all these design matrices $X$ satisfy $h_{ii}<1$
for all $i=1,\ldots ,n$ (which implies some, but not all, of the before
mentioned conditions in \cite{PP5HC}).} Hence, for all these design matrices 
$X$ (and their associated scenarios $(k,q)$), that we have shown to result
in oversized bootstrap-based tests in the vast majority of cases, in
contrast the testing procedures established in \cite{PP5HC} indeed work and
deliver valid tests, i.e., tests that are \emph{not} oversized.

\subsection{Numerical details concerning the computation of $\protect%
\vartheta $\label{sec:numch}}

Evaluating $\vartheta $ numerically is a nontrivial task. In order to be on
the safe side and to bias our results in favor of the bootstrap-based tests
(recall we are after negative results), the $\vartheta $ we shall obtain
will actually be a numerical obtained upper bound for the true $\vartheta $
(as can be seen from the subsequent discussion). In the following we discuss
how the routines implemented in the R-package \textbf{wbsd} address the
numerical challenges encountered in determining $\vartheta $. We give the
discussion for the case where $\vartheta _{Het}$ is to be computed. The
routines proceed similarly for the other cases.

As can be seen from Theorem \ref{Theo_Het_unrestr}, to obtain $\vartheta
_{1,Het}$ one repeatedly needs to compute $T_{Het}(z)$, after checking that $%
z\notin \mathsf{B}$, for various vectors $z$, say. Because this check is
needed anyway for computing $T_{Het}(z)$ (see (\ref{T_het})), it is carried
out in \textbf{wbsd} within the sub-routine computing $T_{Het}(z)$. To this
end, the subroutine uses the function \textquotedblleft
isInvertible\textquotedblright\ associated to the rank-revealing LU
decomposition (with complete pivoting) from the Eigen-library for linear
algebra in C++ (recall that $z\notin \mathsf{B}$ is equivalent to $\hat{%
\Omega}_{Het}\left( z\right) $ being invertible). The package \textbf{wbsd}
makes use of this library through the package \textbf{RcppEigen} by \cite%
{Rcppeigen}. The function \textquotedblleft isInvertible\textquotedblright\
requires the specification of a tolerance parameter, which we set to $%
10^{-6} $. Note that the function \textquotedblleft
isInvertible\textquotedblright\ works in such a way that the larger the
tolerance parameter, the more likely it is that the input matrix is
categorized as numerically non-invertible (i.e., that $z$ is categorized as
satisfying $z\in \mathsf{B}$). As a consequence, the larger the tolerance
parameter, the smaller the numerically determined value of $\vartheta
_{1,Het}$, because violated rank conditions decrease $\vartheta _{1,Het}$.%
\footnote{%
For the tests~$T_{uc}$ ($\tilde{T}_{uc}$, respectively) we checked whether
the residual variance estimate $\hat{\sigma}^{2}(z)$ ($\tilde{\sigma}^{2}(z)$%
, respectively) exceeds~$10^{-6}$ instead of using the
function~\textquotedblleft isInvertible\textquotedblright .}

In addition to rank computations, the definition of $\vartheta _{1,Het}$
requires to numerically check the inequality in the events defining $%
\vartheta _{1,Het}$, see (\ref{theta1_Het}). To this end, we introduce
another tolerance parameter and instead of checking $"\ldots <\ldots "$
directly, we checked $"\ldots +10^{-5}<\ldots "$ Note that also here,
increasing the tolerance parameter decreases the numerically determined
value of $\vartheta _{1,Het}$.

In the computation of $\vartheta _{2,Het}$, checks of the form $z\notin 
\mathsf{B}$ were done exactly in the same way as in the computation of $%
\vartheta _{1,Het}$ described above.\footnote{%
When computing $\vartheta _{2,uc}$, we checked whether the residual variance
estimate $\hat{\sigma}^{2}(z)$ exceeded~$10^{-6}$ instead of using the
function~\textquotedblleft isInvertible\textquotedblright .} Again, the
larger the tolerance parameter the more likely it is that the input matrix
becomes numerically rank deficient. Thus, increasing the tolerance parameter
leads to a possible decrease in the numerically determined value of $%
\vartheta _{2,Het}$. The computation of $\vartheta _{2,Het}$ also requires
checking whether $R\hat{\beta}(e_{i}(n))\neq 0$. Numerically we checked this
via determining whether $\Vert R\hat{\beta}(e_{i}(n))\Vert _{\infty
}>10^{-6} $ where $\Vert \cdot \Vert _{\infty }$ denotes the maximum norm
(again the larger the tolerance parameter, the smaller the numerically
determined value of $\vartheta _{2,Het}$). To compute $\vartheta _{2,Het}$
one also needs to check if $\limfunc{rank}((X:e_{i}(n)))<k+1$. These rank
computations are implemented in \textbf{wbsd} based on the rank-revealing LU
decomposition (with complete pivoting) and the corresponding function
\textquotedblleft rank\textquotedblright\ obtained from the Eigen-library
mentioned above. Checking rank conditions via the function \textquotedblleft
rank\textquotedblright\ requires the specification of a tolerance parameter.
To check $\limfunc{rank}((X:e_{i}(n)))<k+1$, we chose the tolerance
parameter equal to the machine epsilon $2.220446\times 10^{-16}$, because --
in contrast to the previous checks -- decreasing the tolerance parameter
used in this check decreases the numerically determined value of $\vartheta
_{2,Het}$. [Note that decreasing $\vartheta _{1,Het}$ or $\vartheta _{2,Het}$
increases $\vartheta _{Het}$.]

Theorem \ref{Theo_Het_unrestr} requires Assumption \ref{R_and_X} to hold,
which (as noted above) was checked throughout. This condition can be
verified by a series of rank computations, which was done based on the
function \textquotedblleft rank\textquotedblright\ as discussed above, but
with a tolerance parameter of $10^{-7}$. A similar remark applies to
Theorems \ref{Theo_Het_restr_1} and \ref{Theo_Het_restr_2} with regard to
Assumption \ref{R_and_X_tilde}.

\bibliographystyle{ims}
\bibliography{refs}

\end{document}